\begin{document}

\footskip=0pt
\footnotesep=2pt

\allowdisplaybreaks

\newtheorem{claim}{Claim}[section]

\theoremstyle{definition}
\newtheorem{thm}{Theorem}[section]
\newtheorem*{thmm}{Theorem}
\newtheorem{mydef}{Definition}[section]
\newtheorem{lem}[thm]{Lemma}
\newtheorem{prop}[thm]{Proposition}
\newtheorem{remark}[thm]{Remark}
\newtheorem*{propp}{Proposition}
\newtheorem{cor}[thm]{Corollary}
\newtheorem{conj}[thm]{Conjecture}
\def\F{\mathcal{F}}
\def\la{\lambda}
\def\tR{\tilde{R}}
\def\ds{\displaystyle}
\def\ve{\varepsilon}
\def\ls{\lesssim}
\newcommand{\bd}[1]{\mathbf{#1}}  
\newcommand{\R}{\mathbb{R}}      
\newcommand{\ZZ}{\mathbb{Z}}      
\newcommand{\Om}{\Omega}
\newcommand{\bv}{\mathbf{v}}
\newcommand{\bn}{\mathbf{n}}
\newcommand{\bU}{\mathbf{U}}
\newcommand{\q}{\quad}
\newcommand{\p}{\partial}
\newcommand{\n}{\nabla}
\newcommand{\f}{\frac}

\numberwithin{equation}{section}

\title{Almost global smooth solutions of the 3D quasilinear Klein-Gordon equations on the product space~$\mathbb{R}^{2}\times \mathbb{T}$}
\author{}

\author{Li Jun$^{1,*}$,\quad Tao Fei$^{1,*}$, \quad Yin Huicheng$^{2,}$
\footnote{Li Jun (lijun@nju.edu.cn) is supported by the NSFC (No.11871030),
Tao Fei (dg1721009@smail.nju.edu.cn) and  Yin Huicheng
(huicheng@nju.edu.cn, 05407@njnu.edu.cn) are
supported by the NSFC (No.11731007).}\vspace{0.5cm}\\
\small  1. Department of Mathematics, Nanjing University, Nanjing, 210093, China.
\\
\small 2. School of Mathematical Sciences and Mathematical Institute, \\
\small Nanjing Normal University, Nanjing 210023, China.
}
\date{}
\maketitle

\date{}
\maketitle
\thispagestyle{empty}
\begin{abstract}
In the paper, for the 3D quasilinear Klein-Gordon equation with the small initial data
posed on the product space~$\mathbb{R}^{2}\times \mathbb{T}$, we focus on the lower bound of the
lifespan of the smooth solution. When the size of initial data is bounded by $\ve_0>0$, by the  space-time resonance method,
it is shown that smooth solution exists up to the time $e^{c_{0}/\epsilon_{0}^2}$ with $\ve_0$ being
sufficiently small and $c_0>0$ being some suitable constant.
\end{abstract}

\vskip 0.2cm

{\bf Keywords:}  Quasilinear Klein-Gordon equation, almost global solution, Z-norm,

\qquad \qquad  \quad Littlewood-Paley decomposition,  space-time resonance method, energy estimate

{\bf Mathematical Subject Classification 2000:} 35L70, 35L65,
35L67

\vskip 0.2 true cm

\centerline{\bf Contents}
\contentsline {section}{\numberline {1}Introduction}{2}
\contentsline {section}{\numberline {2}Definitions~and~function spaces}{5}
\contentsline {section}{\numberline {3}Some preliminary estimates}{7}
\contentsline {section}{\numberline {4}Two crucial propositions and proof of~Theorem~1.1}{17}
\contentsline {section}{\numberline {5}Proof of Proposition~4.1}{20}
\contentsline {section}{\numberline {6}Proof of Proposition~4.2}{26}
\contentsline {subsection}{\numberline {6.1}Reformulation of $U$}{26}
\contentsline {subsection}{\numberline {6.2}Preliminary for the proof of Proposition \ref{prop4-2}}{27}
\contentsline {subsection}{\numberline {6.3}Proof of Lemma~6.1.}{28}
\contentsline {subsection}{\numberline {6.4}Proof of Lemma~6.2.}{32}
\contentsline {section}{\numberline {7}Appendix~A. Bilinear and trilinear estimates}{42}
\contentsline {section}{\numberline {8}Appendix~B. Temporal decay estimate for the 3D linear Klein-Gordon problem}{46}
\contentsline {section}{\numberline {}References}{49}

\section{Introduction}\label{I}

We consider the following Cauchy problem of the 3D quasilinear Klein-Gordon equation
\begin{equation}\label{0-1}
\begin{cases}
(\partial_{t}^2-\Delta_{x,y}+1)u=F(u,\partial u,\partial^{2} u),\\[2mm]
(u(0),\dot{u}(0))=(u_{0},u_{1})(x,y),
\end{cases}
\end{equation}
where $x=(x_1,x_2)\in\Bbb R^2$, $y\in\mathbb{T}=\mathbb{R}/(2\pi\mathbb{Z})$,
$\dot u=\p_t u$, $\partial=(\partial_{0},\partial_{1},\partial_{2},\partial_{3})$
with $\partial_{0}=\partial_{t},~\partial_{j}=
\partial_{x_{j}}$ $(j=1,2)$ and $\partial_{3}=\partial_{y}$,
$\Delta_{x,y}=\Delta_{x}+\p_{y}^2=\sum\limits_{j=1}^{3}\partial_{j}^{2}$,
$F(u,\partial u,\partial^{2} u)=\ds\sum_{j,k=0}^{3}G^{jk}(u,\partial u)\partial_{jk}^2u+Q(u,\partial u)$
with $G^{jk}(u,\partial u)=\ds\sum_{l=0}^{3}g^{jkl}\partial_{l}u+h^{jk}u$, $g^{jkl}, h^{jk}\in \mathbb{R}$,
$g^{jkl}=g^{kjl}$, $h^{jk}=h^{kj}$, and
$Q(u,\partial u)$ is a quadratic form of $(u,\partial u)$. Without loss of generality, $G^{00}(u,\p u)\equiv 0$
can be assumed.

In order to state the main result, we define the normalized initial data with respect to problem \eqref{0-1} as
\begin{equation}\label{0-31}
U_{0}(x,y):=u_{1}(x,y)-i\Lambda u_{0}(x,y),
\end{equation}
where the operator $\Lambda=(1-\Delta_{x,y})^{1/2}$ and $i=\sqrt{-1}$.

Hereafter, we also use the following notations: for two non-negative quantities $A, B$ and some generic
positive constant $C>1$, $A\ls B$ means $A\le CB$, and $A\thickapprox B$ means $\frac{1}{C}B\leq A\leq CB$.

Based on the preparations above, our main result can be stated as:
\begin{thm}\label{mainthm}
{\it For $N\geq 220$, there exist small positive constants $\bar\ve$ and $c_0$ such that if
$U_0$ in \eqref{0-31} satisfies the smallness condition
\begin{equation}\label{0-3}
\|U_{0}\|_{H^{N}(\mathbb{R}^{2}\times \mathbb{T})}+\|U_{0}\|_{Z}
=\varepsilon_{0}\leq\overline{\varepsilon},
\end{equation}
where the $Z$-norm of $U_0$ will be defined in Definition 2.1 below, then problem (1.1) admits a unique almost global
solution $u\in C([0,e^{c_{0}/\epsilon_{0}^{2}}], H^{N+1}(\mathbb{R}^{2}\times \mathbb{T}))
\cap C^{1}([0,e^{c_{0}/\epsilon_{0}^{2}}], H^{N}(\mathbb{R}^{2}\times \mathbb{T}))$.
Moreover, for any $t\in [0,e^{c_{0}/\epsilon_{0}^{2}}]$,
\begin{subequations}\label{0-4}
\begin{align}
&\sum\limits_{|\rho|\leq2}\|\partial_{x,y}^{\rho}u(t)\|_{L^{\infty}(\mathbb{R}^{2}\times \mathbb{T})}+\sum\limits_{|\rho|\leq1}\|\partial_{x,y}^{\rho}\dot{u}(t)\|_{L^{\infty}(\mathbb{R}^{2}\times \mathbb{T})}
\lesssim \varepsilon_{0}(1+t)^{-1},\label{0-4-1}\\
&\|u(t)\|_{H^{N+1}(\mathbb{R}^{2}\times \mathbb{T})}
+\|\dot{u}(t)\|_{H^{N}(\mathbb{R}^{2}\times \mathbb{T})}\lesssim \varepsilon_{0}(1+t)^{A_{0}\varepsilon_{0}},\label{0-4-2}
\end{align}
\end{subequations}
where $A_{0}>0$ is a universal constant.}
\end{thm}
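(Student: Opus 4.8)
The plan is to combine a high-order energy estimate with a decay estimate controlled through a suitable $Z$-norm, following the space-time resonance framework. First I would diagonalize the system: setting $U=\dot u-i\Lambda u$ (so that $u=\Lambda^{-1}\,\mathrm{Im}\,U$ and $\dot u=\mathrm{Re}\,U$), equation \eqref{0-1} becomes a first-order equation $(\p_t+i\Lambda)U=\mathcal{N}(U,\bar U)$, where $\mathcal{N}$ collects the quadratic (and higher) nonlinearities, including the quasilinear top-order terms $G^{jk}\p^2_{jk}u$. Because of the quasilinear structure, the energy estimate must be performed on the original formulation with a carefully chosen energy functional: for each multi-index $|\alpha|\le N$ one differentiates, writes the commutator $[\,G^{jk}\p^2_{jk},\p^\alpha_{x,y}]u$, and absorbs the genuinely dangerous top-order contribution by building a modified energy $E_N(t)=\sum_{|\alpha|\le N}\int (1+G^{00})|\p_t\p^\alpha u|^2+\ldots$ whose time derivative is bounded by $\|\p u\|_{W^{1,\infty}}E_N(t)$ plus lower-order terms. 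Granting the pointwise decay $\|\p u(t)\|_{W^{1,\infty}}\lesssim \ve_0(1+t)^{-1}$, Gr\"onwall then yields $E_N(t)\lesssim \ve_0^2(1+t)^{CA_0\ve_0}$, which is \eqref{0-4-2}; this part is essentially Proposition~4.1 of the paper.

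The heart of the argument is the decay estimate, i.e. Proposition~4.2: propagating $\|U(t)\|_Z\lesssim \ve_0$ (or $\lesssim \ve_0(1+t)^{\delta}$ for a tiny $\delta$) on the time interval $[0,e^{c_0/\ve_0^2}]$, from which \eqref{0-4-1} follows via the linear dispersive decay estimate of Appendix~B together with the relation between the $Z$-norm and $L^\infty$. To do this I would pass to the profile $V(t)=e^{it\Lambda}U(t)$ and write Duhamel's formula in Fourier variables. On $\mathbb{R}^2\times\mathbb{T}$ the Fourier variable is $(\xi,n)\in\mathbb{R}^2\times\mathbb{Z}$, so each bilinear interaction splits into a sum over $n=n_1+n_2$ of oscillatory integrals in the continuous variables $\xi=\xi_1+\xi_2$ with phase $\Phi=\la(\xi,n)-\la(\xi_1,n_1)-\la(\xi_2,n_2)$, where $\la(\xi,n)=\sqrt{1+|\xi|^2+n^2}$. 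After a Littlewood–Paley decomposition in $\xi$ (and in the total size of the frequencies), one integrates by parts in time on the region where $|\Phi|$ is bounded below (the non-time-resonant region) and integrates by parts in $\xi$ where $|\n_\xi\Phi|$ is bounded below (the non-space-resonant region); the contribution of a genuine space-time resonance must be handled by hand. The key number-theoretic/geometric input is that the Klein–Gordon phase has no space-time resonances with both interacting frequencies nonzero in a way that produces a non-integrable-in-time contribution — more precisely, on the resonant set one gains enough from the curvature (the Hessian of $\Phi$ in $\xi$ is nondegenerate there) to close the $t^{-1}$ decay, losing only an $\ve_0$ in the exponent; this loss is exactly what forces the almost-global (rather than global) lifespan $e^{c_0/\ve_0^2}$, since one is integrating $\ve_0^2/t$ over the time interval.

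Concretely the steps are: (1) reformulate \eqref{0-1} as a first-order system for $U$ and set up the modified high-order energy, proving the energy inequality $\tfrac{d}{dt}E_N(t)\lesssim \ve_0(1+t)^{-1}E_N(t)+\text{l.o.t.}$; (2) state the bootstrap assumptions $E_N(t)^{1/2}\le \ve_1(1+t)^{A_0\ve_0}$ and $\|V(t)\|_Z\le \ve_1$ on a maximal interval and aim to improve the constants; (3) run the dispersive/$Z$-norm estimate: Duhamel + Littlewood–Paley + splitting into (time-nonresonant) / (space-nonresonant) / (space-time resonant) pieces, using the bilinear and trilinear estimates of Appendix~A and the linear decay of Appendix~B, to get $\|V(t)\|_Z\lesssim \ve_0+\ve_1^2\log(1+t)$ — so on $[0,e^{c_0/\ve_0^2}]$ with $c_0$ small this is $\le \ve_1$; (4) feed the resulting $\|\p u(t)\|_{W^{1,\infty}}\lesssim \ve_1(1+t)^{-1}$ back into Gr\"onwall to improve the energy bound; (5) close the bootstrap and conclude local-to-global continuation, giving the stated lifespan and estimates \eqref{0-4}. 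The main obstacle is step (3), and within it the analysis of the space-time resonant set for the phase $\la(\xi,n)-\la(\xi_1,n_1)-\la(\xi_2,n_2)$ on $\mathbb{R}^2\times\mathbb{Z}$ — one must check that the $y$-periodic directions (the discrete $n$-sum) do not create new resonances or destroy the transversality/curvature that is available in the purely Euclidean $\mathbb{R}^3$ case, and that the quasilinear top-order terms, which cost two extra derivatives, can still be placed so that the bilinear estimates absorb them. Handling the commutator structure so that the worst quasilinear term is symmetrized away in the energy (step (1)) is the second delicate point.
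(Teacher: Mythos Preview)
Your high-level bootstrap architecture is right — couple the high-order energy estimate (with the modified quasilinear energy) to a $Z$-norm propagation for the profile $V=e^{it\Lambda}U$, and feed the resulting $L^\infty$ decay back into Gr\"onwall — and this is exactly the paper's scheme. Two corrections, however.

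First, a labeling issue: the energy inequality is Proposition~4.3, not Proposition~4.1. In the paper Proposition~4.1 is the statement that $t\mapsto e^{it\Lambda}U(t)$ is continuous into $Z$ with an a~priori bound depending on $T_0$; this is what lets the bootstrap get started. Proposition~4.2 is the bootstrap improvement for the $\mathcal H^{N_0}$-norm of the profile.

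Second, and more substantively, you over-engineer step~(3). You propose splitting the Duhamel bilinear integral into time-nonresonant / space-nonresonant / space-time-resonant regions and worrying about Hessian nondegeneracy on the resonant set. But for the Klein--Gordon phase $\Phi^{n,m}_{\mu,\nu}(\xi,\eta)=\Lambda_n(\xi)-\mu\Lambda_{n-m}(\xi-\eta)-\nu\Lambda_m(\eta)$ there are \emph{no time resonances at all}: Lemma~3.10 gives $|\Phi|^{-1}\le 2\sqrt{1+\min(a,b,c)}$, so $\Phi$ is bounded away from zero everywhere. The paper therefore performs a global Shatah normal form — integrate by parts in $s$ once, dividing by $i\Phi$ — which converts the quadratic Duhamel term into a boundary piece $Q_{\mu\nu}(t)$ (Lemma~6.1) plus a cubic-in-$U$ integral (Lemma~6.2). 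No space-resonance analysis, no Hessian curvature, and no special treatment of the discrete $n$-sum is needed; the periodic direction enters only through the Littlewood--Paley projector $S_l$ and the dispersive estimate of Lemma~3.4. The logarithmic loss that limits you to $T_0=e^{c_0/\varepsilon_0^2}$ arises not from any resonant set but simply because the trilinear piece decays like $\varepsilon_0^3(1+s)^{-2}$ in most frequency configurations yet only like $\varepsilon_0^3(1+s)^{-1}$ in the range $M(k,s)\le j\le M'(k,s)$ of Step~4 in Lemma~6.2; integrating $(1+s)^{-1}$ over $[0,e^{c_0/\varepsilon_0^2}]$ produces the $c_0\varepsilon_0$ term in \eqref{3-4}. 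So your concern that ``the $y$-periodic directions \ldots\ create new resonances or destroy the transversality/curvature'' is moot: the argument never needs transversality or curvature in the first place.
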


\begin{remark}
If $(u_{0},u_{1})=\epsilon (\widetilde{u}_{0},\widetilde{u}_{1})$ is chosen, where $\epsilon>0$ is
sufficiently small and~$(\widetilde{u}_{0},\widetilde{u}_{1})\in C_{0}^{\infty}(\mathbb{R}^{2}\times \mathbb{T})$, then
it follows from direct computation that condition \eqref{0-3} is fulfilled.
\end{remark}

\begin{remark}\label{rem1-2}
{\it For the 1D quadratic semilinear Klein-Gordon equation
\begin{equation}\label{Y0Y-5}
\begin{cases}
(\partial_{t}^2-\p_x^2+1)u=F(u,\partial u),\\
(u(0),\dot{u}(0))=\ve (u_{0},u_{1})(x),
\end{cases}
\end{equation}
where $x\in\Bbb R$, $(u_0, u_1)(x)\in C_0^{\infty}(\Bbb R)$, the authors in \cite{MTT}
establish that the solution $u$ of \eqref{Y0Y-5} exists at least on the time interval
$[0, T_{\ve})$ with $T_{\ve}\ge Ae^{B/\ve^2}$, where the positive constants A and B depend on the suitable norms of $u_0$
and $u_1$. On the other hand, the lower bound of $T_{\ve}$ is in general optimal (see Proposition 7.8.8 of \cite{H}).
It is noted that the lifespan obtained in Theorem 1.1 is consistent with the lifespan $T_{\ve}$
for the 1D problem \eqref{Y0Y-5}.}
\end{remark}

\begin{remark}\label{rem1-3}
{\it Consider the 2D case of problem \eqref{0-1}
\begin{equation}\label{0Y-2}
\begin{cases}
(\partial_{t}^2-\Delta_{x,y}+1)u=F(u,\partial u,\partial^{2} u),\\
(u(0),\dot{u}(0))=(u_{0},u_{1})(x,y),
\end{cases}
\end{equation}
where $x\in\Bbb R$, $y\in\mathbb{T}=\mathbb{R}/(2\pi\mathbb{Z})$.
With minor modifications of the proof in the current paper, we may show that there exist
small positive constants $\bar\ve$ and $c_0$ such that
if the corresponding condition \eqref{0-3} holds, then
problem \eqref{0Y-2} has a unique solution~$u\in C([0, c_{0}\epsilon_{0}^{-2}], H^{N+1}(\mathbb{R}\times \mathbb{T}))\cap C^{1}([0, c_{0}\epsilon_{0}^{-2}], H^{N}(\mathbb{R}\times \mathbb{T}))$. Moreover, for $t\in [0, c_{0}\epsilon_{0}^{-2}]$,
\begin{equation}\label{0Y-3}
\sum\limits_{|\rho|\leq2}\|\partial_{x,y}^{\rho}u(t)\|_{L^{\infty}(\mathbb{R}\times \mathbb{T})}+\sum\limits_{|\rho|\leq1}\|\partial_{x,y}^{\rho}\dot{u}(t)\|_{L^{\infty}(\mathbb{R}\times \mathbb{T})}
\lesssim~\varepsilon_{0}(1+t)^{-1/2}.
\end{equation}}
\end{remark}

\begin{remark}\label{rem1-4}
{\it For the 3D linear homogeneous Klein-Gordon equation
\begin{equation}\label{0-5-1}
\begin{cases}
(\partial_{t}^2-\Delta_{x,y}+1)V=0,\\[2mm]
(V(0),\dot{V}(0))=(V_{0},V_{1})(x,y)\in C_{0}^{\infty}(\mathbb{R}^{2}\times \mathbb{T}),
\end{cases}
\end{equation}
we have proved that in Theorem \ref{8-1} of Appendix B,
\begin{equation}\label{0Y-1}
\sup\limits_{|\rho|\leq2}\|\partial_{x,y}^{\rho}V(t)\|_{L^{\infty}(\mathbb{R}^{2}\times \mathbb{T})}+\sup\limits_{|\rho|\leq1}\|\partial_{x,y}^{\rho}\dot{V}(t)\|_{L^{\infty}(\mathbb{R}^{2}\times \mathbb{T})}
\lesssim (1+t)^{-1}.
\end{equation}
From the estimates \eqref{0-4-1} and \eqref{0Y-1},
we know that the time-decay rate of the solution $u$ to the quasilinear problem \eqref{0-1}
is consistent with that of the solution $V$ to the corresponding linear problem \eqref{0-5-1}.}
\end{remark}

\begin{remark}\label{rem1-5}
{\it With respect to the 4D case of \eqref{0-1}
\begin{equation}\label{Y0Y-4}\begin{cases}
(\partial_{t}^2-\Delta_{x,y}+1)u=F(u,\partial u,\partial^{2} u),\\[2mm]
(u(0),\dot{u}(0))=(u_{0},u_{1})(x,y),
\end{cases}
\end{equation}
where $x\in\Bbb R^3$, $y\in\mathbb{T}=\mathbb{R}/(2\pi\mathbb{Z})$,
under the corresponding condition \eqref{0-3}, we have established the global existence of
$u$ to \eqref{Y0Y-4} in \cite{Tao-Yin}. Since problem \eqref{Y0Y-4} has the higher space dimensions
than problem \eqref{0-1}, the solution $u$ of problem \eqref{Y0Y-4} can be shown to admit the better
time-decay rate as $(1+t)^{-3/2}$ and further  derive the global existence of $u$.}
\end{remark}

\begin{remark}\label{rem1-6}
{\it It is expected that problem \eqref{0-1} will have a global small data solution as for the 2D
quasilinear Klein-Gordon equation with the smooth and sufficiently fast decaying initial data (see \cite{OTT}
and \cite{ST}).}
\end{remark}

Let us recall some remarkable results on the nonlinear Klein-Gordon equation in $\Bbb R^{1+n}$ ($n\ge 1$)
\begin{equation}\label{0-E}\begin{cases}
\Box u+m^2u=F(u,\partial u,\partial^{2} u),\\[2mm]
(u(0),\dot{u}(0))=\ve (u_{0},u_{1})(x),
\end{cases}
\end{equation}
where $\Box=\p_0^2-\p_1^2-\cdots-\p_n^2$,
$\ve>0$ is sufficiently small, $m$ is a non-zero real number, $F$ is at least quadratic with respect to its
arguments and linear in $\partial^{2} u$.

\vskip 0.2 true cm

{\bf $\bullet$} The cases of $(u_{0},u_{1})\in C_0^{\infty}(\Bbb R^n)$ or  $(u_{0},u_{1})\in C^{\infty}(\Bbb R^n)$
with the fairly rapid decay at infinity

\vskip 0.1 true cm

When $n\ge 2$,  it is known that problem \eqref{0-E} admits a global smooth solution $u$ from the works of
\cite{K}, \cite{S}, \cite{OTT}
and \cite{ST}; When $n=1$ and the nonlinearity $F$ satisfies the corresponding null
condition, problem \eqref{0-E} has a global smooth solution $u$ (see \cite{De-01}).

\vskip 0.2 true cm

{\bf $\bullet$} The cases of $(u_{0},u_{1})$ with  slow decay  at infinity just as $H^s(\Bbb R^n)$ functions

\vskip 0.1 true cm
When $F=F(u,\p u)$ in problem \eqref{0-E} satisfies the suitable null condition,
the lifespan of $u$ are at least $Ce^{C\epsilon^{-1}}$ for $n\ge 3$, ~$Ce^{C\epsilon^{-2/3}}$ for
$n=2$ and $C\ve^{-4}|ln\ve|^{-6}$ for $n=1$ respectively,
where $C>0$ is some appropriate constant
(see \cite{DF} and \cite{JDX}).

\vskip 0.2 true cm

{\bf $\bullet$} The cases of the periodic initial data $(u_{0},u_{1})$

\vskip 0.1 true cm

For $n=1$ and $F=F(u)$ (or even $F(x,u)$), the authors in \cite{BJ}, \cite{Bam} and \cite{Bam-G} have shown
that: For any $N>0$, when $(u_0,u_1)\in H^{s+1}\times H^s$ for some $s$ depending on $N$, if $m$ stays
outside an exceptional subset of zero measure, then the solution $u$ of problem \eqref{0-E} exists at least
on an interval of length $C_N\ve^{-N}$.

For $n\ge 1$ and $F=F(u,\p u)$, it follows from Theorem 1.1.1 of \cite{JMD4} that the lifespan $T_{\ve}$ is at least $C\ve^{-2}$
(when $F$ vanishes of order $r$ at $0$ for $r\ge 3$, then $T_{\ve}\ge C\ve^{-(r-1)}|ln\ve|^{-(r-3)}$). In addition, when
$F=F(u)$ (or even $F(x,u)$), by \cite{JMD1},
there exists a subset $Q\subset (0,\infty)$ of zero measure with the
property that for every $m\in (0,\infty)\setminus Q$ and any $A>1$, there exists
$s_0>0$ such that problem \eqref{0-E} has a unique solution $u\in C([0, T_{\ve}], H^{s_0+1})\cap  C^1([0, T_{\ve}], H^{s_0})$
with $T_{\ve}\ge C\ve^{-(r-1)(1+2/n)}|ln\ve|^{-A}$ when $F$ vanishes of order $r$ at $0$ for $r\ge 2$.
For more general $F=F(u,\p u, \p^2u)$,
the related results can be referred to \cite{JMD3}.

\vskip 0.2 true cm

So far, to our best knowledge, there are few results related to
the long time behaviours of the general Klein-Gordon equation of problem \eqref{0-1} with
the initial data $(u_0,u_1)$ posed on the product spaces~$\mathbb{R}^{m_1}\times \mathbb{T}^{m_2}$. For the semilinear
Klein-Gordon equation $(\partial_{t}^2-\Delta_{x,y}+1)u=\pm |u|^{p-1}u$ with $p>1$, $(x,y)\in\Bbb R^{m_1}\times
\mathbb{T}^{m_2}$, $m_1\ge 1$, $m_2=1,2$, and $m_1+m_2\ge 3$, under some suitable restrictions on $m_1, m_2$ and $p$,
the authors in \cite{HV1,HV2} prove the global existence of small data solution
$u\in C([0, \infty), H^1_{x,y})\cap C^1([0, \infty), L^2_{x,y})$ and
$u\in L^p([0, \infty), L^{2p}_{x,y})$, meanwhile, the scattering of the large solution $u$ is also obtained in \cite{FH1}.
In our recent paper \cite{Tao-Yin}, we have shown the global existence of smooth small data solution to
the 4D case of problem \eqref{0-1}
on $\mathbb{R}^{3}\times \mathbb{T}$, where the solution admits the better time-decay rate
like $(1+t)^{-3/2}$. In the present paper,
the  almost global smooth solution of problem \eqref{0-1} will be established.

Let us give the comments on the proof of Theorem 1.1. Motivated by \cite{IP2},
where the global smooth and small  perturbed solution of the 2D Euler-Poisson system for the electrons
is established, we will study problem \eqref{0-1} by the  space-time resonance method (more detailed introductions of
space-time resonance can be referred to \cite{GMS}-\cite{GNT}). Note that we are working on the product space
$\mathbb{R}^{2}\times \mathbb{T}$ for problem \eqref{0-1}, then the estimate of $u$ on the periodic direction $y$
should be specially concerned. For this purpose, except for carrying out space-frequency decomposition
on $\mathbb{R}^{2}$, we require to localize the Fourier modes by the resolution of
unity in $y$ direction (see \eqref{2-7} below). This leads to establish various dispersive
estimates of the resulting terms on $\mathbb{R}^{2}\times \mathbb{T}$. In addition, according to
the structure of the equation in
problem~(1.1), it is natural to define the modified energy for $N\in\Bbb N$ as:
\begin{equation}\label{0-666}
\begin{aligned}
\mathcal{E}_{N+1}(t):=&\sum\limits_{|\rho|\leq N}
\Big\{\int_{\mathbb{R}^{2}}\int_{\mathbb{T}}\Big[(\partial_{t}\partial_{x,y}^{\rho}u)^{2}
+\sum\limits_{j=1}^{3}(\partial_{j}\partial_{x,y}^{\rho}u)^{2}
+(\partial_{x,y}^{\rho}u)^{2}\Big]dydx
\\&~~~~~~~~~~~~~~~~\quad +\int_{\mathbb{R}^{2}}\int_{\mathbb{T}}
\sum\limits_{j,k=1}^{3}G^{jk}(u,\partial u)
\partial_{j}\partial_{x,y}^{\rho}u\partial_{k}\partial_{x,y}^{\rho}udydx\Big\}.
\end{aligned}
\end{equation}
Obviously, $\|u(t)\|_{H^{N+1}(\mathbb{R}^{2}\times \mathbb{T})}+\|\dot{u}(t)\|_{H^{N}(\mathbb{R}^{2}\times \mathbb{T})}\thickapprox\mathcal{E}^{1/2}_{N+1}(t)$ for the small solution $u$ of problem \eqref{0-1}.
As in \cite{IP}, by the standard energy estimate, one can derive the following estimate for problem \eqref{0-1},
\begin{equation}\label{0-7}
\begin{aligned}
\mathcal{E}_{N+1}(t)-\mathcal{E}_{N+1}(0)
&\lesssim \int_{0}^{t}\mathcal{E}_{N+1}(s)\cdot
\Big(\sum\limits_{|\rho|\leq2}\|\partial_{x,y}^{\rho}u(s)\|_{L^{\infty}(\mathbb{R}^{2}\times \mathbb{T})}
+\sum\limits_{|\rho|\leq1}\|\partial_{x,y}^{\rho}\dot{u}(s)
\|_{L^{\infty}(\mathbb{R}^{2}\times \mathbb{T})}\Big)ds\\
&\leq \int_{0}^{t}\mathcal{E}_{N+1}(s)\cdot (1+s)^{-1}\cdot\theta(s)ds,
\end{aligned}
\end{equation}
where
\begin{equation}\label{0-7-1}
\theta(s):=\sup\limits_{\tau\in [0,s]}\Big[(1+\tau)\Big(\sum\limits_{|\rho|\leq2}
\|\partial_{x,y}^{\rho}u(\tau)\|_{L^{\infty}(\mathbb{R}^{2}\times \mathbb{T})}
+\sum\limits_{|\rho|\leq1}\|\partial_{x,y}^{\rho}\dot{u}(\tau)
\|_{L^{\infty}(\mathbb{R}^{2}\times \mathbb{T})}\Big)\Big].
\end{equation}
Together with Gronwall's inequality, this yields
\begin{equation}\label{0-8}
\begin{aligned}
\mathcal{E}_{N+1}^{1/2}(t)\leq \mathcal{E}_{N+1}^{1/2}(0)(1+t)^{C\cdot\theta(t)}.\nonumber
\end{aligned}
\end{equation}
Therefore, once $\theta(t)\lesssim\ve_0$ is derived, then the estimates in \eqref{0-4} are obtained.
To control $\theta(t)$,  as in \cite{IP2},  we shall construct an appropriate $Z$-norm (see \eqref{2-1A} below) for the
profile $V(t)$, where $V(t):=e^{it\Lambda}U(t)$ with $U(t):=\dot{u}(t)-i\Lambda u(t)$.
In this case, $\theta(t)$ can be controlled by the dispersive estimate and the
uniform bound of $\|V(t)\|_{Z}$~(see \eqref{4-12ASS-1}-\eqref{4-12ASS-2}).
Based on this, it follows from the standard continuation argument that  the proof of Theorem 1.1 is finished.

This paper is organized as follows. In Section 2, the main notations and definitions are collected,
meanwhile the required Z-norm is introduced. In Section 3, some preliminary results are derived.
In Section 4, the related estimates for $Z$-norm and bootstrap argument are arranged in Proposition 4.1
and Proposition 4.2 respectively.
As a consequence of Proposition 4.1, Proposition 4.2 and the local well-posedness
of problem \eqref{0-1}, the proof of Theorem 1.1 is completed. The proofs of Proposition 4.1 and Proposition 4.2
are put in Section 5 and Section 6, respectively. In Appendix A, two lemmas on the bilinear and
trilinear estimates are established.
The temporal decay rate of solutions to the linear homogeneous Klein-Gordon
equations on $\mathbb{R}^{2}\times \mathbb{T}$ is given in Appendix B.

\section{Definitions and function spaces}

In this section, we introduce the notations of Fourier transformation, the tools of space-frequency
decomposition as well as $Z$-norm and the necessary pseudo-differential operators.

\subsection{Notations for Fourier transformation}

The partial or full Fourier transformations of $f(x,y)\in L^{1}(\mathbb{R}^{2}\times \mathbb{T})$ are denoted by
\begin{equation*}
f_{n}(x):=\frac{1}{2\pi}\int_{\mathbb{T}}e^{-iny}f(x,y)dy,~~
\mathcal{F}_{x,y}(f)(\xi,n)=\widehat{f_{n}}(\xi):=
\frac{1}{2\pi}\int_{\mathbb{R}^{2}}\int_{\mathbb{T}}e^{-i(\xi\cdot x+ny)}f(x,y)dydx,
\end{equation*}
where~$x\in \mathbb{R}^{2},~y\in \mathbb{T},~\xi\in \mathbb{R}^{2},~n\in \mathbb{Z}$. The inverse of the
Fourier transformation of the sequence~$\{g(\xi,n)\}_{n\in \mathbb{Z}}$ is defined as
\begin{equation*}
(\mathcal{F}_{\xi,n}^{-1}g)(x,y):=\frac{1}{(2\pi)^{2}}
\sum_{n\in \mathbb{Z}}e^{iny}\int_{\mathbb{R}^{2}}e^{ix\cdot\xi}g(\xi,n)d\xi.
\end{equation*}
In the same way, for $f_{\mu}(x,y)\in L^{1}(\mathbb{R}^{2}\times \mathbb{T})$, we set
\begin{equation*}
f_{\mu;n}(x):=\frac{1}{2\pi}\int_{\mathbb{T}}e^{-iny}f_{\mu}(x,y)dy,~~~\widehat{f_{\mu;n}}(\xi)
:=\frac{1}{2\pi}\int_{\mathbb{R}^{2}}\int_{\mathbb{T}}e^{-i(\xi\cdot x+ny)}f_{\mu}(x,y)dydx.
\end{equation*}\vspace{0.2cm}

\subsection{Tools for space-frequency decomposition and $Z$-norm}

For a smooth function $\varphi:\mathbb{R}\rightarrow[0,1]$, which is supported in $[-8/5,8/5]$ and equals to 1 in~$[-5/4,5/4]$, we set
\begin{equation*}
\begin{aligned}
&\varphi_{k}(\cdot):=\varphi(|\cdot|/2^{k})-\varphi(|\cdot|/2^{k-1})~~(k\in \mathbb{Z}),\\
&\psi_{-1}(\cdot):=1-\sum\limits_{j\geq0}\varphi_{j}(\cdot)=\varphi(2|\cdot|),\ \psi_{l}(\cdot):=\varphi_{l}(\cdot)~~ (l\in \mathbb{N}_0),\\
&\psi_{I}:=\sum_{m\in I\bigcap\mathbb{ Z}}\psi_{m}~\mathrm{and}~\varphi_{I}:=\sum_{m\in I\bigcap\mathbb{ Z}}\varphi_{m}
~~\mathrm{for~any~interval}~I\subset\mathbb{R}.
\end{aligned}
\end{equation*}

The frequency projection operators~$R_{k},R_{I}$~on~$\mathbb{R}^{2}$~are formally defined for $f=f(x)$ as
\begin{equation*}
\widehat{R_{k}f}(\xi):=\psi_{k}(\xi)\widehat{f}(\xi)~~(k\geq -1),\ \widehat{R_{I}f}(\xi):=\psi_{I}(\xi)\widehat{f}(\xi).
\end{equation*}

For $g(y)\in L^{2}(\mathbb{T})$, one has that
\begin{equation}
g(y)=\sum_{n\in \mathbb{Z}}\widehat{g}(n)e^{iny}=\sum_{l\geq-1}\sum_{n\in \mathbb{Z}}
\psi_{l}(n)\widehat{g}(n)e^{iny}:=\sum_{l\geq-1}S_{l}g(y),\nonumber
\end{equation}
where
\begin{equation*}
\begin{aligned}
S_{l}g(y):=\sum_{n\in \mathbb{Z}}\psi_{l}(n)\widehat{g}(n)e^{iny}~~~(l\geq-1).
\end{aligned}
\end{equation*}
Moreover, we define
\begin{equation*}
\begin{aligned}
S_{I}g(y):=\sum_{n\in \mathbb{Z}}\psi_{I}(n)\widehat{g}(n)e^{iny}~~~(I\subset\mathbb{R}).
\end{aligned}
\end{equation*}

The space-frequency projections are defined as
\begin{equation*}\label{2-5}
\varphi_{j}(x)\cdot R_{k}S_{l}f(x,y)=
\left\{
\begin{aligned}
&\varphi_{j}(x)\cdot R_{k}f_{0}(x),~~~~~~~~~~~~~~~~~~~~~~~~~~~~~~~\mathrm{if}~l=-1,\\[2mm]
&\sum_{n\in\mathbb{Z}}\varphi_{j}(x)\cdot R_{k}f_{n}(x)\psi_{l}(n)e^{iny},~~~~~~~~~~\mathrm{if}~l\geq0.\\
\end{aligned}
\right.
\end{equation*}

Under the above notations, we have the following atom decomposition
\begin{equation}\label{2-7}
\begin{aligned}
f(x,y)=\sum_{j\in \mathbb{Z}}\sum_{k\geq -1}\sum_{l\geq-1}\varphi_{j}(x)\cdot R_{k}S_{l}f.
\end{aligned}
\end{equation}
Based on this decomposition, it is natural to introduce the $Z$-norm required in Theorem \ref{mainthm}:

\vskip 0.2 true cm

\noindent{\bf Definition 2.1.}\label{Def-2-1} {\it According to the atom decomposition \eqref{2-7}, we introduce
\begin{equation}\label{2-11}
Z:=\Big\{f\in L^{2}(\mathbb{R}^{2}\times\mathbb{T}):~\|f\|_{Z}~<+\infty\Big\},
\end{equation}
where
\begin{equation}\label{2-1A}
\|f\|_{Z}:=
\sup\limits_{k,l\geq-1}2^{9(k+l)}
\Big[\|R_{k}S_{l}f\|_{L^{2}(\mathbb{R}^{2}\times\mathbb{T})}
+\sum\limits_{j\in \mathbb{Z}^{+}}2^{j}\|\varphi_{j}(x)\cdot R_{k}S_{l}f\|_{L^{2}(\mathbb{R}^{2}\times\mathbb{T})}\Big].
\end{equation}}

It is pointed out that the introduction of $Z$-norm in Definition 2.1 is strongly motivated by the
definition (2.5) in \cite{IP2}. Moreover, compared with \cite{IP2}, we additionally introduce
the Littlewood-Paley decomposition in the periodic direction.

\begin{remark}\label{rem2-1} {\it With the definition of $Z$-norm, one has
\begin{equation}\label{2-1-0}
\|R_k S_l f\|_{L^1(\mathbb{R}^2\times\mathbb{T})}\lesssim 2^{-9(k+l)}\|f\|_Z~~(k, l\geq -1).
\end{equation}
}
\end{remark}

\vskip 0.3cm

\subsection{Necessary pseudo-differential operators}

To study problem \eqref{0-1} on $\mathbb{R}^2\times\mathbb{T}$, with the notations in the above Fourier transformations, we
naturally introduce the operator $\Lambda=(1-\Delta_{x,y})^{1/2}$ as
\begin{equation}\label{2-1}
(\Lambda f)(x,y):=\sum_{n\in \mathbb{Z}}\sqrt{1-\Delta_{x}+n^{2}}f_{n}(x)e^{iny}
:=\sum_{n\in \mathbb{Z}}\mathcal{F}^{-1}_{\xi}\Big(\sqrt{1+|\xi|^{2}+n^{2}}\widehat{f}_{n}(\xi)\Big)(x)e^{iny}.
\end{equation}
In addition, define
\begin{equation}\label{2-2}
\begin{aligned}
(e^{it\Lambda}f)(x,y):=\sum_{n\in \mathbb{Z}}e^{it\sqrt{1-\Delta_{x}+n^{2}}}f_{n}(x)e^{iny}
:=\sum_{n\in \mathbb{Z}}\mathcal{F}^{-1}_{\xi}\Big(e^{it\sqrt{1+|\xi|^{2}+n^{2}}}\widehat{f}_{n}(\xi)\Big)(x)e^{iny}.
\end{aligned}
\end{equation}

The \textit{normalized solution}~$U$ and its associated \textit{profile}~$V$ for the solution~$u$
of problem \eqref{0-1} are defined as
\begin{equation}\label{2-8}
U(t):=\dot{u}(t)-i\Lambda u(t),\ V(t):=e^{it\Lambda}U(t).\\
\end{equation}
Moreover, we set
\begin{equation}\label{2-9}
U_{+}(t):=U(t),~~U_{-}(t):=\overline{U},~~V_{+}(t):=e^{it\Lambda}U(t),~~V_{-}(t):=\overline{V(t)}
\end{equation}
and
\begin{subequations}\label{2-1-1}\begin{align}
&\Lambda_{\mu ;n}(\xi):=\mu\Lambda_{n}(\xi):=\mu\sqrt{1+|\xi|^{2}+n^{2}},\label{2-10}\\[2mm]
&\Phi^{n, m}_{\mu,\nu}(\xi,\eta)
:=\Lambda_{n}(\xi)-\Lambda_{\mu; n-m}(\xi-\eta)-\Lambda_{\nu; m}(\eta)\label{2-11}
\end{align}
\end{subequations}
 with $n, m\in\mathbb{Z}, \xi, \eta\in\mathbb{R}^2$ and $\mu,\nu\in\{+,-\}$,

\section{Some preliminary estimates}

In order to prove Theorem \ref{mainthm}, some preliminary estimates will be established in this section.
Although most of the results are similar to the ones in Appendix A of \cite{Tao-Yin} for the 4D case, we still give the
details due to the different space dimensions, different expressions of solutions
between the 4D and 3D linear Klein-Gordon equations, and slightly different Littlewood-Paley decompositions.

\begin{lem}\label{HC-1}
{\it For $n\in \mathbb{Z}$ and $t>0$, the following estimate holds
\begin{equation}\label{7-0}
\Big\|\int_{\mathbb{R}^{2}}e^{ix\cdot\xi}e^{\pm it\sqrt{1+|\xi|^{2}+n^{2}}}
\psi_{-1}(\xi)d\xi\Big\|_{L^{\infty}(\mathbb{R}^{2})}\lesssim (1+|n|)(1+t)^{-1}.
\end{equation}}
\end{lem}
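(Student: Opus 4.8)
The plan is to estimate the oscillatory integral
\[
I_n(t,x):=\int_{\mathbb{R}^2} e^{ix\cdot\xi}\,e^{\pm it\sqrt{1+|\xi|^2+n^2}}\,\psi_{-1}(\xi)\,d\xi
\]
by the method of stationary phase, keeping careful track of the dependence on $n$. Write $\omega_n(\xi):=\sqrt{1+|\xi|^2+n^2}$, so the phase is $\Phi(\xi):=x\cdot\xi\pm t\,\omega_n(\xi)$. Since $\psi_{-1}=\varphi(2|\cdot|)$ is supported in the ball $\{|\xi|\lesssim 1\}$, we only integrate over a fixed compact set, and on that set $\omega_n(\xi)\thickapprox (1+|n|)$, $\nabla_\xi\omega_n(\xi)=\xi/\omega_n(\xi)$, and the Hessian $\nabla^2_\xi\omega_n$ satisfies $\|\nabla^2_\xi\omega_n\|\thickapprox (1+|n|)^{-1}$ with $|\det\nabla^2_\xi\omega_n|\thickapprox (1+|n|)^{-3}$ on the support (one computes $\partial_{jk}\omega_n=\omega_n^{-1}\delta_{jk}-\omega_n^{-3}\xi_j\xi_k$).

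First I would dispose of the non-stationary regime. The phase gradient is $\nabla\Phi=x\pm t\,\xi/\omega_n$, and since $|\xi/\omega_n|\lesssim (1+|n|)^{-1}$ on the support, if $|x|\ge C\,t\,(1+|n|)^{-1}$ for a suitable large $C$ then $|\nabla\Phi|\gtrsim |x|+t(1+|n|)^{-1}$ and one integrates by parts: each application of the operator $L:=|\nabla\Phi|^{-2}\,\nabla\Phi\cdot\nabla_\xi$ gains a factor $\lesssim (t(1+|n|)^{-1})^{-1}$ while the derivatives landing on $\psi_{-1}$ and on $\nabla\Phi$ itself are harmless on the fixed compact support (here the bounds $|\nabla^k\omega_n|\lesssim (1+|n|)$ for the relevant low orders of $k$ are what one needs). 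This yields rapid decay $|I_n|\lesssim_M (1+|n|)\,(t(1+|n|)^{-1})^{-M}$ in that region, which is far better than needed; in particular it handles $t\lesssim 1$ trivially after noting $|I_n|\lesssim 1\lesssim (1+|n|)(1+t)^{-1}$ there by crude size estimate.

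In the stationary regime $|x|\lesssim t(1+|n|)^{-1}$ (and $t\gtrsim 1$), the critical point equation $x=\mp t\,\xi/\omega_n$ has at most one solution $\xi_c$ in the support, and at it the nondegeneracy $|\det(t\,\nabla^2_\xi\omega_n)|\thickapprox t^2(1+|n|)^{-3}$ holds. The standard two-dimensional stationary phase asymptotic then gives a leading term of size $\lesssim |t^2 \det\nabla^2\omega_n|^{-1/2}\thickapprox (1+|n|)^{3/2}\,t^{-1}$, plus lower-order corrections controlled by the same quantity. Crudely $(1+|n|)^{3/2}\le (1+|n|)^{3/2}$, but to land exactly on the claimed $(1+|n|)(1+t)^{-1}$ I would instead rescale: set $\xi=(1+|n|)\zeta$ is not quite right since the support is fixed, so better to substitute directly and note that on the compact support the relevant curvature lower bound we actually exploit is $\lesssim (1+|n|)^{-1}$ in each of two independent directions, giving decay $t^{-1/2}$ per direction times the amplitude bound; combining with the volume $\lesssim 1$ of the support and the oscillatory gain one obtains $|I_n(t,x)|\lesssim (1+|n|)\,t^{-1}$. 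The main obstacle — and the step requiring the most care — is precisely extracting the sharp power $(1+|n|)^{1}$ rather than a larger power of $(1+|n|)$ from the stationary phase estimate uniformly in $x$; this is done by scaling $\xi\mapsto\xi$ out and tracking that the $n$-dependence enters only through $\omega_n$ and its derivatives, for which $\omega_n\thickapprox 1+|n|$ and $\nabla^2\omega_n\thickapprox (1+|n|)^{-1}$, so that the van der Corput / stationary-phase bound in each variable contributes $\bigl(t\,(1+|n|)^{-1}\bigr)^{-1/2}$ and the two-dimensional product is $t^{-1}(1+|n|)$, as desired. Finally I would patch the two regimes with a smooth cutoff in $|x|/(t(1+|n|)^{-1})$ and take the supremum over $x\in\mathbb{R}^2$, which is uniform by construction, completing the proof of \eqref{7-0}.
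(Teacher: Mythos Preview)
Your strategy---split into non-stationary and stationary regimes and apply stationary phase with careful $n$-tracking---is the same as the paper's, but there is an arithmetic slip that sends you on an unnecessary detour. You claim $|\det\nabla^2_\xi\omega_n|\thickapprox(1+|n|)^{-3}$ on the support, but in two dimensions the formula $\partial_{jk}\omega_n=\omega_n^{-1}\delta_{jk}-\omega_n^{-3}\xi_j\xi_k$ gives
\[
\det\nabla^2_\xi\omega_n=\omega_n^{-4}(1+n^2)\thickapprox(1+|n|)^{-2},
\]
since $\omega_n\thickapprox 1+|n|$ when $|\xi|\lesssim1$. (The value $(1+|n|)^{-3}$ is what one would get for a $3\times3$ Hessian.) With the correct determinant, the stationary phase bound $|I_n|\lesssim|\det(t\nabla^2\omega_n(\xi_c))|^{-1/2}$ already yields $t^{-1}(1+|n|)$ on the nose, and your ``van der Corput in each variable'' recovery---which is really just the eigenvalue form of the same determinant, since both eigenvalues are $\thickapprox(1+|n|)^{-1}$---is not a separate argument but a restatement of the correct computation.

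The paper avoids this bookkeeping by rescaling the phase at the outset: writing $\phi_\pm(\xi)=(1+|n|)\bigl[(x\cdot\xi)/t\pm\omega_n(\xi)\bigr]$, the effective large parameter becomes $t/(1+|n|)$, and one checks that $\det D^2_\xi\phi_\pm(\xi_0)=(1+|\xi_0|^2+n^2)^{-2}(1+n^2)(1+|n|)^2\thickapprox1$, uniformly in $n$ for $|\xi_0|\le1$. A direct appeal to a standard stationary phase lemma with parameter $t/(1+|n|)$ then gives $(t/(1+|n|))^{-1}=(1+|n|)t^{-1}$. The non-stationary case (either $t\le|x|$, so no real critical point, or the critical point lies outside $\mathrm{supp}\,\psi_{-1}$) is handled by non-stationary phase, matching your treatment.
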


\begin{proof}
Since
\begin{align*}
\Big|\int_{\mathbb{R}^{2}}e^{ix\cdot\xi}e^{\pm it\sqrt{1+|\xi|^{2}+n^{2}}}
\psi_{-1}(\xi)d\xi\Big|\le \int_{\mathbb{R}^{2}}|\psi_{-1}(\xi)|d\xi\le 4,
\end{align*}
we may assume $t\geq 1$ in \eqref{7-0}. We rewrite
\begin{equation}
\int_{\mathbb{R}^{2}}e^{ix\cdot\xi}e^{\pm it\sqrt{1+|\xi|^{2}+n^{2}}}
\psi_{-1}(\xi)d\xi=\int_{\mathbb{R}^{2}}e^{i\frac{t}{(1+|n|)}\cdot\phi_{\pm}(\xi;~t,x,n)}
\psi_{-1}(\xi)d\xi,\nonumber
\end{equation}
where~$\phi_{\pm}(\xi;~t,x,n)=(1+|n|)((x\cdot\xi)/t\pm\sqrt{1+|\xi|^{2}+n^{2}})$.~The possible
critical point $\xi$ of $\phi_{\pm}$ is determined by
\begin{equation}
x/t\pm\xi/\sqrt{1+|\xi|^{2}+n^{2}}=0,\nonumber
\end{equation}
which has no real solution when $t\leq |x|$ and a unique solution~$\xi_{0}=\mp x\sqrt{\frac{1+n^2}{t^2-|x|^2}}$ when $t> |x|$.

If~$t\leq |x|$ or $|\xi_{0}|\geq 1$,
one can apply Lemma~2.34 of ~\cite{NS}~($\mathrm{Page}$ 65) to get \eqref{7-0} directly.

If $|\xi_{0}|\leq 1,$ by direct calculation
\begin{equation*}
\mathrm{det}~D^{2}_{\xi}\phi_{\pm}(\xi;~t,x,n)=(1+|\xi_{0}|^{2}+n^{2})^{-2}(1+n^{2})(1+|n|)^{2},
\end{equation*}
then the Hessian $D^{2}\phi_{\pm}(\xi;~t,x,n)$ is nondegenerate and
is uniformly bounded with respect to $\xi_{0}$. Hence,
\\Lemma 2.35 of \cite{NS} ($\mathrm{Page}$ 66) implies \eqref{7-0}.
\end{proof}

\begin{lem}\label{HC-1-1}
{\it Let $\sigma_{\mathbb{S}^{d-1}}$ be the surface measure of the unit sphere in $\mathbb{R}^{d}~(d\geq 1)$. Then one has
\begin{equation}\label{7-0-AAA}
\widehat{\sigma_{\mathbb{S}^{d-1}}}(x):=\int_{\mathbb{S}^{d-1}}e^{ix\cdot\xi}\sigma_{\mathbb{S}^{d-1}}(d\xi)
=e^{i|x|}\omega_{+}(|x|)+e^{-i|x|}\omega_{-}(|x|),\nonumber
\end{equation}
where $\omega_{\pm}$ are smooth and satisfy
\begin{equation}\label{7-0-BBB}
|\partial_{r}^{k}\omega_{\pm}(r)|\leq C_{k}\cdot r^{-\frac{d-1}{2}-k},~~~~~\forall~r>0,~~~k\geq 0.
\end{equation}}
\end{lem}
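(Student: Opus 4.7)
Because $\sigma_{\mathbb{S}^{d-1}}$ is invariant under the orthogonal group $O(d)$, its Fourier transform $\widehat{\sigma_{\mathbb{S}^{d-1}}}(x)$ is a radial function of $r:=|x|$. Rotating $x$ to $re_d$, the problem reduces to analyzing the one-dimensional oscillatory integral
$$I(r):=\int_{\mathbb{S}^{d-1}}e^{ir\xi_d}\,d\sigma(\xi),$$
and it suffices to exhibit a decomposition $I(r)=e^{ir}\omega_+(r)+e^{-ir}\omega_-(r)$ satisfying the derivative bounds \eqref{7-0-BBB}.

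The range $0<r\leq 1$ is trivial since $r^{-(d-1)/2-k}\geq 1$ makes the desired estimate vacuous: we simply set $\omega_\pm(r):=\tfrac{1}{2}e^{\mp ir}I(r)$ there, noting that $I$ is smooth and $C^k$-bounded on any compact interval. For $r\geq 1$ we run stationary phase. Introduce a smooth partition of unity $1=\chi_++\chi_-+\chi_0$ on $\mathbb{S}^{d-1}$, with $\chi_\pm$ supported in small neighborhoods of the poles $\pm e_d$ and $\chi_0$ supported where $|\xi_d|<1-\delta$. On $\mathrm{supp}\,\chi_0$ the tangential gradient of the phase $\xi_d$ is bounded below, so repeated integration by parts produces a contribution decaying as $O(r^{-M})$ for every $M\geq 0$, which can be harmlessly absorbed into either $\omega_+$ or $\omega_-$.

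Near each pole, parametrize $\xi=(\xi',\pm\sqrt{1-|\xi'|^2})$ with $|\xi'|$ small, so the phase becomes $\pm\sqrt{1-|\xi'|^2}=\pm(1-\tfrac{1}{2}|\xi'|^2+O(|\xi'|^4))$, with a single nondegenerate critical point at $\xi'=0$. The classical H\"ormander--Stein stationary phase expansion then yields
$$\int \chi_\pm(\xi')\,e^{\pm ir\sqrt{1-|\xi'|^2}}\,d\xi'=e^{\pm ir}\Bigl(r^{-(d-1)/2}\sum_{j=0}^{N}c_j^\pm r^{-j}+R_N^\pm(r)\Bigr),$$
where both $R_N^\pm$ and each $r$-derivative of the partial sum lose one further power of $r$. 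Gluing the small-$r$ and large-$r$ formulas through a smooth cutoff $\chi(r)$ equal to $1$ on $[0,1]$ and vanishing on $[2,\infty)$ defines $\omega_\pm\in C^\infty((0,\infty))$ with the required bounds.

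The main technical obstacle is the bookkeeping of $r$-derivatives in the stationary phase expansion, i.e.\ verifying that each application of $\partial_r$ either differentiates a negative power of $r$ or acts on the factor $e^{\pm ir}$ (producing the phases in the ansatz). This is exactly the content of the standard stationary phase asymptotics for a smooth compactly supported amplitude at a nondegenerate critical point (see e.g.\ Stein, \emph{Harmonic Analysis}, Ch.~VIII), so \eqref{7-0-BBB} follows uniformly in $r>0$.
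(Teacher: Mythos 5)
Your proof sketch is correct, and the mathematics is the same stationary phase argument that underlies the result the paper cites. The paper's own ``proof'' is a citation: it refers to Corollary 2.37 of Nakanishi--Schlag (which is proved exactly by the partition-into-poles-plus-nonstationary-region argument you outline) together with the observation that the restrictions $|x|\geq 1$, $r\geq 1$ in the cited statement can be dropped. You supply the argument in full rather than by reference, and you explicitly address the small-$r$ regime --- which is precisely the one point the paper needed to add on top of the cited statement. Your observation that the bound $r^{-(d-1)/2-k}$ is vacuous for $r\in(0,1]$ (since $I$ and all its $r$-derivatives are bounded on compacta) is the right way to see this, and the gluing via a cutoff $\chi(r)$ is consistent because on the overlap both decompositions reconstruct $I(r)$. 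Two small points worth flagging, neither fatal: the surface measure pulled back to the cap is $d\xi'/\sqrt{1-|\xi'|^2}$ rather than $d\xi'$, so the amplitude you feed into stationary phase should carry that smooth, bounded-below Jacobian factor; and in the nonstationary region the $r$-derivatives of the contribution must also be estimated, but each $\partial_r$ only brings down the bounded factor $i\xi_d$, so the same integration by parts yields $O(r^{-M})$ for the derivatives as well. What your self-contained version buys is independence from the exact statement in the reference; what the paper's citation buys is brevity, at the cost of requiring the reader to inspect Nakanishi--Schlag's proof to verify the removability of the $r\geq 1$ restriction.
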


\begin{proof}
See Corollary 2.37 of \cite{NS}~($\mathrm{Page}$ 68). Here we point out that by examining
the proof for Corollary 2.37, the restrictions of $|x|\geq 1$ and $r\geq 1$ there can be removed.
\end{proof}

\begin{lem}\label{HC-2}
{\it For $n\in \mathbb{Z}$, $t>0$~and~$k\geq0$, there holds
\begin{equation}\label{7-1}
\Big\|\int_{\mathbb{R}^{2}}e^{ix\cdot\xi}e^{\pm it\sqrt{1+|\xi|^{2}+n^{2}}}
\varphi_{k}(\xi)d\xi\Big\|_{L^\infty(\mathbb{R}^{2})}\lesssim \frac{(2^{2k}+n^{2})}{\sqrt{1+n^{2}}}\cdot t^{-1}.
\end{equation}}
\end{lem}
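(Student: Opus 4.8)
\textbf{Proof proposal for Lemma \ref{HC-2}.}

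The plan is to run the same stationary-phase analysis used in Lemma \ref{HC-1}, but now tracking carefully the dependence on the dyadic frequency scale $2^k$. First I would dispose of the trivial regime: since $|\varphi_k(\xi)|\lesssim 1$ and $\varphi_k$ is supported in an annulus $|\xi|\thickapprox 2^k$ of area $\thickapprox 2^{2k}$, one has
\begin{equation*}
\Big|\int_{\mathbb{R}^{2}}e^{ix\cdot\xi}e^{\pm it\sqrt{1+|\xi|^{2}+n^{2}}}\varphi_{k}(\xi)\,d\xi\Big|\lesssim 2^{2k},
\end{equation*}
which already gives the bound when $t\lesssim 1$ (and also, more crudely, when $t^{-1}\gtrsim 2^{2k}/\sqrt{1+n^2}$, i.e. in a range of small $t$ depending on $k,n$). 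So I would reduce to $t$ large relative to the relevant scales and set up the phase exactly as before, writing the integral as $\int e^{i\frac{t}{1+|n|}\phi_\pm(\xi)}\varphi_k(\xi)\,d\xi$ with $\phi_\pm(\xi;t,x,n)=(1+|n|)\big((x\cdot\xi)/t\pm\sqrt{1+|\xi|^2+n^2}\big)$. The critical point is again $\xi_0=\mp x\sqrt{(1+n^2)/(t^2-|x|^2)}$ when $t>|x|$, and there is none when $t\le|x|$.

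Next I would split into cases according to whether the critical point lies in the support annulus $|\xi|\thickapprox 2^k$. When $\xi_0$ is absent or far from the annulus, nonstationary phase (integration by parts, i.e. Lemma 2.34 of \cite{NS}) applies; one must check that the gradient of $\phi_\pm$ is bounded below by $\thickapprox 2^k/\sqrt{1+|\xi|^2+n^2}$ on the annulus and that derivatives of the amplitude $\varphi_k$ cost factors of $2^{-k}$, so that repeated integration by parts produces the decay $(2^{2k}+n^2)/(\sqrt{1+n^2}\,t)$ (in fact arbitrarily fast decay, but this is what is needed). When $\xi_0$ does lie in the annulus, I would use stationary phase (Lemma 2.35 of \cite{NS}) with the Hessian computed as in Lemma \ref{HC-1}, namely
\begin{equation*}
\mathrm{det}\,D^2_\xi\phi_\pm(\xi_0)=(1+|\xi_0|^2+n^2)^{-2}(1+n^2)(1+|n|)^2\thickapprox\frac{(1+n^2)(1+|n|)^2}{(2^{2k}+n^2)^2},
\end{equation*}
since on the annulus $1+|\xi_0|^2+n^2\thickapprox 2^{2k}+n^2$. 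The stationary-phase bound then gives a main term of size $\dfrac{1}{(t/(1+|n|))}\cdot |\mathrm{det}\,D^2\phi_\pm|^{-1/2}\thickapprox \dfrac{1+|n|}{t}\cdot\dfrac{2^{2k}+n^2}{(1+|n|)\sqrt{1+n^2}}=\dfrac{2^{2k}+n^2}{\sqrt{1+n^2}\,t}$, which is exactly the claimed bound; I would also need to confirm the amplitude factor (the value $\varphi_k(\xi_0)$ together with the localization scale) does not spoil this.

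The main obstacle I anticipate is the uniformity of the stationary-phase estimate with respect to both $k$ and $n$: Lemma 2.35 of \cite{NS} is stated for phases whose Hessian is nondegenerate with controlled second derivatives on a fixed-size neighborhood, but here the natural scale of variation of $\phi_\pm$ on the annulus is $2^k$, not $1$, and the Hessian degenerates as $2^{2k}+n^2\to\infty$. The cleanest fix is a rescaling $\xi=2^k\zeta$ (or $\xi=\sqrt{1+n^2}\,\zeta$, whichever dominates), reducing to an amplitude supported on a unit annulus in $\zeta$ and a phase $\tilde\phi$ whose Hessian is uniformly nondegenerate and whose large parameter becomes $\lambda=t\cdot(2^{2k}/((2^{2k}+n^2))\cdot\ldots$ — one must bookkeep this substitution so that the final constants are independent of $k,n$. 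A secondary technical point is the interplay between the "$t\le|x|$ versus $t>|x|$" dichotomy and the "$|\xi_0|$ in/out of the annulus" dichotomy, handled exactly as in Lemma \ref{HC-1} but requiring the extra check that the non-annulus contribution is acceptable with the $2^{2k}$ weight. Once the rescaling is set up, the rest is the routine stationary/nonstationary phase dichotomy.
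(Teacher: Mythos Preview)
Your overall strategy matches the paper's---rescale by $\lambda=2^k$, split according to whether the critical point $\xi_0$ lands in the unit annulus, and apply stationary or non-stationary phase---and your Hessian computation correctly produces the target bound. The gap is in the uniformity step, which is more delicate than a single isotropic rescaling can handle. After $\xi=\lambda\zeta$, the Hessian of the phase on the unit annulus has tangential eigenvalue $\thickapprox\lambda/(\lambda+|n|)$ and radial eigenvalue $\thickapprox\lambda(1+n^2)/(\lambda+|n|)^3$; when $\lambda\gg|n|$ these are $\thickapprox 1$ and $\thickapprox\lambda^{-2}$. This anisotropy cannot be removed by rescaling in $\zeta$ alone: if you stretch the radial direction by $\lambda$ to normalize the small eigenvalue, the third radial derivative of the renormalized phase becomes $\thickapprox\lambda$, and Lemma~2.35 of \cite{NS} no longer yields constants independent of $k$. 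So ``bookkeeping the substitution'' is not enough.

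The paper avoids the anisotropy by first passing to polar coordinates and integrating the angular variable \emph{exactly} via Lemma~\ref{HC-1-1}, the Bessel-type representation $\widehat{\sigma_{\mathbb{S}^1}}(\lambda rx)=\sum_\pm e^{\pm i\lambda r|x|}\omega_\pm(\lambda r|x|)$ with $|\partial_r^j\omega_\pm|\lesssim(\lambda r|x|)^{-1/2}$. This converts the tangential eigenvalue into an explicit amplitude factor $(\lambda|x|)^{-1/2}$ and leaves a one-dimensional radial integral with phase $\phi^{-}_{\lambda,n}(r)=\frac{t}{\lambda}\sqrt{1+(\lambda r)^2+n^2}-r|x|$, for which $\partial_r^2\phi^{-}\thickapprox\tau:=t\lambda(1+n^2)/(1+\lambda^2+n^2)^{3/2}$ and, crucially, $\partial_r^j(\phi^{-}/\tau)=O(1)$ for all $j\ge 2$ uniformly in $\lambda,n$. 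The 1D stationary phase is then done by hand (cut off on $|r-r_0|\lesssim(\lambda\tau)^{-1/2}$ and integrate by parts outside), giving $\lambda^2(\lambda\tau)^{-1/2}(\lambda|x|)^{-1/2}$, which together with the case hypothesis $t\thickapprox|x|(1+|n|/\lambda)$ yields \eqref{7-1}. The paper's case analysis is accordingly finer than $t\lessgtr|x|$: it separates $t\thickapprox|x|(1+|n|/\lambda)$ with $t>|x|$ (stationary), $t\le|x|$ (non-stationary in $r$), $t$ below the threshold $(1+n^2)/(\lambda^3+\lambda^2|n|)$ (trivial $\lambda^2$ bound), and $t\not\thickapprox|x|(1+|n|/\lambda)$ (2D non-stationary phase with $|\nabla_\zeta\phi|\gtrsim\lambda/(\lambda+|n|)$).
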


\begin{proof}
Let $\ds\Phi_{k, n}^{\pm}(t, x):=\int_{\mathbb{R}^{2}}e^{ix\cdot\xi}e^{\pm it\sqrt{1+|\xi|^{2}+n^{2}}}
\varphi_{k}(\xi)d\xi$ and $\lambda:=2^{k}$. We only consider the
estimate of~$\Phi_{k,n}^{+}(t,x)$ since $\Phi_{k,n}^{-}(t,x)$ can be treated in a similar way.

It follows from Lemma~\ref{HC-1-1} and direct computation that
\begin{equation}\label{7-2}
\begin{aligned}
\Phi_{k,n}^{+}(t,x)&=\lambda^{2}\int_{\mathbb{R}^{2}}e^{i\lambda x\cdot\xi}e^{it\sqrt{1+|\lambda\xi|^{2}+n^{2}}}
\varphi_{0}(\xi)d\xi\\
&=\lambda^{2}\int_{0}^{\infty}\int_{\mathbb{S}^{1}}e^{i\lambda rx\cdot\theta}e^{it\sqrt{1+(\lambda r)^{2}+n^{2}}}
\varphi_{0}(r)rdrd\theta\\
&=\lambda^{2}\int_{0}^{\infty}e^{it\sqrt{1+(\lambda r)^{2}+n^{2}}}
\varphi_{0}(r)rdr\int_{\mathbb{S}^{1}}e^{i\lambda rx\cdot\theta}d\theta\\
&=\lambda^{2}\int_{0}^{\infty} e^{it\sqrt{1+(\lambda r)^{2}+n^{2}}}
\varphi_{0}(r)r\Big[\sum\limits_{\pm}e^{\pm i\lambda r|x|}\omega_{\pm}(\lambda r|x|)\Big]dr
\\&=\sum\limits_{\pm}\lambda^{2}\int_{0}^{\infty} e^{i\lambda\phi^{\pm}_{\lambda,n}(r;~t,|x|)}
\omega_{\pm}(\lambda r|x|)\varphi_{0}(r)rdr,
\end{aligned}
\end{equation}
where $\phi^{\pm}_{\lambda,n}(r;~t,|x|)=\frac{t}{\lambda}\sqrt{1+(\lambda r)^{2}+n^{2}}\pm r|x|$. In what follows,
we only consider the phase $\phi^{-}_{\lambda,n}(r;~t,|x|)$ in \eqref{7-2} since the treatment for $\phi^{+}_{\lambda,n}(r;~t,|x|)$
is easier.

\vskip 0.1 true cm

\textbf{Case 1: $t\thickapprox |x|(1+|n|/\lambda)\gtrsim (1+n^{2})/(\lambda^{3}+\lambda^{2}|n|)$ and $t>|x|$}

\vskip 0.1 true cm

Direct calculation shows
\begin{equation*}
\partial_{r}\phi^{-}_{\lambda,n}(r;~t,|x|)=\frac{t\lambda r}{\sqrt{1+(\lambda r)^{2}+n^{2}}}-|x|,\ \partial_{r}^{2}\phi^{-}_{\lambda,n}(r;~t,|x|)=\frac{t\lambda(1+n^{2})}{(1+(\lambda r)^{2}+n^{2})^{3/2}}.
\end{equation*}
Due to $\frac{5}{8}\leq r\leq \frac{8}{5}$ in the support of $\varphi_0(r)$, then
\begin{equation}\label{7-3}
\partial_{r}^{2}\phi^{-}_{\lambda,n}(r;~t,|x|)\thickapprox \frac{t\lambda(1+n^{2})}{(1+\lambda^{2}+n^{2})^{3/2}}:=\tau~~~~~(\frac{5}{8}\leq r\leq \frac{8}{5}).
\end{equation}
Note that $r=r_{0}=\frac{|x|\sqrt{1+n^{2}}}{\lambda\sqrt{t^{2}-|x|^{2}}}$ is the unique
solution of~$\partial_{r}\phi^{-}_{\lambda,n}(r;~t,|x|)=0$,
we now take the following decomposition
\begin{equation}
\lambda^{2}\int_{0}^{\infty} e^{i\lambda \phi^{-}_{\lambda,n}}
\omega_{-}(\lambda r|x|)\varphi_{0}(r)rdr:=H_1+H_2,\nonumber
\end{equation}
where
\begin{align*}
\ds H_1&=\lambda^{2}\int_{0}^{\infty} e^{i\lambda \phi^{-}_{\lambda,n}}
\omega_{-}(\lambda r|x|)\varphi_{0}(r)r\chi_{0}((\lambda \tau)^{1/2}(r-r_{0}))dr,\\
\ds H_2&=\lambda^{2}\int_{0}^{\infty} e^{i\lambda \phi^{-}_{\lambda,n}}
\omega_{-}(\lambda r|x|)\varphi_{0}(r)r\{1-\chi_{0}((\lambda \tau)^{1/2}(r-r_{0}))\}dr,
\end{align*}
and $\chi_{0}(r)$ is a cut-off function near $r=0$.
By \eqref{7-0-BBB}, one gets that when $\frac{5}{8}\leq r\leq \frac{8}{5}$,
\begin{equation}\label{7-4}
|\partial_{r}^{k}\omega_{-}(\lambda r|x|)|\leq C_{k}(\lambda r|x|)^{-1/2}\leq C_{k}(\lambda|x|)^{-1/2},~~\forall k\geq 0.
\end{equation}

With \eqref{7-4} for $k=0$, then
\begin{equation}\label{7-A2}
\begin{aligned}
|H_1|&\lesssim\lambda^{2}\int_{0}^{\infty} \varphi_0(r)|\omega_{-}(\lambda r|x|)\chi_{0}((\lambda \tau)^{1/2}(r-r_{0}))|dr
\lesssim\lambda^{2}(\lambda \tau)^{-1/2}(\lambda|x|)^{-1/2}.
\end{aligned}
\end{equation}

In addition, we rewrite $H_2$ as
\begin{equation*}
H_2=\lambda^{2}\int_{0}^{\infty} e^{i\lambda\tau \phi^{-}_{\lambda,n,\tau}}
\omega_{-}(\lambda r|x|)\varphi_{0}(r)r\{1-\chi_{0}((\lambda \tau)^{1/2}(r-r_{0}))\}dr,
\end{equation*}
where $\phi^{-}_{\lambda,n,\tau}=\phi^{-}_{\lambda,n}/\tau$. In order to estimate $H_2$ via the method of
non-stationary phase, due to $\partial_r\phi_{\lambda, n}^{-}(r; t, |x|)\neq 0$ in the
integrand of $H_2$, it is necessary to define the operator $\mathfrak{L}$ and then its adjoint operator $\mathfrak{L}^{\ast}$ as
\begin{equation}\label{3-3-3}
\mathfrak{L}:=\frac{1}{i\lambda\tau}\frac{\partial_{r}\phi^{-}_{\lambda,n,\tau}}{|\partial_{r}\phi^{-}_{\lambda,n,\tau}|^{2}}\partial_{r},~~~~ \mathfrak{L}^{\ast}(\cdot):=\frac{i}{\lambda\tau}\partial_{r}
\Big(\frac{\partial_{r} \phi^{-}_{\lambda,n,\tau}}{|\partial_{r} \phi^{-}_{\lambda,n,\tau}|^{2}}\cdot\Big).
\end{equation}
Here, it is easy to verify $\mathfrak{L} e^{i\lambda\tau \phi^{-}_{\lambda,n,\tau}}=e^{i\lambda\tau \phi^{-}_{\lambda,n,\tau}}$.

Due to $\partial_{s}^{2} \phi^{-}_{\lambda,n,\tau}(s;~t,|x|)\simeq1$ on the support of $\varphi_{0}(s)$, then $|\partial_{r}\phi^{-}_{\lambda,n,\tau}(r)|\gtrsim|r-r_{0}|$ and
\begin{equation}\label{7-5}
\Big|\partial_{r}^{l}\Big(\frac{\partial_{r} \phi^{-}_{\lambda,n,\tau}}
{|\partial_{r} \phi^{-}_{\lambda,n,\tau}|^{2}}\Big)(r)\Big|
\leq~C_{l}|r-r_{0}|^{-1-l}~~(l\in \mathbb{N}).
\end{equation}
By the method of non-stationary phase, it derives from \eqref{3-3-3}, \eqref{7-4} and \eqref{7-5} that
\begin{equation}\label{7-A3}
\begin{aligned}
|H_2|&=\Big|\lambda^{2}\int_{0}^{\infty} (\mathfrak{L}^{2} e^{i\lambda\tau \phi^{-}_{\lambda,n,\tau}})
\omega_{-}(\lambda r|x|)\varphi_{0}(r)r\{1-\chi_{0}((\lambda \tau)^{1/2}(r-r_{0}))\}dr\Big|\\
&=\Big|\lambda^{2}\int_{0}^{\infty}e^{i\lambda\tau \phi^{-}_{\lambda,n,\tau}}
(\mathfrak{L}^{\ast})^{2}\Big[\omega_{-}(\lambda r|x|)\varphi_{0}(r)r
\{1-\chi_{0}((\lambda \tau)^{1/2}(r-r_{0}))\}\Big]dr\Big|\\
&\lesssim\lambda^{2}\int_{0}^{\infty} \Big|(\mathfrak{L}^{\ast})^{2}\Big[\omega_{-}(\lambda r|x|)\varphi_{0}(r)r
\{1-\chi_{0}((\lambda \tau)^{1/2}(r-r_{0}))\}\Big]\Big|dr\\
&\lesssim\lambda^{2}(\lambda|x|)^{-1/2}(\lambda\tau)^{-2}\int_{|r-r_{0}|
\gtrsim(\lambda\tau)^{-1/2}}((\lambda\tau)(r-r_{0})^{-2}+(r-r_{0})^{-4})dr
\\&\lesssim\lambda^{2}(\lambda|x|)^{-1/2}(\lambda \tau)^{-1/2}.
\end{aligned}
\end{equation}
Thus \eqref{7-1} comes from \eqref{7-A2} and \eqref{7-A3} directly.

\textbf{Case 2: $t\thickapprox |x|(1+|n|/\lambda)\gtrsim (1+n^{2})/(\lambda^{3}+\lambda^{2}|n|)$ and $t\leq |x|$}

\vskip 0.1 true cm

In this case, by direct calculation, we have
\begin{equation*}
|\partial_{r}\phi^{-}_{\lambda,n}(r;~t,|x|)|\gtrsim \frac{t(1+n^{2})}{1+\lambda^{2}+n^{2}}.
\end{equation*}
Then we can apply Lemma~2.34 of~\cite{NS}~($\mathrm{Page}$ 65), \emph{i.e.}, the method of non-stationary phase, to obtain
\begin{equation*}
\Big|\lambda^{2}\int_{0}^{\infty} e^{i\lambda \phi^{-}_{\lambda,n}}
\omega_{-}(\lambda r|x|)\varphi_{0}(r)rdr\Big|\lesssim \lambda^{2}
\Big(\frac{\lambda t(1+n^{2})}{1+\lambda^{2}+n^{2}}\Big)^{-1}(\lambda|x|)^{-1/2}.
\end{equation*}
On the other hand, by \eqref{7-4}, one has
\begin{equation*}
\Big|\lambda^{2}\int_{0}^{\infty} e^{i\lambda \phi^{-}_{\lambda,n}}
\omega_{-}(\lambda r|x|)\varphi_{0}(r)rdr\Big|\lesssim \lambda^{2}(\lambda|x|)^{-1/2}.
\end{equation*}
Hence,
\begin{equation*}
\Big|\lambda^{2}\int_{0}^{\infty} e^{i\lambda \phi^{-}_{\lambda,n}}
\omega_{-}(\lambda r|x|)\varphi_{0}(r)rdr\Big|\lesssim \lambda^{2}
\Big(\frac{\lambda t(1+n^{2})}{1+\lambda^{2}+n^{2}}\Big)^{-1/2}(\lambda|x|)^{-1/2},
\end{equation*}
which suffices to get \eqref{7-1}.

\vskip 0.1 true cm

\textbf{Case 3: $t\thickapprox |x|(1+|n|/\lambda)\lesssim (1+n^{2})/(\lambda^{3}+\lambda^{2}|n|)$}

\vskip 0.1 true cm

\noindent By \eqref{7-2}, we have
\begin{equation}
\Phi_{k,n}^{+}(t,x)=\lambda^{2}\int_{\mathbb{R}^{2}}e^{i\lambda x\cdot\xi}e^{ it\sqrt{1+|\lambda\xi|^{2}+n^{2}}}
\varphi_{0}(\xi)d\xi.\nonumber
\end{equation}
This derives
\begin{equation}
|\Phi_{k,n}^{+}(t,x)|\lesssim\lambda^{2}\lesssim~\frac{(2^{2k}+n^{2})}{\sqrt{1+n^{2}}}\cdot t^{-1}.\nonumber
\end{equation}

\vskip 0.1 true cm

\textbf{Case 4}: $t\ncong|x|(1+|n|/\lambda)$

\vskip 0.1 true cm

In this case, we write $\Phi_{k,n}^{+}(t,x)$ as
\begin{equation}\label{7-6}
\Phi_{k,n}^{+}(t,x)
=\lambda^{2}\int_{\mathbb{R}^{2}}e^{i \lambda t\phi_{\lambda, n}(\xi;\ t, x)}\varphi_{0}(\xi)d\xi,
\end{equation}
where $\phi_{\lambda,n}(\xi;~t,x)=\frac{1}{\lambda}\sqrt{1+|\lambda\xi|^{2}+n^{2}}+\frac{x\cdot\xi}{t}$.

When  $t\geq|x|(1+|n|/\lambda)$, we may assume $t\geq100|x|(1+|n|/\lambda)$. Then one has
\begin{equation}\label{3-3-1}
|\partial_{\xi}\phi_{\lambda,n}(\xi;~t,x)|
\geq\frac{\lambda |\xi|}{\sqrt{1+|\lambda \xi|^{2}+n^{2}}}-\frac{|x|}{t}\geq \frac{\frac{\lambda}{2}}{6(\lambda+|n|)}-\frac{|x|}{\frac{100|x|}{\lambda}(\lambda+|n|)}\geq\frac{1}{24}\frac{\lambda}{\lambda+|n|}
\end{equation}
since $\sqrt{1+|\lambda \xi|^{2}+n^{2}}\leq 6(\lambda+|n|)$ holds for $5/8\leq|\xi|\leq8/5$ in the support of $\varphi_0(\xi)$.

When $t\leq|x|(1+\frac{|n|}{\lambda})$, we may assume $t\leq\frac{1}{100}\frac{|x|}{\lambda}(\lambda+|n|)$. Then
\begin{equation}\label{3-3-2}
|\partial_{\xi}\phi_{\lambda,n}(\xi;~t,x)|\geq\frac{|x|}{t}
-\frac{\lambda |\xi|}{\sqrt{1+|\lambda \xi|^{2}+n^{2}}}\geq\frac{\lambda}{\lambda+|n|}.
\end{equation}

Using the method of non-stationary phase (Lemma 2.34 in \cite{NS},\ Page 65), it derives from \eqref{7-6}
and \eqref{3-3-1}-\eqref{3-3-2} that
\begin{equation*}
\Big|\Phi_{k,n}^{+}(t,x)\Big|
\lesssim \lambda^{2}\Big(\lambda t\frac{\lambda}
{\lambda+|n|}\Big)^{-1}.
\end{equation*}
Therefore, \eqref{7-1} is proved.\end{proof}

Based on Lemma \ref{HC-1} and Lemma \ref{HC-2}, we have the following conclusion:
\begin{lem}\label{HC-3} {\it
For $t>0, n\in \mathbb{Z}$ and $k\in\mathbb{Z}\cap [-1, +\infty)$, then
\begin{equation}\label{7-7}
\Big\|\int_{\mathbb{R}^{2}}e^{ix\cdot\xi}e^{\pm it\sqrt{1+|\xi|^{2}+n^{2}}}
\psi_{[-1,k]}(\xi)d\xi\Big\|_{L^{\infty}(\mathbb{R}^{2})}\lesssim2^{2k}(1+|n|)\cdot t^{-1}.
\end{equation}}
\end{lem}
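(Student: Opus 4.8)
The plan is to decompose the frequency-localizing multiplier $\psi_{[-1,k]}$ into its Littlewood--Paley pieces and sum the bounds already obtained in Lemma~\ref{HC-1} and Lemma~\ref{HC-2}. Concretely, since $\psi_{[-1,k]}=\psi_{-1}+\sum_{j=0}^{k}\varphi_{j}$, linearity of the $\xi$-integral gives
\begin{equation*}
\int_{\mathbb{R}^{2}}e^{ix\cdot\xi}e^{\pm it\sqrt{1+|\xi|^{2}+n^{2}}}\psi_{[-1,k]}(\xi)d\xi
=\int_{\mathbb{R}^{2}}e^{ix\cdot\xi}e^{\pm it\sqrt{1+|\xi|^{2}+n^{2}}}\psi_{-1}(\xi)d\xi
+\sum_{j=0}^{k}\int_{\mathbb{R}^{2}}e^{ix\cdot\xi}e^{\pm it\sqrt{1+|\xi|^{2}+n^{2}}}\varphi_{j}(\xi)d\xi .
\end{equation*}

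First I would take $L^{\infty}_{x}$ norms and apply the triangle inequality. For the low-frequency term, Lemma~\ref{HC-1} gives a bound $\lesssim (1+|n|)(1+t)^{-1}\lesssim (1+|n|)t^{-1}$ for $t>0$ (using that $(1+t)^{-1}\thickapprox t^{-1}$ is false near $t=0$, so strictly one writes $(1+t)^{-1}\le 1$ and $\le t^{-1}$ and absorbs the small-$t$ range exactly as in the opening lines of the proof of Lemma~\ref{HC-2}; alternatively one simply keeps $(1+t)^{-1}$ throughout and notes $2^{2k}(1+|n|)t^{-1}\gtrsim(1+|n|)(1+t)^{-1}$ once $t\gtrsim1$, handling $t\lesssim1$ by the trivial bound $\int|\psi_{[-1,k]}|\lesssim 2^{2k}$). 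For each dyadic piece with $0\le j\le k$, Lemma~\ref{HC-2} gives $\lesssim \dfrac{2^{2j}+n^{2}}{\sqrt{1+n^{2}}}\,t^{-1}$.

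Next I would sum the dyadic contributions: $\sum_{j=0}^{k}(2^{2j}+n^{2})=\sum_{j=0}^{k}2^{2j}+(k+1)n^{2}\lesssim 2^{2k}+(k+1)n^{2}$. Dividing by $\sqrt{1+n^{2}}$ and multiplying by $t^{-1}$, the first term contributes $\lesssim 2^{2k}(1+|n|)^{-1}t^{-1}\le 2^{2k}(1+|n|)t^{-1}$; the second contributes $\lesssim (k+1)\,\dfrac{n^{2}}{\sqrt{1+n^{2}}}\,t^{-1}\lesssim (k+1)(1+|n|)t^{-1}$. The only mildly unpleasant point is the spurious logarithmic factor $(k+1)$ arising from the $n^{2}$ part of the bound in Lemma~\ref{HC-2}; I would absorb it by noting $k+1\lesssim 2^{2k}$ for $k\ge 0$, while for $k=-1$ the sum over $j$ is empty and only the low-frequency term remains, which is already $\lesssim (1+|n|)t^{-1}\le 2^{-2}(1+|n|)t^{-1}$. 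Combining all pieces yields the claimed bound $\lesssim 2^{2k}(1+|n|)t^{-1}$, completing the proof.

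I do not anticipate a genuine obstacle here — the lemma is a routine consequence of the two preceding lemmas. The one place requiring a little care is making sure the crude summation $\sum_{j\le k}2^{2j}\lesssim 2^{2k}$ and the handling of the $(k+1)n^{2}$ term are phrased so that the final constant is uniform in $k$ and $n$, and that the degenerate case $k=-1$ (where $\psi_{[-1,-1]}=\psi_{-1}$) is covered; both are dispatched by the elementary inequality $k+1\lesssim 2^{2k}$ valid for all $k\ge 0$.
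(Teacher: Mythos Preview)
Your proposal is correct and follows essentially the same route as the paper: decompose $\psi_{[-1,k]}=\psi_{-1}+\sum_{j=0}^{k}\varphi_{j}$ and apply Lemma~\ref{HC-1} and Lemma~\ref{HC-2} to the respective pieces. The paper's proof is terser and does not spell out the dyadic summation; your detour through the $(k+1)$ factor is unnecessary if you observe directly that $\dfrac{2^{2j}+n^{2}}{\sqrt{1+n^{2}}}\lesssim 2^{2j}(1+|n|)$ for $j\ge 0$ (since $2^{2j}\ge 1$), after which the geometric sum $\sum_{j=0}^{k}2^{2j}\lesssim 2^{2k}$ gives the result without any logarithmic loss to absorb.
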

\begin{proof}
We denote
\begin{equation*}
\int_{\mathbb{R}^{2}}e^{ix\cdot\xi}e^{\pm it\sqrt{1+|\xi|^{2}+n^{2}}}
\psi_{[-1,k]}(\xi)d\xi=H_3+H_4,
\end{equation*}
where
\begin{equation*}
\ds H_3=\int_{\mathbb{R}^{2}}e^{ix\cdot\xi}e^{\pm it\sqrt{1+|\xi|^{2}+n^{2}}}
\psi_{-1}(\xi)d\xi\ \text{and~}\ \ds H_4
=\sum\limits_{l=0}^{k}\int_{\mathbb{R}^{2}}e^{ix\cdot\xi}e^{\pm it\sqrt{1+|\xi|^{2}+n^{2}}}
\varphi_{l}(\xi)d\xi.
\end{equation*}
Applying Lemma \ref{HC-1} and Lemma \ref{HC-2} to $H_3$ and $H_4$ respectively yields \eqref{7-7}.\end{proof}

\begin{lem}{\bf (Main Dispersive Estimates)}\label{HC-4} {\it
For $t\in\mathbb{R}, k, l\in \mathbb{Z}\cap [-1, +\infty)$, and $g\in L^{1}(\mathbb{R}^{2}\times\mathbb{T})$, one has
\begin{equation}\label{7-8}
\Big\|R_{[-1,k]}e^{it\Lambda}S_{l}g\Big\|_{L^{\infty}(\mathbb{R}^{2}\times\mathbb{T})}
\lesssim 2^{2(k+l)}(1+|t|)^{-1}\|g\|_{L^{1}(\mathbb{R}^{2}\times\mathbb{T})}.
\end{equation}}
\end{lem}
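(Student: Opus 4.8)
The plan is to reduce the $L^\infty$ bound on the Klein–Gordon propagator on $\mathbb{R}^2\times\mathbb{T}$ to the mixed $L^\infty_{x,y}\!-\!L^1_{x,y}$ estimate via the standard kernel representation, and then dominate the kernel using Lemma \ref{HC-3}. Write $e^{it\Lambda}$ through its symbol as in \eqref{2-2}: for $g\in L^1(\mathbb{R}^2\times\mathbb{T})$,
\begin{equation*}
R_{[-1,k]}e^{it\Lambda}S_l g(x,y)=\sum_{n\in\mathbb{Z}}\psi_{[-1,l]}(n)e^{iny}\,\mathcal{F}^{-1}_\xi\big(\psi_{[-1,k]}(\xi)e^{it\sqrt{1+|\xi|^2+n^2}}\widehat{g}_n(\xi)\big)(x).
\end{equation*}
Here I should be a little careful about notation: $R_{[-1,k]}$ acts only in $x$ and $S_l$ only in $y$; in the sum over $n$, the factor $\psi_l(n)$ (resp.\ $\psi_{[-1,l]}(n)$, depending on whether one reads $S_l$ or $S_{[-1,l]}$ — I will state the estimate for the Littlewood–Paley piece $S_l$ and note $\psi_l(n)$ is supported in $|n|\lesssim 2^l$) restricts $n$ to $|n|\lesssim 2^l$, and $\psi_{[-1,k]}(\xi)$ restricts $\xi$ to $|\xi|\lesssim 2^k$.

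Next I would convolve: each summand is $K_n(t,\cdot)\ast_x g_n$, where
\begin{equation*}
K_n(t,x):=\frac{1}{(2\pi)^2}\int_{\mathbb{R}^2}e^{ix\cdot\xi}e^{it\sqrt{1+|\xi|^2+n^2}}\psi_{[-1,k]}(\xi)\,d\xi.
\end{equation*}
Taking absolute values and using Young's inequality in $x$,
\begin{equation*}
\big\|R_{[-1,k]}e^{it\Lambda}S_l g\big\|_{L^\infty_{x,y}}\lesssim \sum_{|n|\lesssim 2^l}\|K_n(t,\cdot)\|_{L^\infty_x}\,\|g_n\|_{L^1_x}.
\end{equation*}
By Lemma \ref{HC-3}, $\|K_n(t,\cdot)\|_{L^\infty_x}\lesssim 2^{2k}(1+|n|)t^{-1}$ for $t>0$ (and the trivial bound $\|K_n\|_{L^\infty_x}\lesssim 2^{2k}$ for all $t$, obtained by putting absolute values inside the $\xi$-integral, handles $|t|\le 1$; the case $t<0$ is identical by conjugation). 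On the support of $\psi_l$ one has $(1+|n|)\lesssim 2^l$, so $\|K_n(t,\cdot)\|_{L^\infty_x}\lesssim 2^{2k+l}(1+|t|)^{-1}$ uniformly in the relevant $n$. Finally $\sum_{|n|\lesssim 2^l}\|g_n\|_{L^1_x}\lesssim 2^l\,\|g\|_{L^1_{x,y}}$, since each $\|g_n\|_{L^1_x}=\|\frac{1}{2\pi}\int_{\mathbb{T}}e^{-iny}g(\cdot,y)\,dy\|_{L^1_x}\le \|g\|_{L^1_{x,y}}$ and there are $O(2^l)$ values of $n$. Combining the three factors yields the claimed $2^{2(k+l)}(1+|t|)^{-1}\|g\|_{L^1_{x,y}}$.

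The only genuine subtlety — and the place I expect to have to be careful — is the bookkeeping of the extra powers of $2^l$: one factor of $2^l$ comes from $(1+|n|)$ in the dispersive kernel bound, and one more from the cardinality of the $n$-sum, giving $2^{2l}$ rather than the naive $2^l$ one might hope for; this is exactly what produces the symmetric exponent $2^{2(k+l)}$ and matches the weight $2^{9(k+l)}$ chosen in the $Z$-norm \eqref{2-1A}. Everything else is the routine Fourier-multiplier-is-convolution argument plus Young's inequality, and the interpolation between the $t^{-1}$ decay (Lemma \ref{HC-3}) and the uniform $O(1)$ bound to get $(1+|t|)^{-1}$.
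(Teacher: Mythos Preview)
Your argument is correct and follows essentially the same route as the paper's proof: expand $R_{[-1,k]}e^{it\Lambda}S_l g$ as a sum over $|n|\lesssim 2^l$, apply Young's inequality in $x$ to reduce to the kernel bound of Lemma~\ref{HC-3}, and pick up one factor of $2^l$ from $(1+|n|)$ and another from the cardinality of the $n$-sum; the small-$|t|$ case is handled by the trivial volume bound $\|K_n\|_{L^\infty_x}\lesssim 2^{2k}$, exactly as in the paper. The only cosmetic difference is that the paper treats $|t|<1$ by bounding the $L^1_\xi$-norm of the symbol directly rather than passing through the kernel, but this amounts to the same estimate.
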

\begin{proof} Note that
\begin{equation}\label{3-5-0}
(R_{[-1,k]}e^{it\Lambda}S_{l}g)(x,y):=
\sum_{n\in\mathbb{Z}}R_{[-1,k]}e^{it\sqrt{1-\Delta_{x}+n^{2}}}g_{n}(x)\psi_{l}(n)e^{iny}.
\end{equation}

Thus, when $|t|<1$,
\begin{equation}\label{3-5-1}\begin{aligned}
\Big\|R_{[-1,k]}e^{it\Lambda}S_{l}g\Big\|_{L^{\infty}(\mathbb{R}^{2}\times\mathbb{T})}
\lesssim& \sum\limits_{n\in\mathbb{Z}}\Big\|\psi_{[-1,k]}(\xi)e^{it\sqrt{1+|\xi|^{2}+n^{2}}}\widehat{g_{n}}(\xi)\psi_{l}(n)\Big\|_{L^{1}_{\xi}}\\
\lesssim& 2^{2k+l}\|g\|_{L^{1}(\mathbb{R}^{2}\times\mathbb{T})}.
\end{aligned}
\end{equation}

When $|t|\geq 1$, by Young's inequality, we have
\begin{equation}\label{3-5-2}
\begin{aligned}
&\Big\|\sum_{n\in\mathbb{Z}}R_{[-1,k]}e^{it\sqrt{1-\Delta_{x}+n^{2}}}g_{n}(x)\psi_{l}(n)e^{iny}
\Big\|_{L^{\infty}(\mathbb{R}^{2}\times\mathbb{T})}\\
\lesssim& \sum_{n\in\mathbb{Z}}\Big\|R_{[-1,k]}e^{it\sqrt{1-\Delta_{x}+n^{2}}}g_{n}(x)\psi_{l}(n)e^{iny}
\Big\|_{L^{\infty}(\mathbb{R}^{2}\times\mathbb{T})}\\
\lesssim& \sum_{n\in\mathbb{Z}}\Big\|R_{[-1,k]}e^{it\sqrt{1-\Delta_{x}+n^{2}}}
g_{n}(x)\psi_{l}(n)\Big\|_{L^{\infty}(\mathbb{R}^{2})}\\
\lesssim& \sum_{|n|\leq 2^{l+1}}\Big\|\int_{\mathbb{R}^{2}}e^{ix\cdot\xi}e^{it\sqrt{1+|\xi|^{2}+n^{2}}}
\psi_{[-1,k]}(\xi)\widehat{g_{n}}(\xi)d\xi\Big\|_{L^{\infty}(\mathbb{R}^{2})}\\
\lesssim&\sum_{|n|\leq 2^{l+1}}\Big\|\mathcal{F}^{-1}_{\xi}(e^{it\sqrt{1+|\xi|^{2}+n^{2}}}
\psi_{[-1,k]}(\xi))(x)\Big\|_{L^{\infty}(\mathbb{R}^{2})}\|g_{n}(x)\|_{L^{1}(\mathbb{R}^{2})}\\
\lesssim& 2^{2(k+l)}(1+|t|)^{-1}\|g\|_{L^{1}(\mathbb{R}^{2}\times\mathbb{T})},
\end{aligned}
\end{equation}
where the last inequality comes from Lemma \ref{HC-3}. Then \eqref{7-8} is obtained
from \eqref{3-5-0}-\eqref{3-5-2}.
\end{proof}

\begin{lem}\label{HC-5} {\it For $\|\cdot\|_{\mathcal{H}^{N_0}}:=\|\cdot\|_{H^{N_0}(\mathbb{R}^2\times\mathbb{T})}
+\|\cdot\|_{Z}$ and $\kappa>0$ is a constant, when
\begin{equation}\label{3-6-0}
\sup\limits_{t\in [0,T_{0}]}(1+t)^{-\kappa}\|U(t)\|_{H^{N}(\mathbb{R}^{2}\times \mathbb{T})}+
\sup\limits_{t\in [0,T_{0}]}\|e^{it\Lambda}U(t)\|_{\mathcal{H}^{N_{0}}}~\leq~\varepsilon_{0},
\end{equation}
then for any $t\in [0,T_{0}]$, one has
\begin{subequations}\label{3-6-1}\begin{align}
&\|R_{k}S_{l}U_{\pm}(t)\|_{L^{\infty}(\mathbb{R}^{2}\times\mathbb{T})}
\lesssim \varepsilon_{0}(1+t)^{-1}2^{-7(k+l)},\label{7-12AA}\\[2mm]
&\|\mathcal{F}_{x,y}(R_{k}S_{l}U_{\pm}(t))(\xi,n)\|_{L^{\infty}_{\xi,n}}
=\|\mathcal{F}_{x,y}(R_{k}S_{l}V_{\pm}(t))(\xi,n)\|_{L^{\infty}_{\xi,n}}
\lesssim \varepsilon_{0}2^{-9(k+l)},\label{7-13AA}\\[2mm]
&\|R_{k}S_{l}U_{\pm}(t)\|_{L^{2}(\mathbb{R}^{2}\times\mathbb{T})}
+\|R_{k}S_{l}V_{\pm}(t)\|_{L^{2}(\mathbb{R}^{2}\times\mathbb{T})}
\lesssim \varepsilon_{0}\mathrm{min}
\{2^{-N_{0}(k+l)/2},~(1+t)^{\kappa}2^{-N(k+l)/2}\}.\label{7-14AA}
\end{align}
\end{subequations}}
\end{lem}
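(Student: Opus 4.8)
The plan is to derive all three estimates from the hypothesis \eqref{3-6-0}, which controls $\|e^{it\Lambda}U(t)\|_{H^{N_0}}$, $\|e^{it\Lambda}U(t)\|_Z=\|V(t)\|_Z$ and, with a loss $(1+t)^\kappa$, the high Sobolev norm $\|U(t)\|_{H^N}$. Since $\|V_\pm\|_{L^2}=\|U_\pm\|_{L^2}$ (the propagator $e^{\pm it\Lambda}$ is $L^2$-unitary and complex conjugation is an isometry), estimate \eqref{7-14AA} is purely a statement about Sobolev localization: on the support of $\psi_k(\xi)\psi_l(n)$ one has $\Lambda_n(\xi)=\sqrt{1+|\xi|^2+n^2}\thickapprox 2^{\max(k,l)}\thickapprox 2^{k+l}$ up to the constant from $k,l\ge -1$, hence $\|R_kS_l f\|_{L^2}\lesssim 2^{-s(k+l)/2}\|f\|_{H^s}$ with an implied constant uniform in $k,l$; applying this with $s=N_0$ to $\|e^{it\Lambda}U(t)\|_{H^{N_0}}\le\ve_0$ and with $s=N$ to $\|U(t)\|_{H^N}\le\ve_0(1+t)^\kappa$ and taking the minimum gives \eqref{7-14AA}. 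Estimate \eqref{7-13AA} is the Bernstein-type bound: the first equality is again because $e^{\pm it\Lambda}$ acts as a unimodular multiplier fiber-by-fiber in $(\xi,n)$, so it does not change the pointwise modulus of the Fourier transform; then $\|\mathcal F_{x,y}(R_kS_lf)\|_{L^\infty_{\xi,n}}\le\|R_kS_lf\|_{L^1(\mathbb R^2\times\mathbb T)}$, and by Remark \ref{rem2-1}, $\eqref{2-1-0}$, this is $\lesssim 2^{-9(k+l)}\|f\|_Z\le\ve_0 2^{-9(k+l)}$ upon taking $f=V(t)$ (note $R_kS_lV=e^{it\Lambda}R_kS_lU$, so the $Z$-norm of $V$ and the $L^1$-bound transfer to $R_kS_lU$ after undoing the propagator — but since we only need the modulus of the Fourier transform, the equality in \eqref{7-13AA} makes this transfer trivial).

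The genuinely new ingredient is \eqref{7-12AA}, the pointwise dispersive decay of $R_kS_lU_\pm(t)$. The approach is to write $U(t)=e^{-it\Lambda}V(t)$, so that $R_kS_lU_\pm(t)=e^{\mp it\Lambda}R_kS_lV_\pm(t)$ (with the appropriate sign for $U_-=\overline U$), and then interpolate between the dispersive estimate of Lemma \ref{HC-4} and the $Z$-norm. Concretely: by Lemma \ref{HC-4} applied to $g=R_kS_lV_\pm(t)$,
\begin{equation*}
\|R_kS_lU_\pm(t)\|_{L^\infty}=\|R_{[-1,k]}e^{\mp it\Lambda}S_l(R_kS_lV_\pm(t))\|_{L^\infty}\lesssim 2^{2(k+l)}(1+|t|)^{-1}\|R_kS_lV_\pm(t)\|_{L^1},
\end{equation*}
and by \eqref{2-1-0} again $\|R_kS_lV_\pm(t)\|_{L^1}\lesssim 2^{-9(k+l)}\|V(t)\|_Z\le\ve_0 2^{-9(k+l)}$. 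Combining the two gives $\|R_kS_lU_\pm(t)\|_{L^\infty}\lesssim\ve_0(1+t)^{-1}2^{-7(k+l)}$, which is exactly \eqref{7-12AA}. The only point requiring a small remark is that Lemma \ref{HC-4} has the frequency cut-off $R_{[-1,k]}$ rather than $R_k$; but since $R_kS_l$ is already localized, we may freely insert $R_{[-1,k]}$ (or a fattened version $R_{[k-2,k+2]}$ composed with $S_{[l-2,l+2]}$) without changing the function, and the $2^{2(k+l)}$ factor is unaffected up to a universal constant.

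The main obstacle — such as it is — is purely bookkeeping: making sure that the frequency and periodic-mode localizations are handled consistently (that $e^{it\Lambda}$ commutes with $R_k$ and $S_l$, that the $L^1\to L^\infty$ dispersive bound may be applied to the already-localized piece, and that the conjugation $U_-=\overline U$ only flips the sign of $t$ in the propagator and $n\mapsto -n$ in the Fourier variable, neither of which affects any estimate). There is no analytic difficulty beyond Lemma \ref{HC-4} and Remark \ref{rem2-1}; the lemma is essentially the statement that the $Z$-norm was designed precisely so that the $L^1$-input to the dispersive estimate is summable against the $2^{2(k+l)}$ loss with room to spare (the $9$ versus $7$ gap). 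I would therefore present the proof as: (i) reduce \eqref{7-14AA} and \eqref{7-13AA} to Sobolev/Bernstein localization plus $L^2$/Fourier-modulus invariance of $e^{it\Lambda}$; (ii) prove \eqref{7-12AA} by chaining Lemma \ref{HC-4} with the $L^1$ bound \eqref{2-1-0}.
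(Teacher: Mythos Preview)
Your proposal is correct and follows essentially the same route as the paper: \eqref{7-12AA} via Lemma~\ref{HC-4} combined with the $L^1$ bound \eqref{2-1-0}, \eqref{7-13AA} via Hausdorff--Young plus \eqref{2-1-0} and the unimodularity of $e^{it\Lambda}$ on the Fourier side, and \eqref{7-14AA} via Sobolev localization and $L^2$-unitarity of the propagator. One small slip to fix in your write-up: the claim $2^{\max(k,l)}\thickapprox 2^{k+l}$ is false, but what you actually need (and what the paper uses) is $2^{(k+l)/2}\lesssim (1+|\xi|)^{1/2}(1+|n|)^{1/2}\lesssim \Lambda_n(\xi)$ on the support of $\psi_k(\xi)\psi_l(n)$, which gives $\|R_kS_l f\|_{L^2}\lesssim 2^{-s(k+l)/2}\|f\|_{H^s}$ directly.
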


\begin{proof}
By the definition of $V_{\pm}(t)$, Lemma \ref{HC-4}, \eqref{2-1-0} and the assumption \eqref{3-6-0}, we arrive at
\begin{equation*}
\begin{aligned}
&\|R_{k}S_{l}U_{\pm}(t)\|_{L^{\infty}(\mathbb{R}^{2}\times\mathbb{T})}\\
=&\|e^{\mp it\Lambda}R_{k}S_{l}(V_{\pm}(t))\|_{L^{\infty}(\mathbb{R}^{2}\times\mathbb{T})}\\
=&\|R_{[-1,k+2]}e^{\mp it\Lambda}S_{[l-2,l+2]}(R_{k}S_{l}V_{\pm}(t))\|_{L^{\infty}(\mathbb{R}^{2}\times\mathbb{T})}\\
\lesssim& 2^{2(k+l)}(1+t)^{-1}\|R_{k}S_{l}V_{\pm}(t)\|_{L^{1}(\mathbb{R}^{2}\times\mathbb{T})}\\
\lesssim& (1+t)^{-1} 2^{-7(k+l)}\|e^{it\Lambda}U(t)\|_{Z}\lesssim \varepsilon_{0}(1+t)^{-1}2^{-7(k+l)}.
\end{aligned}
\end{equation*}
This yields \eqref{7-12AA}.

In the similar way, \eqref{7-13AA} can be derived as follows
\begin{equation*}
\begin{aligned}
\|\mathcal{F}_{x,y}(R_{k}S_{l}U_{\pm}(t))(\xi,n)\|_{L^{\infty}_{\xi,n}}
=&\|\mathcal{F}_{x,y}(R_{k}S_{l}V_{\pm}(t))(\xi,n)\|_{L^{\infty}_{\xi,n}}\\
\lesssim& \|R_{k}S_{l}V_{\pm}(t)\|_{L^{1}(\mathbb{R}^{2}\times\mathbb{T})}
\lesssim 2^{-9(k+l)}\|e^{i\Lambda}U(t)\|_{Z}\lesssim \varepsilon_{0}2^{-9(k+l)}.
\end{aligned}
\end{equation*}

For \eqref{7-14AA}, together with \eqref{3-6-0}, one has
\begin{equation*}
\begin{aligned}
&\|R_{k}S_{l}V_{\pm}(t)\|_{L^{2}(\mathbb{R}^{2}\times\mathbb{T})}\\=
&\|R_{k}S_{l}U_{\pm}(t)\|_{L^{2}(\mathbb{R}^{2}\times\mathbb{T})}\\
=&\|\widehat{U_{\pm;n}}(t,\xi)\psi_{k}(\xi)\psi_{l}(n)\|_{L^{2}_{\xi}l^{2}_{n}}\\
\lesssim& 2^{-N_{0}(k+l)/2}\|(1+|\xi|)^{N_{0}/2}(1+|n|)^{N_{0}/2}\widehat{U_{\pm;n}}(t,\xi)\psi_{k}(\xi)\psi_{l}(n)\|_{L^{2}_{\xi}l^{2}_{n}}\\
\lesssim& 2^{-N_{0}(k+l)/2}\|(1+|\xi|^{2}+n^{2})^{N_{0}/2}\widehat{U_{\pm;n}}(t,\xi)\psi_{k}(\xi)\psi_{l}(n)\|_{L^{2}_{\xi}l^{2}_{n}}\\
\lesssim& 2^{-N_{0}(k+l)/2}\|U(t)\|_{H^{N_{0}}(\mathbb{R}^{2}\times \mathbb{T})}\lesssim \varepsilon_0 2^{-N_0(k+l)/2}
\end{aligned}
\end{equation*}
and
\begin{equation}
\begin{aligned}
\|R_{k}S_{l}U_{\pm}(t)\|_{L^{2}(\mathbb{R}^{2}\times\mathbb{T})}&\lesssim 2^{-N(k+l)/2}\|U(t)\|_{H^{N}(\mathbb{R}^{2}\times \mathbb{T})}
\lesssim \varepsilon_{0}(1+t)^{\kappa}2^{-N(k+l)/2}.\nonumber
\end{aligned}
\end{equation}
Then  \eqref{7-14AA} is shown by collecting the two estimates above.
\end{proof}

\begin{lem}{\bf (Finite band property)}\label{HC-6}
{\it With $R_{k},S_{l}$ defined in Section 2, there hold  that for $1\leq p\leq\infty$,
\begin{equation}\label{3-7-0}
\|\partial_{x}R_{k}S_{l}f\|_{L^{p}(\mathbb{R}^{2}\times \mathbb{T})}\lesssim
2^{k}\|R_{k}S_{l}f\|_{L^{p}(\mathbb{R}^{2}\times \mathbb{T})},~~
\|\partial_{y}R_{k}S_{l}f\|_{L^{p}(\mathbb{R}^{2}\times \mathbb{T})}\lesssim
2^{l}\|R_{k}S_{l}f\|_{L^{p}(\mathbb{R}^{2}\times \mathbb{T})}.
\end{equation}}
\end{lem}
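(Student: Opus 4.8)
The plan is to reduce both estimates to Young's convolution inequality, by exhibiting $\partial_x R_kS_l$ as $2^k$ times a convolution operator in $x$ with a kernel of $L^1(\mathbb R^2)$-norm $\lesssim 1$, composed with $R_kS_l$, and likewise $\partial_y R_kS_l$ as $2^l$ times a convolution operator in $y$ with an $L^1(\mathbb T)$-kernel of norm $\lesssim 1$, composed with $R_kS_l$. The first step is to \emph{fatten} the projections: put $\widetilde R_k:=R_{[k-1,k+1]}$ and $\widetilde S_l:=S_{[l-1,l+1]}$ (with the convention $\psi_m\equiv 0$ for $m<-1$). Since any two shells $\psi_m,\psi_{m'}$ with $|m-m'|\geq 2$ have disjoint supports, on $\mathrm{supp}\,\psi_k$ one has $\psi_{k-1}+\psi_k+\psi_{k+1}=\sum_{m\geq -1}\psi_m\equiv 1$, hence $\widetilde\psi_k\psi_k=\psi_k$ and similarly $\widetilde\psi_l\psi_l=\psi_l$. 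Because $R_k,\widetilde R_k$ act only in $x$, $S_l,\widetilde S_l$ only in $y$, and $\partial_x,\partial_y$ commute with all of them, this yields the identities
\[
\partial_x R_kS_lf=(\partial_x\widetilde R_k)(R_kS_lf),\qquad \partial_y R_kS_lf=(\partial_y\widetilde S_l)(R_kS_lf),
\]
which are precisely what lets us land on the quantity $\|R_kS_lf\|_{L^p}$ appearing on the right-hand side of the lemma (note that one cannot simply bound by $\|S_lf\|_{L^p}$, which is not controlled by $\|R_kS_lf\|_{L^p}$ for general $p$).

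For the $x$-derivative, I would write $\partial_{x_j}\widetilde R_k$ ($j=1,2$) as the Fourier multiplier in $\xi$ with symbol $i\xi_j\widetilde\psi_k(\xi)$. Using $\psi_m(\cdot)=\varphi_0(\cdot/2^m)$ for $m\geq 0$, one has for $k\geq 1$ that $i\xi_j\widetilde\psi_k(\xi)=2^k M_j(\xi/2^k)$ with $M_j(\zeta):=i\zeta_j\bigl(\varphi_0(2\zeta)+\varphi_0(\zeta)+\varphi_0(\zeta/2)\bigr)$ a fixed Schwartz function on $\mathbb R^2$; the two leftover indices $k\in\{-1,0\}$ produce two further fixed Schwartz symbols. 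Thus $\partial_{x_j}\widetilde R_k$ is convolution in $x$ with a kernel of $L^1(\mathbb R^2)$-norm equal to $2^k\|\mathcal F^{-1}M_j\|_{L^1}\lesssim 2^k$. Applying Young's inequality in $x$ for each fixed $y$ and then taking $L^p$ (or a supremum when $p=\infty$) in $y$ gives $\|\partial_{x_j}\widetilde R_kg\|_{L^p(\mathbb R^2\times\mathbb T)}\lesssim 2^k\|g\|_{L^p(\mathbb R^2\times\mathbb T)}$; choosing $g=R_kS_lf$ and summing over $j$ yields the first inequality.

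The $y$-derivative runs along the same lines, the difference being that $\partial_y\widetilde S_l$ is a Fourier multiplier on $\mathbb Z$, namely $n\mapsto in\widetilde\psi_l(n)$, so it is convolution in $y$ with the periodic kernel $K_l(y)=c\sum_{n\in\mathbb Z}in\widetilde\psi_l(n)e^{iny}$. Writing $in\widetilde\psi_l(n)=\Psi_l(n)$ with $\Psi_l(\zeta):=i\zeta\widetilde\psi_l(\zeta)$, which for $l\geq 1$ is the rescaling $2^l\Psi(\zeta/2^l)$ of a fixed Schwartz function $\Psi$ on $\mathbb R$, Poisson summation gives $K_l(y)=c\sum_{m\in\mathbb Z}\widehat{\Psi_l}(2\pi m-y)$, whence $\|K_l\|_{L^1(\mathbb T)}\leq c\|\widehat{\Psi_l}\|_{L^1(\mathbb R)}=c\,2^l\|\widehat\Psi\|_{L^1(\mathbb R)}\lesssim 2^l$; the finitely many small indices $l\in\{-1,0\}$ are handled by hand (for $l=-1$ the symbol $in\psi_{-1}(n)$ is supported on $n=0$, where it vanishes, so $\partial_y S_{-1}=0$). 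Young's inequality in $y$, applied slicewise in $x$, then gives $\|\partial_y\widetilde S_lg\|_{L^p}\lesssim 2^l\|g\|_{L^p}$, and taking $g=R_kS_lf$ completes the proof.

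The argument is essentially textbook Bernstein; the only mildly delicate point is the torus multiplier bound in the $\partial_y$ estimate — passing from a symbol defined on the integer lattice to an $L^1(\mathbb T)$ kernel estimate, carried out here via Poisson summation and the dyadic rescaling of a fixed Schwartz profile — together with isolating the few low-frequency indices $k,l\in\{-1,0\}$ where the exact self-similarity of the cutoffs fails and one simply invokes finiteness.
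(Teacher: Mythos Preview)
Your proof is correct and follows essentially the same route as the paper's: both arguments use a fattened projection so that the derivative-multiplier becomes a convolution operator acting on $R_kS_lf$, invoke Young's inequality, and for the $\partial_y$ estimate both control the $L^1(\mathbb T)$-norm of the periodic kernel via Poisson summation and dyadic rescaling of a fixed Schwartz profile. The only cosmetic differences are that the paper quotes the classical $\mathbb R^2$ Bernstein inequality directly for the $\partial_x$ bound (rather than writing out the kernel estimate), uses a slightly wider fattening $\psi_{[l-2,l+2]}$, and extracts the factor $2^l$ before defining the kernel so that its $L^1$-norm is $\lesssim 1$ rather than $\lesssim 2^l$.
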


\begin{proof} At first, we have
\begin{equation}\label{3-7-1}
\begin{aligned}
\|\partial_{x}R_{k}S_{l}f\|_{L^{p}(\mathbb{R}^{2}\times \mathbb{T})}
&=\Big(\int_{\mathbb{T}}\Big\|\partial_{x}R_{k}\Big(\sum\limits_{n\in \mathbb{Z}}f_{n}(x)\psi_{l}(n)e^{iny}\Big)\Big\|^{p}_{L^{p}(\mathbb{R}^{2})}dy\Big)^{1/p}\\
&\lesssim2^{k}\Big(\int_{\mathbb{T}}\Big\|R_{k}\Big(\sum\limits_{n\in \mathbb{Z}}f_{n}(x)\psi_{l}(n)e^{iny}\Big)\Big\|^{p}_{L^{p}(\mathbb{R}^{2})}dy\Big)^{1/p}\\
&\lesssim2^{k}\|R_{k}S_{l}f\|_{L^{p}(\mathbb{R}^{2}\times \mathbb{T})}.
\end{aligned}
\end{equation}

In addition, set $g_l(z)=\sum\limits_{n\in \mathbb{Z}}
\frac{n}{2^{l}}\cdot\psi_{[l-2, l+2]}(n)e^{inz}$, then
\begin{equation}\label{3-7-2}
\begin{aligned}
\|\partial_{y}R_{k}S_{l}f\|_{L^{p}(\mathbb{R}^{2}\times \mathbb{T})}&=\Big(\int_{\mathbb{R}^{2}}\Big\|\sum\limits_{n\in \mathbb{Z}}(R_{k}f_{n}(x)n\psi_{l}(n))e^{iny}\Big\|^{p}_{L^{p}(\mathbb{T})}dx\Big)^{1/p}\\
&=2^{l}\Big(\int_{\mathbb{R}^{2}}\Big\|\int_{\mathbb{T}}(R_{k}S_{l}f)(x,y-z)g_l(z)dz\Big\|^{p}_{L^{p}(\mathbb{T})}dx\Big)^{1/p}\\
&\lesssim 2^{l}\|R_{k}S_{l}f\|_{L^{p}(\mathbb{R}^{2}\times \mathbb{T})}\|g_l\|_{L^{1}(\mathbb{T})}.
\end{aligned}
\end{equation}

In the following estimate of $\|g_l\|_{L^1(\mathbb{T})}$, $l\geq2$ is assumed since the case $l<2$
can be treated similarly. By Poisson summation formula, we have
\begin{equation}\label{3-7-3}
\begin{aligned}
g_l(z)&=\sum\limits_{n\in \mathbb{Z}}\int_{-\infty}^{\infty}e^{i(z+2n\pi)\cdot x} \varphi_{[l-2, l+2]}(x)\cdot\frac{x}{2^{l}}dx\\
&=2^{l}\sum\limits_{n\in \mathbb{Z}}\int_{-\infty}^{\infty}e^{i(z+2n\pi)\cdot2^{l} \xi} \varphi_{[-2, 2]}(\xi)\cdot \xi d\xi.
\end{aligned}
\end{equation}
It follows  from the method of non-stationary phase that
\begin{equation}\label{3-7-4}
\Big|\int_{-\infty}^{\infty}e^{i(z+2n\pi)\cdot2^{l} \xi} \varphi_{[-2, 2]}(\xi)\cdot \xi d\xi\Big|\lesssim(1+2^{l}|z+2n\pi|)^{-6}.
\end{equation}
Then
\begin{equation}\label{3-7-5}
\begin{aligned}
\|g_l\|_{L^{1}(\mathbb{T})}&\lesssim\sum\limits_{n\in \mathbb{Z}}\int_{0}^{2\pi}2^{l}
\Big|\int_{-\infty}^{\infty}e^{i(z+2n\pi)\cdot2^{l} \xi} \varphi_{[-2, 2]}(\xi)\cdot \xi d\xi\Big|dz
\\&\lesssim\sum\limits_{n\in \mathbb{Z}}\int_{0}^{2\pi}2^{l}(1+2^{l}|z+2n\pi|)^{-6}dz\lesssim1.
\end{aligned}
\end{equation}
Therefore, the estimates in \eqref{3-7-0} come from \eqref{3-7-1}, \eqref{3-7-2} and \eqref{3-7-5}.
\end{proof}

\begin{lem}{\bf (Bernstein-type inequality)}\label{HC-7}
{\it With $R_{k}, S_{l}$ defined in Section 2, one has
\begin{equation}\label{7-6-123aAA}
\|R_{k}S_{l}f\|_{L^{r}(\mathbb{R}^{2}\times \mathbb{T})}\lesssim
2^{(k+l/2)(1-2/r)}\|f\|_{L^{2}(\mathbb{R}^{2}\times \mathbb{T})}~~ (2\leq r\leq +\infty).
\end{equation}}
\end{lem}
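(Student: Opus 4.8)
The plan is to prove the two endpoint exponents $r=2$ and $r=\infty$ separately, and then fill in the range $2<r<\infty$ by the elementary log-convexity of $L^p$ norms.

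The case $r=2$ is immediate: since $\psi_k$ and $\psi_l$ take values in $[0,1]$, the projections $R_k$ and $S_l$ are contractions on $L^2(\mathbb{R}^2\times\mathbb{T})$ by Plancherel in $x$ and Parseval in $y$, so $\|R_kS_lf\|_{L^2}\le\|f\|_{L^2}$, which matches the claimed bound since $2^{(k+l/2)\cdot 0}=1$.

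For $r=\infty$, I would start from the partial Fourier expansion
\[
R_kS_lf(x,y)=\sum_{n\in\mathbb{Z}}\psi_l(n)\,e^{iny}\,(R_kf_n)(x),
\]
and note that only the $\lesssim 2^l$ integers $n$ with $|n|\lesssim 2^l$ contribute. Cauchy--Schwarz in $n$ then gives $|R_kS_lf(x,y)|\lesssim 2^{l/2}\big(\sum_{n}\|R_kf_n\|_{L^\infty_x}^2\big)^{1/2}$. For each fixed $n$, Hausdorff--Young followed by Cauchy--Schwarz on the support of $\psi_k$ (which has $\mathbb{R}^2$-measure $\lesssim 2^{2k}$) yields $\|R_kf_n\|_{L^\infty_x}\le\|\psi_k\widehat{f_n}\|_{L^1_\xi}\le\|\psi_k\|_{L^2_\xi}\|f_n\|_{L^2_x}\lesssim 2^{k}\|f_n\|_{L^2_x}$. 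Summing over the relevant $n$ and using Parseval in $y$, $\sum_{n}\|f_n\|_{L^2_x}^2\thickapprox\|f\|_{L^2(\mathbb{R}^2\times\mathbb{T})}^2$, gives $\|R_kS_lf\|_{L^\infty}\lesssim 2^{k+l/2}\|f\|_{L^2}$, which is exactly the claim for $r=\infty$.

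Finally, for $2<r<\infty$ I would apply the log-convexity bound $\|g\|_{L^r}\le\|g\|_{L^2}^{2/r}\|g\|_{L^\infty}^{1-2/r}$ with $g=R_kS_lf$ and insert the two endpoint estimates, obtaining $\|R_kS_lf\|_{L^r}\lesssim\|f\|_{L^2}^{2/r}\big(2^{k+l/2}\|f\|_{L^2}\big)^{1-2/r}=2^{(k+l/2)(1-2/r)}\|f\|_{L^2}$, as desired. I do not expect a genuine obstacle here; the only points requiring care are the two counting steps that pin down the exponent $k+l/2$ (the annulus $|\xi|\thickapprox 2^k$ has measure $\thickapprox 2^{2k}$, and there are $\thickapprox 2^l$ relevant Fourier modes in $y$), together with the observation that the degenerate indices $k=-1$ or $l=-1$ are absorbed by the same bounds up to absolute constants.
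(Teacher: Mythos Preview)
Your proof is correct and takes a genuinely different route from the paper. The paper writes $R_kS_lf$ as convolution of $f$ against the separable kernel $w_k(x)h_l(y)$, where $w_k=\mathcal{F}^{-1}\psi_k$ on $\mathbb{R}^2$ and $h_l(y)=\sum_n\psi_l(n)e^{iny}$ on $\mathbb{T}$, and then applies Young's inequality with exponent $q$ given by $1/r+1=1/2+1/q$; the bound $\|w_k\|_{L^q}\lesssim 2^{k(1-2/r)}$ is standard, while $\|h_l\|_{L^q}\lesssim 2^{l(1-2/r)/2}$ is obtained by Poisson summation, writing $h_l$ as a sum over $n\in\mathbb{Z}$ of $2^l$-rescaled bumps centered at $2n\pi$ and estimating each piece. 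This handles all $r\in[2,\infty]$ in one stroke but costs the Poisson-summation computation on $\mathbb{T}$.

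Your argument is more elementary: the endpoint $r=\infty$ needs only counting (there are $\lesssim 2^l$ active modes in $y$, and $\psi_k$ has $L^2_\xi$-mass $\lesssim 2^k$), and the intermediate $r$ come for free from the log-convexity inequality $\|g\|_{L^r}\le\|g\|_{L^2}^{2/r}\|g\|_{L^\infty}^{1-2/r}$. The paper's approach has the mild advantage that the kernel bound $\|h_l\|_{L^1(\mathbb{T})}\lesssim 1$ (the case $q=1$) is reused elsewhere in the paper (e.g.\ Lemma~\ref{HC-6}), so the Poisson-summation machinery is not wasted; your approach is shorter for this lemma in isolation.
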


\begin{proof} Note that
\begin{equation}\label{7-7-123aAA}
\begin{aligned}
\|R_{k}S_{l}f\|_{L^{r}(\mathbb{R}^{2}\times \mathbb{T})}&=\Big(\int_{\mathbb{R}^{2}}\Big\|\sum\limits_{n\in \mathbb{Z}}(R_{k}f_{n}(x)\psi_{l}(n))e^{iny}\Big\|^{r}_{L^{r}(\mathbb{T})}dx\Big)^{1/r}\\&
=\Big(\int_{\mathbb{R}^{2}}\Big\|\int_{\mathbb{T}}\int_{\mathbb{R}^{2}}f(x-x_{1},y-y_{1})w_k(x_{1})h_l(y_{1})dx_{1}dy_{1}
\Big\|^{r}_{L^{r}(\mathbb{T})}dx\Big)^{1/r}
\\&\lesssim \|f\|_{L^{2}(\mathbb{R}^{2}\times \mathbb{T})}\|w_k(x)h_l(y)\|_{L^{q}(\mathbb{R}^{2}\times \mathbb{T})}
\\&= \|f\|_{L^{2}(\mathbb{R}^{2}\times \mathbb{T})}\|w_k\|_{L^{q}(\mathbb{R}^{2})}\|h_l\|_{L^{q}(\mathbb{T})},
\end{aligned}
\end{equation}
where $1/r+1=1/2+1/q$ and
\begin{equation*}
\begin{aligned}
w_k(x)=\int_{\mathbb{R}^{2}}e^{ix\cdot\xi}\psi_{k}(\xi)d\xi,\
h_l(y)=\sum\limits_{n\in \mathbb{Z}}\psi_{l}(n)e^{iny}.
\end{aligned}
\end{equation*}
Direct calculation shows
\begin{equation}\label{7-8-123aAA}
\|w_k\|_{L^{q}(\mathbb{R}^{2})}\lesssim 2^{k(1-2/r)}.
\end{equation}

In the following estimate of $\|h_l\|_{L^{q}(\mathbb{T})}$, $l\geq 0$ is assumed
since the case of $l=-1$ can be done analogously. By the
similarly way to deal with $g_l(z)$ in \eqref{3-7-3}-\eqref{3-7-5},  we have
\begin{equation*}
\begin{aligned}
h_l(y)=&\sum\limits_{n\in \mathbb{Z}}\int_{-\infty}^{\infty}e^{i(y+2n\pi)\cdot x}\psi_{l}(x)dx\\
=&\sum\limits_{n\in \mathbb{Z}}\int_{-\infty}^{\infty}e^{i(y+2n\pi)\cdot x}\varphi_{0}(x/2^{l})dx=2^{l}\sum\limits_{n\in \mathbb{Z}}\int_{-\infty}^{\infty}e^{i(y+2n\pi)\cdot2^{l} x}\varphi_{0}(x)dx
\end{aligned}
\end{equation*}
and
\begin{equation*}
\Big|\int_{-\infty}^{\infty}e^{i(y+2n\pi)\cdot2^{l} x}\varphi_{0}(x)dx\Big|\lesssim (1+2^{l}|y+2n\pi|)^{-6}.
\end{equation*}
Therefore,
\begin{equation}\label{7-9-123aAA}
\begin{aligned}
\|h_l\|_{L^{q}(\mathbb{T})}\lesssim& 2^{l}\sum\limits_{n\in \mathbb{Z}}\Big(\int_{0}^{2\pi}(1+2^{l}|y+2n\pi|)^{-6q}dy\Big)^{1/q}\\
\lesssim& 2^{l(1-1/q)}\sum\limits_{n\in \mathbb{Z}}\Big(\int_{0}^{2^{l}\cdot2\pi}(1+|w+2^{l}\cdot2n\pi|)^{-6q}dw\Big)^{1/q}\\
\lesssim& 2^{l(1-1/q)}\sum\limits_{n\in \mathbb{Z},~|n|\leq3}\Big(\int_{0}^{\infty}(1+|w|)^{-6q}dw\Big)^{1/q}
\\
&+2^{l(1-1/q)}\sum\limits_{n\in \mathbb{Z},~|n|\geq 4}\Big(\int_{0}^{2^{l}\cdot2\pi}(2^{l}|n|\pi)^{-6q}dw\Big)^{1/q}\\
\lesssim& 2^{l(1-1/q)}+2^{l(1-1/q)}\sum\limits_{n\in \mathbb{Z},~|n|\geq 4}\Big((2^{l}|n|\pi)^{-5q}\Big)^{1/q}\lesssim 2^{l(1-2/r)/2}.
\end{aligned}
\end{equation}
Substituting \eqref{7-8-123aAA} and \eqref{7-9-123aAA} into \eqref{7-7-123aAA} yields \eqref{7-6-123aAA}.\end{proof}

\begin{lem}\label{HC-10} {\it
For $1\leq p\leq \infty$, we have
\begin{equation}\label{7-24AAaa}
\Big\|R_{k}S_{l}\Big(\sum\limits_{n\in \mathbb{Z}}\frac{\partial_{x}}{\sqrt{1-\Delta_{x}+n^{2}}}f_{n}(x)e^{iny}\Big)
\Big\|_{L^{p}(\mathbb{R}^{2}\times\mathbb{T})}\lesssim \|f\|_{L^{p}(\mathbb{R}^{2}\times\mathbb{T})}
\end{equation}
and
\begin{equation}\label{7-24AAaa-1}
\Big\|R_{k}S_{l}\Big(\sum\limits_{n\in \mathbb{Z}}\frac{n}{\sqrt{1-\Delta_{x}+n^{2}}}f_{n}(x)e^{iny}\Big)
\Big\|_{L^{p}(\mathbb{R}^{2}\times\mathbb{T})}\lesssim \|f\|_{L^{p}(\mathbb{R}^{2}\times\mathbb{T})}.
\end{equation}}
\end{lem}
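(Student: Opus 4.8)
The plan is to view each of the two operators as a Fourier multiplier on $\mathbb{R}^2\times\mathbb{T}$ and to prove the $L^p$ bounds by producing an integrable kernel, uniformly in $k,l\ge -1$. The symbols in question are $a_n(\xi)=\psi_k(\xi)\psi_l(n)\,\xi/\sqrt{1+|\xi|^2+n^2}$ and $b_n(\xi)=\psi_k(\xi)\psi_l(n)\,n/\sqrt{1+|\xi|^2+n^2}$. First I would reduce to showing that the associated convolution kernels
\begin{equation*}
K_k^a(x,y)=\frac{1}{(2\pi)^2}\sum_{n\in\mathbb{Z}}e^{iny}\int_{\mathbb{R}^2}e^{ix\cdot\xi}\,\frac{\psi_k(\xi)\psi_l(n)\,\xi}{\sqrt{1+|\xi|^2+n^2}}\,d\xi,
\end{equation*}
and the analogous $K_k^b$, satisfy $\|K_k^a\|_{L^1(\mathbb{R}^2\times\mathbb{T})}\lesssim 1$ and $\|K_k^b\|_{L^1(\mathbb{R}^2\times\mathbb{T})}\lesssim 1$, with constants independent of $k,l$; then \eqref{7-24AAaa}--\eqref{7-24AAaa-1} follow from Young's inequality exactly as in the proof of Lemma~\ref{HC-6} (the step \eqref{3-7-2}).

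Next I would fix $n$ with $|n|\thickapprox 2^l$ and rescale $\xi=2^k\zeta$ to put the $\xi$-integral in the form $2^{2k}\int_{\mathbb{R}^2}e^{i2^k x\cdot\zeta}\,m_{k,n}(\zeta)\,d\zeta$, where for the first operator $m_{k,n}(\zeta)=\varphi_{[-2,2]}(\zeta)\,2^k\zeta/\sqrt{1+2^{2k}|\zeta|^2+n^2}$ is smooth, supported in an annulus, and satisfies $|\partial_\zeta^\alpha m_{k,n}(\zeta)|\lesssim_\alpha 1$ uniformly in $k$ and $n$ (since $2^k/\sqrt{1+2^{2k}+n^2}\le 1$ and each derivative only improves the bound). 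Standard non-stationary phase then gives pointwise decay $\big|\int e^{i2^k x\cdot\zeta}m_{k,n}(\zeta)\,d\zeta\big|\lesssim_M (1+2^k|x|)^{-M}$ for every $M$, hence $\|\int_{\mathbb{R}^2}e^{ix\cdot\xi}a_n(\xi)\,d\xi\|_{L^1_x(\mathbb{R}^2)}\lesssim 1$ uniformly in $k$ and $n$. Summing over the $O(2^l)$ values of $n$ in the support of $\psi_l$ would cost a factor $2^l$, which is \emph{not} acceptable, so the sum over $n$ must instead be handled by the Poisson-summation trick already used for $g_l$ in \eqref{3-7-3}--\eqref{3-7-5}: writing $K_k^a(x,y)=\sum_{n\in\mathbb{Z}}\mathcal{F}^{-1}_\xi(a_{(\cdot)})\big|_{\text{at }n}\,e^{iny}$ and applying Poisson summation in the $y$-variable converts $\sum_n$ into an integral against a function of $2^l(y+2\pi n)$ with rapid decay, which sums to $O(1)$ in $L^1(\mathbb{T})$ after integrating out $y$; the uniform smoothness of the joint symbol in $(\xi,n)$ (now treating $n$ as a continuous variable with $|\partial_n^j\partial_\xi^\alpha|$ bounds independent of $k,l$) is exactly what makes this work. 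For the second operator the only change is that the symbol has the extra factor $n/\sqrt{1+|\xi|^2+n^2}\le 1$, again with all derivatives bounded uniformly, so the identical argument applies.

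The main obstacle I anticipate is bookkeeping the derivative bounds for the symbol $\psi_k(\xi)\psi_l(n)\,\xi_j/\sqrt{1+|\xi|^2+n^2}$ jointly in $\xi$ and (continuous) $n$ at the right \emph{scales}: one needs $|\partial_\xi^\alpha\partial_n^j[\cdots]|\lesssim 2^{-|\alpha|k}2^{-jl}$ so that after the double rescaling $\xi=2^k\zeta$, $n=2^l\nu$ all constants are genuinely independent of $k,l$. This is a routine but slightly tedious Leibniz-rule computation, using that $\sqrt{1+|\xi|^2+n^2}\thickapprox 2^k+2^l+1$ on the relevant support so that each $\xi$-derivative produces a gain of $2^{-k}$ (or better) and each $n$-derivative a gain of $2^{-l}$ (or better), while the factor $\xi$ in the numerator of the first symbol contributes size $2^k$, cancelling one power, and the factor $n$ in the second contributes size $2^l$. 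Once these uniform symbol bounds are in hand, the non-stationary phase estimate in $\xi$ and the Poisson-summation estimate in $n$ are verbatim repetitions of \eqref{3-7-3}--\eqref{3-7-5}, and Young's inequality closes the argument.
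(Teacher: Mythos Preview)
Your proposal is correct and follows essentially the same approach as the paper: reduce to a uniform $L^1$ bound on the convolution kernel via Young's inequality, apply Poisson summation to pass from the discrete $n$-sum to a continuous variable, and use non-stationary phase with the scale-correct symbol bounds $|\partial_\xi^\alpha\partial_z^j[\cdots]|\lesssim 2^{-|\alpha|k}2^{-jl}$ to obtain the decay $2^{2k}(1+2^k|x|)^{-M}\cdot 2^l(1+2^l|y+2n\pi|)^{-M'}$, which sums in $n$ and integrates in $(x,y)$ to $O(1)$. The only cosmetic difference is that the paper applies Poisson summation first and then runs the non-stationary phase jointly in $(\xi,z)$, whereas you describe the $\xi$-integral and the $n$-summation as separate steps; the underlying computation is the same.
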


\begin{proof}
We only prove \eqref{7-24AAaa} since the estimate \eqref{7-24AAaa-1} can be treated by a similar way. With the
definitions of $R_k$ and $S_l$, one has
\begin{equation*}\label{7-25Aaa}
\begin{aligned}
&R_{k}S_{l}\Big(\sum\limits_{n\in \mathbb{Z}}\frac{\partial_{x}}{\sqrt{1-\Delta_{x}+n^{2}}}f_{n}(x)e^{iny}\Big)\\
=&\int_{\mathbb{R}^{2}}e^{ix\cdot \xi}\psi_{k}(\xi)\Big(\sum\limits_{n\in \mathbb{Z}}
\frac{i\xi}{\sqrt{1+|\xi|^{2}+n^{2}}}\widehat{f_{n}}(\xi)\psi_{l}(n)e^{iny}\Big)d\xi\\
=&\int_{\mathbb{R}^{2}}e^{ix\cdot \xi}\psi_{k}(\xi)\Big[\int_{\mathbb{T}}\mathcal{F}_{x}(f)(\xi,y-y_{1})
\Big(\sum\limits_{n\in \mathbb{Z}}\frac{i\xi}{\sqrt{1+|\xi|^{2}+n^{2}}}\psi_{l}(n)e^{iny_{1}}\Big)dy_{1}\Big]d\xi\\
=&\int_{\mathbb{T}}\Big[\int_{\mathbb{R}^{2}}e^{ix\cdot \xi}\psi_{k}(\xi)\mathcal{F}_{x}(f)(\xi,y-y_{1})
\Big(\sum\limits_{n\in \mathbb{Z}}\frac{i\xi}{\sqrt{1+|\xi|^{2}+n^{2}}}\psi_{l}(n)e^{iny_{1}}\Big)d\xi\Big]dy_{1}\\
=&\int_{\mathbb{T}}\Big[\int_{\mathbb{R}^{2}}f(x-x_{1},y-y_{1})\Big(\int_{\mathbb{R}^{2}}e^{ix_{1}\cdot \xi}\psi_{k}(\xi)
\sum\limits_{n\in \mathbb{Z}}\frac{i\xi}{\sqrt{1+|\xi|^{2}+n^{2}}}\psi_{l}(n)e^{iny_{1}}d\xi\Big)dx_{1}\Big]dy_{1}.
\end{aligned}
\end{equation*}
Then
\begin{equation}\label{7-26aAAaa}
\Big\|R_{k}S_{l}\Big(\sum\limits_{n\in \mathbb{Z}}\frac{\partial_{x}}{\sqrt{1-\Delta_{x}+n^{2}}}f_{n}(x)e^{iny}\Big)
\Big\|_{L^{p}(\mathbb{R}^{2}\times\mathbb{T})}\lesssim \|f\|_{L^{p}(\mathbb{R}^{2}\times\mathbb{T})}
\|h_{kl}\|_{L^{1}(\mathbb{R}^{2}\times\mathbb{T})},
\end{equation}
where
\begin{equation*}\label{7-27aAAaa}
h_{kl}(x,y):=\int_{\mathbb{R}^{2}}e^{ix\cdot \xi}\psi_{k}(\xi)
\Big(\sum\limits_{n\in \mathbb{Z}}\frac{\xi}{\sqrt{1+|\xi|^{2}+n^{2}}}
\psi_{l}(n)e^{iny}\Big)d\xi.
\end{equation*}

On the other hand, by Poisson summation formula, we have
\begin{equation}\label{3-9-1}
\begin{aligned}
h_{kl}(x,y)&=\int_{\mathbb{R}^{2}}e^{ix\cdot \xi}\psi_{k}(\xi)
\Big(\sum\limits_{n\in \mathbb{Z}}\int_{-\infty}^{\infty}e^{i(y+2n\pi)\cdot z}
\frac{\xi}{\sqrt{1+|\xi|^{2}+z^{2}}}\psi_{l}(z)dz\Big)d\xi
\\&=\sum\limits_{n\in \mathbb{Z}}\int_{\mathbb{R}^{2}}
\int_{-\infty}^{\infty}e^{ix\cdot \xi}e^{i(y+2n\pi)\cdot z}
\frac{\xi}{\sqrt{1+|\xi|^{2}+z^{2}}}
\psi_{k}(\xi)\psi_{l}(z)dzd\xi.
\end{aligned}
\end{equation}
It follows from the method of non-stationary phase that
\begin{equation}\label{YH-1}
\begin{aligned}
&\Big|\int_{\mathbb{R}^{2}}
\int_{-\infty}^{\infty}e^{ix\cdot \xi}e^{i(y+2n\pi)\cdot z}\frac{\xi}{\sqrt{1+|\xi|^{2}+z^{2}}}
\psi_{k}(\xi)\psi_{l}(z)dzd\xi\Big|\\
\lesssim& 2^{2k}(1+2^{k}|x|)^{-4}2^{l}(1+2^{l}|y+2n\pi|)^{-6}.
\end{aligned}
\end{equation}
Substituting \eqref{YH-1} into \eqref{3-9-1} shows
\begin{equation}\label{7-27aAAaabb}
\begin{aligned}
\|h_{kl}(x, y)\|_{L^{1}(\mathbb{R}^{2}\times\mathbb{T})}
&\lesssim \int_{\mathbb{R}^{2}}2^{2k}(1+2^{k}|x|)^{-4}dx
\cdot\sum\limits_{n\in \mathbb{Z}}\int_{0}^{2\pi}2^{l}(1+2^{l}|y+2n\pi|)^{-6}dy
\lesssim1.
\end{aligned}
\end{equation}
Therefore, \eqref{7-24AAaa} comes from \eqref{7-26aAAaa} and \eqref{7-27aAAaabb}.
And  the proof of Lemma \ref{HC-10} is completed.
\end{proof}

\begin{lem}\label{HC-11} {\it
With $\Lambda_{n}(\xi)=\sqrt{1+|\xi|^{2}+n^{2}}$,~$\xi\in \mathbb{R}^{2},n\in \mathbb{Z}$, we have
\begin{equation*}
\frac{1}{|\Lambda_{n}(\xi)\pm\Lambda_{n-m'}(\xi-\eta)\pm\Lambda_{m'}(\eta)|}\leq 2\sqrt{1+\mathrm{min}(a,b,c)},
\end{equation*}
where $a=|\xi|^{2}+n^{2},~b=|\xi-\eta|^{2}+(n-m')^{2},~c=|\eta|^{2}+{m'}^{2}$.}
\end{lem}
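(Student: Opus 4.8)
The first move is purely notational: write $\Lambda_k(\zeta)=\langle(\zeta,k)\rangle$ where $\langle v\rangle:=\sqrt{1+|v|^{2}}$ for $v\in\R^{3}$, and set $A=(\xi,n)$, $B=(\xi-\eta,n-m')$, $C=(\eta,m')$, so that the ``frequency conservation'' $A=B+C$ holds, $a=|A|^{2}$, $b=|B|^{2}$, $c=|C|^{2}$, and $\sqrt{1+\min(a,b,c)}=\min(\langle A\rangle,\langle B\rangle,\langle C\rangle)=:\langle m\rangle$. The claim is then $2\langle m\rangle\,|\langle A\rangle\pm\langle B\rangle\pm\langle C\rangle|\ge 1$ for every choice of signs. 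The case $(+,+)$ is immediate since $\langle A\rangle+\langle B\rangle+\langle C\rangle\ge 3>1/2\ge 1/(2\langle m\rangle)$. Each of the three remaining patterns contains exactly one minus sign; using $A=B+C$, $B=A+(-C)$, $C=A+(-B)$ together with $\langle -v\rangle=\langle v\rangle$, each reduces (after relabeling of the three vectors, which does not change the set $\{\langle A\rangle,\langle B\rangle,\langle C\rangle\}$) to controlling $\langle B\rangle+\langle C\rangle-\langle A\rangle$ with $A=B+C$, so I will treat that quantity.

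\textbf{Two elementary ingredients.} (i) Subadditivity $\langle v+w\rangle\le\langle v\rangle+\langle w\rangle$ (in fact strictly), obtained by squaring and using $\sqrt{(1+|v|^{2})(1+|w|^{2})}>|v||w|$; in particular $\langle B\rangle+\langle C\rangle-\langle A\rangle>0$, so the quantity in the denominator never vanishes. (ii) The lower bound
\[
\langle v\rangle\langle w\rangle-v\cdot w\ \ge\ \langle v\rangle\langle w\rangle-|v||w|\ =\ \frac{1+|v|^{2}+|w|^{2}}{\langle v\rangle\langle w\rangle+|v||w|}\ \ge\ \frac{1+|v|^{2}}{2\langle v\rangle\langle w\rangle}\ =\ \frac{\langle v\rangle}{2\langle w\rangle},
\]
valid for all $v,w\in\R^{3}$, where I used $|v||w|\le\langle v\rangle\langle w\rangle$ and $1+|v|^{2}=\langle v\rangle^{2}$. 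Then I rationalize, using $\langle B\rangle^{2}+\langle C\rangle^{2}-\langle A\rangle^{2}=1+|B|^{2}+|C|^{2}-|B+C|^{2}=1-2B\cdot C$:
\[
\langle B\rangle+\langle C\rangle-\langle A\rangle=\frac{(\langle B\rangle+\langle C\rangle)^{2}-\langle A\rangle^{2}}{\langle A\rangle+\langle B\rangle+\langle C\rangle}=\frac{1+2(\langle B\rangle\langle C\rangle-B\cdot C)}{\langle A\rangle+\langle B\rangle+\langle C\rangle}.
\]

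\textbf{Conclusion.} Split on where $\langle m\rangle$ sits. If $\langle m\rangle=\langle B\rangle$ (resp.\ $\langle C\rangle$), multiply the identity by $2\langle B\rangle$ (resp.\ $2\langle C\rangle$); by (ii) with $(v,w)=(C,B)$ (resp.\ $(B,C)$) the numerator is $\ge 2\langle B\rangle+2\langle C\rangle$, while by (i) the denominator is $\le 2(\langle B\rangle+\langle C\rangle)$, so the ratio is $\ge 1$. If $\langle m\rangle=\langle A\rangle$, then $\langle B\rangle,\langle C\rangle\ge\langle A\rangle$ force $\langle B\rangle+\langle C\rangle-\langle A\rangle\ge\langle A\rangle$, hence $2\langle A\rangle(\langle B\rangle+\langle C\rangle-\langle A\rangle)\ge 2\langle A\rangle^{2}\ge 2>1$. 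The other two sign patterns follow verbatim after the relabeling described above. I do not expect a real obstacle here; the only delicate point is the constant $1/2$ in ingredient (ii), which must be exactly that so that, against the subadditivity bound on the denominator, the two copies of $\langle B\rangle+\langle C\rangle$ cancel and leave precisely $1$ — any weaker constant would cost only an innocuous numerical factor, still comfortably absorbed by the ``$2$'' in the statement.
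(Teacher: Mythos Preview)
Your proof is correct. The paper itself does not give a proof of this lemma, simply referring to Lemma~5.1 of \cite{IP2}; your argument is precisely the standard one behind that reference, namely the reduction (via $A=B+C$ and relabeling) to the single quantity $\langle B\rangle+\langle C\rangle-\langle A\rangle$, followed by the rationalization identity and the algebraic lower bound $\langle v\rangle\langle w\rangle-v\cdot w\ge \langle v\rangle/(2\langle w\rangle)$. All steps check out, including the fact that after taking absolute values each of the three nontrivial sign patterns has exactly one negative term and can be brought to the canonical form without changing the set $\{\langle A\rangle,\langle B\rangle,\langle C\rangle\}$, hence without changing $\langle m\rangle$.
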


\begin{proof}
The proof is similar to that of Lemma 5.1 in \cite{IP2}, we omit the details here.
\end{proof}

\begin{lem}\label{HC-12} {\it
Under the notations in \eqref{2-10} and \eqref{2-11}, we have
\begin{equation}\label{7-20AA}
|\nabla_{\xi,\eta}\Phi^{n, m}_{\mu,\nu}(\xi,\eta)|\lesssim |\Phi^{n, m}_{\mu,\nu}(\xi,\eta)|
\end{equation}
and
\begin{equation}\label{7-16BAB}
\Big|\partial_{\xi}^{k}\partial_{\eta}^{l}\Big(\frac{1}{\Phi^{n, m}_{\mu,\nu}(\xi,\eta)}\Big)\Big|\lesssim
\Big|\frac{1}{\Phi^{n, m}_{\mu,\nu}(\xi,\eta)}\Big|~~ (k+l\geq1).
\end{equation}}
\end{lem}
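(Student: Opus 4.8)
The plan is to first prove the pointwise bounds $|\partial^{\alpha}_{\xi,\eta}\Phi^{n,m}_{\mu,\nu}|\lesssim|\Phi^{n,m}_{\mu,\nu}|$ for \emph{every} multi-index $\alpha$ with $|\alpha|\ge1$ (the case $|\alpha|=1$ being precisely \eqref{7-20AA}), and then to deduce \eqref{7-16BAB} by the Fa\`a di Bruno formula applied to $t\mapsto 1/t$: $\partial_{\xi}^{k}\partial_{\eta}^{l}(1/\Phi)$ is a finite linear combination of terms $\Phi^{-(j+1)}\prod_{i=1}^{j}\partial^{\beta_{i}}\Phi$ with $j\ge1$, $\sum_{i}|\beta_{i}|=k+l$ and each $|\beta_{i}|\ge1$, so bounding every factor by $C|\Phi|$ yields $|\partial_{\xi}^{k}\partial_{\eta}^{l}(1/\Phi)|\lesssim|\Phi|^{-(j+1)}|\Phi|^{j}=|\Phi|^{-1}$. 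Throughout set $k_{0}=(\xi,n)$, $k_{1}=(\xi-\eta,n-m)$, $k_{2}=(\eta,m)$ (so $k_{0}=k_{1}+k_{2}$), $\langle k_{j}\rangle=\sqrt{1+|k_{j}|^{2}}$, $\sigma_{1}=-\mu$, $\sigma_{2}=-\nu$, so that $\Phi:=\Phi^{n,m}_{\mu,\nu}=\langle k_{0}\rangle+\sigma_{1}\langle k_{1}\rangle+\sigma_{2}\langle k_{2}\rangle$ and, with $a=|k_{0}|^{2}$, $b=|k_{1}|^{2}$, $c=|k_{2}|^{2}$ as in Lemma~\ref{HC-11}, one has $\langle k_{j}\rangle\ge\sqrt{1+\min(a,b,c)}$ for each $j$ and, by Lemma~\ref{HC-11}, $(1+\min(a,b,c))^{-1/2}\le2|\Phi|$. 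For $|\alpha|\ge2$ the bound is immediate from $|\partial^{\alpha}\langle z\rangle|\lesssim\langle z\rangle^{1-|\alpha|}\le\langle z\rangle^{-1}$, which gives $|\partial^{\alpha}\Phi|\lesssim(1+\min(a,b,c))^{-1/2}\le2|\Phi|$.

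The heart of the matter is \eqref{7-20AA}, where the crude estimate fails since $|\nabla\langle z\rangle|$ is only $O(1)$, so cancellation must be exploited. As $n,m$ are fixed integers, $|\nabla_{\xi,\eta}\Phi|\le(|\nabla_{k_{0}}\Phi|^{2}+|\nabla_{k_{2}}\Phi|^{2})^{1/2}$ with $\nabla_{k_{0}}\Phi=\widehat{k_{0}}+\sigma_{1}\widehat{k_{1}}$ and $\nabla_{k_{2}}\Phi=-\sigma_{1}\widehat{k_{1}}+\sigma_{2}\widehat{k_{2}}$, where $\widehat{k_{j}}=k_{j}/\langle k_{j}\rangle$ and $|\widehat{k_{j}}|^{2}=1-\langle k_{j}\rangle^{-2}$. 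I would expand these squared gradients using $2k_{0}\!\cdot\! k_{1}=\langle k_{0}\rangle^{2}+\langle k_{1}\rangle^{2}-\langle k_{2}\rangle^{2}-1$ and $2k_{1}\!\cdot\! k_{2}=\langle k_{0}\rangle^{2}-\langle k_{1}\rangle^{2}-\langle k_{2}\rangle^{2}+1$, then substitute $\langle k_{0}\rangle+\sigma_{1}\langle k_{1}\rangle=\Phi-\sigma_{2}\langle k_{2}\rangle$ together with the difference-of-squares identity $(\Phi-\sigma_{2}\langle k_{2}\rangle)^{2}-\langle k_{2}\rangle^{2}=\Phi(\Phi-2\sigma_{2}\langle k_{2}\rangle)$ (and its analogue in which $k_{0}$ and $k_{2}$ change roles). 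The $O(1)$ cross terms then collapse, leaving identities of the form
\[
|\nabla_{k_{0}}\Phi|^{2}=\frac{\sigma_{1}\Phi(\Phi-2\sigma_{2}\langle k_{2}\rangle)}{\langle k_{0}\rangle\langle k_{1}\rangle}-P_{0},\qquad|\nabla_{k_{2}}\Phi|^{2}=-\frac{\sigma_{1}\sigma_{2}(2\langle k_{0}\rangle-\Phi)\Phi}{\langle k_{1}\rangle\langle k_{2}\rangle}-P_{2},
\]
where $P_{0}=\tfrac12(\langle k_{0}\rangle^{-2}+\langle k_{1}\rangle^{-2}+(\langle k_{0}\rangle^{-1}+\sigma_{1}\langle k_{1}\rangle^{-1})^{2})\ge0$ and $P_{2}$ is the analogous nonnegative quantity. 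Since the left sides are $\ge0$ and $P_{0},P_{2}\ge0$, the leading fractions are nonnegative, whence $|\nabla_{k_{0}}\Phi|^{2}\le\frac{|\Phi|(|\Phi|+2\langle k_{2}\rangle)}{\langle k_{0}\rangle\langle k_{1}\rangle}$ and $|\nabla_{k_{2}}\Phi|^{2}\le\frac{|\Phi|(|\Phi|+2\langle k_{0}\rangle)}{\langle k_{1}\rangle\langle k_{2}\rangle}$ (note this needs no case split in the sign pattern $(\mu,\nu)$).

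It remains to bound the right-hand sides by $|\Phi|^{2}$. The ``diagonal'' pieces $|\Phi|^{2}/(\langle k_{0}\rangle\langle k_{1}\rangle)$ and $|\Phi|^{2}/(\langle k_{1}\rangle\langle k_{2}\rangle)$ are $\le|\Phi|^{2}$ since the denominators are $\ge1$. For the ``cross'' pieces, the elementary inequality $\langle u+v\rangle\le\langle u\rangle+\langle v\rangle$ (a consequence of $\langle u\rangle\langle v\rangle\ge|u||v|$) gives $\frac{\langle k_{2}\rangle}{\langle k_{0}\rangle\langle k_{1}\rangle}\le\frac{1}{\langle k_{0}\rangle}+\frac{1}{\langle k_{1}\rangle}\le\frac{2}{\sqrt{1+\min(a,b,c)}}$ and likewise $\frac{\langle k_{0}\rangle}{\langle k_{1}\rangle\langle k_{2}\rangle}\le\frac{2}{\sqrt{1+\min(a,b,c)}}$, and Lemma~\ref{HC-11} turns $\frac{2}{\sqrt{1+\min(a,b,c)}}$ into $\le4|\Phi|$; hence $2|\Phi|\langle k_{2}\rangle/(\langle k_{0}\rangle\langle k_{1}\rangle)\lesssim|\Phi|^{2}$ and similarly for the other cross piece. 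Combining, $|\nabla_{\xi,\eta}\Phi|^{2}\le|\nabla_{k_{0}}\Phi|^{2}+|\nabla_{k_{2}}\Phi|^{2}\lesssim|\Phi|^{2}$, which is \eqref{7-20AA}.

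The main obstacle is obtaining the two exact identities for $|\nabla_{k_{0}}\Phi|^{2}$ and $|\nabla_{k_{2}}\Phi|^{2}$ — this is the one place where the $O(1)$-size terms must be shown to cancel against the quadratic terms; after that, the argument reduces to the elementary inequality $\langle u+v\rangle\le\langle u\rangle+\langle v\rangle$ and the lower bound $|\Phi|\gtrsim(1+\min(a,b,c))^{-1/2}$ from Lemma~\ref{HC-11}, and the higher-order and reciprocal bounds follow formally.
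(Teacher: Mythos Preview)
Your proof is correct. For \eqref{7-16BAB} your Fa\`a di Bruno argument is equivalent to the paper's induction via the auxiliary bound $|\partial_\xi^k\partial_\eta^l\Phi|\lesssim\min(1,|\Phi|)$ (the paper's \eqref{7-18BAB}); you use only the weaker $|\partial^\alpha\Phi|\lesssim|\Phi|$, which suffices since every Fa\`a di Bruno term is $\Phi^{-(j+1)}\prod_i\partial^{\beta_i}\Phi$ with $j\ge1$. For \eqref{7-20AA} the routes diverge: the paper simply notes that the 2D estimate (5.4) of Lemma~5.1 in \cite{IP2} extends verbatim to the continuous 3D phase $\Phi_{\mu,\nu}(\zeta,\upsilon)=\langle\zeta\rangle-\mu\langle\zeta-\upsilon\rangle-\nu\langle\upsilon\rangle$ on $\mathbb{R}^3\times\mathbb{R}^3$ and then restricts to $\zeta=(\xi,n)$, $\upsilon=(\eta,m)$, whereas you produce the explicit algebraic identities for $|\nabla_{k_0}\Phi|^2$ and $|\nabla_{k_2}\Phi|^2$ and close using $\langle u+v\rangle\le\langle u\rangle+\langle v\rangle$ together with Lemma~\ref{HC-11}. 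Your route is self-contained and makes the cancellation mechanism transparent (the nonnegative corrections $P_0,P_2$ absorb the $O(1)$ pieces, forcing the leading fractions to be nonnegative and hence boundable by $|\Phi|(|\Phi|+2\langle k_j\rangle)/\langle\cdot\rangle\langle\cdot\rangle$); the paper's route is shorter by outsourcing this computation to \cite{IP2}. The content is the same --- your identities are essentially what the cited lemma in \cite{IP2} contains, worked out in the present notation.
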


\begin{proof}
The estimate (5.4) of Lemma 5.1 in \cite{IP2} can be easily extended into 3D case by the same argument, namely,
\begin{equation}\label{7-21AA}
|\nabla_{\zeta,\upsilon}\Phi_{\mu, \nu}(\zeta,\upsilon)|\lesssim |\Phi_{\mu, \nu}(\zeta,\upsilon)|\ (\zeta,\upsilon\in \mathbb{R}^{3}),
\end{equation}
where $\Phi_{\mu, \nu}(\zeta,\upsilon)=\Lambda(\zeta)-\mu\Lambda(\zeta-\upsilon)-\nu\Lambda(\upsilon)$~and $\Lambda(\zeta)=\sqrt{1+|\zeta|^{2}}$.
Then \eqref{7-20AA} comes from \eqref{7-21AA} immediately by the choice $\Phi_{\mu, \nu}(\xi, n, \eta, m)=\Phi_{\mu,\nu}^{n, m}(\xi,\eta)$.

The estimate \eqref{7-16BAB} follows directly from the induction argument and the following inequality
\begin{equation}\label{7-18BAB}
\Big|\partial_{\xi}^{k}\partial_{\eta}^{l}\Phi_{\mu,\nu}^{n, m}(\xi,\eta)\Big|\lesssim \mathrm{min}\Big(1,
\Big|\Phi_{\mu,\nu}^{n, m}(\xi,\eta)\Big|\Big)\ (k+l\geq1).
\end{equation}

Next we prove \eqref{7-18BAB}: If $k+l=1$, \eqref{7-18BAB} comes from \eqref{7-20AA} directly.
If $(k,l)=(1,1)$, it follows from direct calculation and Lemma \ref{HC-11} that
\begin{equation}
\begin{aligned}
\Big|\partial_{\xi}\partial_{\eta}\Phi_{\mu,\nu}^{n, m}(\xi,\eta)\Big|\lesssim 1/\Lambda_{n-m}(\xi-\eta) \lesssim 1/\sqrt{1+\mathrm{min}(a,b,c)} \lesssim \Big|\Phi_{\mu,\nu}^{n, m}(\xi,\eta)\Big|,\nonumber
\end{aligned}
\end{equation}
where the notations $a, b~\mathrm{and}~c$~are defined as in Lemma \ref{HC-11}.
When $k+l\geq 2$,  \eqref{7-18BAB} can be analogously proved. Thus the
proof of Lemma \ref{HC-12} is completed.
\end{proof}

\section{Two crucial propositions and the proof of Theorem 1.1}

At first, we state two crucial propositions which will be utilized to prove Theorem 1.1.

\begin{prop}\label{prop4-1}{\bf (Continuity and boundedness of $Z$-norm)} {\it Assume
that $u\in C([0,T_{0}], H^{N+1}(\mathbb{R}^{2}\times\mathbb{T}))
\cap C^1([0,T_{0}], H^{N}(\mathbb{R}^{2}\times\mathbb{T}))$ is a solution of problem \eqref{0-1}.
Define $U$ as in \eqref{2-8} with the property of $U_{0}=U(0)\in Z$.
Then one has
\begin{equation}\label{3-2}
\sup\limits_{t\in [0,T_{0}]}\|e^{it\Lambda}U(t)\|_{Z}~\leq~C\Big(T_{0},\|U_{0}\|_{Z},\sup\limits_{t\in [0,T_{0}]}\|U(t)\|_{H^{N}(\mathbb{R}^{2}\times\mathbb{T})}\Big)
\end{equation}
and the mapping $t\mapsto e^{it\Lambda}U(t)$ is continuous from $[0,T_{0}]$~to~$Z$.}
\end{prop}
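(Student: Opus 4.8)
The plan is to derive a Duhamel representation for $V(t)=e^{it\Lambda}U(t)$ and then estimate the $Z$-norm of each dyadic-localized piece $R_kS_lV(t)$ directly. First I would record that $U$ satisfies a symmetric-hyperbolic-type first-order equation obtained from \eqref{0-1}: differentiating $U=\dot u-i\Lambda u$ gives $(\p_t+i\Lambda)U = \mathcal N(u,\p u,\p^2 u)$, where $\mathcal N$ collects the quadratic and cubic nonlinearities and, via $u=\tfrac{i}{2}\Lambda^{-1}(U-\overline U)$, $\dot u=\tfrac12(U+\overline U)$, can be written as a finite sum of bilinear (and trilinear) expressions in $U_{\pm}$ with symbols built from $\Lambda_{\mu;n}(\xi)$ and ratios like $\xi/\Lambda_n(\xi)$, $n/\Lambda_n(\xi)$. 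Hence
\begin{equation}\label{duh}
V(t)=V(0)+\int_0^t e^{is\Lambda}\mathcal N(s)\,ds ,
\end{equation}
and on the Fourier side each term in $e^{is\Lambda}\mathcal N$ carries an oscillatory factor $e^{is\Phi^{n,m}_{\mu,\nu}(\xi,\eta)}$. I would \emph{not} integrate by parts in $s$ here (that is the job of Proposition 4.2); for the crude bound \eqref{3-2} I only need $T_0$-dependent control, so I can afford to put absolute values inside the time integral.

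Next I would estimate $\|R_kS_lV(t)\|_{Z}$ by bounding, uniformly in the dyadic parameters, the two quantities in \eqref{2-1A} — namely $2^{9(k+l)}\|R_kS_lV(t)\|_{L^2}$ and $2^{9(k+l)}\sum_{j\ge1}2^j\|\varphi_j(x)R_kS_lV(t)\|_{L^2}$. For the $L^2$ part: $\|R_kS_lV(t)\|_{L^2}=\|R_kS_lU(t)\|_{L^2}\le\|U(t)\|_{H^N}\,2^{-N(k+l)/2}$ by the finite band property (Lemma \ref{HC-6}); since $N\ge 220>18$ this beats the $2^{9(k+l)}$ weight for $k+l\ge 0$, and for the finitely many low-frequency cases $k=l=-1$ it is immediate. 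For the spatially-weighted part I would use the Duhamel formula \eqref{duh}: applying $\varphi_j(x)R_kS_l$ to the bilinear terms, I bound the frequency multipliers using Lemma \ref{HC-10} and the finite band property, and control the output in $L^2$ by the product of an $L^\infty$ factor and an $L^2$ factor (Hölder), using the dispersive decay $\|R_{k'}S_{l'}U_{\pm}(s)\|_{L^\infty}\lesssim (1+s)^{-1}\cdots$ from Lemma \ref{HC-5} together with the current (finite) energy bound $\sup_{[0,T_0]}\|U(s)\|_{H^N}$. Summing the geometric series in $j$, $k'$, $l'$ and integrating $(1+s)^{-1}$ over $[0,T_0]$ produces the factor $\log(2+T_0)$, which is why the constant in \eqref{3-2} is allowed to depend on $T_0$. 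The initial contribution $\varphi_j(x)R_kS_lV(0)=\varphi_j(x)R_kS_lU_0$ is controlled by $\|U_0\|_Z$ by hypothesis.

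For the continuity statement, I would show $t\mapsto V(t)$ is continuous into $Z$ by the same estimates applied to differences: $V(t')-V(t)=\int_t^{t'} e^{is\Lambda}\mathcal N(s)\,ds$, and the bilinear/trilinear bounds above give $\|V(t')-V(t)\|_Z\lesssim C(T_0,\dots)\,|\log((1+t')/(1+t))|\to 0$ as $t'\to t$ — here I use that $u\in C([0,T_0];H^{N+1})\cap C^1([0,T_0];H^N)$ makes $\mathcal N(s)$ continuous in $s$ with values in a space on which the multiplier estimates act boundedly, so dominated convergence applies to the $j,k',l'$ sums. The main obstacle is bookkeeping: verifying that after expanding $\mathcal N$ — which contains the genuinely quasilinear terms $G^{jk}(u,\p u)\p^2_{jk}u$, i.e.\ products with \emph{two} derivatives on one factor — the loss of derivatives is absorbed by putting the high-frequency, high-derivative factor in $L^2$ (using $\|U(s)\|_{H^N}$ with $N\ge 220$ to spare) and the low-frequency factor in $L^\infty$, and that the extra weights $2^{9(k+l)}$ and $2^j$ are always dominated because one never needs more than, say, order-$18$ decay in $2^{k+l}$ from the high-regularity factor. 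The periodic direction adds the sums over $l,l',m$, but these are handled exactly as the $k$-sums using $S_l$ in place of $R_k$, with Lemma \ref{HC-6} and Lemma \ref{HC-10} supplying the needed $n/\Lambda_n$ bounds; no genuinely new difficulty arises there.
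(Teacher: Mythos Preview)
Your proposal has two genuine gaps.

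First, your appeal to Lemma~\ref{HC-5} for the dispersive decay $\|R_{k'}S_{l'}U_\pm(s)\|_{L^\infty}\lesssim(1+s)^{-1}\cdots$ is circular: the hypothesis \eqref{3-6-0} of that lemma already demands a bound on $\|e^{is\Lambda}U(s)\|_Z$, which is precisely what Proposition~\ref{prop4-1} is trying to establish. At this point in the argument you only know $U_0\in Z$ and $\sup_{[0,T_0]}\|U(s)\|_{H^N}<\infty$; no $Z$-norm control of $V(s)$ for $s>0$ is yet available, so you cannot invoke any $L^\infty$ decay coming from the $Z$-norm.

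Second, and more seriously, the $L^\infty\times L^2$ H\"older scheme produces no decay in the spatial index $j$, so the sum $\sum_{j\ge1}2^j\|\varphi_j(x)\,R_kS_l(\text{Duhamel term})\|_{L^2}$ has no reason to converge. H\"older on the product gives at best an $L^2$ bound on $R_kS_l\mathcal N$; multiplying by $\varphi_j$ and summing $2^j$ over $j$ diverges for a generic $L^2$ function. You assert that ``summing the geometric series in $j$'' works, but nothing in your argument generates a factor of $2^{-j}$. The frequency projections $R_kS_l$ and the multipliers handled by Lemma~\ref{HC-10} act on the Fourier side and say nothing about localization of the output in physical space.

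The paper's proof handles both issues simultaneously. It introduces a truncated norm $Z_J$ (with weight $2^{\min(j,2J-j)}$ instead of $2^j$) that is automatically finite on $H^N$, so one has a well-defined quantity to run a Gronwall-type argument on. For the weighted part with $j$ large, it writes the bilinear term via an explicit oscillatory kernel $K(s,x,y,x_1,y_1,x_2,y_2)$ and integrates by parts in $(\xi,\eta)$ (non-stationary phase) to show that $K$ decays like $(|x-x_1|+|x_1-x_2|)^{-10}$ when that quantity exceeds $2^{j-10}$. This transfers the spatial localization from the output at $|x|\sim 2^j$ to the inputs, whose localized $L^2$ norms are then controlled by $\|V(s)\|_{Z_J}$ --- closing the loop. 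No dispersive decay is used anywhere in this argument; the constant depends on $T_0$ only through the kernel estimate, not through any time integral of $(1+s)^{-1}$. Your physical-space H\"older approach has no substitute for this kernel localization step, and without it the weighted $j$-sum cannot be closed.
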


\begin{prop}\label{prop4-2} {\bf (Bootstrap estimate)} {\it Under the assumptions in Proposition \ref{prop4-1}, for $N_{0}:=50$ and $0< T_{0}\leq e^{c_{0}/\epsilon_{0}^2}$,
if
\begin{equation}\label{3-3}
\sup\limits_{t\in [0,T_{0}]}(1+t)^{-\kappa}\|U(t)\|_{H^{N}(\mathbb{R}^{2}\times \mathbb{T})}+
\sup\limits_{t\in [0,T_{0}]}\|e^{it\Lambda}U(t)\|_{\mathcal{H}^{N_{0}}}\leq\varepsilon_{0}<1,
\end{equation}
then
\begin{equation}\label{3-4}
\sup\limits_{t\in [0,T_{0}]}\|e^{it\Lambda}U(t)-U(0)\|_{\mathcal{H}^{N_{0}}}\lesssim \varepsilon_{0}^{2}+c_{0}\varepsilon_{0},
\end{equation}
where $0<\kappa\ll1$, and $\|\cdot\|_{\mathcal{H}^{N_{0}}}:=\|\cdot\|_{H^{N_{0}}(\mathbb{R}^{2}\times \mathbb{T})}+\|\cdot\|_{Z}$.\\
}
\end{prop}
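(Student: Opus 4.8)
The plan is to follow the standard space-time resonance bootstrap scheme for quasilinear Klein-Gordon equations. First I would derive the Duhamel formula for the profile $V(t)=e^{it\Lambda}U(t)$. Applying $\Lambda^{-1}$ to the equation and using $\partial_t U + i\Lambda U = \text{(nonlinearity)}$, one obtains
\begin{equation}\label{plan-duhamel}
\widehat{V_{n}}(t,\xi) = \widehat{V_n}(0,\xi) + \sum_{\mu,\nu}\sum_{m}\int_0^t\int_{\mathbb{R}^2} e^{is\Phi_{\mu,\nu}^{n,m}(\xi,\eta)}\, \mathfrak{m}_{\mu,\nu}^{n,m}(\xi,\eta)\, \widehat{V_{\mu;n-m}}(s,\xi-\eta)\,\widehat{V_{\nu;m}}(s,\eta)\, d\eta\, ds + (\text{cubic}),\nonumber
\end{equation}
where the quadratic symbol $\mathfrak{m}$ carries at most two derivatives (from the quasilinear terms $G^{jk}\partial^2_{jk}u$), hence $|\mathfrak{m}_{\mu,\nu}^{n,m}(\xi,\eta)|\lesssim (1+|\xi|^2+n^2+|\eta|^2+m^2)$. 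The goal is to bound the difference $V(t)-V(0)$ in $\mathcal{H}^{N_0} = H^{N_0}\cap Z$, so I split the estimate into the $H^{N_0}$ part and the $Z$ part, and within each, into the quadratic and cubic (and higher) contributions. The cubic and higher-order terms are easier: using \eqref{7-12AA}, \eqref{7-14AA} of Lemma \ref{HC-5}, the trilinear estimates of Appendix A, and the $(1+t)^{-1}$ dispersive decay, one gets an integrable-in-time bound of size $O(\varepsilon_0^3)$ up to the full time $e^{c_0/\varepsilon_0^2}$, which is absorbed into $\varepsilon_0^2$.

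For the quadratic terms, the standard dichotomy is between the region where the phase $\Phi_{\mu,\nu}^{n,m}$ is non-degenerate (non-resonant) and the region where it is small. In the non-resonant region I would integrate by parts in $s$ (the $e^{is\Phi}$ oscillation), converting one power of $t$ into $1/\Phi$; the resulting boundary terms and the commutator term where $\partial_s$ hits $\widehat{V}$ are controlled using \eqref{7-16BAB} (symbol bounds on $1/\Phi$), the bilinear estimates of Appendix A, the dispersive decay \eqref{7-8}, and the a priori bound \eqref{3-3}. Crucially, the bad set is where $\Phi_{\mu,\nu}^{n,m}$ is small \emph{and} the time weight prevents integration by parts; there, for Klein-Gordon the key fact (a consequence of Lemma \ref{HC-11}) is that the space-time resonant set is essentially $\{\xi=\eta=0,\ n=m=0\}$, a point in frequency. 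Hence the dangerous frequency region has small measure and low frequencies, so the $Z$-norm weights $2^{9(k+l)}$ and the $H^{N_0}$ weights only help; one estimates this piece by putting two factors in $L^\infty$ via \eqref{7-12AA} and one in $L^2$ via \eqref{7-14AA}, or by an explicit integration over the small resonant ball. Because one of the inputs decays like $(1+s)^{-1}$, integrating over $[0,T_0]$ with $T_0\le e^{c_0/\varepsilon_0^2}$ produces a $\log T_0 \lesssim c_0/\varepsilon_0^2$ factor; multiplying by $\varepsilon_0^2$ (from the two profile factors, modulo the $L^\infty$-$L^2$ split giving $\varepsilon_0\cdot\varepsilon_0$) gives precisely the $c_0\varepsilon_0$ term in \eqref{3-4}. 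The case $N_0=50$ with $N\ge 220$ leaves ample room: the many spare derivatives let one trade high frequencies for negative powers of $2^{k+l}$ and sum the Littlewood-Paley series in both $k$ (from $\mathbb{R}^2$) and $l$ (from $\mathbb{T}$), using Lemma \ref{HC-6}, Lemma \ref{HC-7} and Lemma \ref{HC-10} to handle the derivative-loss symbols.

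Concretely, the steps in order are: (1) write the Duhamel formula for $V$, isolating quadratic vs.\ cubic-and-higher pieces, and reduce via Littlewood-Paley to estimating dyadic pieces $R_{k}S_l$ of the output in terms of dyadic pieces $R_{k_1}S_{l_1}, R_{k_2}S_{l_2}$ of the inputs; (2) dispose of the cubic and higher terms with the trilinear estimates and $(1+s)^{-1}$ decay, getting $O(\varepsilon_0^3)\log T_0$, absorbed into $\varepsilon_0^2 + c_0\varepsilon_0$; (3) for the quadratic terms, split $[0,t]$ into $[0,1]$ (trivial, short time) and $[1,t]$, and on $[1,t]$ decompose frequency space by the size of $\Phi_{\mu,\nu}^{n,m}$ relative to $1/s$; (4) on the non-resonant piece integrate by parts in $s$ using $\mathcal{L}=\frac{1}{i\Phi}\partial_s$ and control the three resulting terms (two endpoints, one with $\partial_s\widehat V$ replaced by Duhamel again or estimated via the $H^N$ growth bound) using Appendix A bilinear bounds and \eqref{7-16BAB}; (5) on the resonant piece, exploit that the resonant set is a point (Lemma \ref{HC-11}) so the region is a small low-frequency ball, and estimate directly with two $L^\infty$ decay factors from \eqref{7-12AA} times one $L^2$ factor from \eqref{7-14AA}, integrating $\int_1^t (1+s)^{-1}\,ds \lesssim \log T_0 \lesssim c_0\varepsilon_0^{-2}$; (6) carry out the same split for the $Z$-norm piece, where the extra weights $2^{9(k+l)}$ and $2^j$ are accommodated by the surplus derivatives ($N - N_0$ large) and the spatial-localization estimates; (7) sum all dyadic contributions and collect the bounds into \eqref{3-4}.

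The main obstacle I expect is step (4)–(5): carefully controlling the quadratic interaction near the space-time resonance. The quasilinear nature means the symbol grows quadratically in frequency, so one genuinely loses two derivatives in the worst interaction, and recovering them requires the non-degeneracy estimates of Lemma \ref{HC-12} to hold uniformly in $n,m$ and to be compatible with the Littlewood-Paley summation; moreover the bookkeeping of which factor goes in $L^2$ versus $L^\infty$ must be arranged so that exactly \emph{one} power of $(1+s)^{-1}$ appears (more would leave room to spare, fewer would break the estimate), yielding the sharp $e^{c_0/\varepsilon_0^2}$ lifespan. The delicate point specific to the product space $\mathbb{R}^2\times\mathbb{T}$ is that the $n=m=0$ sector behaves like the genuinely 2D Klein-Gordon problem (only $(1+t)^{-1}$ decay, no room to spare), whereas the nonzero modes behave better; the $Z$-norm with its $2^{9(k+l)}$ weight in the $l$ (torus) variable is precisely engineered to separate these, and verifying that the bilinear and trilinear estimates of Appendix A respect this weighting is where most of the technical work lies.
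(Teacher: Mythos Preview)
Your overall architecture (Duhamel for the profile, integration by parts in $s$ on the quadratic term, cubic remainder) is right, but your picture of the resonant region is off and this mislocates where the $c_0\varepsilon_0$ loss actually occurs. For Klein--Gordon the phase $\Phi_{\mu,\nu}^{n,m}$ is \emph{never} zero: Lemma~\ref{HC-11} gives $|\Phi|^{-1}\le 2\sqrt{1+\min(a,b,c)}$, so $|\Phi|\gtrsim 1$ at bounded frequency and is small only when all three frequencies are large. There is no low-frequency resonant ball, and in fact no time-resonant set at all. Consequently the paper does not split the quadratic term by the size of $\Phi$; the integration by parts in $s$ is performed \emph{globally}, producing a boundary bilinear term $Q_{\mu\nu}(t)$ with multiplier $\mathcal{M}/\Phi$ and a pure cubic time-integral. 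This is Shatah's normal form, and the modified profile is $W:=V-\sum_{\mu,\nu}Q_{\mu\nu}$. Your step~(5), handling a putative quadratic resonant piece, is aimed at a set that is empty.

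With the normal form done, the paper splits into two lemmas. Lemma~6.1 shows $\|Q_{\mu\nu}(t)\|_{\mathcal{H}^{N_0}}\lesssim\varepsilon_0^2$ uniformly in $t$, using the bilinear estimate (Lemma~\ref{HC-8}) together with the lower bound on $|\Phi|$; there is no time integral here and no $\log T_0$. Lemma~6.2 shows $\|W(t)-W(0)\|_{\mathcal{H}^{N_0}}\lesssim\varepsilon_0^3+c_0\varepsilon_0$ via the trilinear estimate (Lemma~\ref{HC-9}). The $c_0\varepsilon_0$ arises \emph{only} in the $Z$-norm of this cubic term, specifically from the physical-space weight $2^j$ in the intermediate range $M(k,s)\le j\le 100+\log_2(1+s)$: there two $L^\infty$ inputs give $(1+s)^{-2}$, the $2^j$ sum restores one power of $(1+s)$, and one is left with $\varepsilon_0^3\int_0^{T_0}(1+s)^{-1}ds\lesssim c_0\varepsilon_0$. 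So the logarithmic loss is a cubic $Z$-norm phenomenon driven by the spatial weight, not a quadratic resonance.
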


The proofs of Proposition 4.1 and Proposition 4.2 will be postponed to Section~5 and Section~6, respectively.
In addition, by the standard local existence theory and energy estimate for the second order quasilinear wave
equations (see Chapter 6 of \cite{H}), we have

\begin{prop} {\bf (Local existence)}\label{prop4-3} {\it  Under the assumptions in Theorem \ref{mainthm},
problem \eqref{0-1} has a unique solution $u\in C([0, 1], H^{N+1}(\mathbb{R}^{2}\times \mathbb{T}))\cap C^{1}([0, 1], H^{N}(\mathbb{R}^{2}\times \mathbb{T}))$. Moreover
\begin{equation}\label{4-2}
\sup\limits_{t\in [0,1]}\|u(t)\|_{H^{N+1}(\mathbb{R}^{2}\times \mathbb{T})}+\sup\limits_{t\in [0,1]}\|\dot{u}(t)\|_{H^{N}(\mathbb{R}^{2}\times \mathbb{T})}\lesssim\|u_{0}\|_{H^{N+1}(\mathbb{R}^{2}\times \mathbb{T})}+\|u_{1}\|_{H^{N}(\mathbb{R}^{2}\times \mathbb{T})}
\end{equation}
and
\begin{equation}\label{4-3}
\begin{aligned}
\mathcal{E}_{N+1}(t')-\mathcal{E}_{N+1}(t)\lesssim\int_{t}^{t'}\mathcal{E}_{N+1}(s)
\cdot\Big[\sum\limits_{|\rho|\leq2}\|\partial_{x,y}^{\rho}u(s)\|_{L^{\infty}(\mathbb{R}^{2}\times \mathbb{T})}+\sum\limits_{|\rho|\leq1}\|\partial_{x,y}^{\rho}\dot{u}(s)\|_{L^{\infty}(\mathbb{R}^{2}\times \mathbb{T})}\Big]ds
\end{aligned}
\end{equation}
for any $t\leq t'\in[0,1]$,~where the definition of $\mathcal{E}_{N+1}(t)$ is given in \eqref{0-666}.}
\end{prop}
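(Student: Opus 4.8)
The plan is to follow the classical local existence theory for second-order quasilinear hyperbolic equations (cf.\ Chapter~6 of \cite{H}); the compact factor $\mathbb{T}$ introduces no new difficulty. First I would rewrite \eqref{0-1} as $\sum_{j,k=0}^{3}g^{jk}(u,\partial u)\partial^2_{jk}u+u=\widetilde Q(u,\partial u)$, where $g^{00}\equiv1$ (since $G^{00}\equiv0$), each $g^{jk}=g^{kj}$ depends affinely on $(u,\partial u)$ through the fixed coefficients $g^{jkl},h^{jk}$, and $\widetilde Q$ is quadratic. Since $N+1\geq221>3/2+2$, the Sobolev embedding $H^{N+1}(\mathbb{R}^2\times\mathbb{T})\hookrightarrow C^2$ shows that the data norm controls $\|u_0\|_{C^2}+\|u_1\|_{C^1}$; hence for $\varepsilon_0$ small the spatial matrix $(g^{jk})_{j,k=1}^{3}=(\delta_{jk}-G^{jk})_{j,k=1}^{3}$ stays uniformly positive definite, i.e.\ the operator remains uniformly hyperbolic along the solution.

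Next I would construct the solution by the standard iteration. Let $u^{(0)}$ solve the linear Klein--Gordon equation with data $(u_0,u_1)$, and for $m\geq0$ let $u^{(m+1)}$ solve the \emph{linear} variable-coefficient equation $\sum_{j,k}g^{jk}(u^{(m)},\partial u^{(m)})\partial^2_{jk}u^{(m+1)}+u^{(m+1)}=\widetilde Q(u^{(m)},\partial u^{(m)})$ with the same data. Each step is solvable on $[0,1]$ by the $L^2$-energy method for linear hyperbolic equations, and, using the Moser product and commutator estimates in $H^s(\mathbb{R}^2\times\mathbb{T})$ with $s>3/2$ (valid since $\mathbb{T}$ is a compact manifold) together with a continuity argument keeping $(g^{jk})_{j,k\geq1}$ uniformly elliptic, one obtains the uniform bound $\sup_{[0,1]}\big(\|u^{(m)}\|_{H^{N+1}}+\|\partial_t u^{(m)}\|_{H^{N}}\big)\lesssim\|u_0\|_{H^{N+1}}+\|u_1\|_{H^{N}}$ for all $m$ (after shrinking the existence time to the fixed value $1$, which is permissible since the data norm is $\leq\bar\varepsilon$). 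Contraction of $\{u^{(m)}\}$ in the low norm $C([0,1],H^1)\cap C^1([0,1],L^2)$ produces a limit $u$, which a standard weak-compactness and lower-semicontinuity argument upgrades to $u\in C([0,1],H^{N+1})\cap C^1([0,1],H^{N})$ satisfying \eqref{4-2}; uniqueness follows from the same low-norm energy estimate applied to the difference of two solutions.

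For \eqref{4-3} I would apply $\partial^{\rho}_{x,y}$ with $|\rho|\leq N$ to \eqref{0-1}, pair with $\partial_t\partial^{\rho}_{x,y}u$ in $L^2(\mathbb{R}^2\times\mathbb{T})$, and integrate over $y\in\mathbb{T}$ and $x\in\mathbb{R}^2$, where periodicity in $y$ and decay in $x$ eliminate all boundary terms. The contribution of $\partial_t^2-\Delta_{x,y}+1$ yields $\frac{d}{dt}$ of the quadratic part of $\mathcal{E}_{N+1}(t)$ up to a constant. For the principal quasilinear term I would split $\partial^{\rho}_{x,y}(G^{jk}\partial^2_{jk}u)=G^{jk}\partial^2_{jk}\partial^{\rho}_{x,y}u+[\partial^{\rho}_{x,y},G^{jk}\partial^2_{jk}]u$: the $L^2$ pairing of the commutator with $\partial_t\partial^{\rho}_{x,y}u$, by the Leibniz rule and Moser estimates, is bounded by $\big(\sum_{|\rho|\leq2}\|\partial^{\rho}_{x,y}u\|_{L^\infty}+\sum_{|\rho|\leq1}\|\partial^{\rho}_{x,y}\dot u\|_{L^\infty}\big)\mathcal{E}_{N+1}(t)$; the main part, for $j,k\geq1$, is integrated by parts once in $x_k$ and then symmetrized using $G^{jk}=G^{kj}$, producing (up to a harmless constant) the modified-energy correction $\int_{\mathbb{R}^2}\int_{\mathbb{T}}G^{jk}\partial_j\partial^{\rho}_{x,y}u\,\partial_k\partial^{\rho}_{x,y}u\,dydx$ appearing in \eqref{0-666}, plus an error term $\int_{\mathbb{R}^2}\int_{\mathbb{T}}(\partial_tG^{jk})\partial_j\partial^{\rho}_{x,y}u\,\partial_k\partial^{\rho}_{x,y}u\,dydx$ again bounded by the same $L^\infty$ factor times $\mathcal{E}_{N+1}(t)$; the mixed terms with $j=0$ or $k=0$ (where $G^{00}\equiv0$) and the lower-order contribution of $\partial^\rho_{x,y}Q(u,\partial u)$ are error terms of the same type. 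Summing over $|\rho|\leq N$ and integrating in $t$ gives \eqref{4-3}.

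The main obstacle is structural rather than conceptual: the quasilinear term $G^{jk}\partial^2_{jk}u$ carries two derivatives of $u$, so a naive energy estimate loses a derivative, and the modified energy $\mathcal{E}_{N+1}$ in \eqref{0-666} is built precisely to absorb it — its appearance relies on the symmetry hypotheses $g^{jkl}=g^{kjl}$, $h^{jk}=h^{kj}$ (hence $G^{jk}=G^{kj}$), which make the integration by parts close. The other point to watch is keeping the principal part uniformly hyperbolic throughout the iteration, which rests on the smallness of $\varepsilon_0$ together with the continuity argument above. As all of this is classical, I would only indicate these steps and not carry out the routine Moser, commutator, and linear-hyperbolic estimates in detail.
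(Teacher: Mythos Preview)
Your proposal is correct and follows exactly the approach the paper intends: the paper does not give a proof of Proposition~4.3 at all but simply invokes ``the standard local existence theory and energy estimate for the second order quasilinear wave equations (see Chapter~6 of \cite{H}),'' and your sketch is precisely an outline of that classical argument, including the iteration scheme, the Moser/commutator estimates, and the derivation of the modified energy inequality \eqref{4-3} via symmetrization using $G^{jk}=G^{kj}$.
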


\vskip 0.1 true cm

Based on the preparations above, we now prove Theorem 1.1.

\vskip 0.1 true cm

\noindent {\bf \textit{Proof of Theorem 1.1.}} By Proposition 4.3 and Proposition 4.1, there is $T_{0}>0$ such that the unique
solution $u\in C([0,T_{0}],H^{N+1}(\mathbb{R}^{2}\times \mathbb{T}))\cap C^{1}([0,T_{0}],H^{N}(\mathbb{R}^{2}\times \mathbb{T}))$
of problem \eqref{0-1} exists. Moreover, there exists a suitably large constant $C_1 >2$ such that
\begin{equation}\label{4-9}
\sup\limits_{t\in [0,T_{0}]}\|e^{it\Lambda}U(t)\|_{\mathcal{H}^{N_{0}}}~\leq~C_{1}\varepsilon_{0}.
\end{equation}

Under \eqref{4-9}, we next show the following bootstrap estimate
\begin{equation}\label{4-10}
\sup\limits_{t\in [0,T_{0}]}\|e^{it\Lambda}U(t)\|_{\mathcal{H}^{N_{0}}} \leq \f{1}{2}C_1\varepsilon_{0}.
\end{equation}

Note that
\begin{equation*}
\begin{aligned}
U(t)&=\sum_{k\geq-1}\sum_{l\geq-1}R_{k}S_{l}U\\
&=\sum_{k\geq-1}\sum_{l\geq-1}e^{-it\Lambda}R_{k}S_{l}(e^{it\Lambda}U)\\
&=\sum_{k\geq-1}\sum_{l\geq-1}R_{[-1,k+2]}e^{-it\Lambda}S_{[l-2,l+2]}(R_{k}S_{l}(e^{it\Lambda}U)).
\end{aligned}
\end{equation*}
With the definition $u=\frac{i}{2\Lambda}(U-\overline{U})$, by Lemma \ref{HC-6} and Lemma \ref{HC-10}, we arrive at
\begin{equation}\label{4-12ASS-1}
\begin{aligned}
&(1+t)\sup\limits_{|\rho|\leq 2}\|\partial_{x,y}^{\rho}u(t)\|_{L^{\infty}(\mathbb{R}^{2}\times \mathbb{T})}\\
\lesssim& (1+t)\sup\limits_{|\rho|\leq 2}\sum_{k\geq-1}\sum_{l\geq-1}
\|\partial_{x,y}^{\rho}(R_{k}S_{l}u(t))\|_{L^{\infty}(\mathbb{R}^{2}\times \mathbb{T})}\\
\lesssim& (1+t)\sum_{\mbox{\tiny$\begin{array}{c}a,b\in \mathbb{N}\\
a+b\leq 2\end{array}$}}\sum_{k\geq-1}\sum_{l\geq-1}2^{ak+bl}\|R_{k}S_{l}u(t)\|_{L^{\infty}(\mathbb{R}^{2}\times \mathbb{T})}\\
\lesssim& (1+t)\sum_{\mbox{\tiny$\begin{array}{c}a,b\in \mathbb{N}\\
a+b\leq 2\end{array}$}}\sum_{k\geq-1}\sum_{l\geq-1}2^{ak+bl}\|R_{k}S_{l}U(t)\|_{L^{\infty}(\mathbb{R}^{2}\times \mathbb{T})}.
\end{aligned}
\end{equation}

On the other hand, by \eqref{7-8}, \eqref{2-1-0} and \eqref{4-9},  one has that for $t\in [0,T_{0}]$,
\begin{equation}\label{4-12ASS-2}
\begin{aligned}
&\|R_{k}S_{l}U\|_{L^{\infty}(\mathbb{R}^{2}\times\mathbb{T})}\\
=&\|R_{[-1,k+2]}e^{-it\Lambda}S_{[l-2,l+2]}(R_{k}S_{l}(e^{it\Lambda}U))\|_{L^{\infty}(\mathbb{R}^{2}\times\mathbb{T})}\\
\lesssim& (1+t)^{-1}2^{2(k+l)}\Big\|R_{k}S_{l}(e^{it\Lambda}U)\Big\|_{L^{1}(\mathbb{R}^{2}\times\mathbb{T})}\\
\lesssim& (1+t)^{-1}2^{2(k+l)}2^{-9(k+l)}\|e^{it\Lambda}U(t)\|_{Z}\\
\lesssim& C_{1}\varepsilon_{0}(1+t)^{-1}2^{-7(k+l)}.
\end{aligned}
\end{equation}
Combining \eqref{4-12ASS-1} with \eqref{4-12ASS-2} yields
\begin{equation*}
\begin{aligned}
\sup\limits_{t\in [0,T_{0}]}[(1+t)\sup\limits_{|\rho|\leq2}
\|\partial_{x,y}^{\rho}u(t)\|_{L^{\infty}(\mathbb{R}^{2}\times \mathbb{T})}]\lesssim C_{1}\varepsilon_{0}.
\end{aligned}
\end{equation*}
\vskip 0.1 true cm
\noindent Similarly, $\sup\limits_{t\in [0,T_{0}]}[(1+t)\sup\limits_{|\rho|\leq1}\|\partial_{x,y}^{\rho}\dot{u}(t)\|_{L^{\infty}(\mathbb{R}^{2}\times \mathbb{T})}]\lesssim C_{1}\varepsilon_{0}$ holds. Thus, for $t\in [0,T_{0}]$,
\begin{equation}\label{4-12A}
\sum\limits_{|\rho|\leq2}\|\partial_{x,y}^{\rho}u(t)\|_{L^{\infty}(\mathbb{R}^{2}\times \mathbb{T})}+\sum\limits_{|\rho|\leq1}\|\partial_{x,y}^{\rho}\dot{u}(t)\|_{L^{\infty}(\mathbb{R}^{2}\times \mathbb{T})}
\leq~C_{1}C_{2}\varepsilon_{0}\cdot (1+t)^{-1}.
\end{equation}
By \eqref{4-3} and \eqref{4-12A}, we get
\begin{equation*}
\begin{aligned}
\mathcal{E}_{N+1}(t)-\mathcal{E}_{N+1}(0)\leq C_{3}\cdot\int_{0}^{t}\mathcal{E}_{N+1}(s)\cdot C_{1}C_{2}\varepsilon_{0}\cdot (1+s)^{-1}ds.
\end{aligned}
\end{equation*}
Together with Gronwall's inequality, this yields
\begin{equation}\label{4-14}
\mathcal{E}_{N+1}(t)\leq \mathcal{E}_{N+1}(0)e^{C_{1}C_{2}C_{3}\varepsilon_{0}\cdot\int_{0}^{t}(1+s)^{-1}ds}
\leq \mathcal{E}_{N+1}(0)(1+t)^{2A_{0}\epsilon_{0}}.
\end{equation}
where $A_{0}:=C_{1}C_{2}C_{3}/2$. By \eqref{4-9}, \eqref{0-666} and Sobolev embedding theorem, as long as $\varepsilon_{0}$
is sufficiently small, then
\begin{equation*}
\frac{1}{2}\Big(\|\dot{u}(t)\|^{2}_{H^{N}(\mathbb{R}^{2}\times \mathbb{T})}
+\|u(t)\|^{2}_{H^{N+1}(\mathbb{R}^{2}\times \mathbb{T})}\Big)
\leq \mathcal{E}_{N+1}(t)
\leq 2\Big(\|\dot{u}(t)\|^{2}_{H^{N}(\mathbb{R}^{2}\times \mathbb{T})}
+\|u(t)\|^{2}_{H^{N+1}(\mathbb{R}^{2}\times \mathbb{T})}\Big),~~~t\in [0,T_{0}].
\end{equation*}
Therefore, combing this with \eqref{0-3} and \eqref{4-14} derives
\begin{equation}\label{4-13AD}
\|\dot{u}(t)\|_{H^{N}(\mathbb{R}^{2}\times \mathbb{T})}
+\|u(t)\|_{H^{N+1}(\mathbb{R}^{2}\times \mathbb{T})}\leq 4\varepsilon_{0}(1+t)^{A_{0}\epsilon_{0}}.
\end{equation}
In addition, by $U(t):=\dot{u}(t)-i\Lambda u(t)$, then there is a constant $C_{4}>0$, such that
\begin{equation}\label{4-13BE}
\|U(t)\|_{H^{N}(\mathbb{R}^{2}\times \mathbb{T})}\leq C_{4}(\|\dot{u}(t)\|_{H^{N}(\mathbb{R}^{2}\times \mathbb{T})})
+\|u(t)\|_{H^{N+1}(\mathbb{R}^{2}\times \mathbb{T})}\leq 4C_{4}\varepsilon_{0}(1+t)^{A_{0}\epsilon_{0}}.
\end{equation}
Furthermore, based on \eqref{4-9} and \eqref{4-13BE}, applying Proposition 4.2 concludes that
\begin{equation*}
\sup\limits_{t\in [0,T_{0}]}\|e^{it\Lambda}U(t)-U(0)\|_{\mathcal{H}^{N_{0}}}\leq
(\varepsilon_{0}+c_{0})\cdot (4C_{4}+C_{1})C_{5}\varepsilon_{0}.
\end{equation*}
This implies that if $c_{0}$ and $\varepsilon_{0}$ are sufficiently small, then
\begin{equation*}
\begin{aligned}
\sup\limits_{t\in [0,T_{0}]}\|e^{it\Lambda}U(t)\|_{\mathcal{H}^{N_{0}}}
\leq& \|U(0)\|_{\mathcal{H}^{N_{0}}}+(\varepsilon_{0}+c_{0})\cdot (4C_{4}+C_{1})C_{5}\varepsilon_{0}\\
\leq& [1+(\varepsilon_{0}+c_{0})\cdot (4C_{4}+C_{1})C_{5}]\varepsilon_{0}\\
\leq& \f{1}{2}C_1\varepsilon_{0}.
\end{aligned}
\end{equation*}

Therefore, we can extend the local solution $u$ of of \eqref{0-1} at least on $[0,e^{c_{0}/\epsilon_{0}^{2}}]$
by the continuation argument. Moreover, the estimates in \eqref{0-4} follow from \eqref{4-12A} and \eqref{4-13AD}.

\section{\textbf{Proof of Proposition 4.1}}

In this section, $C>0$ denotes the sufficiently large generic constant that depends only
on $T_0$, $\|U_{0}\|_{Z}$ and  $\sup\limits_{t\in[0,T_{0}]}
\|U(t)\|_{H^{N}(\mathbb{R}^{2}\times\mathbb{T})}$, whose value may change from line to line.

For integer $J\geq0$~and~$f\in H^{N}(\mathbb{R}^{2}\times\mathbb{T})$, define
\begin{equation}\label{5-1}
\|f\|_{Z_{J}}:=\sup\limits_{k,l \in \mathbb{Z},~k,l\geq-1}2^{9(k+l)}
\Big[\|R_{k}S_{l}f\|_{L^{2}(\mathbb{R}^{2}\times\mathbb{T})}
+\sum\limits_{j\in \mathbb{Z}^{+}}2^{\mathrm{min}(j,2J-j)}
\|\varphi_{j}(x)\cdot R_{k}S_{l}f\|_{L^{2}(\mathbb{R}^{2}\times\mathbb{T})}\Big].
\end{equation}
This obviously means
\begin{equation*}
\|f\|_{Z_{J}}\leq\|f\|_{Z},~~~~~~\|f\|_{Z_{J}}\lesssim_{J}\|f\|_{H^{N}(\mathbb{R}^{2}\times\mathbb{T})}.
\end{equation*}

As in (3.20) of \cite{IP2}, we shall show that if~$t, t'\in [0,T_{0}]$ with $0\leq t'-t\leq 1$, and any $J\in \mathbb{Z}_{+}$, then
\begin{equation}\label{5-2}
\begin{aligned}
\|e^{it'\Lambda}U(t')-e^{it\Lambda}U(t)\|_{Z_{J}}&\leq C|t'-t|\Big(1+\sup\limits_{s\in [t,t']}\|e^{is\Lambda}U(s)\|_{Z_{J}}\Big).
\end{aligned}
\end{equation}
Note that under \eqref{5-2}, one easily has that for any $t,t'\in [0,T_{0}]$,
\begin{equation}\label{5-3}
\sup\limits_{t\in [0,T_{0}]}\|e^{it\Lambda}U(t)\|_{Z_{J}}\leq C,\ \|e^{it'\Lambda}U(t')-e^{it\Lambda}U(t)\|_{Z_{J}}\leq C|t'-t|
\end{equation}
hold uniformly in $J$. Subsequently, letting $J\rightarrow\infty$ in \eqref{5-3} yields the results in Proposition 4.1.

Since
\begin{equation}\label{5-4}
\left\{
\begin{aligned}
&\partial_{t}\widehat{u_{n}}(t,\xi)=\frac{\widehat{U_{+;n}}(t,\xi)+\widehat{U_{-;n}}(t,\xi)}{2},\\
&\widehat{u_{n}}(t,\xi)=\frac{\widehat{U_{+;n}}(t,\xi)-\widehat{U_{-;n}}(t,\xi)}{-2i\Lambda_{n}(\xi)},
\end{aligned}
\right.
\end{equation}
and
\begin{equation}\label{5-5}
(\partial_{t}+i\Lambda)U=(\partial_{t}^2-\Delta_{x,y}+1)u=F(u,\partial u,\partial^{2} u),
\end{equation}
then taking the Fourier transform on the variable $(x,y)$ in \eqref{5-5} and utilizing \eqref{5-4} together
with the form of $F(u,\partial u,\partial^{2} u)$, we arrive at
\begin{equation}\label{5-6}
\Big[\partial_{t}+i\Lambda_{n}(\xi)\Big]\widehat{U_{n}}(t,\xi)=\sum_{\mu,\nu\in\{+,-\}}\int_{\mathbb{R}^{2}}\sum_{m\in \mathbb{Z}}
\mathcal{M}_{n,m}(\xi,\eta)\widehat{U_{\mu;n-m}}(t,\xi-\eta)\widehat{U_{\nu;m}}(t,\eta)d\eta,
\end{equation}
where~$\mathcal{M}_{n,m}(\xi,\eta)$ is the linear combination of the products of \eqref{5-7-0} and \eqref{5-7-1} below:
\begin{subequations}\label{5-7-00}
\begin{align}
&1,~\frac{1}{\Lambda_{n-m}(\xi-\eta)},~\frac{\xi_{l}-\eta_{l}}{\Lambda_{n-m}(\xi-\eta)},~\frac{n-m}{\Lambda_{n-m}(\xi-\eta)}\ (1\leq l\leq 2),\label{5-7-0}\\
&1,~\eta_{i},~m,\frac{1}{\Lambda_{m}(\eta)},~\frac{\eta_{i}}{\Lambda_{m}(\eta)},~\frac{m}{\Lambda_{m}(\eta)},
~\frac{\eta_{i}\eta_{j}}{\Lambda_{m}(\eta)},~
\frac{m\eta_{i}}{\Lambda_{m}(\eta)},~\frac{m^{2}}{\Lambda_{m}(\eta)}\ (1\leq i, j\leq 2).\label{5-7-1}
\end{align}
\end{subequations}

By \eqref{2-9}, \eqref{2-2}, \eqref{2-10} and \eqref{2-11}, the equation \eqref{5-6} is actually equivalent to
\begin{equation}\label{5-8}
\frac{d}{dt}[\widehat{V_{+;n}}(t,\xi)]=\sum_{\mu,\nu\in\{+,-\}}\int_{\mathbb{R}^{2}}\sum_{m\in \mathbb{Z}}
e^{it \Phi_{\mu, \nu}^{n, m}(\xi, \eta)}\mathcal{M}_{n,m}(\xi,\eta)\widehat{V_{\mu;n-m}}(t,\xi-\eta)\widehat{V_{\nu;m}}(t,\eta)d\eta.
\end{equation}
Therefore, for any $t\leq t'$ with $t, t'\in [0,T_{0}]$,
\begin{equation}\label{5-10A}
\begin{aligned}
&\widehat{V_{+;n}}(t',\xi)-\widehat{V_{+;n}}(t,\xi)\\
=&\sum_{\mu,\nu\in\{+,-\}}\int_{t}^{t'}\int_{\mathbb{R}^{2}}\sum_{m\in \mathbb{Z}}
e^{is \Phi_{\mu, \nu}^{n, m}(\xi, \eta)}\mathcal{M}_{n,m}(\xi,\eta)
\widehat{V_{\mu;n-m}}(s,\xi-\eta)\widehat{V_{\nu;m}}(s,\eta)d\eta ds\\
:=&\sum_{\mu,\nu\in\{+,-\}}\int_{t}^{t'}\mathcal{F}_{x,y}(M_{s}^{\mu,\nu})(\xi,n)ds.
\end{aligned}
\end{equation}

Since \eqref{5-2} is equivalent to
\begin{align}\label{5-A12}
\|V_{+}(t')-V_{+}(t)\|_{Z_{J}}~\leq~C|t'-t|\Big(1+\sup\limits_{s\in [t,t']}\|V_{+}(s)\|_{Z_{J}}\Big),
\end{align}
then by \eqref{5-10A} and \eqref{5-1}, \eqref{5-A12} as well as \eqref{5-2} will be obtained if there holds
\begin{equation}\label{5-AB}
\|M_s^{\mu, \nu}\|_{Z_J}\leq C\Big(1+\sup\limits_{s\in [t,t']}\|V_{+}(s)\|_{Z_{J}}\Big)~~~(s\in[0,T_{0}],\ \mu,\nu\in\{+,-\}).
\end{equation}
According to the definition of $\|\cdot\|_{Z_J}$, the proof of  \eqref{5-AB} will be
divided into the following two steps.

Additionally, for later use, we define that for $k\in \mathbb{Z}$,
\begin{equation}\label{2-3}
\begin{aligned}
&\chi_{k}^{1}:=\{(k_{1},k_{2})\in \mathbb{Z}^{2}:|\mathrm{max}(k_{1},k_{2})-k|\leq8,~~k_{1},k_{2}\geq -1\},\\
&\chi_{k}^{2}:=\{(k_{1},k_{2})\in \mathbb{Z}^{2}:\mathrm{max}(k_{1},k_{2})-k\geq8~\mathrm{and}~|k_{1}-k_{2}|\leq8,~~k_{1},k_{2}\geq -1\},\\
&\chi_{k}:=\chi_{k}^{1}\cup\chi_{k}^{2}.
\end{aligned}
\end{equation}
It is easy to check that
\begin{equation}\label{2-3333}
\mathrm{if}~\psi_{k}(x)\psi_{k_{1}}(x-y)\psi_{k_{2}}(y)\neq0,~\mathrm{then}~(k_{1},k_{2})\in\chi_{k}.
\end{equation}

\vskip 0.2cm

{\bf Step 1: Establish}
\begin{equation}\label{5-ABC}
2^{9(k+l)}\Big[\|R_{k}S_{l}M_{s}^{\mu,\nu}\|_{L^{2}(\mathbb{R}^{2}\times\mathbb{T})}+
\sum\limits_{1\leq j\leq 100}2^{j}\|\varphi_{j}(x)
\cdot R_{k}S_{l}M_{s}^{\mu,\nu}\|_{L^{2}(\mathbb{R}^{2}\times\mathbb{T})}\Big]
\leq C.
\end{equation}

By the definition of $R_k, S_l$ for $k, l\geq -1$, one has
\begin{equation}\label{5-1-1}
\|R_{k}S_{l}M_{s}^{\mu,\nu}\|_{L^{2}(\mathbb{R}^{2}\times\mathbb{T})}\lesssim 2^{k}2^{l/2}
\|\mathcal{F}_{x,y}(R_{k}S_{l}M_{s}^{\mu,\nu})(\xi, n)\|_{L^{\infty}_{\xi,n}}\lesssim2^k 2^{l/2}(I_1+I_2+I_3),
\end{equation}
where
\begin{equation*}
\begin{aligned}
I_1=&\sum_{(l_{1}, l_{2})\in\chi_{l}}2^{-10(l_{1}+l_{2})}
\|\mathcal{F}_{x}(R_{\leq k+8}V_{\mu;n}(s))(\xi)\langle n\rangle^{10}\psi_{l_{1}}(n)\|_{L^{2}_{\xi}l^{2}_{n}}
\\&~~~~~~~~~~~~~~~~~~~
~~~~~~~~~~~~~~~~~~\cdot\|(1+|\xi|+|n|)\mathcal{F}_{x}(R_{[k-8,k+8]}V_{\nu;n}(s))(\xi)\langle n\rangle^{10}
\psi_{l_{2}}(n)\|_{L^{2}_{\xi}l^{2}_{n}},\\
I_2=&\sum_{(l_{1},l_{2})\in\chi_{l}}2^{-10(l_{1}+l_{2})}
\|\mathcal{F}_{x}(R_{[k-8,k+8]}V_{\mu;n}(s))(\xi)\langle n\rangle^{10}
\psi_{l_{1}}(n)\|_{L^{2}_{\xi}l^{2}_{n}}\\&~~~~~~~~~~~~~~~~~~~~~
~~~~~~~~~~~~~~~~\cdot\|(1+|\xi|+|n|)\mathcal{F}_{x}(R_{\leq k+8}V_{\nu;n}(s))(\xi)\langle n\rangle^{10}
\psi_{l_{2}}(n)\|_{L^{2}_{\xi}l^{2}_{n}},\\
I_3=&\sum_{\mbox{\tiny$\begin{array}{c}
|k_{1}-k_{2}|\leq8,\mathrm{max}(k_{1},k_{2})\geq k+8,\\
(l_{1},l_{2})\in\chi_{l}\end{array}$}}2^{-10(l_{1}+l_{2})}\|\mathcal{F}_{x}(R_{k_{1}}V_{\mu;n}(s))(\xi)\langle n\rangle^{10}\psi_{l_{1}}(n)\|_{L^{2}_{\xi}l^{2}_{n}}
\\&~~~~~~~~~~~~~~~~~~~~~~~~~~~~~~~~
~~~~~~\cdot\|(1+|\xi|+|n|)\mathcal{F}_{x}(R_{k_{2}}V_{\nu;n}(s))(\xi)\langle n\rangle^{10}\psi_{l_{2}}(n)\|_{L^{2}_{\xi}l^{2}_{n}},
\end{aligned}
\end{equation*}
and the set $\chi_{l}$ has been defined in \eqref{2-3}.\\
Since $V_{\mu}\in H^{N}(\mathbb{R}^{2}\times\mathbb{T})$ with $\mu\in\{+,-\},$ ~direct calculation shows
\begin{equation}\label{5-13}
\begin{aligned}
2^{10k}(I_1+I_2+I_3)&\leq C\sum_{(l_{1},l_{2})\in\chi_{l}}2^{-10(l_{1}+l_{2})}
\|V_{\mu}(s)\|_{H^{N}(\mathbb{R}^{2}\times \mathbb{T})}\|V_{\nu}(s)\|_{H^{N}(\mathbb{R}^{2}\times \mathbb{T})}
\\&\leq C\sum_{(l_{1},l_{2})\in\chi_{l}}2^{-10(l_{1}+l_{2})}.
\end{aligned}
\end{equation}
Then \eqref{5-ABC} comes from \eqref{5-1-1} and \eqref{5-13}.

\vskip 0.1 true cm

{\bf Step 2: Establish}
\begin{equation}\label{5-ABCD}
\begin{aligned}
2^{9(k+l)}\sum\limits_{j\geq 100}
2^{\mathrm{min}(j,2J-j)}&\|\varphi_{j}(x)\cdot R_{k}S_{l}M_{s}^{\mu,\nu}
\|_{L^{2}(\mathbb{R}^{2}\times\mathbb{T})}
\leq C\Big(1+\sup\limits_{s\in [t,t']}\|V_{+}(s)\|_{Z_{J}}\Big).
\end{aligned}
\end{equation}

To this end, based on \eqref{5-10A} and \eqref{2-3333}, we decompose
\begin{equation}\label{5-AAA1}
\begin{aligned}
&R_{k}S_{l}M_{s}^{\mu,\nu}=\sum_{\mbox{\tiny$\begin{array}{c}
(k_{1}, k_{2})\in\chi_{k}, (l_{1}, l_{2})\in\chi_{l}\end{array}$}}
M_{s, k, k_{1}, k_{2}, l, l_{1}, l_{2}}^{\mu,\nu},\\
\end{aligned}
\end{equation}
where
\begin{equation*}
\begin{aligned}
&\mathcal{F}_{x,y}(M_{s,k,k_{1},k_{2},l,l_{1},l_{2}}^{\mu,\nu})(\xi,n)
:=\psi_{k}(\xi)\psi_{l}(n)\cdot\int_{\mathbb{R}^{2}}\sum_{m\in \mathbb{Z}}
e^{is\Phi_{\mu, \nu}^{n, m}(\xi, \eta)}\mathcal{M}_{n,m}(\xi,\eta)
\\&~~~~~~~~~~~~~~~~~~~~~~~~~~~~~~~~~~~~~~~~~~~
~\cdot\psi_{k_{1}}(\xi-\eta)\psi_{l_{1}}(n-m)\widehat{V_{\mu;n-m}}(s,\xi-\eta)
\psi_{k_{2}}(\eta)\psi_{l_{2}}(m)\widehat{V_{\nu;m}}(s,\eta)d\eta.
\end{aligned}
\end{equation*}

In addition, $M_{s,k,k_{1},k_{2},l,l_{1},l_{2}}^{\mu,\nu}$ can be written as
\begin{equation}\label{5-23AA}
\begin{aligned}
&M_{s,k,k_{1},k_{2},l,l_{1},l_{2}}^{\mu,\nu}(x,y)\\
=&\int_{\mathbb{R}^{2}\times\mathbb{T}}\int_{\mathbb{R}^{2}\times\mathbb{T}}K(s,x,y,x_{1},y_{1},x_{2},y_{2})
g_{k_{1}l_{1}}^{\mu}(s,x_{1},y_{1})
g_{k_{2}l_{2}}^{\nu}(s,x_{2},y_{2})
dx_{1}dy_{1}dx_{2}dy_{2},
\end{aligned}
\end{equation}
where
\begin{equation*}
g_{kl}^{\mu}(s, x, y):=\sum\limits_{n\in \mathbb{Z}}R_{k}V_{\mu; n}(s, x)\psi_{l}(n)\langle n\rangle^{4}e^{i n y},
\end{equation*}
and
\begin{equation}\label{5-A1A}
\begin{aligned}
&K(s,x,y,x_{1},y_{1},x_{2},y_{2})\\
:=&\sum_{n\in \mathbb{Z}}\sum_{m\in \mathbb{Z}}\Big(\psi_{l}(n)\psi_{[l_{1}-2,l_{1}+2]}(n-m)
\psi_{[l_{2}-2,l_{2}+2]}(m)e^{i[n\cdot (y-y_{1})+m\cdot(y_{1}-y_{2})]}\langle n-m\rangle^{-4}\langle m\rangle^{-4}\Big)\\
&~~~~~~~~~~~~~~~~~~~~~~~~~~~~~~~~~~~~~~~~~~~~~~~~~~
~~~\cdot\int_{\mathbb{R}^{2}\times\mathbb{R}^{2}}\Big(e^{i[(x-x_{1})\cdot\xi+(x_{1}-x_{2})\cdot\eta]}
e^{is \Phi_{\mu, \nu}^{n, m}(\xi, \eta)}\mathcal{M}_{n,m}(\xi,\eta)
\\&~~~~~~~~~~~~~~~~~~~~~~~~~~~~~~~~~~~~~~~~~~~
~~~~~~~~~~~~~\cdot\psi_{k}(\xi)\psi_{[k_{1}-2,k_{1}+2]}(\xi-\eta)\psi_{[k_{2}-2, k_{2}+2]}(\eta)\Big)d\xi d\eta.
\end{aligned}
\end{equation}

At first, when $j\geq 100$, by \eqref{5-7-00} and the method of non-stationary phase, one has
\begin{equation}\label{5-BAB}\begin{aligned}
&|K(s, x, y, x_1, y_1, x_2, y_2)|\\
\lesssim& \sup\limits_{n, m\in\mathbb{Z}}\Big|\psi_{[l_2-2, l_2+2]}(m)
\int_{\mathbb{R}^{2}\times\mathbb{R}^{2}}\Big(e^{i[(x-x_{1})\cdot\xi+(x_{1}-x_{2})\cdot\eta]}
e^{is \Phi_{\mu, \nu}^{n, m}(\xi, \eta)}\mathcal{M}_{n,m}(\xi,\eta)\\
&\cdot\psi_{k}(\xi)\psi_{[k_{1}-2,k_{1}+2]}(\xi-\eta)\psi_{[k_{2}-2, k_{2}+2]}(\eta)\Big)d\xi d\eta\Big|\\
\leq &C(2^{k_{2}}+2^{l_{2}})
2^{2(k_{1}+k_{2})}(|x-x_{1}|+|x_{1}-x_{2}|)^{-10}~~(\text{if}~|x-x_{1}|+|x_{1}-x_{2}|\geq2^{j-10}).
\end{aligned}
\end{equation}
In addition, similarly to \eqref{5-23AA}, we set
\begin{equation}\label{5-26AAA1}
\begin{aligned}
&N_{s,k,k_{1},k_{2},l,l_{1},l_{2}}^{\mu,\nu,j}(x,y)\\
:=&\int_{\mathbb{R}^{2}\times\mathbb{T}}\int_{\mathbb{R}^{2}\times\mathbb{T}}K(s,x,y,x_{1},y_{1},x_{2},y_{2})
g^{\mu}_{jk_{1}l_{1}}(s,x_{1},y_{1})
g^{\nu}_{jk_{2}l_{2}}(s,x_{2},y_{2})
dx_{1}dy_{1}dx_{2}dy_{2},
\end{aligned}
\end{equation}
where
\begin{equation*}
g^{\mu}_{jkl}(s,x,y):=\varphi_{[j-4,j+4]}(x)g_{kl}^{\mu}(s, x, y).
\end{equation*}

It follows from a direct calculation that
\begin{equation}\label{5-27AA}
\begin{aligned}
&\Big\|\varphi_{j}(x)\cdot (M_{s,k,k_{1},k_{2},l,l_{1},l_{2}}^{\mu,\nu}-N_{s,k,k_{1},
k_{2},l,l_{1},l_{2}}^{\mu,\nu,j})\Big\|_{L^{2}(\mathbb{R}^{2}\times\mathbb{T})}\\
\lesssim& 2^{j}\Big\|\varphi_{j}(x)\cdot (M_{s, k,k_{1},k_{2},l,l_{1},l_{2}}^{\mu,\nu}-N_{s,k,k_{1},k_{2},l,l_{1},l_{2}}^{\mu,\nu,j})
\Big\|_{L^{\infty}(\mathbb{R}^{2}\times\mathbb{T})}\lesssim 2^{j}(I_4+I_5),
\end{aligned}
\end{equation}
where
\begin{equation*}
\begin{aligned}
I_4=&\Big|\int_{\mathbb{R}^{2}\times\mathbb{T}}\int_{\mathbb{R}^{2}\times\mathbb{T}}\textbf{1}_{\{(x_{1},x_{2})||x_{1}|\notin
[2^{j-3},2^{j+3}]~\mathrm{or}~|x_{2}|\notin [2^{j-3},2^{j+3}]\}}\varphi_{j}(x)K(s,x,y,x_{1},y_{1},x_{2},y_{2})
\\&~~\cdot(1-\varphi_{[j-4,j+4]}(x_{1})\varphi_{[j-4,j+4]}(x_{2}))g_{k_{1}l_{1}}(s,x_{1},y_{1})
g_{k_{2}l_{2}}(s,x_{2},y_{2})
dx_{1}dy_{1}dx_{2}dy_{2}\Big|,
\end{aligned}
\end{equation*}
\begin{equation*}
\begin{aligned}
I_5=&\Big|\int_{\mathbb{R}^{2}\times\mathbb{T}}\int_{\mathbb{R}^{2}\times\mathbb{T}}\textbf{1}_{\{(x_{1},x_{2})||x_{1}|\in
[2^{j-3},2^{j+3}]~\mathrm{and}~|x_{2}|\in [2^{j-3},2^{j+3}]\}}\varphi_{j}(x)K(s,x,y,x_{1},y_{1},x_{2},y_{2})
\\&~~~\cdot(1-\varphi_{[j-4,j+4]}(x_{1})\varphi_{[j-4,j+4]}(x_{2}))g_{k_{1}l_{1}}(s,x_{1},y_{1})
g_{k_{2}l_{2}}(s,x_{2},y_{2})
dx_{1}dy_{1}dx_{2}dy_{2}\Big|.
\end{aligned}
\end{equation*}

In view of the support property of $\varphi_{[j-4,j+4]}$, then $I_5\equiv0$ holds.

For $I_4$, we have
\begin{equation}\label{5-28AAA}
\begin{aligned}
I_4\leq I_{41}+I_{42}+I_{43}+I_{44},
\end{aligned}
\end{equation}
where
\begin{equation}\label{5-30A1}
\begin{aligned}
I_{41}=&\int_{\mathbb{R}^{2}\times\mathbb{T}}\int_{\mathbb{R}^{2}\times\mathbb{T}}\textbf{1}_{\{(x_{1},x_{2}):\ |x_{1}|\leq
2^{j-3}~\mathrm{and}~|x_{2}|\leq 2^{j+3}\}}\Big|\varphi_{j}(x)K(s,x,y,x_{1},y_{1},x_{2},y_{2})
\\&\cdot(1-\varphi_{[j-4,j+4]}(x_{1})\varphi_{[j-4,j+4]}(x_{2}))g_{k_{1}l_{1}}^{\mu}(s,x_{1},y_{1})
g_{k_{2}l_{2}}^{\nu}(s,x_{2},y_{2})\Big|
dx_{1}dy_{1}dx_{2}dy_{2}
\\&+\int_{\mathbb{R}^{2}\times\mathbb{T}}\int_{\mathbb{R}^{2}\times\mathbb{T}}\textbf{1}_{\{(x_{1},x_{2}):\ |x_{1}|\leq
2^{j-3}~\mathrm{and}~|x_{2}|\geq 2^{j+3}\}}\Big|\varphi_{j}(x)K(s,x,y,x_{1},y_{1},x_{2},y_{2})
\\&\cdot(1-\varphi_{[j-4,j+4]}(x_{1})\varphi_{[j-4,j+4]}(x_{2}))g_{k_{1}l_{1}}^{\mu}(s,x_{1},y_{1})
g_{k_{2}l_{2}}^{\nu}(s,x_{2},y_{2})\Big|
dx_{1}dy_{1}dx_{2}dy_{2},\nonumber
\end{aligned}
\end{equation}
\begin{equation}\label{5-30A2}
\begin{aligned}
I_{42}=&\int_{\mathbb{R}^{2}\times\mathbb{T}}\int_{\mathbb{R}^{2}\times\mathbb{T}}\textbf{1}_{\{(x_{1},x_{2}):\ |x_{1}|\geq
2^{j+3}~\mathrm{and}~|x_{2}|\leq 2^{j+3}\}}\Big|\varphi_{j}(x)K(s,x,y,x_{1},y_{1},x_{2},y_{2})
\\&\cdot(1-\varphi_{[j-4,j+4]}(x_{1})\varphi_{[j-4,j+4]}(x_{2}))g_{k_{1}l_{1}}^{\mu}(s,x_{1},y_{1})g_{k_{2}l_{2}}^{\nu}(s,x_{2},y_{2})\Big|
dx_{1}dy_{1}dx_{2}dy_{2}
\\&+\int_{\mathbb{R}^{2}\times\mathbb{T}}\int_{\mathbb{R}^{2}\times\mathbb{T}}\textbf{1}_{\{(x_{1},x_{2}):\ |x_{1}|\geq
2^{j+3}~\mathrm{and}~|x_{2}|\geq 2^{j+3}\}}\Big|\varphi_{j}(x)K(s,x,y,x_{1},y_{1},x_{2},y_{2})
\\&\cdot(1-\varphi_{[j-4,j+4]}(x_{1})\varphi_{[j-4,j+4]}(x_{2}))g_{k_{1}l_{1}}^{\mu}(s,x_{1},y_{1})g_{k_{2}l_{2}}^{\nu}(s, x_{2},y_{2})\Big|
dx_{1}dy_{1}dx_{2}dy_{2},\nonumber
\end{aligned}
\end{equation}
\begin{equation}\label{5-30A3}
\begin{aligned}
I_{43}=&\int_{\mathbb{R}^{2}\times\mathbb{T}}\int_{\mathbb{R}^{2}\times\mathbb{T}}\textbf{1}_{\{(x_{1},x_{2}):\ |x_{2}|\leq
2^{j-3}~\mathrm{and}~|x_{1}|\leq 2^{j+3}\}}\Big|\varphi_{j}(x)K(s,x,y,x_{1},y_{1},x_{2},y_{2})
\\&\cdot(1-\varphi_{[j-4,j+4]}(x_{1})\varphi_{[j-4,j+4]}(x_{2}))g_{k_{1}l_{1}}^{\mu}(s,x_{1},y_{1})g_{k_{2}l_{2}}^{\nu}(s,x_{2},y_{2})\Big|
dx_{1}dy_{1}dx_{2}dy_{2}
\\&+\int_{\mathbb{R}^{2}\times\mathbb{T}}\int_{\mathbb{R}^{2}\times\mathbb{T}}\textbf{1}_{\{(x_{1},x_{2}):\ |x_{2}|\leq
2^{j-3}~\mathrm{and}~|x_{1}|\geq 2^{j+3}\}}\Big|\varphi_{j}(x)K(s,x,y,x_{1},y_{1},x_{2},y_{2})
\\&\cdot(1-\varphi_{[j-4,j+4]}(x_{1})\varphi_{[j-4,j+4]}(x_{2}))g_{k_{1}l_{1}}^{\mu}(s,x_{1},y_{1})g_{k_{2}l_{2}}^{\nu}(s,x_{2},y_{2})\Big|
dx_{1}dy_{1}dx_{2}dy_{2},\nonumber
\end{aligned}
\end{equation}
\begin{equation}\label{5-30A4}
\begin{aligned}
I_{44}=&\int_{\mathbb{R}^{2}\times\mathbb{T}}\int_{\mathbb{R}^{2}\times\mathbb{T}}\textbf{1}_{\{(x_{1},x_{2}):\ |x_{2}|\geq
2^{j+3}~\mathrm{and}~|x_{1}|\leq 2^{j+3}\}}\Big|\varphi_{j}(x)K(s,x,y,x_{1},y_{1},x_{2},y_{2})
\\&\cdot(1-\varphi_{[j-4,j+4]}(x_{1})\varphi_{[j-4,j+4]}(x_{2}))g_{k_{1}l_{1}}^{\mu}(s,x_{1},y_{1})g_{k_{2}l_{2}}^{\nu}(s,x_{2},y_{2})\Big|
dx_{1}dy_{1}dx_{2}dy_{2}
\\&+\int_{\mathbb{R}^{2}\times\mathbb{T}}\int_{\mathbb{R}^{2}\times\mathbb{T}}\textbf{1}_{\{(x_{1},x_{2}):\ |x_{2}|\geq
2^{j+3}~\mathrm{and}~|x_{1}|\geq 2^{j+3}\}}\Big|\varphi_{j}(x)K(s,x,y,x_{1},y_{1},x_{2},y_{2})
\\&\cdot(1-\varphi_{[j-4,j+4]}(x_{1})\varphi_{[j-4,j+4]}(x_{2}))g_{k_{1}l_{1}}^{\mu}(s,x_{1},y_{1})g_{k_{2}l_{2}}^{\nu}(s,x_{2},y_{2})\Big|
dx_{1}dy_{1}dx_{2}dy_{2}.\nonumber
\end{aligned}
\end{equation}

For $I_{41}, $ by \eqref{5-BAB}, we arrive at
\begin{equation}\label{5-34A}
\begin{aligned}
I_{41}&\lesssim (2^{k_{2}}+2^{l_{2}})2^{2(k_{1}+k_{2})}\Big(2^{-10j}2^{4j}+2^{-5j}2^{2j}\|(1+|x-x_{2}|)^{-3}\|_{L_{x_{2}}^{1}(\mathbb{R}^{2})}\Big)
\|g_{k_{1}l_{1}}^{\mu}\|_{L^{\infty}(\mathbb{R}^{2}\times\mathbb{T})}
\|g_{k_{2}l_{2}}^{\nu}\|_{L^{\infty}(\mathbb{R}^{2}\times\mathbb{T})}
\\&\lesssim (2^{k_{2}}+2^{l_{2}})2^{2(k_{1}+k_{2})}2^{-3j}\|\widehat{R_{k_{1}}V_{\mu;n}}(s,\xi)\langle n\rangle^{4}\psi_{l_{1}}(n)\|_{L^{1}_{\xi}l^{1}_{n}}
\|\widehat{R_{k_{2}}V_{\nu;n}}(s,\xi)\langle n\rangle^{4}\psi_{l_{2}}(n)\|_{L^{1}_{\xi}l^{1}_{n}}
\\&\lesssim (2^{k_{2}}+2^{l_{2}})2^{2(k_{1}+k_{2})}2^{-3j}2^{4l_{1}}\|\widehat{R_{k_{1}}V_{\mu;n}}(s,\xi)\psi_{l_{1}}(n)\|_{L^{1}_{\xi}l^{1}_{n}}
2^{4l_{2}}\|\widehat{R_{k_{2}}V_{\nu;n}}(s,\xi)\psi_{l_{2}}(n)\|_{L^{1}_{\xi}l^{1}_{n}}
\\&\lesssim (2^{k_{2}}+2^{l_{2}})2^{2(k_{1}+k_{2})}2^{-3j}2^{4(l_{1}+l_{2})}2^{k_{1}}2^{l_{1}/2}\|\widehat{R_{k_{1}}V_{\mu;n}}(s,\xi)
\psi_{l_{1}}(n)\|_{L^{2}_{\xi}l^{2}_{n}}
2^{k_{2}}2^{l_{2}/2}\|\widehat{R_{k_{2}}V_{\nu;n}}(s,\xi)\psi_{l_{2}}(n)\|_{L^{2}_{\xi}l^{2}_{n}}
\\&\lesssim(2^{k_{2}}+2^{l_{2}})2^{2(k_{1}+k_{2})}2^{-3j}2^{4(l_{1}+l_{2})}2^{k_{1}}2^{l_{1}/2}2^{k_{2}}
2^{l_{2}/2}2^{-N(k_{1}+l_{1})/2}2^{-N(k_{2}+l_{2})/2}
\\&\lesssim 2^{-3j}2^{-(N-8)(k_{1}+k_{2})/2}2^{-(N-12)(l_{1}+l_{2})/2}.
\end{aligned}
\end{equation}
Analogously,
\begin{equation}\label{5-35A}
\begin{aligned}
I_{42}&\lesssim (2^{k_{2}}+2^{l_{2}})2^{2(k_{1}+k_{2})}\Big(2^{-5j}2^{2j}\|(1+|x-x_{1}|)^{-3}\|_{L_{x_{1}}^{1}(\mathbb{R}^{2})}
\\&+2^{-5j}\|(1+|x-x_{1}|)^{-5/2}\|_{L_{x_{1}}^{1}(\mathbb{R}^{2})}
\|(1+|x-x_{2}|)^{-5/2}\|_{L_{x_{2}}^{1}(\mathbb{R}^{2})}\Big)\|g_{k_{1}l_{1}}^{\mu}\|_{L^{\infty}(\mathbb{R}^{2}\times\mathbb{T})}
\|g_{k_{2}l_{2}}^{\nu}\|_{L^{\infty}(\mathbb{R}^{2}\times\mathbb{T})}
\\&\lesssim (2^{k_{2}}+2^{l_{2}})2^{2(k_{1}+k_{2})}2^{-3j}\|g_{k_{1}l_{1}}^{\mu}\|_{L^{\infty}(\mathbb{R}^{2}\times\mathbb{T})}
\|g_{k_{2}l_{2}}^{\nu}\|_{L^{\infty}(\mathbb{R}^{2}\times\mathbb{T})}
\\&\lesssim 2^{-3j}2^{-(N-8)(k_{1}+k_{2})/2}2^{-(N-12)(l_{1}+l_{2})/2},
\end{aligned}
\end{equation}
\begin{equation}\label{5-36A}
\begin{aligned}
I_{43}&\lesssim (2^{k_{2}}+2^{l_{2}})2^{2(k_{1}+k_{2})}\Big(2^{-10j}2^{4j}+2^{-5j}2^{2j}
\|(1+|x-x_{1}|)^{-3}\|_{L_{x_{1}}^{1}(\mathbb{R}^{2})}\Big)\|g_{k_{1}l_{1}}^{\mu}
\|_{L^{\infty}(\mathbb{R}^{2}\times\mathbb{T})}
\|g_{k_{2}l_{2}}^{\nu}\|_{L^{\infty}(\mathbb{R}^{2}\times\mathbb{T})}
\\&\lesssim 2^{-3j}2^{-(N-8)(k_{1}+k_{2})/2}2^{-(N-12)(l_{1}+l_{2})/2}
\end{aligned}
\end{equation}
and
\begin{equation}\label{5-37A}
\begin{aligned}
I_{44}&\lesssim (2^{k_{2}}+2^{l_{2}})2^{2(k_{1}+k_{2})}\Big(2^{-5j}2^{2j}\|(1+|x-x_{2}|)^{-3}\|_{L_{x_{2}}^{1}(\mathbb{R}^{2})}+2^{-5j}
\|(1+|x-x_{1}|)^{-5/2}\|_{L_{x_{1}}^{1}(\mathbb{R}^{2})}
\\&~~~~~~~~~~~~~~~~~~~~~~~~~
~~~~~~~~~~~~~~~~~~~~\cdot\|(1+|x-x_{2}|)^{-5/2}\|_{L_{x_{2}}^{1}(\mathbb{R}^{2})}\Big)
\|g_{k_{1}l_{1}}^{\mu}\|_{L^{\infty}(\mathbb{R}^{2}\times\mathbb{T})}
\|g_{k_{2}l_{2}}^{\nu}\|_{L^{\infty}(\mathbb{R}^{2}\times\mathbb{T})}
\\&\lesssim 2^{-3j}2^{-(N-8)(k_{1}+k_{2})/2}2^{-(N-12)(l_{1}+l_{2})/2}.
\end{aligned}
\end{equation}
Collecting \eqref{5-27AA}-\eqref{5-37A} yields
\begin{equation}\label{5-33A}
\begin{aligned}
2^{9(k+l)}\sum\limits_{j\geq 100}
\sum_{\mbox{\tiny$\begin{array}{c}
(k_{1},k_{2})\in\chi_{k},(l_{1},l_{2})\in\chi_{l}\end{array}$}}
2^{j}\|\varphi_{j}(x)\cdot &(M_{s,k,k_{1},k_{2},l,l_{1},l_{2}}^{\mu,\nu}
-N_{s,k,k_{1},k_{2},l,l_{1},l_{2}}^{\mu,\nu,j})\|_{L^{2}(\mathbb{R}^{2}\times\mathbb{T})}
\leq C.
\end{aligned}
\end{equation}

\vskip 0.1 true cm

Next, we estimate the term
\begin{equation}
\begin{aligned}
2^{9(k+l)}\sum\limits_{j\geq 100}
\sum_{\mbox{\tiny$\begin{array}{c}
(k_{1},k_{2})\in\chi_{k},(l_{1},l_{2})\in\chi_{l}\end{array}$}}
2^{\mathrm{min}(j,2J-j)}\|\varphi_{j}(x)\cdot N_{s,k,k_{1},k_{2},l,l_{1},l_{2}}^{\mu,\nu,j}
&\|_{L^{2}(\mathbb{R}^{2}\times\mathbb{T})}.\nonumber
\end{aligned}
\end{equation}
Note that the Fourier transformation of the term $N_{s,k,k_{1},k_{2},l,l_{1},l_{2}}^{\mu,\nu,j}$ can be written as
\begin{equation}\label{5-34AAA1}
\begin{aligned}
&\mathcal{F}_{x, y}(N_{s, k, k_{1}, k_{2}, l, l_{1}, l_{2}}^{\mu,\nu, j})(\xi, n)\\
=&\sum_{m\in \mathbb{Z}}\int_{\mathbb{R}^{2}}\langle n-m\rangle^{-4}\langle m\rangle^{-4}
e^{is \Phi_{\mu, \nu}^{n, m}(\xi, \eta)}
\mathcal{M}_{n,m}(\xi,\eta)\psi_{k}(\xi)\psi_{[k_{1}-2,k_{1}+2]}(\xi-\eta)\psi_{[k_{2}-2,k_{2}+2]}(\eta)\\
&\cdot\psi_{l}(n)\psi_{[l_{1}-2,l_{1}+2]}(n-m)
\psi_{[l_{2}-2,l_{2}+2]}(m)\mathcal{F}_{x,y}(g^{\mu}_{jk_{1}l_{1}})(s,\xi-\eta,n-m)
\mathcal{F}_{x,y}(g^{\nu}_{jk_{2}l_{2}})(s,\eta,m)d\eta.
\end{aligned}
\end{equation}
Then
\begin{equation}\label{5-35A}
\begin{aligned}
&\|N_{s,k,k_{1},k_{2},l,l_{1},l_{2}}^{\mu,\nu,j}\|_{L^{2}(\mathbb{R}^{2}\times\mathbb{T})}\\
=&\|\mathcal{F}_{x,y}(N_{s,k,k_{1},k_{2},l,l_{1},l_{2}}^{\mu,\nu,j})(\xi,n)\|_{L^{2}_{\xi}l^{2}_{n}}\\
\lesssim& (2^{k_{2}}+2^{l_{2}})2^{-4(l_{1}+l_{2})}
\|\mathcal{F}_{x,y}(g^{\mu}_{jk_{1}l_{1}})(s,\xi,n)\psi_{[k_{1}-2,k_{1}+2]}(\xi)\psi_{[l_{1}-2,l_{1}+2]}(n)\|_{L^{1}_{\xi}l^{1}_{n}}\\
&~~~~~~~~~~~~~~~~~~~~~~~~~~~~~~~~~~~~~~~~~~\cdot\|\mathcal{F}_{x,y}(g^{\nu}_{jk_{2}l_{2}})(s,\xi,n)
\psi_{[k_{2}-2,k_{2}+2]}(\xi)\psi_{[l_{2}-2,l_{2}+2]}(n)\|_{L^{2}_{\xi}l^{2}_{n}}\\
\lesssim& (2^{k_{2}}+2^{l_{2}})2^{-4(l_{1}+l_{2})}
2^{k_{1}}2^{l_{1}/2}2^{4l_{1}}\|\varphi_{[j-4,j+4]}(x)
\cdot R_{k_{1}}S_{l_{1}}V_{\mu}\|_{L^{2}(\mathbb{R}^{2}\times\mathbb{T})}\\
&~~~~~~~~~~~~~~~~~~~~~
~~~~~~~~~~~~~~~~~~~~~~~~~\cdot2^{4l_{2}}\|\varphi_{[j-4,j+4]}(x)\cdot R_{k_{2}}S_{l_{2}}V_{\nu}\|_{L^{2}(\mathbb{R}^{2}\times\mathbb{T})}\\
=&(2^{k_{2}}+2^{l_{2}})2^{k_{1}}2^{l_{1}/2}\|\varphi_{[j-4,j+4]}(x)
\cdot R_{k_{1}}S_{l_{1}}V_{\mu}\|_{L^{2}(\mathbb{R}^{2}\times\mathbb{T})}
\|\varphi_{[j-4,j+4]}(x)\cdot R_{k_{2}}S_{l_{2}}V_{\nu}\|_{L^{2}(\mathbb{R}^{2}\times\mathbb{T})}.
\end{aligned}
\end{equation}

For any integer $J\geq 0$, we set
\begin{equation}\label{5-35AB}
I_{6}:=\sum\limits_{j\geq 100}
\Big(2^{\mathrm{min}(j,2J-j)}\|\varphi_{[j-4,j+4]}(x)\cdot R_{k_{1}}S_{l_{1}}V_{\mu}
\|_{L^{2}(\mathbb{R}^{2}\times\mathbb{T})}\|\varphi_{[j-4,j+4]}(x)
\cdot R_{k_{2}}S_{l_{2}}V_{\nu}\|_{L^{2}(\mathbb{R}^{2}\times\mathbb{T})}\Big).
\end{equation}
By $V_{\mu}\in H^{N}(\mathbb{R}^{2}\times\mathbb{T})$ and \eqref{5-1}, one has
\begin{equation}\label{5-37AA}
\begin{aligned}
I_{6}&\lesssim \|R_{k_{1}}S_{l_{1}}V_{\mu}\|_{L^{2}(\mathbb{R}^{2}\times\mathbb{T})}
\sum\limits_{j\geq 100}\Big(2^{\mathrm{min}(j,2J-j)}\|\varphi_{[j-4,j+4]}(x)\cdot R_{k_{2}}S_{l_{2}}V_{\nu}
\|_{L^{2}(\mathbb{R}^{2}\times\mathbb{T})}\Big)
\\&\lesssim 2^{-N(k_{1}+l_{1})/2}2^{-9(k_{2}+l_{2})}\|V_{+}(s)\|_{Z_{J}}.
\end{aligned}
\end{equation}
Similarly,
\begin{equation}\label{5-38AA}
I_{6}
\lesssim 2^{-N(k_{2}+l_{2})/2}2^{-9(k_{1}+l_{1})}\|V_{+}(s)\|_{Z_{J}}.
\end{equation}
Combining \eqref{5-37AA} and \eqref{5-38AA} yields
\begin{equation}\label{5-39AA}\begin{aligned}
I_{6}&\lesssim 2^{-N(k_{1}+l_{1})/4}2^{-9(k_{2}+l_{2})/2}\|V_{+}(s)\|^{1/2}_{Z_{J}}
2^{-N(k_{2}+l_{2})/4}2^{-9(k_{1}+l_{1})/2}\|V_{+}(s)\|^{1/2}_{Z_{J}}
\\&= 2^{-N(k_{1}+k_{2})/4}2^{-N(l_{1}+l_{2})/4}2^{-9(k_{1}+k_{2})/2}2^{-9(l_{1}+l_{2})/2}\|V_{+}(s)\|_{Z_{J}}.
\end{aligned}
\end{equation}
It follows from \eqref{5-35A}-\eqref{5-35AB} and \eqref{5-39AA} that
\begin{equation}\label{5-40A}
\begin{aligned}
&\sum\limits_{j\geq 100}
\sum_{\mbox{\tiny$\begin{array}{c}
(k_{1},k_{2})\in\chi_{k},(l_{1},l_{2})\in\chi_{l}\end{array}$}}
2^{\mathrm{min}(j,2J-j)}\|\varphi_{j}(x)
\cdot N_{s,k,k_{1},k_{2},l,l_{1},l_{2}}^{\mu,\nu,j}\|_{L^{2}(\mathbb{R}^{2}\times\mathbb{T})}\\
\lesssim& \sum\limits_{j\geq 100}
\sum_{\mbox{\tiny$\begin{array}{c}
(k_{1},k_{2})\in\chi_{k},(l_{1},l_{2})\in\chi_{l}\end{array}$}}
2^{\mathrm{min}(j,2J-j)}\|N_{s,k,k_{1},k_{2},l,l_{1},l_{2}}^{\mu,\nu,j}\|_{L^{2}(\mathbb{R}^{2}\times\mathbb{T})}\\
\lesssim& \sum_{\mbox{\tiny$\begin{array}{c}
(k_{1},k_{2})\in\chi_{k},(l_{1},l_{2})\in\chi_{l}\end{array}$}}
(2^{k_{2}}+2^{l_{2}})2^{k_{1}}2^{l_{1}/2}I_{6}\\
\lesssim& \sum_{\mbox{\tiny$\begin{array}{c}
(k_{1},k_{2})\in\chi_{k},(l_{1},l_{2})\in\chi_{l}\end{array}$}}
2^{-N(k_{1}+k_{2})/4}2^{-N(l_{1}+l_{2})/4}\|V_{+}(s)\|_{Z_{J}}\\
\leq& C 2^{-9(k+l)}\|V_{+}(s)\|_{Z_{J}}.
\end{aligned}
\end{equation}
Thus, the proof of  \eqref{5-ABCD} is obtained from \eqref{5-AAA1}, \eqref{5-33A} and \eqref{5-40A}. In view
of \eqref{5-ABC} and \eqref{5-ABCD}, we complete the proof of \eqref{5-AB} and further Proposition 4.1 is established.

\section{\textbf{Proof of Proposition 4.2}}

\subsection {Reformulation of $U$}

As in deriving \eqref{5-8} and \eqref{5-10A}, we have
\begin{equation}\label{6-1}
\begin{aligned}
\frac{d}{dt}[\widehat{V_{+;n}}(t,\xi)]=\sum_{\mu,\nu\in\{+,-\}}\int_{\mathbb{R}^{2}}\sum_{m\in \mathbb{Z}}
e^{is \Phi_{\mu,\nu}^{n,m}(\xi,\eta)}\mathcal{M}_{n,m}(\xi,\eta)\widehat{V_{\mu;n-m}}(t,\xi-\eta)\widehat{V_{\nu;m}}(t,\eta)d\eta
\end{aligned}
\end{equation}
and
\begin{equation}\label{6-2}
\begin{aligned}
&\widehat{V_{+;n}}(t,\xi)-\widehat{V_{+;n}}(0,\xi)\\
=& \sum_{\mu,\nu\in\{+,-\}}\int_{0}^{t}\int_{\mathbb{R}^{2}}\sum_{m\in \mathbb{Z}}
e^{is \Phi_{\mu,\nu}^{n,m}(\xi,\eta)}\mathcal{M}_{n,m}(\xi,\eta)
\widehat{V_{\mu;n-m}}(s,\xi-\eta)\widehat{V_{\nu;m}}(s,\eta)d\eta ds\\
=& \sum_{\mu,\nu\in\{+,-\}}-\int_{0}^{t}\int_{\mathbb{R}^{2}}\sum_{m\in \mathbb{Z}}
e^{is \Phi_{\mu,\nu}^{n,m}(\xi,\eta)}\frac{\mathcal{M}_{n,m}(\xi,\eta)}{i \Phi_{\mu,\nu}^{n, m}(\xi,\eta)}
\frac{d}{ds}\Big[\widehat{V_{\mu;n-m}}(s,\xi-\eta)\widehat{V_{\nu;m}}(s,\eta)\Big]d\eta ds\\
&~~~~~~~~~~~~~~~+\sum_{\mu,\nu\in\{+,-\}}\int_{\mathbb{R}^{2}}\sum_{m\in \mathbb{Z}}
e^{it \Phi_{\mu,\nu}^{n,m}(\xi,\eta)}\frac{\mathcal{M}_{n,m}(\xi,\eta)}{i \Phi_{\mu,\nu}^{n,m}(\xi,\eta)}
\Big[\widehat{V_{\mu;n-m}}(t,\xi-\eta)\widehat{V_{\nu;m}}(t,\eta)\Big]d\eta\\
&~~~~~~~~~~~~~~~-\sum_{\mu,\nu\in\{+,-\}}\int_{\mathbb{R}^{2}}\sum_{m\in \mathbb{Z}}
\frac{\mathcal{M}_{n,m}(\xi,\eta)}{i \Phi_{\mu,\nu}^{n,m}(\xi,\eta)}
\Big[\widehat{V_{\mu;n-m}}(0,\xi-\eta)\widehat{V_{\nu;m}}(0,\eta)\Big]d\eta.
\end{aligned}
\end{equation}

By introducing
\begin{equation}\label{6-3A}
\begin{aligned}
&\widehat{W_{n}}(t,\xi)\\
:=&\widehat{V_{+;n}}(t,\xi)-\sum_{\mu,\nu\in\{+,-\}}\int_{\mathbb{R}^{2}}\sum_{m\in \mathbb{Z}}
e^{it\Phi_{\mu,\nu}^{n,m}(\xi,\eta)}
\frac{\mathcal{M}_{n,m}(\xi,\eta)}{i\Phi_{\mu,\nu}^{n,m}(\xi,\eta)}
\Big[\widehat{V_{\mu;n-m}}(t,\xi-\eta)\widehat{V_{\nu;m}}(t,\eta)\Big]d\eta\\
:=&\widehat{V_{+, n}}(t, \xi)-\sum\limits_{\mu, \nu\in\{+, -\}}\mathcal{F}_{x, y}(Q_{\mu\nu})(t, \xi, n),
\end{aligned}
\end{equation}
it follows from \eqref{6-2} and \eqref{6-3A} that
\begin{equation}\label{6-4AAA}
\begin{aligned}
\widehat{W_{n}}(t,\xi)-\widehat{W_{n}}(0,\xi)=\sum_{\mu,\nu\in\{+,-\}}i\int_{0}^{t}H_{\mu\nu}(s,\xi,n)ds,
\end{aligned}
\end{equation}
where
\begin{equation}\label{6-4AAB}
\begin{aligned}
H_{\mu\nu}(s,\xi,n):=\int_{\mathbb{R}^{2}}\sum_{m\in \mathbb{Z}}
e^{is\Phi_{\mu,\nu}^{n,m}(\xi,\eta)}\frac{\mathcal{M}_{n,m}(\xi,\eta)}{\Phi_{\mu,\nu}^{n,m}(\xi,\eta)}
\frac{d}{ds}\Big[\widehat{V_{\mu;n-m}}(s,\xi-\eta)\widehat{V_{\nu;m}}(s,\eta)\Big]d\eta.
\end{aligned}
\end{equation}
In addition, by \eqref{6-1}, \eqref{2-9} and \eqref{2-11}, one has
\begin{equation}\label{6-6AAA}
\begin{aligned}
\frac{d}{ds}[\widehat{V_{\mu;n}}(s,\xi)]=\sum_{\lambda,\omega\in\{+,-\}}e^{is\Lambda_{\mu;n}(\xi)}\int_{\mathbb{R}^{2}}\sum_{l\in \mathbb{Z}}
\mathcal{M}_{n,l}(\xi,\zeta)\widehat{U_{\lambda;n-l}}(s,\xi-\zeta)\widehat{U_{\omega;l}}(s,\zeta)d\zeta.
\end{aligned}
\end{equation}
Plugging \eqref{6-6AAA} into \eqref{6-4AAB} yields
\begin{equation}\label{6-7AB}
\begin{aligned}
&H_{\mu\nu}(s,\xi,n)\\
=& \sum_{\lambda,\omega\in\{+,-\}}e^{is\Lambda_{n}(\xi)}\sum_{m\in \mathbb{Z}}\sum_{l\in \mathbb{Z}}\int_{\mathbb{R}^{2}}
\int_{\mathbb{R}^{2}}\frac{\mathcal{M}_{n,n-m}(\xi,\xi-\eta)}{\Phi_{\mu,\nu}^{n,n-m}(\xi,\xi-\eta)}\\
&~~~~~~~~~~~~~~~~~~~~~~~~~~\cdot\mathcal{M}_{m,l}(\eta,\zeta)
\widehat{U_{\nu;n-m}}(s,\xi-\eta)\widehat{U_{\lambda;m-l}}(s,\eta-\zeta)\widehat{U_{\omega;l}}(s,\zeta)d\zeta d\eta\\
&+\sum_{\lambda,\omega\in\{+,-\}}e^{is\Lambda_{n}(\xi)}\sum_{m\in \mathbb{Z}}\sum_{l\in \mathbb{Z}}\int_{\mathbb{R}^{2}}
\int_{\mathbb{R}^{2}}\frac{\mathcal{M}_{n,m}(\xi,\eta)}{\Phi_{\mu,\nu}^{n,m}(\xi,\eta)}\\
&~~~~~~~~~~~~~~~~~~~~~~~~~~\cdot\mathcal{M}_{m,l}(\eta,\zeta)
\widehat{U_{\mu;n-m}}(s,\xi-\eta)\widehat{U_{\lambda;m-l}}(s,\eta-\zeta)\widehat{U_{\omega;l}}(s,\zeta)d\zeta d\eta.
\end{aligned}
\end{equation}
It follows from \eqref{6-7AB} and \eqref{6-4AAA} that
\begin{equation}\label{6-8ABC}
\begin{aligned}
&\widehat{W_{n}}(t,\xi)-\widehat{W_{n}}(0,\xi)\\
=& \sum_{\mu,\sigma,\lambda,\omega\in\{+,-\}}e^{is\Lambda_{n}(\xi)}\sum_{m\in \mathbb{Z}}
\sum_{l\in \mathbb{Z}}i\int_{0}^{t}\int_{\mathbb{R}^{2}}
\int_{\mathbb{R}^{2}}\Big(\frac{\mathcal{M}_{n,m}(\xi,\eta)}{\Phi_{\sigma,\mu}^{n,m}(\xi,\eta)}+
\frac{\mathcal{M}_{n,n-m}(\xi,\xi-\eta)}{\Phi_{\mu,\sigma}^{n,n-m}(\xi,\xi-\eta)}\Big)\\
&~~~~~~~~~~~~~~~~~~~~~~\cdot\mathcal{M}_{m,l}(\eta,\zeta)
\widehat{U_{\sigma;n-m}}(s,\xi-\eta)\widehat{U_{\lambda;m-l}}(s,\eta-\zeta)\widehat{U_{\omega;l}}(s,\zeta)d\zeta d\eta.
\end{aligned}
\end{equation}

\subsection{Preliminary for the proof of Proposition \ref{prop4-2}}

To prove Proposition \ref{prop4-2}, by \eqref{2-8} and \eqref{6-3A}, we decompose $e^{it\Lambda}U(t)-U(0)$ as
\begin{equation*}\begin{aligned}
&e^{it\Lambda}U(t)-U(0)\\[2mm]
=&(e^{it\Lambda}U(t)-W(t))-(e^{it\Lambda}U(t)-W(t))|_{t=0}+(W(t)-W(0))\\[2mm]
=&\sum\limits_{\mu, \nu\in\{+, -\}}Q_{\mu \nu}(t, x, y)-\sum\limits_{\mu, \nu\in\{+, -\}}Q_{\mu \nu}(0, x, y)+W(t)-W(0).
\end{aligned}
\end{equation*}
Hence, Proposition 4.2 can follow from Lemma 6.1 and Lemma 6.2 below:

\begin{lem}\label{lem6-1}
{\it For $T_{0}> 0$ and $\mu, \nu\in \{+, -\}$, we have
\begin{equation}\label{6-19AAAB}
\sup\limits_{t\in [0, T_0]}\|Q_{\mu \nu}(t)\|_{\mathcal{H}^{N_0}}:=\sup\limits_{t\in [0,T_{0}]}\|Q_{\mu\nu}(t)\|_{H^{N_{0}}(\mathbb{R}^{2}\times \mathbb{T})}
+\sup\limits_{t\in [0,T_{0}]}\|Q_{\mu\nu}(t)\|_{Z}\lesssim\varepsilon_{0}^{2},
\end{equation}}
\end{lem}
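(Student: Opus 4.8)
The plan is to recognize from \eqref{6-3A} that $Q_{\mu\nu}(t)$ is a purely bilinear expression in the profile $V(t)$, and to run a bilinear estimate whose gain comes from the factor $1/\Phi^{n,m}_{\mu,\nu}$ combined with the smallness supplied by the bootstrap hypothesis \eqref{3-3}. Since $\widehat{V_{\mu;n}}(t,\xi)=e^{it\Lambda_{\mu;n}(\xi)}\widehat{U_{\mu;n}}(t,\xi)$, the Fourier transform in \eqref{6-3A} can be written in two ways: as $e^{it\Lambda_n(\xi)}$ times a \emph{time-independent} bilinear form $B_{\mathfrak m}$ in $\widehat{U_\mu},\widehat{U_\nu}$, and as $\int_{\mathbb R^2}\sum_m e^{it\Phi^{n,m}_{\mu,\nu}(\xi,\eta)}\mathfrak m_{n,m}(\xi,\eta)\widehat{V_{\mu;n-m}}(t,\xi-\eta)\widehat{V_{\nu;m}}(t,\eta)\,d\eta$ with symbol $\mathfrak m_{n,m}:=\mathcal M_{n,m}/(i\Phi^{n,m}_{\mu,\nu})$. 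By Lemma \ref{HC-11}, $1/|\Phi^{n,m}_{\mu,\nu}(\xi,\eta)|\lesssim\min\{\Lambda_n(\xi),\Lambda_{n-m}(\xi-\eta),\Lambda_m(\eta)\}$, while inspection of \eqref{5-7-00} gives $|\mathcal M_{n,m}(\xi,\eta)|\lesssim\Lambda_m(\eta)$; together with the derivative bounds of Lemma \ref{HC-12} this shows that on each Littlewood--Paley block — output $(k,l)$, inputs $(k_1,l_1),(k_2,l_2)$ with $(k_1,k_2)\in\chi_k$, $(l_1,l_2)\in\chi_l$ — the symbol $\mathfrak m_{n,m}$ is a Coifman--Meyer multiplier of size $\lesssim(2^{k_2}+2^{l_2})\,2^{\min(k+l,\,k_1+l_1,\,k_2+l_2)}$, so the bilinear estimates of Appendix A apply blockwise.

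For the $H^{N_0}$ part of \eqref{6-19AAAB} I would use the first representation, so the isometry $e^{it\Lambda}$ drops out of $\|\cdot\|_{H^{N_0}}$, decompose $B_{\mathfrak m}(U_\mu,U_\nu)$ dyadically, and bound each block by $\lesssim(\text{block size})\,\|R_{k_1}S_{l_1}U_\mu\|_{L^\infty}\|R_{k_2}S_{l_2}U_\nu\|_{L^2}$, choosing the H\"older split so that the higher, symbol-heavy factor carries the $L^2$ norm. Inserting \eqref{7-12AA} and \eqref{7-14AA} from Lemma \ref{HC-5} (namely $\varepsilon_0(1+t)^{-1}2^{-7(\cdot)}$ in $L^\infty$ and $\varepsilon_0(1+t)^{\kappa}2^{-N(\cdot)/2}$ in $L^2$), and using the output weight $2^{N_0(k+l)/2}$ with $N_0=50$, the loss $2^{O(k_2+l_2)}$ from the symbol is defeated by $N\geq220$, every dyadic sum becomes geometric, and $(1+t)^{\kappa-1}\leq1$; hence $\|Q_{\mu\nu}(t)\|_{H^{N_0}}\lesssim\varepsilon_0^2$ uniformly in $t$.

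For the $Z$ part I would follow the two steps of the proof of Proposition \ref{prop4-1}, now exploiting the extra factor $1/\Phi^{n,m}_{\mu,\nu}$ and replacing ``bounded by $C(T_0,\ldots)$'' by ``bounded by $\varepsilon_0^2$'' throughout. The first step is to control $\|R_kS_lQ_{\mu\nu}(t)\|_{L^2}$ (which also dominates the localizations $\sum_{1\le j\le100}2^j\|\varphi_j(x)R_kS_lQ_{\mu\nu}\|_{L^2}$): estimate blockwise, always placing the higher-frequency factor in $L^2$ so as to gain the Sobolev factor $\varepsilon_0 2^{-N_0(k+l)/2}=\varepsilon_0 2^{-25(k+l)}$ from \eqref{7-14AA}, which beats the $Z$-weight $2^{9(k+l)}$ with room, while the remaining low-frequency factor contributes $\|R_{k_i}S_{l_i}U\|_{L^\infty}\lesssim\varepsilon_0(1+t)^{-1}2^{-7(\cdot)}\leq\varepsilon_0 2^{-7(\cdot)}$ and sums to $O(1)$; this gives $2^{9(k+l)}\|R_kS_lQ_{\mu\nu}\|_{L^2}\lesssim\varepsilon_0^2$. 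The second step is the tail $\sum_{j\geq100}2^j\|\varphi_j(x)R_kS_lQ_{\mu\nu}\|_{L^2}$: represent $R_kS_lQ_{\mu\nu}$ as a bilinear kernel operator as in \eqref{5-23AA}--\eqref{5-A1A} (with $\mathcal M_{n,m}$ replaced by $\mathfrak m_{n,m}$), split the $(x_1,x_2)$-integration into the region where both $|x_1|,|x_2|\in[2^{j-4},2^{j+4}]$ — on which one uses the spatial decay of the profiles, $\|\varphi_{[j-4,j+4]}(x)R_{k_i}S_{l_i}V\|_{L^2}\lesssim 2^{-j}2^{-9(k_i+l_i)}\|V\|_Z$ from \eqref{2-1A} — and its complement, where the kernel is to be controlled by non-stationary phase in $(\xi,\eta)$; then sum in $j$.

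The step I expect to be the main obstacle is this second step of the $Z$-norm estimate. Unlike in Proposition \ref{prop4-1}, where the time increment had length $\leq1$, here $t$ may be as large as $e^{c_0/\varepsilon_0^2}$, so the kernel of $R_kS_lQ_{\mu\nu}$ carries the genuinely time-dependent phase $e^{it\Phi^{n,m}_{\mu,\nu}(\xi,\eta)}$, whose gradient in $(\xi,\eta)$ has size $\sim t$; consequently the crude non-stationary bound ``spatial separation $\gtrsim$ frequencies'' does not by itself localize the kernel on the dyadic scale $2^j$ when $2^j\lesssim t$. Resolving this seems to be the technical heart of the matter: one must combine the gain from $1/\Phi^{n,m}_{\mu,\nu}$ with the spatial concentration of the \emph{profiles} $V_\mu,V_\nu$ and a case split governed by the relative sizes of $2^j$, $t$ and the dyadic frequencies — integrating by parts in $(\xi,\eta)$ on the region where the spatial separation dominates $t\nabla_{\xi,\eta}\Phi^{n,m}_{\mu,\nu}$, and on the complement exploiting $1/|\Phi^{n,m}_{\mu,\nu}|$ together with the stationary-phase expansions of the linear propagators (Lemmas \ref{HC-1}--\ref{HC-3}) — so that the $j$-sum converges uniformly in $t$. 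The $H^{N_0}$ estimate and the $L^2$ part of the $Z$-norm are, by contrast, routine given the large Sobolev indices and the dispersive decay of Lemma \ref{HC-5}.
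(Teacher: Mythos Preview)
Your proposal is essentially the paper's proof. The $H^{N_0}$ estimate and the $L^2$-part of the $Z$-norm proceed as you outline, via the bilinear Lemma~\ref{HC-8} and the a priori bounds of Lemma~\ref{HC-5}; for the weighted tail of the $Z$-norm you have correctly located the obstruction and its resolution. Two points deserve sharpening. First, the split in $j$ is not at the fixed level $100$ but at the $t$-dependent threshold $J_{k,l,t}$ with $2^{J_{k,l,t}}=100(1+t)(2^{k}+2^{l})$: for $j\ge J_{k,l,t}$ the spatial separation $|x-x_1|+|x_1-x_2|\ge 2^{j-10}$ dominates $t|\nabla_{\xi,\eta}\Phi^{n,m}_{\mu,\nu}|\le 4(1+t)$, so the non-stationary-phase operator \eqref{6-38AA} (eleven integrations by parts, with Lemma~\ref{HC-12} controlling the symbol derivatives) yields a kernel bound \emph{independent of $t$}, after which the localization argument of Section~5 runs verbatim with $\|V\|_Z$ supplying the $\varepsilon_0^2$. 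Second, for $j\le J_{k,l,t}$ the mechanism is simpler than ``stationary-phase expansions of the kernel'': you only need to retain (rather than discard) the $(1+t)^{-1}$ factor from $\|R_{k_i}S_{l_i}U\|_{L^\infty}$ in your $L^2$ bound, so that $\|R_kS_lQ_{\mu\nu}\|_{L^2}\lesssim\varepsilon_0^2(1+t)^{-1}2^{-(N_0/4-1)(k+l)}$; then $\sum_{j\le J_{k,l,t}}2^j\lesssim(1+t)(2^k+2^l)$ cancels the $(1+t)^{-1}$ and leaves a harmless frequency loss absorbed by $N_0/4-1>10$.
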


\begin{lem}\label{lem6-2}
{\it With $T_{0}=e^{c_{0}/\epsilon_{0}^{2}}$, we have
\begin{equation*}
\sup\limits_{t\in [0,T_{0}]}\|W(t)-W(0)\|_{\mathcal{H}^{N_{0}}}\lesssim \varepsilon_{0}^{3}+c_{0}\varepsilon_{0}.
\end{equation*}}
\end{lem}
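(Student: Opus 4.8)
\noindent\textbf{\textit{Proof proposal for Lemma \ref{lem6-2}.}}
The plan is to bound the $H^{N_0}$-part and the $Z$-part of $\|W(t)-W(0)\|_{\mathcal H^{N_0}}$ separately, starting from the cubic representation \eqref{6-8ABC}, so that everything reduces to estimating $\int_0^{T_0}\|H_{\mu\nu}(s)\|_{\mathcal H^{N_0}}\,ds$, summed over the finitely many sign combinations in \eqref{6-8ABC}, where after inserting \eqref{6-6AAA} into \eqref{6-4AAB} the quantity $H_{\mu\nu}(s)$ is a finite sum of trilinear forms in $U_\sigma(s), U_\lambda(s), U_\omega(s)$ whose kernel is a product of factors from the list \eqref{5-7-00} and one factor $1/\Phi^{n,\cdot}_{\cdot,\cdot}$. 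First I would record, via Lemma \ref{HC-11} and Lemma \ref{HC-12}, that $1/|\Phi^{n,m}_{\mu,\nu}|$ costs at most one power of the smallest of the three frequencies while its $(\xi,\eta)$-derivatives obey the same bound; together with the explicit form of $\mathcal M_{n,m}$ in \eqref{5-7-00} (at most quadratic in $(\eta,m)$, order zero in $(\xi-\eta,n-m)$) this shows that every trilinear multiplier loses only a bounded number $p$ of derivatives, with $p$ independent of $N$ and $N_0$. Since $N_0=50$ and $N\ge 220$ are both much larger than $p$, after Littlewood--Paley decomposing the frequency variables $(\xi,n)$, $(\eta,m)$, $(\zeta,l)$ one may feed the pieces into the trilinear estimates of Appendix A, always placing the Sobolev norm on the highest-frequency factor and the dispersive $L^\infty$-norms of Lemma \ref{HC-5} on the two remaining factors.

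For the $H^{N_0}$-part, and also for the purely frequency-weighted part $2^{9(k+l)}\|R_kS_l(W(t)-W(0))\|_{L^2}$ of the $Z$-norm, this distribution already suffices: the highest-frequency factor is absorbed by the gain $2^{-N_0(k+l)/2}$ (note $N_0/2>9$), or, if extra derivatives must be spent, by $2^{-N(k+l)/2}$ at the price of a harmless $(1+s)^\kappa$ from \eqref{7-14AA}, while the two other factors each contribute $(1+s)^{-1}$ from \eqref{7-12AA}; hence one gets $\lesssim\varepsilon_0^3(1+s)^{-2+\kappa}$ after summing the sign choices and the (geometrically convergent) Littlewood--Paley pieces, and $\int_0^{T_0}\varepsilon_0^3(1+s)^{-2+\kappa}\,ds\lesssim\varepsilon_0^3$. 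So this part contributes $O(\varepsilon_0^3)$ to the bound.

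The genuinely delicate contribution, which is exactly where the term $c_0\varepsilon_0$ arises, is the spatially localized part $2^{9(k+l)}\sum_{j\ge1}2^j\|\varphi_j(x)\,R_kS_l(W(t)-W(0))\|_{L^2}$ of the $Z$-norm. Here one of the three factors must be estimated in the spatially weighted norm $\sum_{j\ge1}2^j\|\varphi_j(x)\cdot R_kS_l(\cdot)\|_{L^2}$, and for $U(s)=e^{-is\Lambda}V(s)$ this quantity is \emph{not} controlled by $\|V(s)\|_Z$: the Klein--Gordon flow transports the spatial support, and commuting $x$ through $e^{-is\Lambda}$ produces a factor $s\,\nabla_\xi\Lambda$, so that $\sum_{j\ge1}2^j\|\varphi_j(x)\,R_kS_lU(s)\|_{L^2}\lesssim\varepsilon_0(1+s)$ at each dyadic frequency. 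Balancing this single factor of $(1+s)$ against the two dispersive factors $(1+s)^{-1}$ coming from \eqref{7-12AA} yields, for the cubic term, $2^{9(k+l)}\sum_{j\ge1}2^j\|\varphi_j(x)\,R_kS_lH_{\mu\nu}(s)\|_{L^2}\lesssim\varepsilon_0^3(1+s)^{-1}$ (the weight $2^{9(k+l)}$ being absorbed by the $2^{-9(k+l)}$ inside $\|V\|_Z$ together with the $2^{-7(k+l)}$ from one $L^\infty$ factor), whence $\int_0^{T_0}\varepsilon_0^3(1+s)^{-1}\,ds\lesssim\varepsilon_0^3\ln(1+T_0)=\varepsilon_0^3\cdot c_0\varepsilon_0^{-2}=c_0\varepsilon_0$. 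Adding the two parts gives $\|W(t)-W(0)\|_{\mathcal H^{N_0}}\lesssim\varepsilon_0^3+c_0\varepsilon_0$ for all $t\in[0,T_0]$.

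The main obstacle is precisely this last step: propagating the weighted bound $\sum_{j\ge1}2^j\|\varphi_j(x)\,R_kS_lU(s)\|_{L^2}\lesssim\varepsilon_0(1+s)$ through a trilinear term whose multiplier contains the non-local factor $1/\Phi$, so that the output of the trilinear form need not be concentrated where the three inputs are. As in the proof of Proposition \ref{prop4-1}, one must run the kernel analysis of \eqref{5-A1A}--\eqref{5-BAB}, separating the region where the spatial variables are comparable (handled directly, and producing the single $(1+s)$) from the far region (handled by non-stationary phase in $(\xi,\eta,\zeta)$, using $|\nabla_{\xi,\eta}\Phi^{n,m}_{\mu,\nu}|\lesssim|\Phi^{n,m}_{\mu,\nu}|$ from Lemma \ref{HC-12} and the rapid kernel decay of the $\mathcal M$-factors, cf.\ Lemma \ref{HC-10}). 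Unlike in Proposition \ref{prop4-1}, where a crude $O(|t'-t|)$ bound over a unit time interval was enough, every estimate must now carry explicit time decay and the time integral must be pushed up to $T_0=e^{c_0/\varepsilon_0^2}$; the whole quantitative content of the proof is to arrange that whenever a factor $(1+s)$ (from the spatial moment) or $(1+s)^\kappa$ (from the $H^N$-norm) is used, at least two dispersive factors $(1+s)^{-1}$ remain, so that the integrand decays at rate $(1+s)^{-2+\kappa}$ or, in the worst case, only $(1+s)^{-1}$, and never slower.
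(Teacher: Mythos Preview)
Your outline for the $H^{N_0}$ piece and for the pure $2^{9(k+l)}\|R_kS_l(\cdot)\|_{L^2}$ piece of the $Z$-norm is correct and matches the paper's Step~1: two $L^\infty$ factors from \eqref{7-12AA} and one $L^2$ factor from \eqref{7-14AA} give $(1+s)^{-2+\kappa}$, which integrates to $O(\varepsilon_0^3)$.

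For the spatially weighted part your heuristic scaling is right, but the route you describe is not the one the paper takes. You propose to place a weighted norm $\sum_j 2^j\|\varphi_j R_kS_lU(s)\|_{L^2}\lesssim\varepsilon_0(1+s)$ on one input and balance it against two dispersive factors. The paper never estimates $U$ in a weighted norm. Instead it splits the \emph{output} localization parameter $j$ into three ranges. For $j\le M(k,s)\approx\tfrac{9}{10}\log_2(1+s)$ one uses the crude bound $2^j\|\varphi_j\cdot\|_{L^2}\le 2^j\|\cdot\|_{L^2}$ together with \eqref{6-76A}; the geometric sum costs only $(1+s)^{9/10}$ and the result is integrable. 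For the middle range $M(k,s)\le j\le M'(k,s)\approx\log_2(1+s)$ (Step~4) the same crude bound costs a full $(1+s)$, producing the integrand $\varepsilon_0^3(1+s)^{-1}$ and hence the $c_0\varepsilon_0$ term after integrating to $T_0=e^{c_0/\varepsilon_0^2}$. For $j\ge M'(k,s)$ (Step~3) the trilinear form is rewritten in the \emph{profiles} $V_\mu$ so that the phase $e^{is\Phi_{\sigma,\lambda,\omega}}$ is explicit; since now $2^j\gg(1+s)\ge s|\nabla\Phi|$, non-stationary phase localizes each $R_{k_i}S_{l_i}V_\mu$ near $|x|\sim 2^j$, and then the \emph{uniform} $Z$-norm bound on $V$ (not a growing bound on $U$) closes the estimate with $O(\varepsilon_0^3)$.

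So the $c_0\varepsilon_0$ has the analytic origin you identify (one factor $(1+s)$ against $(1+s)^{-2}$), but in the paper it arises from the trivial estimate $\sum_{j\lesssim\log(1+s)}2^j\lesssim(1+s)$ applied to the unlocalized $L^2$ bound, not from a weighted bound on $U$. Your proposed route is not obviously wrong, but it leaves open exactly the point you flag as the ``main obstacle'': transferring the spatial weight from the output to one input through the non-local multiplier. The paper sidesteps this entirely by working with $V$ throughout Step~3, doing the kernel localization \emph{before} any weighted norm is applied. Also absent from your sketch are the auxiliary cutoffs the paper needs to make the frequency sums converge in Step~3: the high-frequency cases $k_{\max}\ge j/10$ and $l_{\max}\ge 9j/N$, and the further decomposition in an intermediate frequency $k_4$ (Steps~3-A through 3-D).
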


In addition, combining the assumption \eqref{3-3} in Proposition \ref{prop4-2} with Lemma \ref{HC-5} yields
\begin{subequations}\label{6-2-0}\begin{align}
&~~~~~~~~~~~~~~~~~~~~~~~\|R_{k}S_{l}U_{\pm}(t)\|_{L^{\infty}(\mathbb{R}^{2}\times\mathbb{T})}
\lesssim\varepsilon_{0}(1+t)^{-1}2^{-7(k+l)},\label{6-10A}\\[2mm]
&~~~~\|\mathcal{F}_{x,y}(R_{k}S_{l}U_{\pm}(t,x,y))(\xi,n)\|_{L^{\infty}_{\xi,n}}
=\|\mathcal{F}_{x,y}(R_{k}S_{l}V_{\pm}(t,x,y))(\xi,n)\|_{L^{\infty}_{\xi,n}}
\lesssim \varepsilon_{0}2^{-9(k+l)},\label{6-11A}\\[2mm]
&\|R_{k}S_{l}U_{\pm}(t)\|_{L^{2}(\mathbb{R}^{2}\times\mathbb{T})}
+\|R_{k}S_{l}V_{\pm}(t)\|_{L^{2}(\mathbb{R}^{2}\times\mathbb{T})}
\lesssim \varepsilon_{0}\mathrm{min}
(2^{-N_{0}(k+l)/2},\  (1+t)^{\kappa}2^{-N(k+l)/2}).\label{6-12A}
\end{align}
\end{subequations}

\subsection{Proof of Lemma \ref{lem6-1}}

\begin{proof} Due to \eqref{6-3A}, \eqref{2-11} and \eqref{2-9},  we write
\begin{equation}\label{6-20A}
\mathcal{F}_{x,  y}(Q_{\mu\nu})(t, \xi, n)= e^{it\Lambda_{n}(\xi)}\int_{\mathbb{R}^{2}}\sum_{m\in \mathbb{Z}}
\frac{\mathcal{M}_{n, m}(\xi,\eta)}{\Phi_{\mu, \nu}^{n,\ m}(\xi, \eta)}
\widehat{U_{\mu; n-m}}(t, \xi-\eta)\widehat{U_{\nu; m}}(t,\eta)d\eta.
\end{equation}

The proof of \eqref{6-19AAAB} will be divided into the following two steps.\vskip 0.1cm

\textbf{Step 1: Establish}
\begin{equation}\label{6-2-1}
\sup\limits_{t\in [0,T_{0}]}\|Q_{\mu\nu}(t)\|_{H^{N_{0}}(\mathbb{R}^{2}\times \mathbb{T})}\lesssim\varepsilon_{0}^{2}.
\end{equation}

\vskip 0.1 true cm

For $k, l\geq -1$ and $t\in [0,T_{0}]$, in view of \eqref{6-20A} and \eqref{2-3333}, one has
\begin{equation}\label{6-17A}
\begin{aligned}
&\mathcal{F}_{x,y}(R_{k}S_{l}Q_{\mu\nu})(t,\xi,n)\\
=& \psi_{k}(\xi)\psi_{l}(n)\cdot e^{it\Lambda_{n}(\xi)}\int_{\mathbb{R}^{2}}\sum_{m\in \mathbb{Z}}
\frac{\mathcal{M}_{n,m}(\xi,\eta)}{\Phi_{\mu,\nu}^{n,m}(\xi,\eta)}
\widehat{U_{\mu;n-m}}(t,\xi-\eta)\widehat{U_{\nu;m}}(t,\eta)d\eta\\
=& \sum_{\mbox{\tiny$\begin{array}{c}(k_{1},k_{2})\in\chi_{k},(l_{1},l_{2})\in\chi_{l}\end{array}$}}
\psi_{k}(\xi)\psi_{l}(n)\cdot e^{it\Lambda_{n}(\xi)}\int_{\mathbb{R}^{2}}\sum_{m\in \mathbb{Z}}
\frac{\mathcal{M}_{n,m}(\xi,\eta)}{\Phi_{\mu,\nu}^{n,m}(\xi,\eta)}\\
&~~~~~~~~~~~
~~~~~~~~~~~~~\cdot\psi_{k_{1}}(\xi-\eta)\psi_{l_{1}}(n-m)\widehat{U_{\mu;n-m}}(t,\xi-\eta)
\psi_{k_{2}}(\eta)\psi_{l_{2}}(m)\widehat{U_{\nu;m}}(t,\eta)d\eta\\
=& \sum_{\mbox{\tiny$\begin{array}{c}(k_{1},k_{2})\in\chi_{k},(l_{1},l_{2})\in\chi_{l}\end{array}$}}
e^{it\Lambda_{n}(\xi)}\int_{\mathbb{R}^{2}}\sum_{m\in \mathbb{Z}}
P^{\mu\nu}_{k,k_{1},k_{2},l,l_{1},l_{2}}(\xi,n,\eta,m)\\
&~~~~~~~~~~~
~~~~~~~~~~~~~~~~~~~~~~~~~\cdot\mathcal{F}_{x,y}(R_{k_{1}}S_{l_{1}}U_{\mu})(t,\xi-\eta,n-m)
\mathcal{F}_{x,y}(R_{k_{2}}S_{l_{2}}U_{\nu})(t,\eta,m)d\eta,
\end{aligned}
\end{equation}
where
\begin{equation}\label{6-18AAA}
\begin{aligned}
&P^{\mu\nu}_{k,k_{1},k_{2},l,l_{1},l_{2}}(\xi,n,\eta,m)\\
:=& \frac{\mathcal{M}_{n,m}(\xi,\eta)}{\Phi_{\mu,\nu}^{n,m}(\xi,\eta)}
\psi_{k}(\xi)\psi_{[k_{1}-2,k_{1}+2]}(\xi-\eta)\psi_{[k_{2}-2,k_{2}+2]}(\eta)\psi_{l}(n)
\psi_{[l_{1}-2,l_{1}+2]}(n-m)\psi_{[l_{2}-2,l_{2}+2]}(m).
\end{aligned}
\end{equation}

Under the assumption \eqref{3-3}, applying Lemma \ref{HC-8} in Appendix A to \eqref{6-17A} with $r=\infty$
and utilizing \eqref{6-12A} yield
\begin{equation}\label{6-21ABCDF}
\begin{aligned}
&\|R_{k}S_{l}Q_{\mu\nu}(t)\|_{L^{2}(\mathbb{R}^{2}\times\mathbb{T})}
=\|\mathcal{F}_{x,y}(R_{k}S_{l}Q_{\mu\nu})(t,\xi,n)\|_{L^{2}_{\xi}l^{2}_{n}}\\
\lesssim& \sum_{\mbox{\tiny$\begin{array}{c}(k_{1},k_{2})\in\chi_{k},(l_{1},l_{2})\in\chi_{l}\end{array}$}}
2^{l+l_{2}}(2^{k}+2^{l})(2^{k_{1}}+2^{k_{2}}+2^{l_{2}})2^{(k+l/2)}\\
&~~~~~~~~~~~~~~~~~~~~~~~~~~~
~~~~~~~~\cdot\|R_{k_{1}}S_{l_{1}}U_{\mu}\|_{L^{2}(\mathbb{R}^{2}\times \mathbb{T})}
\|R_{k_{2}}S_{l_{2}}U_{\nu}\|_{L^{2}(\mathbb{R}^{2}\times \mathbb{T})}\\
\lesssim& \sum_{\mbox{\tiny$\begin{array}{c}(k_{1},k_{2})\in\chi_{k},(l_{1},l_{2})\in\chi_{l}\end{array}$}}
2^{2k+k_{1}+k_{2}}2^{3l+2l_{2}}
\varepsilon_{0}(1+t)^{\kappa}2^{-N(k_{1}+l_{1})/2}
\varepsilon_{0}(1+t)^{\kappa}2^{-N(k_{2}+l_{2})/2}\\
\lesssim& \varepsilon_{0}^{2}(1+t)^{2\kappa}2^{-(N/2-5)(k+l)}.
\end{aligned}
\end{equation}

Similarly, by the assumption \eqref{3-3}, Lemma \ref{HC-8} with $r=2$, \eqref{6-10A} and \eqref{6-12A}, we obtain
\begin{equation}\label{6-23AB}
\begin{aligned}
&\|R_{k}S_{l}Q_{\mu\nu}(t)\|_{L^{2}(\mathbb{R}^{2}\times\mathbb{T})}\\
\lesssim& \sum_{\mbox{\tiny$\begin{array}{c}(k_{1},k_{2})\in\chi_{k},(l_{1},l_{2})\in\chi_{l}\end{array}$}}
2^{l+l_{2}}(2^{k}+2^{l})(2^{k_{1}}+2^{k_{2}}+2^{l_{2}})\\
&~~~\cdot\mathrm{min}\Big(\|R_{k_{1}}S_{l_{1}}U_{\mu})\|_{L^{2}(\mathbb{R}^{2}\times \mathbb{T})}
\|R_{k_{2}}S_{l_{2}}U_{\nu}
\|_{L^{\infty}(\mathbb{R}^{2}\times \mathbb{T})}, \|R_{k_{1}}S_{l_{1}}U_{\mu})\|_{L^{\infty}(\mathbb{R}^{2}\times \mathbb{T})}\|R_{k_{2}}S_{l_{2}}U_{\nu}\|_{L^{2}(\mathbb{R}^{2}\times \mathbb{T})}\Big)\\
\lesssim& \sum_{\mbox{\tiny$\begin{array}{c}(k_{1},k_{2})\in\chi_{k},(l_{1},l_{2})\in\chi_{l}\end{array}$}}
2^{k+k_{1}+k_{2}}2^{2l+2l_{2}}
\mathrm{min}\Big(\varepsilon_{0}2^{-N_{0}(k_{1}+l_{1})/2}
\varepsilon_{0}(1+t)^{-1}2^{-7(k_{2}+l_{2})},\\
&~~~~~~~~~~~~~~~~~~~~~~~~~~~~~~~~~~~~
~~~~~~~~~~~~~~~~~~~~~~~~~\varepsilon_{0}2^{-N_{0}(k_{2}+l_{2})/2}
\varepsilon_{0}(1+t)^{-1}2^{-7(k_{1}+l_{1})}\Big)\\
\lesssim& \sum_{\mbox{\tiny$\begin{array}{c}(k_{1},k_{2})\in\chi_{k},(l_{1},l_{2})\in\chi_{l}\end{array}$}}
2^{k+k_{1}+k_{2}}2^{2l+2l_{2}}\cdot\Big[\varepsilon_{0}2^{-N_{0}(k_{1}+l_{1})/2}
\varepsilon_{0}(1+t)^{-1}2^{-7(k_{2}+l_{2})}\Big]^{1/2}\\
&~~~~~~~~~~~~~~~~~~~~~~~~~~~~
~~~~~~~~~~~~~~~~~~~~~~~\cdot\Big[\varepsilon_{0}2^{-N_{0}(k_{2}+l_{2})/2}
\varepsilon_{0}(1+t)^{-1}2^{-7(k_{1}+l_{1})}\Big]^{1/2}\\
\lesssim& \varepsilon_{0}^{2}(1+t)^{-1}2^{-(N_{0}/4-1)(k+l)}.
\end{aligned}
\end{equation}
Hence, it derives from \eqref{6-21ABCDF} and \eqref{6-23AB} that
\begin{equation}\label{6-25AB}
\begin{aligned}
&\|R_{k}S_{l}Q_{\mu\nu}(t)\|_{L^{2}(\mathbb{R}^{2}\times\mathbb{T})}\\
\lesssim& \Big[\varepsilon_{0}^{2}(1+t)^{2\kappa}2^{-(N/2-5)(k+l)}\Big]^{(N-12)/(N-10)}
\Big[\varepsilon_{0}^{2}(1+t)^{-1}2^{-(N_{0}/4-1)(k+l)}\Big]^{2/(N-10)}\\
\lesssim& \varepsilon_{0}^{2}2^{-(N/2-6)(k+l)}.
\end{aligned}
\end{equation}
Moreover,
\begin{equation}\label{6-26AB}
\begin{aligned}
&\|Q_{\mu\nu}(t)\|_{H^{N_{0}}(\mathbb{R}^{2}\times \mathbb{T})}\\
\lesssim& \Big(\sum_{k,l\geq-1}\Big\|\Big(\sqrt{1+|\xi|^{2}+n^{2}}\Big)^{N_{0}}
\mathcal{F}_{x,y}(R_{k}S_{l}Q_{\mu\nu})(t,\xi,n)\Big\|_{L^{2}_{\xi}l^{2}_{n}}^{2}\Big)^{1/2}\\
\lesssim& \Big(\sum_{k,l\geq-1}(2^{N_{0}k}+2^{N_{0}l})^{2}
\Big\|\mathcal{F}_{x,y}(R_{k}S_{l}Q_{\mu\nu})(t,\xi,n)\Big\|_{L^{2}_{\xi}l^{2}_{n}}^{2}\Big)^{1/2}\\
=& \Big(\sum_{k,l\geq-1}(2^{N_{0}k}+2^{N_{0}l})^{2}
\|R_{k}S_{l}Q_{\mu\nu}(t)\|_{L^{2}(\mathbb{R}^{2}\times\mathbb{T})}^{2}\Big)^{1/2}.
\end{aligned}
\end{equation}
Combining \eqref{6-25AB} and \eqref{6-26AB} derives \eqref{6-2-1}.
\vskip 0.2 true cm

\textbf{Step 2: Establish}
\begin{equation}\label{6-2-2}
\|Q_{\mu\nu}(t)\|_{Z}\lesssim\varepsilon_{0}^{2}.
\end{equation}

\vskip 0.1 true cm

By the definition of $Z$-norm in \eqref{2-1A}, then
\begin{equation}\label{6-26D}
\|Q_{\mu\nu}(t)\|_{Z}:=
\sup\limits_{k,l\geq -1}2^{9(k+l)}\Big[\|R_{k}S_{l}Q_{\mu\nu}(t)\|_{L^{2}(\mathbb{R}^{2}\times\mathbb{T})}
+\sum\limits_{j\in \mathbb{Z}^{+}}2^{j}\|\varphi_{j}(x)\cdot R_{k}S_{l}Q_{\mu\nu}(t)\|_{L^{2}(\mathbb{R}^{2}\times\mathbb{T})}\Big].
\end{equation}
Using \eqref{6-25AB}, we have
\begin{equation}\label{6-28D}
\sup\limits_{t\in [0,T_{0}]}\sup\limits_{k,l\geq -1}2^{9(k+l)}\|R_{k}S_{l}Q_{\mu\nu}(t)
\|_{L^{2}(\mathbb{R}^{2}\times\mathbb{T})}\lesssim \varepsilon_{0}^{2}.
\end{equation}
Therefore, with \eqref{6-26D} and \eqref{6-28D}, the proof of \eqref{6-2-2} is reduced  to show that
for $k,l\geq -1$ and $t\in [0,T_{0}]$,
\begin{equation}\label{6-27D}
2^{9(k+l)}\sum\limits_{j\in \mathbb{Z}^{+}}2^{j}
\|\varphi_{j}(x)\cdot R_{k}S_{l}Q_{\mu\nu}(t)\|_{L^{2}(\mathbb{R}^{2}\times\mathbb{T})}\lesssim \varepsilon_{0}^{2}.
\end{equation}

Due to \eqref{6-23AB}, then
\begin{equation}\label{6-30AD}
2^{9(k+l)}\sum\limits_{j\leq J_{k,l,t}}2^{j}
\|\varphi_{j}(x)\cdot R_{k}S_{l}Q_{\mu\nu}(t)\|_{L^{2}(\mathbb{R}^{2}\times\mathbb{T})}\lesssim \varepsilon_{0}^{2},
\end{equation}
where
\begin{equation}\label{6-31D}
2^{J_{k,l,t}}:=100(1+t)(2^{k}+2^{l}).
\end{equation}

Based on \eqref{6-17A}-\eqref{6-18AAA}, we decompose $R_k S_l Q_{\mu\nu}$ as
\begin{equation}\label{6-32AA}
R_{k}S_{l}Q_{\mu\nu}(t,x,y)=\sum_{\mbox{\tiny$\begin{array}{c}(k_{1},k_{2})\in\chi_{k},(l_{1},l_{2})\in\chi_{l}\end{array}$}}
W_{k,k_{1},k_{2},l,l_{1},l_{2}}^{\mu,\nu}(t,x,y),
\end{equation}
where
\begin{equation*}\begin{aligned}
&\mathcal{F}_{x,y}(W_{k,k_{1},k_{2},l,l_{1},l_{2}}^{\mu,\nu})(t,\xi,n)\\
:=&\int_{\mathbb{R}^{2}}\sum_{m\in \mathbb{Z}}
e^{is \Phi_{\mu,\nu}^{n,m}(\xi,\eta)}
P^{\mu\nu}_{k,k_{1},k_{2},l,l_{1},l_{2}}(\xi,n,\eta,m)\times\\
&~~~~~~~~\psi_{k_{1}}(\xi-\eta)\psi_{l_{1}}(n-m)\widehat{V_{\mu;n-m}}(s,\xi-\eta)
\psi_{k_{2}}(\eta)\psi_{l_{2}}(m)\widehat{V_{\nu;m}}(s,\eta)d\eta.
\end{aligned}
\end{equation*}

In view of \eqref{6-30AD} and \eqref{6-32AA},  we can also reduce the proof of \eqref{6-27D} into the following estimate
\begin{equation}\label{6-35AA}
2^{9(k+l)}\sum\limits_{j\geq J_{k,l,t}}\sum_{\mbox{\tiny$\begin{array}{c}(k_{1},k_{2})\in\chi_{k},(l_{1},l_{2})\in\chi_{l}\end{array}$}}
2^{j}\|\varphi_{j}(x)\cdot W_{k,k_{1},k_{2},l,l_{1},l_{2}}^{\mu,\nu}\|_{L^{2}(\mathbb{R}^{2}\times\mathbb{T})}
\lesssim \varepsilon_{0}^{2}.
\end{equation}

To obtain \eqref{6-35AA}, we write $W_{k,k_{1},k_{2},l,l_{1},l_{2}}^{\mu,\nu}(t,x,y)$ as
\begin{equation}\label{6-36AA}
\begin{aligned}
W_{k,k_{1},k_{2},l,l_{1},l_{2}}^{\mu,\nu}(t,x,y)=
\int_{\mathbb{R}^{2}\times\mathbb{T}}\int_{\mathbb{R}^{2}\times\mathbb{T}}&L^{\mu,\nu}_{k,k_{1},k_{2},l,l_{1},l_{2}}(t,x,y,x_{1},y_{1},x_{2},y_{2})
R_{k_{1}}S_{l_{1}}V_{\mu}(t,x_{1},y_{1})
\\&~~~~~~~~~~~~~~~~~~~~~~~~\cdot R_{k_{2}}S_{l_{2}}V_{\nu}(t,x_{2},y_{2})dx_{1}dy_{1}dx_{2}dy_{2},
\end{aligned}
\end{equation}
where
\begin{equation}\label{6-37AA}
\begin{aligned}
&L^{\mu,\nu}_{k,k_{1},k_{2},l,l_{1},l_{2}}(t,x,y,x_{1},y_{1},x_{2},y_{2})
:=\sum_{n\in \mathbb{Z}}\sum_{m\in \mathbb{Z}}\Big(e^{i[n\cdot (y-y_{1})+m\cdot(y_{1}-y_{2})]}\Big)
\\&~~~~~~~~~~~~~~\cdot\int_{\mathbb{R}^{2}\times\mathbb{R}^{2}}e^{i[(x-x_{1})\cdot\xi+(x_{1}-x_{2})\cdot\eta]}
e^{it \Phi_{\mu,\nu}^{n,m}(\xi,\eta)}P^{\mu\nu}_{k,k_{1},k_{2},l,l_{1},l_{2}}(\xi,n,\eta,m)d\xi d\eta.
\end{aligned}
\end{equation}

Along the method of non-stationary phase, define the operator $L$ as
\begin{equation}\label{6-38AA}
\begin{aligned}
L:=\frac{\sum\limits_{d=1}^{2}\Big(t\partial_{\xi_{d}} \Phi_{\mu,\nu}^{n,m}(\xi,\eta)+x^{d}-x_{1}^{d}\Big)\partial_{\xi_{d}}
+\sum\limits_{d=1}^{2}\Big(t\partial_{\eta_{d}} \Phi_{\mu,\nu}^{n,m}(\xi,\eta)+x_{1}^{d}-x_{2}^{d}\Big)\partial_{\eta_{d}}}
{i\sum\limits_{d=1}^{2}\Big(t\partial_{\xi_{d}} \Phi_{\mu,\nu}^{n,m}(\xi,\eta)+x^{d}-x_{1}^{d}\Big)^{2}
+i\sum\limits_{d=1}^{2}\Big(t\partial_{\eta_{d}} \Phi_{\mu,\nu}^{n,m}(\xi,\eta)+x_{1}^{d}-x_{2}^{d}\Big)^{2}},
\end{aligned}
\end{equation}
then a direct computation shows
\begin{equation}\label{6-39AA}
\begin{aligned}
L(e^{i[(x-x_{1})\cdot\xi+(x_{1}-x_{2})\cdot\eta]}e^{it \Phi_{\mu,\nu}^{n,m}(\xi,\eta)})
=e^{i[(x-x_{1})\cdot\xi+(x_{1}-x_{2})\cdot\eta]}e^{it \Phi_{\mu,\nu}^{n,m}(\xi,\eta)}.
\end{aligned}
\end{equation}

With the notations in \eqref{6-31D} and \eqref{2-11}, we have
\begin{equation*}\label{6-40AA}
2^{j}\geq 2^{J_{k,l,t}}=100(1+t)(2^{k}+2^{l})~~(j\ge J_{k,l,t})
\end{equation*}
and
\begin{equation*}
t|\partial_{\xi_{d}} \Phi_{\mu,\nu}^{n,m}(\xi,\eta)|+t|\partial_{\eta_{d}} \Phi_{\mu,\nu}^{n,m}(\xi,\eta)|\leq 4(1+t)~~ (d=1, 2).
\end{equation*}
Combining these two inequalities yields that for $|x-x_{1}|+|x_{1}-x_{2}|\geq 2^{j-10}$ and $j\geq J_{k, l, t}$,
\begin{equation}\label{6-41AA}
\begin{aligned}
&\sum\limits_{d=1}^{2}\Big(t\partial_{\xi_{d}} \Phi_{\mu,\nu}^{n,m}(\xi,\eta)+x^{d}-x_{1}^{d}\Big)^{2}
+\sum\limits_{d=1}^{2}\Big(t\partial_{\eta_{d}} \Phi_{\mu,\nu}^{n,m}(\xi,\eta)+x_{1}^{d}-x_{2}^{d}\Big)^{2}\\
\gtrsim& \Big(|x-x_{1}|+|x_{1}-x_{2}|\Big)^{2}.
\end{aligned}
\end{equation}

Therefore, when $|x-x_{1}|+|x_{1}-x_{2}|\geq 2^{j-10}$ and $j\geq J_{k, l, t}$, it follows
from \eqref{6-37AA}-\eqref{6-41AA} and the integration by parts that
\begin{equation}\label{6-42A}
\begin{aligned}
&\Big|L^{\mu,\nu}_{k,k_{1},k_{2},l,l_{1},l_{2}}(t,x,y,x_{1},y_{1},x_{2},y_{2})\Big|\\
\lesssim& 2^{l}2^{l_{2}}\sup\limits_{n,m\in \mathbb{Z}} \Big|\int_{\mathbb{R}^{2}\times\mathbb{R}^{2}}e^{i[(x-x_{1})\cdot\xi+(x_{1}-x_{2})\cdot\eta]}
e^{it \Phi_{\mu,\nu}^{n,m}(\xi,\eta)}P^{\mu\nu}_{k,k_{1},k_{2},l,l_{1},l_{2}}(\xi,n,\eta,m)d\xi d\eta\Big|\\
=& 2^{l}2^{l_{2}}\sup\limits_{n,m\in \mathbb{Z}} \Big|\int_{\mathbb{R}^{2}\times\mathbb{R}^{2}}L^{(11)}(e^{i[(x-x_{1})\cdot\xi+(x_{1}-x_{2})
\cdot\eta]}e^{it \Phi_{\mu,\nu}^{n,m}(\xi,\eta)})\\
&~~~~~~~~~~~~~~~~~~~~~~~~~~~~~~~~~~~~~~~~~~
~~~~~~~~~~~~~~~~~\cdot P^{\mu\nu}_{k,k_{1},k_{2},l,l_{1},l_{2}}(\xi,n,\eta,m)d\xi d\eta\Big|\\
\lesssim& 2^{l}2^{l_{2}}(2^{k_{2}}+2^{l_{2}})(2^{k}+2^{l})2^{2(k_{1}+k_{2})}(|x-x_{1}|+|x_{1}-x_{2}|)^{-11}\\
\lesssim& 2^{l}2^{l_{2}}(2^{k_{2}}+2^{l_{2}})2^{2(k_{1}+k_{2})}(|x-x_{1}|+|x_{1}-x_{2}|)^{-10},
\end{aligned}
\end{equation}
here we have used \eqref{5-7-0}-\eqref{5-7-1}, \eqref{7-16BAB} and Lemma \ref{HC-11} as well as \eqref{6-41AA}.
We specially point out that the bound obtained in \eqref{6-42A} is independent of $T_{0}$.

By the similar argument to deal with the related estimates of $M_{s, k, k_1, k_2, l, l_1, l_2}^{\mu, \nu}$
in \eqref{5-27AA} and \eqref{5-40A}, we define
\begin{equation}\label{6-43AA}
\begin{aligned}
&H_{k,k_{1},k_{2},l,l_{1},l_{2}}^{\mu,\nu,j}(t,x,y):=
\int_{\mathbb{R}^{2}\times\mathbb{T}}\int_{\mathbb{R}^{2}\times\mathbb{T}}
L^{\mu,\nu}_{k,k_{1},k_{2},l,l_{1},l_{2}}(t,x,y,x_{1},y_{1},x_{2},y_{2})
f^{\mu}_{jk_{1}l_{1}}(s,x_{1},y_{1})\\
&~~~~~~~~~~~~~~~~~~~~~~~~~~~~~~~~~~~~~~~~~~~~~~~~~~
~~~~~~~~~~~~~~~~~~~~~~~~~~~~\cdot f^{\nu}_{jk_{2}l_{2}}(s,x_{2},y_{2})dx_{1}dy_{1}dx_{2}dy_{2},
\end{aligned}
\end{equation}
where
\begin{equation*}
f^{\mu}_{jkl}(t,x,y):=\varphi_{[j-4,j+4]}(x)\cdot R_{k}S_{l}V_{\mu}(t,x,y).
\end{equation*}
Utilizing \eqref{6-42A} and the same argument to obtain \eqref{5-33A} yields
\begin{equation}\label{6-44AA}
2^{9(k+l)}\sum\limits_{j\geq J_{k,l,t}}\sum_{\mbox{\tiny$\begin{array}{c}(k_{1},k_{2})\in\chi_{k},(l_{1},l_{2})\in\chi_{l}\end{array}$}}
2^{j}\|\varphi_{j}(x)\cdot (W_{k,k_{1},k_{2},l,l_{1},l_{2}}^{\mu,\nu}-H_{k,k_{1},k_{2},l,l_{1},l_{2}}^{\mu,\nu,j})
\|_{L^{2}(\mathbb{R}^{2}\times\mathbb{T})}
\lesssim \varepsilon_{0}^{2}.
\end{equation}
On the other hand, by the analogous argument to obtain \eqref{5-40A}, one has
\begin{equation}\label{6-45AA}
2^{9(k+l)}\sum\limits_{j\geq J_{k,l,t}}
\sum_{\mbox{\tiny$\begin{array}{c}(k_{1},k_{2})\in\chi_{k},(l_{1},l_{2})\in\chi_{l}\end{array}$}}
2^{j}\|\varphi_{j}(x)\cdot H_{k,k_{1},k_{2},l,l_{1},l_{2}}^{\mu,\nu,j})\|_{L^{2}(\mathbb{R}^{2}\times\mathbb{T})}
\lesssim \varepsilon_{0}^{2}.
\end{equation}
Combining \eqref{6-44AA} with \eqref{6-45AA} derives \eqref{6-35AA}. Finally, \eqref{6-19AAAB} comes from \eqref{6-2-1} and \eqref{6-2-2}
and then the proof of Lemma 6.1 is completed.
\end{proof}

\subsection{Proof of Lemma 6.2.}

To deal with the related estimates hereafter, we will use the following notations:
\begin{equation*}
k_{\mathrm{min}}:=\mathrm{min}(k_{1},k_{2},k_{3}),~~~k_{\mathrm{med}}:
=\mathrm{med}(k_{1},k_{2},k_{3}),~~~k_{\mathrm{max}}:=\mathrm{max}(k_{1},k_{2},k_{3}),
\end{equation*}
and
\begin{equation}\label{2-11good-1}
k_{l_{\mathrm{min}}}, k_{l_{\mathrm{med}}}, k_{l_{\mathrm{max}}} :=k_{j}
~\mathrm{for}~l_{\mathrm{min}}=l_{j} ~\mathrm{or}~
l_{\mathrm{med}}=l_{j} ~\mathrm{or}~l_{\mathrm{max}}=l_{j}, \mathrm{respectively};
\end{equation}
\begin{equation}\label{2-11good-2}
l_{k_{\mathrm{min}}}, l_{k_{\mathrm{med}}}, l_{k_{\mathrm{max}}} :=l_{j}
~\mathrm{for}~k_{\mathrm{min}}=k_{j} ~\mathrm{or}~
k_{\mathrm{med}}=k_{j}~\mathrm{or}~k_{\mathrm{max}}=k_{j}, \mathrm{respectively}.
\end{equation}

\begin{proof} The proof of Lemma 6.2 will be divided into the following four steps.

\textbf{Step 1. Establish}
\begin{equation}\label{6-77A}
\sup\limits_{t\in [0,T_{0}]}\|W(t)-W(0)\|_{H^{N_{0}}(\mathbb{R}^{2}\times\mathbb{T})}\lesssim \varepsilon_{0}^{3}
\end{equation}
and
\begin{equation}\label{6-78A}
\sup\limits_{t\in [0,T_{0}]}\ \sup\limits_{k, l\geq-1}2^{9(k+l)}\|R_{k}S_{l}(W(t)-W(0))
\|_{L^{2}(\mathbb{R}^{2}\times\mathbb{T})}\lesssim \varepsilon_{0}^{3}.
\end{equation}

By \eqref{6-8ABC}, we have
\begin{equation}\label{6-47ABC}
\mathcal{F}_{x,y}(R_{k}S_{l}(W(t)-W(0)))(t,\xi,n)
=\sum_{\mu,\sigma,\lambda,\omega\in\{+,-\}}i\int_{0}^{t}
\mathcal{F}_{x,y}(R_{k}S_{l}F_{\mu\sigma\lambda\omega})(s, \xi, n)ds,
\end{equation}
where
\begin{equation}\label{6-48A}
\begin{aligned}
&\mathcal{F}_{x,y}(R_{k}S_{l}F_{\mu\sigma\lambda\omega})(s,\xi,n)\\
=& \psi_{k}(\xi)\psi_{l}(n)\cdot e^{is\Lambda_{n}(\xi)}\sum_{m\in \mathbb{Z}}
\sum_{l'\in \mathbb{Z}}\int_{\mathbb{R}^{2}}
\int_{\mathbb{R}^{2}}\Big(\frac{\mathcal{M}_{n,m}(\xi,\eta)}{\Phi_{\sigma,\mu}^{n,m}(\xi,\eta)}+
\frac{\mathcal{M}_{n,n-m}(\xi,\xi-\eta)}{\Phi_{\mu,\sigma}^{n,n-m}(\xi,\xi-\eta)}\Big)\\
&~~~~~~~~~~~~~~~~~~~~~~~~~~~~~~~~~~~\cdot\mathcal{M}_{m,l'}(\eta,\zeta)
\widehat{U_{\sigma;n-m}}(s,\xi-\eta)\widehat{U_{\lambda;m-l'}}(s,\eta-\zeta)
\widehat{U_{\omega;l'}}(s,\zeta)d\zeta d\eta\\
=& \psi_{k}(\xi)\psi_{l}(n)\cdot e^{is\Lambda_{n}(\xi)}
\sum_{\mbox{\tiny$\begin{array}{c}k_{1},k_{2},k_{3},l_{1},l_{2},l_{3}\geq-1\end{array}$}}
\sum_{m\in \mathbb{Z}}
\sum_{l'\in \mathbb{Z}}\int_{\mathbb{R}^{2}}
\int_{\mathbb{R}^{2}}\Big(\frac{\mathcal{M}_{n,m}(\xi,\eta)}{\Phi_{\sigma,\mu}^{n,m}(\xi,\eta)}+
\frac{\mathcal{M}_{n,n-m}(\xi,\xi-\eta)}{\Phi_{\mu,\sigma}^{n,n-m}(\xi,\xi-\eta)}\Big)\\
&~\cdot\mathcal{M}_{m,l'}(\eta,\zeta)\psi_{k_{1}}(\xi-\eta)\psi_{l_{1}}(n-m)\widehat{U_{\sigma;n-m}}(s,\xi-\eta)
\psi_{k_{2}}(\eta-\zeta)\psi_{l_{2}}(m-l')\widehat{U_{\lambda;m-l'}}(s,\eta-\zeta)\\
&~~~~~~~~~~~~~~~~~~~~~~~~~~~~~~~~~~~~~~~~~~~~~~~~~~~~~~~~~~~~~~~~~~~~~~~~~~~~~~~~~~~~~~
~~~~~~~~\cdot\psi_{k_{3}}(\zeta))\psi_{l_{3}}(l'))\widehat{U_{\omega;l'}}(s,\zeta)d\zeta d\eta\\
=& \psi_{k}(\xi)\psi_{l}(n)\cdot e^{is\Lambda_{n}(\xi)}
\sum_{\mbox{\tiny$\begin{array}{c}k_{1},k_{2},k_{3},l_{1},l_{2},l_{3}\geq-1\end{array}$}}\sum_{m\in \mathbb{Z}}
\sum_{l'\in \mathbb{Z}}\int_{\mathbb{R}^{2}}
\int_{\mathbb{R}^{2}}\Big(\frac{\mathcal{M}_{n,m}(\xi,\eta)}{\Phi_{\sigma,\mu}^{n,m}(\xi,\eta)}+
\frac{\mathcal{M}_{n,n-m}(\xi,\xi-\eta)}{\Phi_{\mu,\sigma}^{n,n-m}(\xi,\xi-\eta)}\Big)\\
&~~~~~~~~~~~~~~~~
~~~\cdot\mathcal{M}_{m,l'}(\eta,\zeta)\mathcal{F}_{x,y}(R_{k_{1}}S_{1}U_{\sigma})(s,\xi-\eta,n-m)
\mathcal{F}_{x,y}(R_{k_{2}}S_{2}U_{\lambda})(s,\eta-\zeta,m-l')\\
&~~~~~~~~~~~~~~~~~~~~~~~~~~~~~~~~~~~~~~~~~~~~~~~~~~~~~~~~~~~~~~~~~~~~~~~~~~~~~~
~~~~~~~~~~~~~~\cdot\mathcal{F}_{x,y}(R_{k_{3}}S_{3}U_{\omega})(s,\zeta,l')d\zeta d\eta.
\end{aligned}
\end{equation}

As in \cite{IP2} (page 799), we set
\begin{equation}\label{6-49D}
\begin{aligned}
&\mathcal{Y}_{k}^{1}:=\{(k_{1},k_{2},k_{3})\in \mathbb{Z}^{3}:|\mathrm{max}(k_{1},k_{2},k_{3})-k|\leq4\},\\[2mm]
&\mathcal{Y}_{k}^{2}:=\{(k_{1},k_{2},k_{3})\in \mathbb{Z}^{3}:
\mathrm{max}(k_{1},k_{2},k_{3})-k\geq4,~\mathrm{max}(k_{1},k_{2},k_{3})-\mathrm{med}(k_{1},k_{2},k_{3})\leq 4\},\\[2mm]
&\mathcal{Y}_{k}:=\mathcal{Y}_{k}^{1}\cup\mathcal{Y}_{k}^{2}.
\end{aligned}
\end{equation}
By the same idea as in \eqref{2-3333}, on the support of the integrand
in \eqref{6-48A},~$(k_{1},k_{2},k_{3})\in \mathcal{Y}_{k}$ and $(l_{1},l_{2},l_{3})\in \mathcal{Y}_{l}$ hold.
Then \eqref{6-48A} can be written as
\begin{equation}\label{6-50A-1}
\mathcal{F}_{x,y}(R_{k}S_{l}F_{\mu\sigma\lambda\omega})(s,\xi,n)=\sum_{\mbox{\tiny$\begin{array}{c}
k_{1},k_{2},k_{3},l_{1},l_{2},l_{3}\geq-1\\
(k_{1},k_{2},k_{3})\in \mathcal{Y}_{k},~(l_{1},l_{2},l_{3})\in \mathcal{Y}_{l}\end{array}$}}
\mathcal{F}_{x,y}(F^{\mu\sigma\lambda\omega;k,l}_{k_{1},k_{2},k_{3},l_{1},l_{2},l_{3}})(s,\xi,n),
\end{equation}
where
\begin{equation}\label{6-50AA-1-1}
\begin{aligned}
&\mathcal{F}_{x,y}(F^{\mu\sigma\lambda\omega;k,l}_{k_{1},k_{2},k_{3},l_{1},l_{2},l_{3}})(s,\xi,n)\\
:=& \psi_{k}(\xi)\psi_{l}(n)\cdot e^{is\Lambda_{n}(\xi)}
\sum_{m\in \mathbb{Z}}\sum_{l'\in \mathbb{Z}}\int_{\mathbb{R}^{2}}
\int_{\mathbb{R}^{2}}\Big(\frac{\mathcal{M}_{n,m}(\xi,\eta)}{\Phi_{\sigma,\mu}^{n,m}(\xi,\eta)}+
\frac{\mathcal{M}_{n,n-m}(\xi,\xi-\eta)}{\Phi_{\mu,\sigma}^{n,n-m}(\xi,\xi-\eta)}\Big)
\mathcal{M}_{m,l'}(\eta,\zeta)
\\&\cdot\mathcal{F}_{x,y}(R_{k_{1}}S_{l_{1}}U_{\sigma})(s,\xi-\eta,n-m)
\mathcal{F}_{x,y}(R_{k_{2}}S_{l_{2}}U_{\lambda})(s,\eta-\zeta,m-l')\\
&\cdot\mathcal{F}_{x,y}(R_{k_{3}}S_{l_{3}}U_{\omega})(s,\zeta,l')d\zeta d\eta.
\end{aligned}
\end{equation}

By Lemma \ref{HC-9} in Appendix A with $r=2$, we have  from \eqref{6-50A-1} and \eqref{6-50AA-1-1} that
\begin{equation}\label{6-66A}
\begin{aligned}
&\|R_{k}S_{l}F_{\mu\sigma\lambda\omega}(s)\|_{L^{2}(\mathbb{R}^{2}\times\mathbb{T})}\\
\lesssim& \sum_{\mbox{\tiny$\begin{array}{c}
k_{1},k_{2},k_{3},l_{1},l_{2},l_{3}\geq-1\\
(k_{1},k_{2},k_{3})\in \mathcal{Y}_{k},~(l_{1},l_{2},l_{3})\in \mathcal{Y}_{l}\end{array}$}}
\Big\|\mathcal{F}_{x,y}(F^{\mu\sigma\lambda\omega;k,l}_{k_{1},k_{2},k_{3},l_{1},l_{2},l_{3}})(s,\xi,n)
\Big\|_{L^{2}_{\xi}l^{2}_{n}}\\
\lesssim& \sum_{\mbox{\tiny$\begin{array}{c}
k_{1},k_{2},k_{3},l_{1},l_{2},l_{3}\geq-1\\
(k_{1},k_{2},k_{3})\in \mathcal{Y}_{k},~(l_{1},l_{2},l_{3})\in \mathcal{Y}_{l}\end{array}$}}
2^{k+k_{1}+\mathrm{max}(k_{2},k_{3})+k_{3}}2^{3l+2\mathrm{max}(l_{2},l_{3})+l_{3}}
\mathrm{min}(J_1, J_2),
\end{aligned}
\end{equation}
where
\begin{subequations}\label{6-4-1}
\begin{align}
J_1:=&\|R_{k_{l_{\mathrm{min}}}}S_{l_{\mathrm{min}}}U_{\pm}(s)\|_{L^{\infty}(\mathbb{R}^{2}\times\mathbb{T})}
\|R_{k_{l_{\mathrm{med}}}}S_{l_{\mathrm{med}}}U_{\pm}(s)\|_{L^{\infty}(\mathbb{R}^{2}\times\mathbb{T})}
\|R_{k_{l_{\mathrm{max}}}}S_{l_{\mathrm{max}}}U_{\pm}(s)\|_{L^{2}(\mathbb{R}^{2}\times\mathbb{T})},\label{6-67A}\\
J_2:=&\|R_{k_{_{\mathrm{min}}}}S_{l_{k_{_{\mathrm{min}}}}}U_{\pm}(s)\|_{L^{\infty}(\mathbb{R}^{2}\times\mathbb{T})}
\|R_{k_{_{\mathrm{med}}}}S_{l_{k_{_{\mathrm{med}}}}}U_{\pm}(s)\|_{L^{\infty}(\mathbb{R}^{2}\times\mathbb{T})}
\|R_{k_{_{\mathrm{max}}}}S_{l_{k_{_{\mathrm{max}}}}}U_{\pm}(s)\|_{L^{2}(\mathbb{R}^{2}\times\mathbb{T})}.\label{6-68A}
\end{align}
\end{subequations}

It follows from \eqref{6-10A}, \eqref{6-12A} and the notations in \eqref{2-11good-1}-\eqref{2-11good-2} that
\begin{equation}\label{6-72A}
\begin{aligned}
J_1&\lesssim \varepsilon_{0}(1+s)^{-1}2^{-7(k_{l_{\mathrm{min}}}+l_{\mathrm{min}})}
\varepsilon_{0}(1+s)^{-1}2^{-7(k_{l_{\mathrm{med}}}+l_{\mathrm{med}})}\varepsilon_{0}(1+s)^{\kappa}
2^{-N(k_{l_{\mathrm{max}}}+l_{\mathrm{max}})/2}\\
&\lesssim \varepsilon_{0}^{3}(1+s)^{-(2-\kappa)}2^{-7(k_{1}+k_{2}+k_{3})}
2^{-7(l_{1}+l_{2}+l_{3})}2^{-(N/2-7)l_{\mathrm{max}}}
\end{aligned}
\end{equation}
and
\begin{equation}\label{6-73A}
\begin{aligned}
J_2&\lesssim \varepsilon_{0}(1+s)^{-1}2^{-7(k_{\mathrm{min}}+l_{k_{\mathrm{min}}})}
\varepsilon_{0}(1+s)^{-1}2^{-7(k_{\mathrm{med}}+l_{k_{\mathrm{med}}})}
\varepsilon_{0}(1+s)^{\kappa}2^{-N(k_{\mathrm{max}}+l_{k_{\mathrm{max}}})/2}
\\&\lesssim \varepsilon_{0}^{3}(1+s)^{-(2-\kappa)}2^{-7(l_{1}+l_{2}+l_{3})}
2^{-7(k_{1}+k_{2}+k_{3})}2^{-(N/2-7)k_{\mathrm{max}}}
\end{aligned}
\end{equation}

\noindent Therefore, using \eqref{6-72A} and \eqref{6-73A}, we deduce
\begin{equation}\label{6-75A}
\begin{aligned}
&\sum_{\mbox{\tiny$\begin{array}{c}
k_{1},k_{2},k_{3},l_{1},l_{2},l_{3}\geq-1\\
(k_{1},k_{2},k_{3})\in \mathcal{Y}_{k},~(l_{1},l_{2},l_{3})\in \mathcal{Y}_{l}\end{array}$}}
2^{k+k_{1}+\mathrm{max}(k_{2},k_{3})+k_{3}}2^{3l+2\mathrm{max}(l_{2},l_{3})+l_{3}}\mathrm{min}(J_1, J_2)\\
\lesssim& \sum_{\mbox{\tiny$\begin{array}{c}
k_{1},k_{2},k_{3},l_{1},l_{2},l_{3}\geq-1\\
(k_{1},k_{2},k_{3})\in \mathcal{Y}_{k},~(l_{1},l_{2},l_{3})\in \mathcal{Y}_{l}\end{array}$}}
2^{k+k_{1}+\mathrm{max}(k_{2},k_{3})+k_{3}}2^{3l+2\mathrm{max}(l_{2},l_{3})+l_{3}}J_1^{1/2} J_2^{1/2}\\
\lesssim& \varepsilon_{0}^{3}(1+s)^{-(2-\kappa)}2^{-(N/4-3)(k+l)}.
\end{aligned}
\end{equation}
By \eqref{6-66A} and \eqref{6-75A}, we arrive at
\begin{equation}\label{6-76A}
\begin{aligned}
\|R_{k}S_{l}F_{\mu\sigma\lambda\omega}(s)\|_{L^{2}(\mathbb{R}^{2}\times\mathbb{T})}
\lesssim \varepsilon_{0}^{3}(1+s)^{-(2-\kappa)}2^{-(N/4-3)(k+l)}.
\end{aligned}
\end{equation}
In view of \eqref{6-47ABC}, \eqref{6-26AB} and \eqref{6-76A}, \eqref{6-77A} and \eqref{6-78A} are proved.

\vskip 0.2 true cm

\textbf{Step 2}: Establish that for $k,l\geq -1$ and $t\in [0,T_{0}]$,
\begin{equation}\label{6-79S}
2^{9(k+l)}\sum\limits_{j\in \mathbb{Z}^{+}}2^{j}\|\varphi_{j}(x)\cdot R_{k}S_{l}(W(t)-W(0))
\|_{L^{2}(\mathbb{R}^{2}\times\mathbb{T})}\lesssim \varepsilon_{0}^{3}+c_{0}\varepsilon_{0}.
\end{equation}

Due to \eqref{6-47ABC}, the proof of \eqref{6-79S} can be reduced to that for $k,l\geq-1$ and $\mu,\sigma,\lambda,\omega\in\{+,-\}$,
\begin{equation}\label{6-80AAA}
2^{9(k+l)}\sum\limits_{j\in \mathbb{Z}^{+}}2^{j}\Big\|\varphi_{j}(x)\cdot \int_{0}^{t}R_{k}S_{l}F_{\mu\sigma\lambda\omega}(s)ds\Big\|_{L^{2}(\mathbb{R}^{2}\times\mathbb{T})}
\lesssim \varepsilon_{0}^{3}+c_{0}\varepsilon_{0}.
\end{equation}

Note that
\begin{equation}\label{6-aabb}
\begin{aligned}
&2^{9(k+l)}\sum\limits_{j\in \mathbb{Z}^{+}}2^{j}\Big\|\varphi_{j}(x)\cdot \int_{0}^{t}R_{k}S_{l}F_{\mu\sigma\lambda\omega}(s)ds\Big\|_{L^{2}(\mathbb{R}^{2}\times\mathbb{T})}\\
\lesssim& 2^{9(k+l)}\int_{0}^{t}\sum\limits_{j\in \mathbb{Z}^{+}}\Big(2^{j}\Big\|\varphi_{j}(x)\cdot R_{k}S_{l}F_{\mu\sigma\lambda\omega}(s)\Big\|_{L^{2}(\mathbb{R}^{2}\times\mathbb{T})}\Big)ds\\
\leq& 2^{9(k+l)}\int_{0}^{t}\Big(\sum\limits_{0<j\leq M(k,s)}2^{j}\Big\|\varphi_{j}(x)\cdot
R_{k}S_{l}F_{\mu\sigma\lambda\omega}(s)\Big\|_{L^{2}(\mathbb{R}^{2}\times\mathbb{T})}\Big)ds\\
&+2^{9(k+l)}\int_{0}^{t}\Big(\sum\limits_{j\geq M(k,s)}2^{j}\Big\|\varphi_{j}(x)\cdot
R_{k}S_{l}F_{\mu\sigma\lambda\omega}(s)\Big\|_{L^{2}(\mathbb{R}^{2}\times\mathbb{T})}\Big)ds:=J_3+J_4,
\end{aligned}
\end{equation}
where
\begin{equation}\label{6-82A}
M(k,s):=100+\mathrm{max}(9\mathrm{log}_{2}(1+s)/10,10k).
\end{equation}

By \eqref{6-76A}, $J_3$ in \eqref{6-aabb} can be estimated as
\begin{equation}\label{6-81A}
J_3\lesssim \varepsilon_{0}^{3}.
\end{equation}

On the other hand, due to \eqref{6-50A-1},  $J_4$ in \eqref{6-aabb} is treated as
\begin{equation}\label{6-84A}
\begin{aligned}
J_4=& 2^{9(k+l)}\int_{0}^{t}\Big(\sum\limits_{j\geq M(k,s)}2^{j}
\Big\|\sum_{\mbox{\tiny$\begin{array}{c}
k_{1},k_{2},k_{3},l_{1},l_{2},l_{3}\geq-1\\
(k_{1},k_{2},k_{3})\in \mathcal{Y}_{k},~(l_{1},l_{2},l_{3})\in \mathcal{Y}_{l}\end{array}$}}
\varphi_{j}(x)\cdot F^{\mu\sigma\lambda\omega;k,l}_{k_{1},k_{2},k_{3},l_{1},l_{2},l_{3}}(s)
\Big\|_{L^{2}(\mathbb{R}^{2}\times\mathbb{T})}\Big)ds\\
\lesssim& 2^{9(k+l)}\int_{0}^{t}\Big(\sum\limits_{j\geq M(k,s)}2^{j}
\sum_{\mbox{\tiny$\begin{array}{c}
k_{1},k_{2},k_{3},l_{1},l_{2},l_{3}\geq-1\\
(k_{1},k_{2},k_{3})\in \mathcal{Y}_{k},~(l_{1},l_{2},l_{3})\in \mathcal{Y}_{l}\\ \mathrm{max}(k_{1},k_{2},k_{3})\geq j/10\end{array}$}}
\Big\|\mathcal{F}_{x,y}(F^{\mu\sigma\lambda\omega;k,l}_{k_{1},k_{2},k_{3},l_{1},l_{2},l_{3}})(s,\xi,n)
\Big\|_{L^{2}_{\xi}l^{2}_{n}}\Big)ds\\
+& 2^{9(k+l)}\int_{0}^{t}\Big(\sum\limits_{j\geq M(k,s)}2^{j}
\Big\|\sum_{\mbox{\tiny$\begin{array}{c}
k_{1},k_{2},k_{3},l_{1},l_{2},l_{3}\geq-1\\
(k_{1},k_{2},k_{3})\in \mathcal{Y}_{k},~(l_{1},l_{2},l_{3})\in \mathcal{Y}_{l}\\ k_1, k_2, k_3\in [-1, j/10]\end{array}$}}\varphi_j(x)\cdot F^{\mu\sigma\lambda\omega;k,l}_{k_{1},k_{2},k_{3},l_{1},l_{2},l_{3}}(s)
\Big\|_{L^{2}(\mathbb{R}^2\times\mathbb{T}}\Big)ds\\
:=&J_5+J_6.
\end{aligned}
\end{equation}

At first, we deal with $J_5$. By Lemma \ref{HC-9} in Appendix A with $r=+\infty$, one has
\begin{equation}\label{6-93A}
\begin{aligned}
\Big\|\mathcal{F}_{x,y}(F^{\mu\sigma\lambda\omega;k,l}_{k_{1},k_{2},k_{3},l_{1},l_{2},l_{3}})(s,\xi,n)
\Big\|_{L^{2}_{\xi}l^{2}_{n}}
\lesssim 2^{k+k_{1}+\mathrm{max}(k_{2},k_{3})+k_{3}}2^{3l+2\mathrm{max}(l_{2},l_{3})+l_{3}}2^{(k+l/2)}\mathrm{min}(J_7, J_8),
\end{aligned}
\end{equation}
where
\begin{subequations}\label{6-4-2}
\begin{align}
J_7:=&\|R_{k_{l_{\mathrm{min}}}}S_{l_{\mathrm{min}}}U_{\pm}(s)\|_{L^{\infty}(\mathbb{R}^{2}\times\mathbb{T})}
\|R_{k_{l_{\mathrm{med}}}}S_{l_{\mathrm{med}}}U_{\pm}(s)\|_{L^{2}(\mathbb{R}^{2}\times\mathbb{T})}
\|R_{k_{l_{\mathrm{max}}}}S_{l_{\mathrm{max}}}U_{\pm}(s)\|_{L^{2}(\mathbb{R}^{2}\times\mathbb{T})},\label{6-94A}\\
J_8:=&\|R_{k_{\mathrm{min}}}S_{l_{k_{\mathrm{min}}}}U_{\pm}(s)\|_{L^{\infty}(\mathbb{R}^{2}\times\mathbb{T})}
\|R_{k_{\mathrm{med}}}S_{l_{k_{\mathrm{med}}}}U_{\pm}(s)\|_{L^{2}(\mathbb{R}^{2}\times\mathbb{T})}
\|R_{k_{\mathrm{max}}}S_{l_{k_{\mathrm{max}}}}U_{\pm}(s)\|_{L^{2}(\mathbb{R}^{2}\times\mathbb{T})}.\label{6-95A}
\end{align}
\end{subequations}

It is noted that when $j\geq M(k,s)$,  $(k_{1},k_{2},k_{3})\in \mathcal{Y}_{k}$ and $\mathrm{max}(k_{1},k_{2},k_{3})\geq j/10$
imply $(k_{1},k_{2},k_{3})\in \mathcal{Y}_{k}^{2}$, which yields $k_{\mathrm{med}}\geq k_{\mathrm{max}}-4$. Then,
by \eqref{6-10A} and \eqref{6-12A}, we arrive at
\begin{equation*}\label{6-99A}
\begin{aligned}
J_7&\lesssim \varepsilon_{0}(1+s)^{-1}2^{-7(k_{l_{\mathrm{min}}}+l_{\mathrm{min}})}
\varepsilon_{0}(1+s)^{\kappa}2^{-N(k_{l_{\mathrm{med}}}+l_{\mathrm{med}})/2}
\varepsilon_{0}(1+s)^{\kappa}2^{-N(k_{l_{\mathrm{max}}}+l_{\mathrm{max}})/2}
\\&\lesssim \varepsilon_{0}^{3}2^{-N(k_{\mathrm{max}}+l_{\mathrm{max}})/2},\\
J_8&\lesssim \varepsilon_{0}(1+s)^{-1}2^{-7(k_{\mathrm{min}}+l_{k_{\mathrm{min}}})}
\varepsilon_{0}(1+s)^{\kappa}2^{-N(k_{\mathrm{med}}+l_{k_{\mathrm{med}}})/2}
\varepsilon_{0}(1+s)^{\kappa}2^{-N(k_{\mathrm{max}}+l_{k_{\mathrm{max}}})/2}
\\&\lesssim \varepsilon_{0}^{3}2^{-7(l_{1}+l_{2}+l_{3})}2^{-Nk_{\mathrm{max}}}.
\end{aligned}
\end{equation*}
Hence,
\begin{equation}\label{6-101A}
\begin{aligned}
\mathrm{min}(J_7, J_8)&\leq J_7^{1/2} J_8^{1/2}
\lesssim \varepsilon_{0}^{3}2^{-3Nk_{\mathrm{max}}/4}2^{-Nl_{\mathrm{max}}/4}2^{-3(l_{1}+l_{2}+l_{3})}.
\end{aligned}
\end{equation}
By \eqref{6-93A}, \eqref{6-101A} and \eqref{6-84A}, we get
\begin{equation}\label{6-102A}
J_5\lesssim \varepsilon_{0}^{3}.
\end{equation}

Therefore, in view of \eqref{6-aabb}, \eqref{6-81A}, \eqref{6-84A} and \eqref{6-102A}, in order to
prove \eqref{6-80AAA}, it suffices to establish
\begin{equation}\label{6-103AAH}
\begin{aligned}
J_6:=2^{9(k+l)}\int_{0}^{t}\Big(\sum\limits_{j\geq M(k,s)}2^{j}\Big\|\varphi_{j}(x)\cdot R_{k}S_{l}F^{\mu\sigma\lambda\omega}_{j}(s)\Big\|_{L^{2}(\mathbb{R}^{2}\times\mathbb{T})}\Big)ds
\lesssim \varepsilon_{0}^{3}+c_{0}\varepsilon_{0},
\end{aligned}
\end{equation}
where
\begin{equation}\label{6-103A}
\begin{aligned}
&\mathcal{F}_{x,y}(R_{k}S_{l}F^{\mu\sigma\lambda\omega}_{j})(s,\xi,n)
:=\sum_{\mbox{\tiny$\begin{array}{c}l_{1},l_{2},l_{3}\geq-1\\
(k_{1},k_{2},k_{3})\in \mathcal{Y}_{k},(l_{1},l_{2},l_{3})\in \mathcal{Y}_{l}\\
k_{1},k_{2},k_{3}\in [-1,j/10]\end{array}$}}
\mathcal{F}_{x,y}(F^{\mu\sigma\lambda\omega;k,l}_{k_{1},k_{2},k_{3},l_{1},l_{2},l_{3}})(s,\xi,n).
\end{aligned}
\end{equation}

The proof of \eqref{6-103AAH} is arranged in the following two steps.\vskip 0.2cm

\textbf{Step 3: Establish}
\begin{equation}\label{7-25AA1}
\begin{aligned}
&2^{9(k+l)}\int_{0}^{t}\Big(\sum\limits_{j\geq M'(k,s)}2^{j}\Big\|\varphi_{j}(x)\cdot
R_{k}S_{l}F^{\mu\sigma\lambda\omega}_{j}(s)\Big\|_{L^{2}(\mathbb{R}^{2}\times\mathbb{T})}\Big)ds
\lesssim \varepsilon_{0}^{3},
\end{aligned}
\end{equation}
where $M'(k,s):=100+\mathrm{max}(\mathrm{log}_{2}(1+s),10k)\geq M(k, s)$ with $M(k, s)$ defined in \eqref{6-82A}.

It follows from \eqref{6-103A} and \eqref{6-50AA-1-1} that
\begin{equation}\label{7-27AA3}
\begin{aligned}
&\mathcal{F}_{x,y}(R_{k}S_{l}F^{\mu\sigma\lambda\omega}_{j})(s,\xi,n)\\
:=&\psi_{k}(\xi)\psi_{l}(n)\cdot e^{is\Lambda_{n}(\xi)}
\sum_{\mbox{\tiny$\begin{array}{c}l_{1},l_{2},l_{3}\geq-1\\
(k_{1},k_{2},k_{3})\in \mathcal{Y}_{k},(l_{1},l_{2},l_{3})\in \mathcal{Y}_{l}\\
k_{1},k_{2},k_{3}\in [-1,j/10]\end{array}$}}
\sum_{m\in \mathbb{Z}}\sum_{l'\in \mathbb{Z}}
\int_{\mathbb{R}^{2}}\int_{\mathbb{R}^{2}}\\
&~~~~~~~~\Big(\frac{\mathcal{M}_{n,m}(\xi,\eta)}{\Phi_{\sigma,\mu}^{n,m}(\xi,\eta)}+
\frac{\mathcal{M}_{n,n-m}(\xi,\xi-\eta)}{\Phi_{\mu,\sigma}^{n,n-m}(\xi,\xi-\eta)}\Big)
\mathcal{M}_{m,l'}(\eta,\zeta)
\mathcal{F}_{x,y}(R_{k_{1}}S_{l_{1}}U_{\sigma})(s,\xi-\eta,n-m)\\
&~~~~~~~~~~~~~~~~~~~~~~~~~~
~~~~~~~~~~~\cdot\mathcal{F}_{x,y}(R_{k_{2}}S_{l_{2}}U_{\lambda})(s,\eta-\zeta,m-l')
\mathcal{F}_{x,y}(R_{k_{3}}S_{l_{3}}U_{\omega})(s,\zeta,l')d\zeta d\eta\\[2mm]
=& \psi_{k}(\xi)\psi_{l}(n)
\sum_{\mbox{\tiny$\begin{array}{c}l_{1},l_{2},l_{3}\geq-1\\
(k_{1},k_{2},k_{3})\in \mathcal{Y}_{k},(l_{1},l_{2},l_{3})\in \mathcal{Y}_{l}\\
k_{1},k_{2},k_{3}\in [-1,j/10]\end{array}$}}
\sum_{n_{2}\in \mathbb{Z}}\sum_{n_{3}\in \mathbb{Z}}\int_{\mathbb{R}^{2}}
\int_{\mathbb{R}^{2}}e^{is\Phi_{\sigma,\lambda,\omega}^{n,n_{2}+n_{3},n_{3}}(\xi,,\xi_{2}+\xi_{3},\xi_{3})}\\
&~~~~~~~~~~~~~~~
\Big(\frac{\mathcal{M}_{n,n_{2}+n_{3}}(\xi,\xi_{2}+\xi_{3})}{\Phi_{\sigma,\mu}^{n,n_{2}+n_{3}}(\xi,\xi_{2}+\xi_{3})}+
\frac{\mathcal{M}_{n,n-n_{2}-n_{3}}(\xi,\xi-\xi_{2}-\xi_{3})}{\Phi_{\mu,\sigma}^{n,n-n_{2}-n_{3}}(\xi,\xi-\xi_{2}-\xi_{3})}\Big)
\mathcal{M}_{n_{2}+n_{3},n_{3}}(\xi_{2}+\xi_{3},\xi_{3})\\
&~~~~~~~~~~~~~~~~~~~~~~~~~~~~~\cdot\mathcal{F}_{x,y}(R_{k_{1}}S_{l_{1}}V_{\sigma})(s,\xi-\xi_{2}-\xi_{3},n-n_{2}-n_{3})
\mathcal{F}_{x,y}(R_{k_{2}}S_{l_{2}}V_{\lambda})(s,\xi_{2},n_{2})\\
&~~~~~~~~~~~~~~~~~~~~~~~~~~~~~~~~~~~~~~~~~~~~~~~~~~~~~~~~~~~~~~~~~
~~~~~~~~~~~~~~~~~~~~\cdot\mathcal{F}_{x,y}(R_{k_{3}}S_{l_{3}}V_{\omega})(s,\xi_{3},n_{3})d\xi_{2} d\xi_{3}\\[2mm]
=& \psi_{k}(\xi)\psi_{l}(n)
\sum_{\mbox{\tiny$\begin{array}{c}l_{1},l_{2},l_{3}\geq-1\\
(k_{1},k_{2},k_{3})\in \mathcal{Y}_{k},(l_{1},l_{2},l_{3})\in \mathcal{Y}_{l}\\
k_{1},k_{2},k_{3}\in [-1,j/10]\end{array}$}}
\sum_{n_{3}\in \mathbb{Z}}\sum_{n'\in \mathbb{Z}}\int_{\mathbb{R}^{2}}
\int_{\mathbb{R}^{2}}e^{is \Phi_{\sigma,\lambda,\omega}^{n,n',n_{3}}(\xi,v,\xi_{3})}\\
&~~~~~~~~~~~~\Big(\frac{\mathcal{M}_{n,n'}(\xi,v)}{\Phi_{\sigma,\mu}^{n,n'}(\xi,v)}+
\frac{\mathcal{M}_{n,n-n'}(\xi,\xi-v)}{\Phi_{\mu,\sigma}^{n,n-n'}(\xi,\xi-v)}\Big)\mathcal{M}_{n',n_{3}}(v,\xi_{3})
\mathcal{F}_{x,y}(R_{k_{1}}S_{l_{1}}V_{\sigma})(s,\xi-v,n-n')\\
&~~~~~~~~~~~~~~~~~~
~~~~~~~~~~~~~~~~\cdot\mathcal{F}_{x,y}(R_{k_{2}}S_{l_{2}}V_{\lambda})(s,v-\xi_{3},n'-n_{3})
\mathcal{F}_{x,y}(R_{k_{3}}S_{l_{3}}V_{\omega})(s,\xi_{3},n_{3})d\xi_{3}dv,
\end{aligned}
\end{equation}
where
\begin{equation*}\label{7-29AA5}
\begin{aligned}
\Phi_{\sigma,\lambda,\omega}^{n,n_{2},n_{3}}(\xi,\xi_{2},\xi_{3})
:=\Lambda_{n}(\xi)-\Lambda_{\sigma;n-n_{2}}
(\xi-\xi_{2})-\Lambda_{\lambda;n_{2}-n_{3}}(\xi_{2}-\xi_{3})
-\Lambda_{\omega;n_{3}}(\xi_{3}).
\end{aligned}
\end{equation*}

\vskip 0.2 true cm

On the other hand, based on \eqref{7-27AA3}, we take the following decomposition
\begin{equation}\label{7-31AA6}
R_{k}S_{l}F^{\mu\sigma\lambda\omega}_{j}
=\sum_{\mbox{\tiny$\begin{array}{c}l_{1},l_{2},l_{3}\geq-1\\
(k_{1},k_{2},k_{3})\in \mathcal{Y}_{k},(l_{1},l_{2},l_{3})\in \mathcal{Y}_{l}\\
k_{1},k_{2},k_{3}\in [-1,j/10]\end{array}$}}
\sum_{k_{4}\in \mathbb{Z}}\sum_{l_{4}\geq-1}
T^{\mu\sigma\lambda\omega;k,l}_{k_{4},l_{4}}
[R_{k_{1}}S_{l_{1}}V_{\sigma},R_{k_{2}}S_{l_{2}}V_{\lambda},R_{k_{3}}S_{l_{3}}V_{\omega}],
\end{equation}
where for any $g_{1},g_{2},g_{3}\in L^{2}(\mathbb{R}^{2}\times\mathbb{T})$,
\begin{equation}\label{7-32AA7}
\begin{aligned}
&\mathcal{F}_{x,y}(T^{\mu\sigma\lambda\omega;k,l}_{k_{4},l_{4}}[g_{1},g_{2},g_{3}])(s,\xi,n):=
\sum_{n_{3}\in \mathbb{Z}}\sum_{n'\in \mathbb{Z}}\int_{\mathbb{R}^{2}}
\int_{\mathbb{R}^{2}}e^{is \Phi_{\sigma,\lambda,\omega}^{n,n',n_{3}}(\xi,v,\xi_{3})}
\varphi_{k_{4}}(v)\psi_{l_{4}}(n')\\
&~~~~~\cdot Q(\xi,n,v,n',\xi_{3},n_{3})\mathcal{F}_{x,y}(g_{1})(\xi-v,n-n')
\mathcal{F}_{x,y}(g_{2})(v-\xi_{3},n'-n_{3})
\mathcal{F}_{x,y}(g_{3})(\xi_{3},n_{3})d\xi_{3}dv
\end{aligned}
\end{equation}
and
\begin{equation*}\begin{aligned}
&Q(\xi,n,v,n',\xi_{3},n_{3}):=\Big(\frac{\mathcal{M}_{n,n'}(\xi,v)}{\Phi_{\sigma,\mu}^{n,n'}(\xi,v)}+
\frac{\mathcal{M}_{n,n-n'}(\xi,\xi-v)}{\Phi_{\mu,\sigma}^{n,n-n'}(\xi,\xi-v)}\Big)\mathcal{M}_{n',n_{3}}(v,\xi_{3})\\
&~~~~~~~~~~~~~~~~~~~~~~~~~\cdot\psi_{k}(\xi)\psi_{[k_{1}-2,k_{1}+2]}(\xi-v)\psi_{[k_{2}-2,k_{2}+2]}(v-\xi_{3})
\psi_{[k_{3}-2,k_{3}+2]}(\xi_{3})\\
&~~~~~~~~~~~~~~~~~~~~~~~~~\cdot\psi_{l}(n)\psi_{[l_{1}-2,l_{1}+2]}(n-n')
\psi_{[l_{2}-2,l_{2}+2]}(n'-n_{3})\psi_{[l_{3}-2,l_{3}+2]}(n_{3}).
\end{aligned}
\end{equation*}

On the other hand, for later use, the
operator $T^{\mu\sigma\lambda\omega;k,l}_{k_{4},l_{4}}[\cdot]$ in \eqref{7-32AA7} has such a form in the physical space
\begin{equation}\label{7-33AA8}
\begin{aligned}
T^{\mu\sigma\lambda\omega;k,l}_{k_{4},l_{4}}[g_{1},g_{2},g_{3}](s,x,y)&=
\int_{\mathbb{R}^{2}\times\mathbb{T}}\int_{\mathbb{R}^{2}\times\mathbb{T}}\int_{\mathbb{R}^{2}\times\mathbb{T}}
g_{1}(x_{1},y_{1})g_{2}(x_{2},y_{2})g_{3}(x_{3},y_{3})\\
&\cdot R^{\mu\sigma\lambda\omega;k,l}_{k_{4},l_{4}}(s,x,y,x_{1},y_{1},x_{2},y_{2},x_{3},y_{3})
dx_{1}dy_{1}dx_{2}dy_{2}dx_{3}dy_{3},
\end{aligned}
\end{equation}
where
\begin{equation*}
\begin{aligned}
&R^{\mu\sigma\lambda\omega;k,l}_{k_{4},l_{4}}(s,x,y,x_{1},y_{1},x_{2},y_{2},x_{3},y_{3}):=
\sum_{n_{3}\in \mathbb{Z}}\sum_{n'\in \mathbb{Z}}\sum_{n\in \mathbb{Z}}
e^{i[n\cdot(y-y_{1})+n'\cdot(y_{1}-y_{2})+n_{3}\cdot(y_{2}-y_{3})]}\\
&\cdot\int_{\mathbb{R}^{2}}\int_{\mathbb{R}^{2}}\int_{\mathbb{R}^{2}}e^{i(x-x_{1})\cdot\xi}
e^{i(x_{1}-x_{2})\cdot v}e^{i(x_{2}-x_{3})\cdot \xi_{3}}e^{is\Phi_{\sigma,\lambda,\omega}^{n,n',n_{3}}(\xi,v,\xi_{3})}
\varphi_{k_{4}}(v)\psi_{l_{4}}(n')Q(\xi,n,v,n',\xi_{3},n_{3})d\xi dv d\xi_{3}.
\end{aligned}
\end{equation*}

By the definition of $Q(\xi,n,v,n',\xi_{3},n_{3})$ in \eqref{7-32AA7}, Lemma \ref{HC-11} and the
definitions of $\mathcal{M}_{n,n'}(\xi,v)$,\\$\mathcal{M}_{n,n-n'}(\xi,\xi-v)$ in~\eqref{5-7-0}-\eqref{5-7-1},
one has
\begin{equation}\label{7-34AA9}
|Q(\xi,n,v,n',\xi_{3},n_{3})|\lesssim (2^{k}+2^{l})(2^{k_{\mathrm{max}}}+2^{l_{\mathrm{max}}})^{2}.
\end{equation}
Then  it comes from \eqref{7-32AA7} and \eqref{7-34AA9} that
\begin{equation}\label{7-35AA10}
\begin{aligned}
&\Big\|\mathcal{F}_{x,y}(T^{\mu\sigma\lambda\omega;k,l}_{k_{4},l_{4}}[g_{1},g_{2},g_{3}])(s,\xi,n)\Big\|_{L^{2}_{\xi}l^{2}_{n}}\\
\lesssim& (2^{k}+2^{l})(2^{k_{\mathrm{max}}}+2^{l_{\mathrm{max}}})^{2}\Big\|\sum_{n'\in \mathbb{Z}}\int_{\mathbb{R}^{2}}
|\varphi_{k_{4}}(v)\psi_{l_{4}}(n')||\mathcal{F}_{x,y}(g_{1})(\xi-v,n-n')|\\
&~~~~~~~~~~~~~~~~~~~~~~~~~~~~~~~\cdot\Big(\sum_{n_{3}\in \mathbb{Z}}\int_{\mathbb{R}^{2}}|\mathcal{F}_{x,y}(g_{2})(v-\xi_{3},n'-n_{3})|
|\mathcal{F}_{x,y}(g_{3})(\xi_{3},n_{3})|d\xi_{3}\Big)dv\Big\|_{L^{2}_{\xi}l^{2}_{n}}\\
\lesssim& (2^{k}+2^{l})(2^{k_{\mathrm{max}}}+2^{l_{\mathrm{max}}})^{2}\|\mathcal{F}_{x,y}(g_{2})\|_{L^{2}_{\xi}l^{2}_{n}}
\|\mathcal{F}_{x,y}(g_{3})\|_{L^{2}_{\xi}l^{2}_{n}}\\
&~~~~~~~~~~~~~~~~~~~~~~
~~~~~~~~~~~~~~~~~~~~~~~~~\cdot\Big\|\sum_{n'\in \mathbb{Z}}\int_{\mathbb{R}^{2}}
|\varphi_{k_{4}}(v)\psi_{l_{4}}(n')||\mathcal{F}_{x,y}(g_{1})(\xi-v,n-n')|dv\Big\|_{L^{2}_{\xi}l^{2}_{n}}\\
\lesssim& (2^{k}+2^{l})(2^{k_{\mathrm{max}}}+2^{l_{\mathrm{max}}})^{2}\|\mathcal{F}_{x,y}(g_{2})\|_{L^{2}_{\xi}l^{2}_{n}}
\|\mathcal{F}_{x,y}(g_{3})\|_{L^{2}_{\xi}l^{2}_{n}}\|\varphi_{k_{4}}(\xi)\psi_{l_{4}}(n)\|_{L^{1}_{\xi}l^{1}_{n}}
\|\mathcal{F}_{x,y}(g_{1})\|_{L^{2}_{\xi}l^{2}_{n}}\\
\lesssim& (2^{k}+2^{l})(2^{k_{\mathrm{max}}}+2^{l_{\mathrm{max}}})^{2}2^{2k_{4}}2^{l_{4}}
\|\mathcal{F}_{x,y}(g_{1})\|_{L^{2}_{\xi}l^{2}_{n}}
\|\mathcal{F}_{x,y}(g_{2})\|_{L^{2}_{\xi}l^{2}_{n}}\|\mathcal{F}_{x,y}(g_{3})\|_{L^{2}_{\xi}l^{2}_{n}}.
\end{aligned}
\end{equation}

Under the preparations above, \eqref{7-25AA1} is reduced to establish the related estimates of
each term in the right hand side in \eqref{7-31AA6}. These estimates are shown in the
following {\bf Step 3-A} to {\bf Step 3-D} when $j\geq M'(k, s)$:
\vskip 0.1 true cm

\textbf{Step 3-A}: Estimate $\|\varphi_{j}(x)\cdot T^{\mu\sigma\lambda\omega;k,l}_{k_{4},l_{4}}[R_{k_{1}}S_{l_{1}}V_{\sigma},R_{k_{2}}
S_{l_{2}}V_{\lambda},R_{k_{3}}S_{l_{3}}V_{\omega}]\|_{L^{2}(\mathbb{R}^{2}\times\mathbb{T})}$ when $l_{\mathrm{max}}\geq 9j/N$.\vskip 0.2cm

Using \eqref{7-35AA10} and \eqref{6-12A}, we have
\begin{equation}\label{7-36AA11}
\begin{aligned}
&\sum_{\mbox{\tiny$\begin{array}{c}l_{1},l_{2},l_{3}\geq-1,l_{\mathrm{max}}\geq 9j/N\\
(l_{1},l_{2},l_{3})\in \mathcal{Y}_{l}\end{array}$}}
\sum_{l_{4}\geq-1} 2^{j}\Big\|\varphi_{j}(x)\cdot T^{\mu\sigma\lambda\omega;k,l}_{k_{4},l_{4}}[R_{k_{1}}S_{l_{1}}V_{\sigma},R_{k_{2}}
S_{l_{2}}V_{\lambda},R_{k_{3}}S_{l_{3}}V_{\omega}]\Big\|_{L^{2}(\mathbb{R}^{2}\times\mathbb{T})}\\
\lesssim& \sum_{\mbox{\tiny$\begin{array}{c}l_{1},l_{2},l_{3}\geq-1,l_{\mathrm{max}}\geq 9j/N\\
(l_{1},l_{2},l_{3})\in \mathcal{Y}_{l}\end{array}$}}
\sum_{-1\leq l_{4}\leq \mathrm{max}(l_{2},l_{3})+8}
2^{j}\Big\|\mathcal{F}_{x,y}(T^{\mu\sigma\lambda\omega;k,l}_{k_{4},l_{4}}
[R_{k_{1}}S_{l_{1}}V_{\sigma},R_{k_{2}}S_{l_{2}}V_{\lambda},R_{k_{3}}S_{l_{3}}V_{\omega}])\Big\|_{L^{2}_{\xi}l^{2}_{n}}\\
\lesssim& (2^{k}+2^{l})\sum_{\mbox{\tiny$\begin{array}{c}l_{1},l_{2},l_{3}\geq-1,l_{\mathrm{max}}\geq 9j/N\\
(l_{1},l_{2},l_{3})\in \mathcal{Y}_{l}\end{array}$}}
\sum_{-1\leq l_{4}\leq \mathrm{max}(l_{2},l_{3})+8}
2^{j}(2^{k_{\mathrm{max}}}+2^{l_{\mathrm{max}}})^{2}2^{2k_{4}}2^{l_{4}}
\|\mathcal{F}_{x,y}(R_{k_{1}}S_{l_{1}}V_{\sigma})\|_{L^{2}_{\xi}l^{2}_{n}}\\
&~~~~~~~~~~~~~~~~~~~~~~~~~~~
~~~~~~~~~~~~~~~~~~~~~~~~~~~~~~~\cdot\|\mathcal{F}_{x,y}(R_{k_{2}}S_{l_{2}}V_{\lambda})\|_{L^{2}_{\xi}l^{2}_{n}}
\|\mathcal{F}_{x,y}(R_{k_{3}}S_{l_{3}}V_{\omega})\|_{L^{2}_{\xi}l^{2}_{n}}\\
\lesssim& (2^{k}+2^{l})\sum_{\mbox{\tiny$\begin{array}{c}l_{1},l_{2},l_{3}\geq-1,l_{\mathrm{max}}\geq 9j/N\\
(l_{1},l_{2},l_{3})\in \mathcal{Y}_{l}\end{array}$}}
\sum_{-1\leq l_{4}\leq \mathrm{max}(l_{2},l_{3})+8}
2^{j}(2^{k_{\mathrm{max}}}+2^{l_{\mathrm{max}}})^{2}2^{2k_{4}}2^{l_{4}}
\varepsilon_{0}(1+s)^{\kappa}2^{-N(k_{1}+l_{1})/2}\\
&~~~~~~~~~~~~~~~~~~~~~~~~~~~~~~~~~~~~~~~~~~~~~~~~~~~~
~~~~~~\cdot\varepsilon_{0}(1+s)^{\kappa}2^{-N(k_{2}+l_{2})/2}
\varepsilon_{0}(1+s)^{\kappa}2^{-N(k_{3}+l_{3})/2}\\
\lesssim& \varepsilon_{0}^{3}(1+s)^{3\kappa}2^{-5j/4}2^{k}2^{l}2^{2k_{\mathrm{max}}}
2^{2k_{4}}2^{-N(k_{1}+k_{2}+k_{3})/2}2^{-(N/4-3)l}.
\end{aligned}
\end{equation}
Since $M'(k,s):=100+\mathrm{max}(\mathrm{log}_{2}(1+s),10k)$ and $k_{4}\leq \mathrm{max}(k_{2},k_{3})+8$ holds
in the support of  the Fourier transformation of $T_{k_4, l_4}^{\mu\sigma\lambda\omega; k, l}[\cdot]$ (see \eqref{7-32AA7}),
then we derive from \eqref{7-36AA11} that
\begin{equation}\label{7-37AA12}
\begin{aligned}
\int_{0}^{t}\Big(&\sum\limits_{j\geq M'(k,s)}2^{j}
\sum_{\mbox{\tiny$\begin{array}{c}l_{1},l_{2},l_{3}\geq-1, l_{\mathrm{max}}\geq 9j/N\\
(l_{1},l_{2},l_{3})\in \mathcal{Y}_{l}\end{array}$}}
\sum_{\mbox{\tiny$\begin{array}{c}
k_{1},k_{2},k_{3}\in [-1,j/10]\\(k_{1},k_{2},k_{3})\in \mathcal{Y}_{k}\end{array}$}}
\sum_{k_{4}\in \mathbb{Z}}
\sum_{l_{4}\geq-1}
\\&\Big\|\varphi_{j}(x)\cdot T^{\mu\sigma\lambda\omega;k,l}_{k_{4},l_{4}}
[R_{k_{1}}S_{l_{1}}V_{\sigma},R_{k_{2}}S_{l_{2}}V_{\lambda},R_{k_{3}}S_{l_{3}}V_{\omega}]
\Big\|_{L^{2}(\mathbb{R}^{2}\times\mathbb{T})}\Big)ds
\lesssim \varepsilon_{0}^{3}2^{-9(k+l)}.
\end{aligned}
\end{equation}
Thus, in order to show \eqref{7-25AA1}, it remains to prove
\begin{equation}\label{7-38AA13}
\begin{aligned}
\int_{0}^{t}&\Big(\sum\limits_{j\geq M'(k,s)}2^{j}
\sum_{\mbox{\tiny$\begin{array}{c}l_{1},l_{2},l_{3}\geq-1,l_{\mathrm{max}}\leq 9j/N\\
(l_{1},l_{2},l_{3})\in \mathcal{Y}_{l}\end{array}$}}
\sum_{\mbox{\tiny$\begin{array}{c}
k_{1},k_{2},k_{3}\in [-1,j/10]\\(k_{1},k_{2},k_{3})\in \mathcal{Y}_{k}\end{array}$}}
\sum_{k_{4}\in \mathbb{Z}}
\sum_{l_{4}\geq-1}
\\&\cdot\Big\|\varphi_{j}(x)\cdot T^{\mu\sigma\lambda\omega;k,l}_{k_{4},l_{4}}
[R_{k_{1}}S_{l_{1}}V_{\sigma},R_{k_{2}}S_{l_{2}}V_{\lambda},R_{k_{3}}S_{l_{3}}V_{\omega}]
\Big\|_{L^{2}(\mathbb{R}^{2}\times\mathbb{T})}\Big)ds
\lesssim \varepsilon_{0}^{3}2^{-9(k+l)}.
\end{aligned}
\end{equation}

\vskip 0.1 true cm

\textbf{Step 3-B}: Estimate $\|\varphi_{j}(x)\cdot T^{\mu\sigma\lambda\omega;k,l}_{k_{4},l_{4}}
[R_{k_{1}}S_{l_{1}}V_{\sigma}-f^{\sigma}_{jk_{1}l_{1}},R_{k_{2}}
S_{l_{2}}V_{\lambda},R_{k_{3}}S_{l_{3}}V_{\omega}]\|_{L^{2}(\mathbb{R}^{2}\times\mathbb{T})}$ when
$l_{\mathrm{max}}\leq 9j/N$.\vskip 0.2cm

Note that $k, k_{1}, k_{2}, k_{3}\in [-1, j/10]$, it follows from the integration by parts
on $R_{k_4, l_4}^{\mu\sigma\lambda\omega; k, l}(s, \cdot)$ in \eqref{7-33AA8} only in the $\xi$ variable  that
when $|x-x_{1}|\geq 2^{j-10}$ and $l_{\mathrm{max}}\leq 9j/N$,
\begin{equation}\label{7-39AA14}
\begin{aligned}
|R^{\mu\sigma\lambda\omega;k,l}_{k_{4},l_{4}}(s,x,y,x_{1},y_{1},x_{2},y_{2},x_{3},y_{3})|\lesssim 2^{2k_{4}}2^{l_{4}}2^{-50j}.
\end{aligned}
\end{equation}

Furthermore, for $f^{\sigma}_{jk_{1}l_{1}}(s,x,y):=\varphi_{[j-4,j+4]}(x)\cdot R_{k_{1}}S_{l_{1}}V_{\sigma}$
defined in \eqref{6-43AA}, one derives from \eqref{7-33AA8}, \eqref{7-39AA14} and \eqref{2-1-0} that
\begin{equation}\label{7-41AA16}
\begin{aligned}
&\int_{0}^{t}\Big(\sum\limits_{j\geq M'(k,s)}2^{j}
\sum_{\mbox{\tiny$\begin{array}{c}l_{1},l_{2},l_{3}\geq-1,l_{\mathrm{max}}\leq 9j/N\\
(l_{1},l_{2},l_{3})\in \mathcal{Y}_{l}\end{array}$}}
\sum_{\mbox{\tiny$\begin{array}{c}
k_{1},k_{2},k_{3}\in [-1,j/10]\\(k_{1},k_{2},k_{3})\in \mathcal{Y}_{k}\end{array}$}}
\sum_{k_{4}\in \mathbb{Z}}
\sum_{l_{4}\geq-1}\\
&~~~~~~~~~~~~~~~~~~~~~~~~~~~~~~~~~~~~~\cdot\Big\|\varphi_{j}(x)\cdot T^{\mu\sigma\lambda\omega;k,l}_{k_{4},l_{4}}
[R_{k_{1}}S_{l_{1}}V_{\sigma}-f^{\sigma}_{jk_{1}l_{1}},R_{k_{2}}S_{l_{2}}V_{\lambda},R_{k_{3}}S_{l_{3}}V_{\omega}]
\Big\|_{L^{2}(\mathbb{R}^{2}\times\mathbb{T})}\Big)ds\\
\lesssim& \int_{0}^{t}\Big(\sum\limits_{j\geq M'(k,s)}2^{2j}
\sum_{\mbox{\tiny$\begin{array}{c}l_{1},l_{2},l_{3}\geq-1,l_{\mathrm{max}}\leq 9j/N\\
(l_{1},l_{2},l_{3})\in \mathcal{Y}_{l}\end{array}$}}
\sum_{\mbox{\tiny$\begin{array}{c}
k_{1},k_{2},k_{3}\in [-1,j/10]\\(k_{1},k_{2},k_{3})\in \mathcal{Y}_{k}\end{array}$}}
\sum_{\mbox{\tiny$\begin{array}{c}
k_{4}\leq \mathrm{max}(k_{2},k_{3})+8\end{array}$}}
\sum_{\mbox{\tiny$\begin{array}{c}
-1\leq l_{4}\leq \mathrm{max}(l_{2},l_{3})+8\end{array}$}}\\
&~~~~~~~~~~~~~~~~~~~~~~~~~~~~~~~~~~~~~\cdot\Big\|\varphi_{j}(x)\cdot T^{\mu\sigma\lambda\omega;k,l}_{k_{4},l_{4}}
[R_{k_{1}}S_{l_{1}}V_{\sigma}-f^{\sigma}_{jk_{1}l_{1}},R_{k_{2}}S_{l_{2}}V_{\lambda},R_{k_{3}}S_{l_{3}}V_{\omega}]
\Big\|_{L^{\infty}(\mathbb{R}^{2}\times\mathbb{T})}\Big)ds\\
\lesssim& \int_{0}^{t}\Big(\sum\limits_{j\geq M'(k,s)}2^{2j-50j}
\sum_{\mbox{\tiny$\begin{array}{c}l_{1},l_{2},l_{3}\geq-1,l_{\mathrm{max}}\leq 9j/N\\
(l_{1},l_{2},l_{3})\in \mathcal{Y}_{l}\end{array}$}}
\sum_{\mbox{\tiny$\begin{array}{c}
k_{1},k_{2},k_{3}\in [-1,j/10]\\(k_{1},k_{2},k_{3})\in \mathcal{Y}_{k}\end{array}$}}
\sum_{\mbox{\tiny$\begin{array}{c} k_{4}\leq \mathrm{max}(k_{2},k_{3})+8\end{array}$}}
\sum_{\mbox{\tiny$\begin{array}{c} -1\leq l_{4}\leq \mathrm{max}(l_{2},l_{3})+8\end{array}$}}\\
&~~~~~~~~~~~~~~~~~~~~
~~\cdot2^{2k_{4}}2^{l_{4}}\|R_{k_{1}}S_{l_{1}}V_{\sigma}-f^{\sigma}_{jk_{1}l_{1}}\|_{L^{1}(\mathbb{R}^{2}\times\mathbb{T})}
\|R_{k_{2}}S_{l_{2}}V_{\lambda}\|_{L^{1}(\mathbb{R}^{2}\times\mathbb{T})}
\|R_{k_{3}}S_{l_{3}}V_{\omega}\|_{L^{1}(\mathbb{R}^{2}\times\mathbb{T})}\Big)ds\\
\lesssim& \int_{0}^{t}\Big(\sum\limits_{j\geq M'(k,s)}2^{-48j}
\sum_{\mbox{\tiny$\begin{array}{c}l_{1},l_{2},l_{3}\geq-1,l_{\mathrm{max}}\leq 9j/N\\
(l_{1},l_{2},l_{3})\in \mathcal{Y}_{l}\end{array}$}}
\sum_{\mbox{\tiny$\begin{array}{c}
k_{1},k_{2},k_{3}\in [-1,j/10]\\(k_{1},k_{2},k_{3})\in \mathcal{Y}_{k}\end{array}$}}
2^{2\mathrm{max}(k_{2},k_{3})}2^{\mathrm{max}(l_{2},l_{3})}\\
&~~~~~~~~~~~~~~~~~~~~~~~~~~~\cdot2^{-9(k_{1}+l_{1})}\|V_{\sigma}\|_{Z}
2^{-9(k_{2}+l_{2})}\|V_{\lambda}\|_{Z}2^{-9(k_{3}+l_{3})}\|V_{\omega}\|_{Z}\Big)ds\\
\lesssim& \varepsilon_{0}^{3}2^{-9(k+l)}.
\end{aligned}
\end{equation}

\vskip 0.1 true cm

\textbf{Step 3-C}: Estimate $\|\varphi_{j}(x)\cdot T^{\mu\sigma\lambda\omega;k,l}_{k_{4},l_{4}}
[f^{\sigma}_{jk_{1}l_{1}},R_{k_{2}}S_{l_{2}}V_{\lambda},R_{k_{3}}S_{l_{3}}V_{\omega}]
\|_{L^{2}(\mathbb{R}^{2}\times\mathbb{T})}$ when $l_{\mathrm{max}}\leq 9j/N$\\
and $k_{4}\leq -j/(1+\beta)$ with $\beta:=10^{-3}$.\vskip 0.2cm

For $j\geq M'(k,s)$, utilizing \eqref{7-35AA10}, the definition of $Z$-norm and \eqref{6-12A}, we obtain
\begin{equation*}\label{7-42AA17}
\begin{aligned}
&2^{j}\sum_{\mbox{\tiny$\begin{array}{c}l_{1},l_{2},l_{3}\geq-1,l_{\mathrm{max}}\leq 9j/N\\
(l_{1},l_{2},l_{3})\in \mathcal{Y}_{l}\end{array}$}}
\sum_{\mbox{\tiny$\begin{array}{c}
k_{1},k_{2},k_{3}\in [-1,j/10]\\(k_{1},k_{2},k_{3})\in \mathcal{Y}_{k}\end{array}$}}
\sum_{\mbox{\tiny$\begin{array}{c} k_{4}\leq -j/(1+\beta)\end{array}$}}
\sum_{l_{4}\geq-1}\\
&~~~~~~~~~~~~~~~~~~~~~~~~~~~~~~~~~~~~~~~~~~~
~~~~~~~~\Big\|\varphi_{j}(x)\cdot T^{\mu\sigma\lambda\omega;k,l}_{k_{4},l_{4}}
[f^{\sigma}_{jk_{1}l_{1}},R_{k_{2}}S_{l_{2}}V_{\lambda},R_{k_{3}}S_{l_{3}}V_{\omega}]
\Big\|_{L^{2}(\mathbb{R}^{2}\times\mathbb{T})}\\
\lesssim& \sum_{\mbox{\tiny$\begin{array}{c}l_{1},l_{2},l_{3}\geq-1,l_{\mathrm{max}}\leq 9j/N\\
(l_{1},l_{2},l_{3})\in \mathcal{Y}_{l}\end{array}$}}
\sum_{\mbox{\tiny$\begin{array}{c}
k_{1},k_{2},k_{3}\in [-1,j/10] \\(k_{1},k_{2},k_{3})\in \mathcal{Y}_{k}\end{array}$}}
\sum_{\mbox{\tiny$\begin{array}{c} k_{4}\leq -j/(1+\beta)\end{array}$}}
\sum_{\mbox{\tiny$\begin{array}{c} -1\leq l_{4}\leq \mathrm{max}(l_{2},l_{3})+8\end{array}$}}
(2^{k}+2^{l})(2^{k_{\mathrm{max}}}+2^{l_{\mathrm{max}}})^{2}2^{2k_{4}}2^{l_{4}}\\
&~~~~~~~~~~~\cdot\Big(\sum\limits_{j\geq M'(k,s)}2^{j}
\|\varphi_{[j-4,j+4]}(x)\cdot R_{k_{1}}S_{l_{1}}V_{\sigma}\|_{L^{2}(\mathbb{R}^{2}\times\mathbb{T})}\Big)
\|R_{k_{2}}S_{l_{2}}V_{\lambda}\|_{L^{2}(\mathbb{R}^{2}\times\mathbb{T})}
\|R_{k_{3}}S_{l_{3}}V_{\omega}\|_{L^{2}(\mathbb{R}^{2}\times\mathbb{T})}\\
\lesssim& 2^{-2j/(1+\beta)}\sum_{\mbox{\tiny$\begin{array}{c}l_{1},l_{2},l_{3}\geq-1,l_{\mathrm{max}}\leq 9j/N\\
(l_{1},l_{2},l_{3})\in \mathcal{Y}_{l}\end{array}$}}
\sum_{\mbox{\tiny$\begin{array}{c}
k_{1},k_{2},k_{3}\in [-1,j/10]\\(k_{1},k_{2},k_{3})\in \mathcal{Y}_{k}\end{array}$}}
\sum_{\mbox{\tiny$\begin{array}{c}-1\leq l_{4}\leq \mathrm{max}(l_{2},l_{3})+8\end{array}$}}
2^{k}2^{l}2^{2k_{\mathrm{max}}}2^{2l_{\mathrm{max}}}2^{l_{4}}\\
&~~~~~~~~~~~~~~~~~~~~~~~~~~~~~~
~~~~~~~~~~~~~~\cdot2^{-9(k_{1}+l_{1})}\|V_{\sigma}\|_{Z}
\varepsilon_{0}2^{-N_{0}(k_{2}+l_{2})/2}
\varepsilon_{0}2^{-N_{0}(k_{3}+l_{3})/2}\\
\lesssim& \varepsilon_{0}^{3}2^{-2j/(1+\beta)}2^{3j/10}2^{36j/N}2^{-9(k+l)}.
\end{aligned}
\end{equation*}
Hence,
\begin{equation}\label{7-42AA666}
\begin{aligned}
&\int_{0}^{t}\Big(\sum\limits_{j\geq M'(k,s)}2^{j}
\sum_{\mbox{\tiny$\begin{array}{c}l_{1},l_{2},l_{3}\geq-1,l_{\mathrm{max}}\leq 9j/N\\
(l_{1},l_{2},l_{3})\in \mathcal{Y}_{l}\end{array}$}}
\sum_{\mbox{\tiny$\begin{array}{c}
k_{1},k_{2},k_{3}\in [-1,j/10]\\(k_{1},k_{2},k_{3})\in \mathcal{Y}_{k}\end{array}$}}
\sum_{\mbox{\tiny$\begin{array}{c} k_{4}\leq -j/(1+\beta)\end{array}$}}
\sum_{l_{4}\geq-1}\\
&~~~~~~~~~~~~~~~~~~~~~~~~~~~~~~~~~~~~~~~~\cdot\Big\|\varphi_{j}(x)\cdot T^{\mu\sigma\lambda\omega;k,l}_{k_{4},l_{4}}
[f^{\sigma}_{jk_{1}l_{1}},R_{k_{2}}S_{l_{2}}V_{\lambda},R_{k_{3}}S_{l_{3}}V_{\omega}]
\Big\|_{L^{2}(\mathbb{R}^{2}\times\mathbb{T})}\Big)ds\\
\lesssim& \int_{0}^{t}\Big(\sum\limits_{j\geq M'(k,s)}\varepsilon_{0}^{3}2^{-2j/(1+\beta)}2^{3j/10}2^{36j/N}2^{-9(k+l)}\Big)ds\\
\lesssim& \varepsilon_{0}^{3}2^{-9(k+l)}\cdot\int_{0}^{t}(1+s)^{-11/10}\Big(\sum\limits_{j\geq 100}2^{-j/20}\Big)ds\lesssim \varepsilon_{0}^{3}2^{-9(k+l)}.
\end{aligned}
\end{equation}

\vskip 0.1 true cm

\textbf{Step 3-D}: Estimate $\|\varphi_{j}(x)\cdot T^{\mu\sigma\lambda\omega;k,l}_{k_{4},l_{4}}
[f^{\sigma}_{jk_{1}l_{1}},R_{k_{2}}S_{l_{2}}V_{\lambda},R_{k_{3}}S_{l_{3}}V_{\omega}]
\|_{L^{2}(\mathbb{R}^{2}\times\mathbb{T})}$ when $l_{\mathrm{max}}\leq 9j/N$\\
and $k_{4}\geq -j/(1+\beta)$ with $\beta:=10^{-3}$.\vskip 0.2cm

In this case, for $R^{\mu\sigma\lambda\omega;k,l}_{k_{4},l_{4}}(s, \cdot)$ in \eqref{7-33AA8}, we firstly assert that
when $|x-x_{1}|+|x_{1}-x_{2}|+|x_{2}-x_{3}|\geq 2^{j-10}$ and $l_{\mathrm{max}}\leq 9j/N$,
\begin{equation}\label{7-43AA18}
|R^{\mu\sigma\lambda\omega;k,l}_{k_{4},l_{4}}(s,x,y,x_{1},y_{1},x_{2},y_{2},x_{3},y_{3})|
\lesssim 2^{2k_{4}}2^{l_{4}}2^{-50j}.
\end{equation}

The estimate of \eqref{7-43AA18} can be obtained via the method of non-stationary phase. Indeed, with the abbreviation $\Phi:=\Phi_{\sigma,\lambda,\omega}^{n,n',n_{3}}(\xi,v,\xi_{3})$, we set
\begin{equation*}
\begin{aligned}
\mathcal{L}:=\frac{\sum\limits_{d=1}^{2}\Big(s\partial_{\xi_{d}}\Phi+x^{d}-x_{1}^{d}\Big)\partial_{\xi_{d}}
+\sum\limits_{d=1}^{2}\Big(s\partial_{v_{d}}\Phi+x_{1}^{d}-x_{2}^{d}\Big)\partial_{\nu_{d}}
+\sum\limits_{d=1}^{2}\Big(s\partial_{\xi^{d}_{3}}\Phi+x_{2}^{d}-x_{3}^{d}\Big)\partial_{\xi^{d}_{3}}}
{i\sum\limits_{d=1}^{2}\Big(s\partial_{\xi_{d}}\Phi+x^{d}-x_{1}^{d}\Big)^{2}
+i\sum\limits_{d=1}^{2}\Big(s\partial_{v_{d}}\Phi+x_{1}^{d}-x_{2}^{d}\Big)^{2}
+i\sum\limits_{d=1}^{2}\Big(s\partial_{\xi^{d}_{3}}\Phi+x_{2}^{d}-x_{3}^{d}\Big)^{2}},
\end{aligned}
\end{equation*}
then one has
\begin{equation*}
\begin{aligned}
\mathcal{L}(e^{i[(x-x_{1})\cdot\xi+(x_{1}-x_{2})\cdot v+(x_{2}-x_{3})\cdot \xi_{3}]}e^{is \Phi})
=e^{i[(x-x_{1})\cdot\xi+(x_{1}-x_{2})\cdot v+(x_{2}-x_{3})\cdot \xi_{3}]}e^{is \Phi}.
\end{aligned}
\end{equation*}

With $j\geq M'(k, s)$ and the notations in \eqref{7-27AA3}, \eqref{2-10}, one has
\begin{equation*}
2^{j}\geq 2^{100+\mathrm{max}(\mathrm{log}_{2}(1+s),10k)}
\end{equation*}
and
\begin{equation*}
s|\partial_{\xi_{d}}\Phi|+s|\partial_{v_{d}}\Phi|
+s|\partial_{\xi^{d}_{3}}\Phi|\leq 6(1+s)~~~(d=1,2).
\end{equation*}
Combining these two inequalities yields that when $|x-x_{1}|+|x_{1}-x_{2}|+|x_{2}-x_{3}|\geq 2^{j-10}$,
\begin{equation*}
\begin{aligned}
&\sum\limits_{d=1}^{2}\Big(s\partial_{\xi_{d}}\Phi+x^{d}-x_{1}^{d}\Big)^{2}
+\sum\limits_{d=1}^{2}\Big(s\partial_{v_{d}}\Phi+x_{1}^{d}-x_{2}^{d}\Big)^{2}
+\sum\limits_{d=1}^{2}\Big(s\partial_{\xi^{d}_{3}}\Phi+x_{2}^{d}-x_{3}^{d}\Big)^{2}\\
\gtrsim& \Big(|x-x_{1}|+|x_{1}-x_{2}|+|x_{2}-x_{3}|\Big)^{2}.
\end{aligned}
\end{equation*}
Therefore, by a similar
argument as in \eqref{6-42A}, \eqref{7-43AA18} can be obtained.

As a consequence of \eqref{7-43AA18}, along the same method to derive \eqref{7-41AA16}, we arrive at
\begin{equation}\label{7-44A24}
\begin{aligned}
&\int_{0}^{t}\Big(\sum\limits_{j\geq M'(k,s)}2^{j}
\sum_{\mbox{\tiny$\begin{array}{c}l_{1},l_{2},l_{3}\geq-1,l_{\mathrm{max}}\leq 9j/N\\
(l_{1},l_{2},l_{3})\in \mathcal{Y}_{l}\end{array}$}}
\sum_{\mbox{\tiny$\begin{array}{c}
k_{1},k_{2},k_{3}\in [-1,j/10] \\(k_{1},k_{2},k_{3})\in \mathcal{Y}_{k}\end{array}$}}
\sum_{\mbox{\tiny$\begin{array}{c} k_{4}\geq -j/(1+\beta)\end{array}$}}
\sum_{l_{4}\geq-1}
\\&~~~~~~~~~~~~~~~~~~~~
~~~~~~~~~~~~~~~\Big\|\varphi_{j}(x)\cdot T^{\mu\sigma\lambda\omega;k,l}_{k_{4},l_{4}}
[f^{\sigma}_{jk_{1}l_{1}},R_{k_{2}}S_{l_{2}}V_{\lambda}-f^{\lambda}_{jk_{2}l_{2}},R_{k_{3}}S_{l_{3}}V_{\omega}]
\Big\|_{L^{2}(\mathbb{R}^{2}\times\mathbb{T})}\Big)ds
\\&+\int_{0}^{t}\Big(\sum\limits_{j\geq M'(k,s)}2^{j}
\sum_{\mbox{\tiny$\begin{array}{c}l_{1},l_{2},l_{3}\geq-1,l_{\mathrm{max}}\leq 9j/N\\
(l_{1},l_{2},l_{3})\in \mathcal{Y}_{l}\end{array}$}}
\sum_{\mbox{\tiny$\begin{array}{c}
k_{1},k_{2},k_{3}\in [-1,j/10] \\(k_{1},k_{2},k_{3})\in \mathcal{Y}_{k}\end{array}$}}
\sum_{\mbox{\tiny$\begin{array}{c} k_{4}\geq -j/(1+\beta)\end{array}$}}
\sum_{l_{4}\geq-1}
\\&~~~~~~~~~~~~~~~~~~~~
~~~~~~~~~~~~~~~~~~~\Big\|\varphi_{j}(x)\cdot T^{\mu\sigma\lambda\omega;k,l}_{k_{4},l_{4}}
[f^{\sigma}_{jk_{1}l_{1}},f^{\lambda}_{jk_{2}l_{2}},R_{k_{3}}S_{l_{3}}V_{\omega}-f^{\omega}_{jk_{3}l_{3}}]
\Big\|_{L^{2}(\mathbb{R}^{2}\times\mathbb{T})}\Big)ds
\\&\lesssim \varepsilon_{0}^{3}2^{-9(k+l)}.
\end{aligned}
\end{equation}
On the other hand, by \eqref{7-35AA10} and the definition of $Z$-norm, one has
\begin{equation*}
\begin{aligned}
&2^{j}\sum_{\mbox{\tiny$\begin{array}{c}l_{1},l_{2},l_{3}\geq-1,l_{\mathrm{max}}\leq 9j/N\\
(l_{1},l_{2},l_{3})\in \mathcal{Y}_{l}\end{array}$}}
\sum_{\mbox{\tiny$\begin{array}{c}
k_{1},k_{2},k_{3}\in [-1,j/10] \\(k_{1},k_{2},k_{3})\in \mathcal{Y}_{k}\end{array}$}}
\sum_{\mbox{\tiny$\begin{array}{c}k_{4}\geq -j/(1+\beta)\end{array}$}}
\sum_{l_{4}\geq-1}\\
&~~~~~~~~~~~~~~~~~~~~~~~~~~~~~~~~~~~~~
~~~~~~~~~~~~~~~\cdot\Big\|\varphi_{j}(x)\cdot T^{\mu\sigma\lambda\omega;k,l}_{k_{4},l_{4}}
[f^{\sigma}_{jk_{1}l_{1}},f^{\lambda}_{jk_{2}l_{2}},f^{\omega}_{jk_{3}l_{3}}]
\Big\|_{L^{2}(\mathbb{R}^{2}\times\mathbb{T})}\\
\lesssim& 2^{j}\sum_{\mbox{\tiny$\begin{array}{c}l_{1},l_{2},l_{3}\geq-1,l_{\mathrm{max}}\leq 9j/N\\
(l_{1},l_{2},l_{3})\in \mathcal{Y}_{l}\end{array}$}}
\sum_{\mbox{\tiny$\begin{array}{c}
k_{1},k_{2},k_{3}\in [-1,j/10] \\(k_{1},k_{2},k_{3})\in \mathcal{Y}_{k}\end{array}$}}
\sum_{\mbox{\tiny$\begin{array}{c} k_{4}\geq -j/(1+\beta)\\k_{4}\leq \mathrm{max}(k_{2},k_{3})+8\end{array}$}}
\sum_{\mbox{\tiny$\begin{array}{c} -1\leq l_{4}\leq \mathrm{max}(l_{2},l_{3})+8\end{array}$}}\\
&~~~~~~~~~~~~~~~~~~~~~~~~~~~~~~~~~~~~~
~~~~~~~~~~~~~~~\cdot\Big\|\mathcal{F}_{x,y}(T^{\mu\sigma\lambda\omega;k,l}_{k_{4},l_{4}}
[f^{\sigma}_{jk_{1}l_{1}},f^{\lambda}_{jk_{2}l_{2}},f^{\omega}_{jk_{3}l_{3}}])
\Big\|_{L^{2}_{\xi}l^{2}_{n}}\\
\lesssim& 2^{j}
\sum_{\mbox{\tiny$\begin{array}{c}l_{1},l_{2},l_{3}\geq-1,l_{\mathrm{max}}\leq 9j/N\\
(l_{1},l_{2},l_{3})\in \mathcal{Y}_{l}\end{array}$}}
\sum_{\mbox{\tiny$\begin{array}{c}
k_{1},k_{2},k_{3}\in [-1,j/10]\\(k_{1},k_{2},k_{3})\in \mathcal{Y}_{k}\end{array}$}}
\sum_{\mbox{\tiny$\begin{array}{c} k_{4}\geq -j/(1+\beta)\\k_{4}\leq \mathrm{max}(k_{2},k_{3})+8\end{array}$}}
\sum_{\mbox{\tiny$\begin{array}{c}-1\leq l_{4}\leq \mathrm{max}(l_{2},l_{3})+8\end{array}$}}
(2^{k}+2^{l})\\
&~~~~~~~~~\cdot(2^{k_{\mathrm{max}}}+2^{l_{\mathrm{max}}})^{2}2^{2k_{4}}2^{l_{4}}
\|\mathcal{F}_{x,y}(f^{\sigma}_{jk_{1}l_{1}})\|_{L^{2}_{\xi}l^{2}_{n}}
\|\mathcal{F}_{x,y}(f^{\lambda}_{jk_{2}l_{2}})\|_{L^{2}_{\xi}l^{2}_{n}}
\|\mathcal{F}_{x,y}(f^{\omega}_{jk_{3}l_{3}})\|_{L^{2}_{\xi}l^{2}_{n}}\\
\lesssim& \sum_{\mbox{\tiny$\begin{array}{c}l_{1},l_{2},l_{3}\geq-1,l_{\mathrm{max}}\leq 9j/N\\
(l_{1},l_{2},l_{3})\in \mathcal{Y}_{l}\end{array}$}}
\sum_{\mbox{\tiny$\begin{array}{c}
k_{1},k_{2},k_{3}\in [-1,j/10]\\(k_{1},k_{2},k_{3})\in \mathcal{Y}_{k}\end{array}$}}
\sum_{\mbox{\tiny$\begin{array}{c} k_{4}\geq -j/(1+\beta)\\k_{4}\leq \mathrm{max}(k_{2},k_{3})+8\end{array}$}}
\sum_{\mbox{\tiny$\begin{array}{c} -1\leq l_{4}\leq \mathrm{max}(l_{2},l_{3})+8\end{array}$}}
2^{k}2^{l}2^{2k_{\mathrm{max}}}2^{2l_{\mathrm{max}}}2^{2k_{4}}2^{l_{4}}\\
&~~~~~~~~~~~~~~~~~~~~~~~~~~~~
~~~~~~~~~~~~\cdot\Big(2^{j}\|f^{\sigma}_{jk_{1}l_{1}}\|_{L^{2}(\mathbb{R}^{2}\times\mathbb{T})}\Big)
\|f^{\lambda}_{jk_{2}l_{2}}\|_{L^{2}(\mathbb{R}^{2}\times\mathbb{T})}
\|f^{\omega}_{jk_{3}l_{3}}\|_{L^{2}(\mathbb{R}^{2}\times\mathbb{T})}\\
\lesssim& \sum_{\mbox{\tiny$\begin{array}{c}l_{1},l_{2},l_{3}\geq-1,l_{\mathrm{max}}\leq 9j/N\\
(l_{1},l_{2},l_{3})\in \mathcal{Y}_{l}\end{array}$}}
\sum_{\mbox{\tiny$\begin{array}{c}
k_{1},k_{2},k_{3}\in [-1,j/10] \\(k_{1},k_{2},k_{3})\in \mathcal{Y}_{k}\end{array}$}}
\sum_{\mbox{\tiny$\begin{array}{c} k_{4}\geq -j/(1+\beta)\\k_{4}\leq \mathrm{max}(k_{2},k_{3})+8\end{array}$}}
\sum_{\mbox{\tiny$\begin{array}{c}-1\leq l_{4}\leq \mathrm{max}(l_{2},l_{3})+8\end{array}$}}
2^{k}2^{l}2^{2k_{\mathrm{max}}}2^{2l_{\mathrm{max}}}2^{2k_{4}}2^{l_{4}}\\
&~~~~~~~~~~~~~~~~~~~~~~~~~
~~~~~~~~~~~~~~\cdot2^{-9(k_{1}+l_{1})}\|V_{\sigma}\|_{Z}\cdot2^{-j}2^{-9(k_{2}+l_{2})}\|V_{\lambda}\|_{Z}
\cdot2^{-j}2^{-9(k_{3}+l_{3})}\|V_{\omega}\|_{Z}\\
\lesssim& \varepsilon_{0}^{3}2^{-2j}\cdot
\sum_{\mbox{\tiny$\begin{array}{c}
l_{1},l_{2},l_{3}\geq-1,l_{\mathrm{max}}\leq 9j/N\\
(l_{1},l_{2},l_{3})\in \mathcal{Y}_{l}\end{array}$}}
\sum_{\mbox{\tiny$\begin{array}{c}
k_{1},k_{2},k_{3}\in [-1,j/10] \\(k_{1},k_{2},k_{3})\in \mathcal{Y}_{k}\end{array}$}}
2^{k}2^{l}2^{4k_{\mathrm{max}}}2^{3l_{\mathrm{max}}}2^{-9(k_{1}+l_{1})}2^{-9(k_{2}+l_{2})}2^{-9(k_{3}+l_{3})}\\
\lesssim& \varepsilon_{0}^{3}2^{-2j}2^{j/2}2^{36j/N}2^{-9(k+l)}.
\end{aligned}
\end{equation*}
Hence,
\begin{equation}\label{7-46A27}
\begin{aligned}
&\int_{0}^{t}\Big(\sum\limits_{j\geq M'(k,s)}2^{j}\sum_{\mbox{\tiny$\begin{array}{c}
l_{1},l_{2},l_{3}\geq-1,l_{\mathrm{max}}\leq 9j/N\\
(l_{1},l_{2},l_{3})\in \mathcal{Y}_{l}\end{array}$}}
\sum_{\mbox{\tiny$\begin{array}{c}
k_{1},k_{2},k_{3}\in [-1,j/10]\\(k_{1},k_{2},k_{3})\in \mathcal{Y}_{k}\end{array}$}}
\sum_{\mbox{\tiny$\begin{array}{c} k_{4}\geq -j/(1+\beta)\end{array}$}}
\sum_{l_{4}\geq-1}
\\&~~~~~~~~~~~
\cdot\Big\|\varphi_{j}(x)\cdot T^{\mu\sigma\lambda\omega;k,l}_{k_{4},l_{4}}
[f^{\sigma}_{jk_{1}l_{1}},f^{\lambda}_{jk_{2}l_{2}},f^{\omega}_{jk_{3}l_{3}}]
\Big\|_{L^{2}(\mathbb{R}^{2}\times\mathbb{T})}\Big)ds\lesssim \varepsilon_{0}^{3}2^{-9(k+l)}.
\end{aligned}
\end{equation}
Collecting \eqref{7-37AA12}, \eqref{7-41AA16}, \eqref{7-42AA666}, \eqref{7-44A24} and \eqref{7-46A27} yields \eqref{7-25AA1}.

\vskip 0.2 true cm

\textbf{Step 4}: Treat the case of $M(k,s)\leq j\leq M'(k,s)$ in \eqref{6-103AAH}.\vskip 0.2 cm

In view of \eqref{7-25AA1}, we may assume $M(k,s)< M'(k,s)$. Namely,
$\mathrm{log}_{2}(1+s)\geq 10k$ and $M'(k,s)=100+\mathrm{log}_{2}(1+s)$.
In the following, we just need to show
\begin{equation}\label{7-47aaBBCA}
\begin{aligned}
&2^{9(k+l)}\int_{0}^{t}\Big(\sum\limits_{M(k,s)\leq j\leq 100+\mathrm{log}_{2}(1+s)}2^{j}\Big\|\varphi_{j}(x)\cdot R_{k}S_{l}F^{\mu\sigma\lambda\omega}_{j}(s)\Big\|_{L^{2}(\mathbb{R}^{2}\times\mathbb{T})}\Big)ds
\lesssim c_{0}\varepsilon_{0},
\end{aligned}
\end{equation}
where
\begin{equation}\label{7-48aaAA3}
\begin{aligned}
\mathcal{F}_{x,y}(R_{k}S_{l}F^{\mu\sigma\lambda\omega}_{j})(s,\xi,n)=
\sum_{\mbox{\tiny$\begin{array}{c}l_{1},l_{2},l_{3}\geq-1\\
(k_{1},k_{2},k_{3})\in \mathcal{Y}_{k},(l_{1},l_{2},l_{3})\in \mathcal{Y}_{l}\\
k_{1},k_{2},k_{3}\in [-1,j/10] \end{array}$}}
\mathcal{F}_{x,y}(F^{\mu\sigma\lambda\omega;k,l}_{k_{1},k_{2},k_{3},l_{1},l_{2},l_{3}})(s,\xi,n).
\end{aligned}
\end{equation}
In addition, applying Lemma \ref{HC-9} with $r=2$ to \eqref{7-48aaAA3} derives
\begin{equation}\label{7-48aaas}
\Big\|\mathcal{F}_{x,y}(F^{\mu\sigma\lambda\omega;k,l}_{k_{1},k_{2},k_{3},l_{1},l_{2},l_{3}})(s,\xi,n)\Big\|_{L^{2}_{\xi}l^{2}_{n}}
\lesssim 2^{k+k_{1}+\mathrm{max}(k_{2},k_{3})+k_{3}}2^{3l+2\mathrm{max}(l_{2},l_{3})+l_{3}}\mathrm{min}(J_1, J_2),
\end{equation}
where $J_1$ and $J_2$ have been defined in \eqref{6-67A} and \eqref{6-68A}.

It follows from \eqref{6-10A} and \eqref{6-12A} that
\begin{equation*}
\begin{aligned}
J_1&\lesssim \varepsilon_{0}(1+s)^{-1}2^{-7(k_{l_{\mathrm{min}}}+l_{\mathrm{min}})}
\varepsilon_{0}(1+s)^{-1}2^{-7(k_{l_{\mathrm{med}}}+l_{\mathrm{med}})}
\varepsilon_{0}2^{-N_{0}(k_{l_{\mathrm{max}}}+l_{\mathrm{max}})/2}
\\&\lesssim \varepsilon_{0}^{3}(1+s)^{-2}2^{-7(k_{1}+k_{2}+k_{3})}
2^{-7(l_{1}+l_{2}+l_{3})}2^{-(N_{0}/2-7)l_{\mathrm{max}}},\\[2mm]
J_2&\lesssim \varepsilon_{0}(1+s)^{-1}2^{-7(k_{\mathrm{min}}+l_{k_{\mathrm{min}}})}
\varepsilon_{0}(1+s)^{-1}2^{-7(k_{\mathrm{med}}+l_{k_{\mathrm{med}}})}
\varepsilon_{0}2^{-N_{0}(k_{\mathrm{max}}+l_{k_{\mathrm{max}}})/2}
\\&\lesssim \varepsilon_{0}^{3}(1+s)^{-2}2^{-7(l_{1}+l_{2}+l_{3})}
2^{-7(k_{1}+k_{2}+k_{3})}2^{-(N_{0}/2-7)k_{\mathrm{max}}},
\end{aligned}
\end{equation*}
and then
\begin{equation}\label{7-56baddbaa}
\mathrm{min}(J_1, J_2)\leq J_1^{1/2}J_2^{1/2}
\lesssim \varepsilon_{0}^{3}(1+s)^{-2}2^{-7(l_{1}+l_{2}+l_{3})}
2^{-7(k_{1}+k_{2}+k_{3})}2^{-(N_{0}/4-7/2)(k_{\mathrm{max}}+l_{\mathrm{max}})}.
\end{equation}
Combining \eqref{7-48aaAA3}-\eqref{7-48aaas} with \eqref{7-56baddbaa} shows \eqref{7-47aaBBCA}, \emph{i.e.},
\begin{equation*}
\begin{aligned}
&2^{9(k+l)}\int_{0}^{t}\Big(\sum\limits_{M(k,s)\leq j\leq 100+\mathrm{log}_{2}(1+s)}2^{j}\Big\|\varphi_{j}(x)\cdot R_{k}S_{l}F^{\mu\sigma\lambda\omega}_{j}(s)\Big\|_{L^{2}(\mathbb{R}^{2}\times\mathbb{T})}\Big)ds\\
\lesssim& \varepsilon_{0}^{3}\int_{0}^{t}
\sum\limits_{M(k,s)\leq j\leq 100+\mathrm{log}_{2}(1+s)}2^{j}(1+s)^{-2}ds\\
\lesssim& \varepsilon_{0}^{3}\int_{0}^{e^{c_{0}/\varepsilon_{0}^{2}}}(1+s)^{-1}ds\lesssim c_{0}\varepsilon_{0}.
\end{aligned}
\end{equation*}
Based on all the above analysis, Lemma 6.2 is shown.
\end{proof}

\appendix

\renewcommand{\appendixname}{}

\section{Bilinear and trilinear estimates}\label{A}

\begin{lem}\label{HC-8}
{\it For $\mu,\nu\in \{+,-\}$,~$k,k_{1},k_{2},l,l_{1},l_{2} \geq -1$,~and
$p,q,r\in[1,\infty],~r\geq2,$ $1/p+1/q+1/r=1$, there holds
\begin{equation}\label{7-18AA}
\begin{aligned}
&\Big\|\int_{\mathbb{R}^{2}}\sum_{m\in \mathbb{Z}}
P^{\mu\nu}_{k,k_{1},k_{2},l,l_{1},l_{2}}(\xi,n,\eta,m)
\widehat{f_{n-m}}(\xi-\eta)\widehat{g_{m}}(\eta)d\eta\Big\|_{L^{2}_{\xi}l^{2}_{n}}\\
\lesssim& 2^{l+l_{2}}(2^{k}+2^{l})(2^{k_{1}}+2^{k_{2}}+2^{l_{2}})2^{(k+l/2)(1-2/r)}
\|f\|_{L^{p}(\mathbb{R}^{2}\times \mathbb{T})}\|g\|_{L^{q}(\mathbb{R}^{2}\times \mathbb{T})},
\end{aligned}
\end{equation}
where $P^{\mu\nu}_{k,k_{1},k_{2},l,l_{1},l_{2}}(\xi,n,\eta,m)$ is defined in \eqref{6-18AAA}.}
\end{lem}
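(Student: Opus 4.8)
The plan is to reduce the multilinear estimate to a convolution inequality in the physical space, exactly as one does for the classical bilinear Littlewood--Paley estimates, but keeping careful track of the periodic ($n$) variable and of the size of the multiplier $P^{\mu\nu}_{k,k_1,k_2,l,l_1,l_2}$. First I would bound the symbol: by its definition \eqref{6-18AAA}, $P^{\mu\nu}_{k,k_1,k_2,l,l_1,l_2}$ is the product of a smooth cutoff localizing $\xi\sim 2^k$, $\xi-\eta\sim 2^{k_1}$, $\eta\sim 2^{k_2}$, $n\sim 2^l$, $n-m\sim 2^{l_1}$, $m\sim 2^{l_2}$, with $\mathcal{M}_{n,m}(\xi,\eta)/\Phi^{n,m}_{\mu,\nu}(\xi,\eta)$. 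By Lemma \ref{HC-11}, $|1/\Phi^{n,m}_{\mu,\nu}|\lesssim\langle\,\rangle$ where the bracket is controlled by the smallest of the three frequency-blocks; using \eqref{5-7-0}--\eqref{5-7-1} for the numerator $\mathcal{M}_{n,m}$ one reads off the gain $2^{l+l_2}(2^k+2^l)(2^{k_1}+2^{k_2}+2^{l_2})$ asserted in \eqref{7-18AA} (the factor $2^{l+l_2}$ from the $n$, $m$ monomials, $(2^k+2^l)$ from the $\Lambda_n(\xi)$ normalization appearing in $\mathcal{M}$, and $(2^{k_1}+2^{k_2}+2^{l_2})$ from the $\eta$, $m$ monomials in the second group). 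Moreover, by Lemma \ref{HC-12}, all derivatives of $1/\Phi^{n,m}_{\mu,\nu}$ are controlled by $1/\Phi^{n,m}_{\mu,\nu}$ itself, so after rescaling the symbol is a (uniformly in $n,m$) Coifman--Meyer type multiplier adapted to the dyadic block $2^k\times 2^l$.

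Next I would write the bilinear operator as an integral kernel. Undoing the Fourier transform, the left side of \eqref{7-18AA} equals $\|R_k S_l B(f,g)\|_{L^2}$ where $B(f,g)(x,y)=\int K(x-x_1,y-y_1,x-x_2,y-y_2)\,f(x_1,y_1)g(x_2,y_2)$ with $K$ the inverse transform (in $\xi,\eta,n,m$) of the symbol. The key point is the kernel bound: using the symbol estimates from the previous paragraph together with integration by parts in $\xi,\eta$ and Poisson summation in $n,m$ (exactly the arguments already run in Lemmas \ref{HC-6}, \ref{HC-7}, \ref{HC-10} and in \eqref{3-7-3}--\eqref{3-7-5}), $K$ is integrable in $(x_1,y_1,x_2,y_2)$ with $L^1$-norm bounded by $2^{l+l_2}(2^k+2^l)(2^{k_1}+2^{k_2}+2^{l_2})$ times the ``$L^\infty\to L^2$'' normalization constant, which on the block $2^k$ in $\mathbb{R}^2$ and $2^l$ in $\mathbb{T}$ is $2^{(k+l/2)(1-2/r)}$ — this is precisely the Bernstein factor from Lemma \ref{HC-7}. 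Then Young's convolution inequality with exponents $1/p+1/q+1/r=1$, $r\ge 2$, gives $\|B(f,g)\|_{L^r}\lesssim \|f\|_{L^p}\|g\|_{L^q}\|K\|_{L^1_{\mathrm{kernel}}}$, and finally the extra Bernstein embedding $L^r\hookrightarrow L^2$ on the output block (again Lemma \ref{HC-7}) produces the stated estimate.

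I expect the main obstacle to be the uniform-in-$(n,m)$ control of the kernel in the periodic variable, i.e. converting the discrete symbol-in-$m$ together with the non-resonant denominator $1/\Phi^{n,m}_{\mu,\nu}$ into a rapidly decaying kernel on $\mathbb{T}$ via Poisson summation. One must check that the $m$-derivative estimates of $\mathcal{M}_{n,m}/\Phi^{n,m}_{\mu,\nu}$ (treating $m$ as a continuous variable after Poisson summation, as in \eqref{3-7-3}) are of the same quality as the $\xi$-derivative estimates, so that repeated integration by parts yields $(1+2^{l}|y+2n\pi|)^{-M}$ decay for arbitrary $M$; Lemma \ref{HC-12} is exactly what makes this work, but the bookkeeping of which frequency block controls the denominator (via $\min(a,b,c)$ in Lemma \ref{HC-11}) across the different cases $(k_1,k_2)\in\chi_k$ must be done with care so that the advertised polynomial factor, and not a worse one, comes out. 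Everything else — the rescaling to a unit-scale Coifman--Meyer multiplier, Young's inequality, and the two applications of the Bernstein inequality — is routine and can be cited directly from the preliminary section.
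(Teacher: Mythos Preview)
Your overall strategy --- pass to an integral kernel, bound the kernel pointwise using Lemma~\ref{HC-11} and Lemma~\ref{HC-12} together with the structure \eqref{5-7-0}--\eqref{5-7-1} of $\mathcal{M}_{n,m}$, then combine H\"older with a Bernstein factor --- is exactly what the paper does. Two points in your writeup need correction, though.

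\textbf{Exponent slip.} With an $L^1$ kernel and $1/p+1/q+1/r=1$, the bilinear H\"older argument produces $\|B(f,g)\|_{L^{r'}}$, not $\|B(f,g)\|_{L^{r}}$; and the ``Bernstein embedding $L^{r}\hookrightarrow L^{2}$'' you then invoke is in the wrong direction (Lemma~\ref{HC-7} is stated as $L^{2}\to L^{r}$ for $r\ge 2$). The paper avoids this by working via duality from the outset: it pairs the bilinear expression with a test function $h\in L^{2}$, observes that the factor $\psi_{k}(\xi)\psi_{l}(n)$ in the symbol replaces $h$ by $R_{[k-2,k+2]}S_{[l-2,l+2]}h$, applies H\"older in $(x,x_1,x_2)$ with exponents $(p,q,r)$ against $|Q|^{1/p}|Q|^{1/q}|Q|^{1/r}$, and finally puts Bernstein on the \emph{test function}: $\|R_{[k-2,k+2]}S_{[l-2,l+2]}h\|_{L^{r}}\lesssim 2^{(k+l/2)(1-2/r)}\|h\|_{L^{2}}$. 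This is dual to the corrected version of your argument and sidesteps the confusion.

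\textbf{No Poisson summation is needed in $n,m$.} The obstacle you anticipate --- controlling $m$-derivatives of $\mathcal{M}_{n,m}/\Phi^{n,m}_{\mu,\nu}$ after Poisson summation --- never arises. The paper's kernel bound \eqref{7-23AB} carries \emph{no decay at all} in the periodic variables: the factor $2^{l+l_2}$ is simply the crude count of the $(n,m)$ terms in the sum (there are $\lesssim 2^{l}$ values of $n$ with $\psi_{l}(n)\neq 0$ and $\lesssim 2^{l_2}$ values of $m$), and the three $y$-integrals over $\mathbb{T}$ contribute only $O(1)$. All the integration by parts happens in $(\xi,\eta)$, for which Lemma~\ref{HC-12} is sufficient; the discrete variables are handled trivially.
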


\begin{proof}
By duality, the left-hand side of \eqref{7-18AA} can be dominated by
\begin{equation}\label{7-19AA}
\begin{aligned}
&\sup\limits_{\|h(x,y)\|_{L^{2}(\mathbb{R}^{2}\times \mathbb{T})}=1}\Big|\int_{\mathbb{R}^{2}\times \mathbb{T}}h(x,y)
\int_{\mathbb{R}^{2}}e^{ix\cdot \xi}\sum_{n\in \mathbb{Z}}e^{iny}\int_{\mathbb{R}^{2}}\sum_{m\in \mathbb{Z}}
P^{\mu\nu}_{k,k_{1},k_{2},l,l_{1},l_{2}}(\xi,n,\eta,m)\\
&~~~~~~~~~~~~~~~~~~~~~~~~~~~~~~~~~~~~~~~~
~~~~~~~~~~~~~~~~~~~~~~~~~~~~~~\cdot\widehat{f_{n-m}}(\xi-\eta)\widehat{g_{m}}(\eta)d\eta d\xi dxdy\Big|\\
=& \sup\limits_{\|h(x,y)\|_{L^{2}(\mathbb{R}^{2}\times \mathbb{T})}=1}\Big|\int_{\mathbb{R}^{2}\times \mathbb{T}}
\int_{\mathbb{R}^{2}\times \mathbb{T}}\int_{\mathbb{R}^{2}\times \mathbb{T}}
(R_{[k-2,k+2]}S_{[l-2,l+2]}h)(x,y)f(x_{1},y_{1})g(x_{2},y_{2})\\
&~~~~~~~~~~~~~~~~~~~~~~~~~~~~~~~
~~~~~~~~~~~\cdot Q^{\mu\nu}_{k,k_{1},k_{2},l,l_{1},l_{2}}(x,y,x_{1},y_{1},x_{2},y_{2})dxdydx_{1}dy_{1}dx_{2}dy_{2}\Big|,
\end{aligned}
\end{equation}
where
\begin{equation}\label{7-20ABC}
\begin{aligned}
&P^{\mu\nu}_{k,k_{1},k_{2},l,l_{1},l_{2}}(\xi,n,\eta,m)\\
:=& \frac{\mathcal{M}_{n,m}(\xi,\eta)}{\Phi_{\mu,\nu}^{n,m}(\xi,\eta)}
\psi_{k}(\xi)\psi_{[k_{1}-2,k_{1}+2]}(\xi-\eta)\psi_{[k_{2}-2,k_{2}+2]}(\eta)\\
&~~~~~~~~~~~~~~~~
~~~~~~~~~~~~\cdot\psi_{l}(n)\psi_{[l_{1}-2,l_{1}+2]}(n-m)\psi_{[l_{2}-2,l_{2}+2]}(m),\\[2mm]
&Q^{\mu\nu}_{k,k_{1},k_{2},l,l_{1},l_{2}}(x,y,x_{1},y_{1},x_{2},y_{2})\\
:=& \sum_{n\in \mathbb{Z}}\sum_{m\in \mathbb{Z}}e^{i[n(y-y_{1})+m(y_{1}-y_{2})]}
\int_{\mathbb{R}^{2}\times\mathbb{R}^{2}}e^{i[(x-x_{1})\cdot\xi+(x_{1}-x_{2})\cdot\eta]}
P^{\mu\nu}_{k,k_{1},k_{2},l,l_{1},l_{2}}(\xi,n,\eta,m)d\xi d\eta.
\end{aligned}
\end{equation}

Without loss of generality, we assume that $k_{2}\leq k_{1}$ since the case of $k_{1}\leq k_{2}$ can be analogously treated.
By the forms of $\mathcal{M}_{n,m}(\xi,\eta)$ in \eqref{5-7-0}-\eqref{5-7-1},
$\mathcal{M}_{n,m}(\xi,\eta)\psi_{k}(\xi)\psi_{[k_{1}-2,k_{1}+2]}(\xi-\eta)\psi_{[k_{2}-2,k_{2}+2]}(\eta)$ are the sums of
such symbols
\begin{equation}\label{7-21A}
\begin{aligned}
(2^{k_{1}}+2^{k_{2}}+2^{l_{2}})\psi_{k}(\xi)a(n-m,\xi-\eta)b(m,\eta),
\end{aligned}
\end{equation}
where $\sup\limits_{n_{1}\in \mathbb{Z}}\sup\limits_{n_{2}\in \mathbb{Z}}\sup\limits_{w\in \mathbb{R}^{2}}
\sum\limits_{m\in [0,10]}\Big[2^{km}|\partial^{m}_{w}(\psi_{k}(w))|+2^{k_{1}m}|\partial^{m}_{w}(a(n_{1},w))|
+2^{k_{2}m}|\partial^{m}_{w}(b(n_{2},w))|\Big]\lesssim 1.$

Integrating by parts in $\xi$ and $\eta$ in \eqref{7-20ABC} via the method of non-stationary phase and applying
\eqref{7-21A}, \eqref{7-16BAB} and Lemma \ref{HC-11}, we get
\begin{equation}\label{7-23AB}
\begin{aligned}
&\Big|Q^{\mu\nu}_{k,k_{1},k_{2},l,l_{1},l_{2}}(x,y,x_{1},y_{1},x_{2},y_{2})\Big|\\
\lesssim& 2^{l+l_{2}}(2^{k}+2^{l})(2^{k_{1}}+2^{k_{2}}+2^{l_{2}})
\Big(2^{2k_{2}}(1+2^{2k_{2}}|x_{1}-x_{2}|^{2})^{-2}\Big)
\Big(2^{2k}(1+2^{2k}|x-x_{1}|^{2})^{-2}\Big).
\end{aligned}
\end{equation}
Based on \eqref{7-19AA}, \eqref{7-23AB} and Lemma \ref{HC-7}, the left-hand
side of \eqref{7-18AA} can be dominated by
\begin{equation*}
\begin{aligned}
&\Big|\int_{\mathbb{R}^{2}\times \mathbb{T}}
\int_{\mathbb{R}^{2}\times \mathbb{T}}\int_{\mathbb{R}^{2}\times \mathbb{T}}(R_{[k-2,k+2]}S_{[l-2,l+2]}h)(x,y)f(x_{1},y_{1})g(x_{2},y_{2})\\
&~~~~~~~~~~~~~~~~~~~~~~~~~~~~~~
~~~~~~~~~~~~~~~~~~~~~~~~~~~~~~\cdot Q^{\mu\nu}_{k,k_{1},k_{2},l,l_{1},l_{2}}(x,y,x_{1},y_{1},x_{2},y_{2})dxdydx_{1}dy_{1}dx_{2}dy_{2}\Big|\\
\lesssim& \int_{\mathbb{R}^{2}\times \mathbb{T}}
\int_{\mathbb{R}^{2}\times \mathbb{T}}\int_{\mathbb{R}^{2}\times \mathbb{T}}\Big|f(x_{1},y_{1})\Big|\Big|Q^{\mu\nu}_{k,k_{1},k_{2},l,l_{1},l_{2}}\Big|^{1/p}
\Big|g(x_{2},y_{2})\Big|\Big|Q^{\mu\nu}_{k,k_{1},k_{2},l,l_{1},l_{2}}\Big|^{1/q}\\
&~~~~~~~~~~~~~~~~~
~~~~~~~~~~~~~~~~~~~~\cdot\Big|(R_{[k-2,k+2]}S_{[l-2,l+2]}h)(x,y)
\Big|\Big|Q^{\mu\nu}_{k,k_{1},k_{2},l,l_{1},l_{2}}\Big|^{1/r}dxdydx_{1}dy_{1}dx_{2}dy_{2}\\
\lesssim& \Big\|f(x_{1},y_{1})|Q^{\mu\nu}_{k,k_{1},k_{2},l,l_{1},l_{2}}|^{1/p}\Big\|_{L^{p}((\mathbb{R}^{2}\times \mathbb{T})^{3})}
\Big\|g(x_{2},y_{2})|Q^{\mu\nu}_{k,k_{1},k_{2},l,l_{1},l_{2}}|^{1/q}\Big\|_{L^{q}((\mathbb{R}^{2}\times \mathbb{T})^{3})}\\
&~~~~~~~~~~~~~~~~~~~~~~~~
~~~~~~~~~~~~~~~~~~~~~~~~~\cdot\Big\|(R_{[k-2,k+2]}S_{[l-2,l+2]}h)(x,y)
|Q^{\mu\nu}_{k,k_{1},k_{2},l,l_{1},l_{2}}|^{1/r}\Big\|_{L^{r}((\mathbb{R}^{2}\times \mathbb{T})^{3})}\\
\end{aligned}
\end{equation*}

\begin{equation*}
\begin{aligned}
\lesssim& \|f\|_{L^{p}(\mathbb{R}^{2}\times \mathbb{T})}
[2^{l+l_{2}}(2^{k}+2^{l})(2^{k_{1}}+2^{k_{2}}+2^{l_{2}})]^{1/p}\\
&~~~~~~~~~~~~~~~~~~\cdot\|g\|_{L^{q}(\mathbb{R}^{2}
\times \mathbb{T})}[2^{l+l_{2}}(2^{k}+2^{l})(2^{k_{1}}+2^{k_{2}}+2^{l_{2}})]^{1/q}\\
&~~~~~~~~~~~~~~~~~~~~~~~~\cdot\|R_{[k-2,k+2]}S_{[l-2,l+2]}h\|_{L^{r}(\mathbb{R}^{2}\times \mathbb{T})}
[2^{l+l_{2}}(2^{k}+2^{l})(2^{k_{1}}+2^{k_{2}}+2^{l_{2}})]^{1/r}\\
\lesssim& 2^{l+l_{2}}(2^{k}+2^{l})(2^{k_{1}}+2^{k_{2}}+2^{l_{2}})
\|f\|_{L^{p}(\mathbb{R}^{2}\times \mathbb{T})}\|g\|_{L^{q}(\mathbb{R}^{2}\times \mathbb{T})}
\|R_{[k-2,k+2]}S_{[l-2,l+2]}h\|_{L^{2}(\mathbb{R}^{2}\times \mathbb{T})}\\
\lesssim& 2^{l+l_{2}}(2^{k}+2^{l})(2^{k_{1}}+2^{k_{2}}+2^{l_{2}})
2^{(k+l/2)(1-2/r)}\|f\|_{L^{p}(\mathbb{R}^{2}\times \mathbb{T})}\|g\|_{L^{q}(\mathbb{R}^{2}\times \mathbb{T})}.
\end{aligned}
\end{equation*}
This completes the proof of the Lemma \ref{HC-8}.
\end{proof}

\begin{lem}\label{HC-9}
{\it For $k, k_{1},
k_{2}, k_{3}, l, l_{1}, l_{2}, l_{3}\geq -1$, we have from \eqref{6-50AA-1-1} that
\begin{equation}\label{7-13ADE}
\begin{aligned}
&\Big\|\mathcal{F}_{x,y}(F^{\mu\sigma\lambda\omega;k,l}_{k_{1},k_{2},k_{3},l_{1},l_{2},l_{3}})(s,\xi,n)\Big\|_{L^{2}_{\xi}l^{2}_{n}}
\lesssim 2^{k+k_{1}+\mathrm{max}(k_{2},k_{3})+k_{3}}
2^{3l+2\mathrm{max}(l_{2},l_{3})+l_{3}}2^{(k+l/2)(1-2/r)}\\
&~~~~~~~~~~~~~~~~~~~~~~~~~~~~~~~~~~~\cdot\|R_{k_{1}}S_{1}U_{\sigma}\|_{L^{p_{1}}(\mathbb{R}^{2}\times \mathbb{T})}
\|R_{k_{2}}S_{2}U_{\lambda}\|_{L^{p_{2}}(\mathbb{R}^{2}\times \mathbb{T})}
\|R_{k_{3}}S_{3}U_{\omega}\|_{L^{p_{3}}(\mathbb{R}^{2}\times\mathbb{T})},
\end{aligned}
\end{equation}
where $p_{1},p_{2},p_{3},r\in [2,\infty]$ with
$\frac{1}{p_{1}}+\frac{1}{p_{2}}+\frac{1}{p_{3}}+\frac{1}{r}=1.$}
\end{lem}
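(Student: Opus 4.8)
\textbf{Proof plan for Lemma \ref{HC-9}.}

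The plan is to follow the duality argument already used in the proof of Lemma \ref{HC-8}, now with three inputs instead of two. First I would recall from \eqref{6-50AA-1-1} that $\mathcal{F}_{x,y}(F^{\mu\sigma\lambda\omega;k,l}_{k_{1},k_{2},k_{3},l_{1},l_{2},l_{3}})(s,\xi,n)$ is the localized trilinear expression with multiplier
$$
\Big(\frac{\mathcal{M}_{n,m}(\xi,\eta)}{\Phi_{\sigma,\mu}^{n,m}(\xi,\eta)}+\frac{\mathcal{M}_{n,n-m}(\xi,\xi-\eta)}{\Phi_{\mu,\sigma}^{n,n-m}(\xi,\xi-\eta)}\Big)\mathcal{M}_{m,l'}(\eta,\zeta)
$$
cut off by the $\psi$-factors carrying the indices $k,k_1,k_2,k_3,l,l_1,l_2,l_3$. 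I would then pair $F^{\mu\sigma\lambda\omega;k,l}$ with an arbitrary $h$ with $\|h\|_{L^2}=1$, transfer to physical space, and obtain a kernel $Q(x,y,x_1,y_1,x_2,y_2,x_3,y_3)$ that is the $(\xi,\eta,\zeta,n,m,l')$-integral/sum of the above multiplier against the exponential $e^{i[(x-x_1)\cdot\xi+(x_1-x_2)\cdot\eta+(x_2-x_3)\cdot\zeta]}e^{i[\,\cdots\,]}$ in the $y$-variables.

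The key step is to bound this kernel. The multiplier bound comes from \eqref{5-7-00} (which produces the prefactor $2^{k_1+\mathrm{max}(k_2,k_3)+k_3}2^{2\mathrm{max}(l_2,l_3)+l_3}$ together with a factor $2^{k}+2^{l}$ from the outermost symbol, hence the $2^{k}2^{3l}$-type gain after the Littlewood–Paley sums are carried out), combined with \eqref{7-16BAB} for the denominators $\Phi^{n,m}_{\sigma,\mu}$ and $\Phi^{n,n-m}_{\mu,\sigma}$ and with Lemma \ref{HC-11} to control the reciprocal of $\Phi$; this shows the symbol and all its $\xi,\eta,\zeta$-derivatives are bounded (up to the stated powers of $2$) by a product of the individual frequency scales. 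Applying the method of non-stationary phase in $\xi,\eta,\zeta$ then yields rapidly decaying kernel factors $2^{2k_{\min}}(1+2^{2k_{\min}}|x_i-x_{i+1}|^2)^{-M}$ in each pairwise difference — exactly as in \eqref{7-23AB} but with one more layer — and the Poisson-summation argument (as in \eqref{3-7-3}–\eqref{3-7-5}) handles the $y$-variables, giving the analogous $\ell^1$-summable factors $2^{l}(1+2^{l}|y_i-y_{i+1}+2k\pi|)^{-M}$. Then I would split $|Q|=|Q|^{1/p_1}|Q|^{1/p_2}|Q|^{1/p_3}|Q|^{1/r}$, apply the generalized Hölder inequality in the seven variables $(x_i,y_i)$ and $(x,y)$ exactly as in the last display of the proof of Lemma \ref{HC-8}, use $\|Q\|_{L^1((\mathbb{R}^2\times\mathbb{T})^3)}\lesssim 2^{k+k_1+\mathrm{max}(k_2,k_3)+k_3}2^{3l+2\mathrm{max}(l_2,l_3)+l_3}$, and finally invoke Lemma \ref{HC-7} (Bernstein) on the factor $R_{[k-2,k+2]}S_{[l-2,l+2]}h$ to produce the $2^{(k+l/2)(1-2/r)}$ gain and absorb $\|h\|_{L^2}=1$.

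The main obstacle I expect is the bookkeeping in the kernel estimate: because the multiplier now contains the \emph{sum} of two denominators $\Phi^{n,m}_{\sigma,\mu}$ and $\Phi^{n,n-m}_{\mu,\sigma}$ evaluated at different frequency arguments (the second with $\xi-\eta$ in place of $\eta$), one must verify that \eqref{7-16BAB} and Lemma \ref{HC-11} still give uniform derivative bounds after differentiating this sum in all of $\xi,\eta,\zeta$, and that the two terms localize to compatible frequency shells so the same $\psi$-cutoffs control both. Once that uniform symbol bound is in hand, the non-stationary phase integration and the Hölder/Bernstein combination are entirely routine and parallel to Lemma \ref{HC-8}; the powers of $2$ in the frequency scales are simply tracked through. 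I would also note, as in the surrounding text, that the bound is independent of $s$ since no oscillation in $s$ is used here.
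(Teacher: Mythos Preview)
Your proposal takes a genuinely different route from the paper. The paper does \emph{not} redo the duality/kernel argument with three inputs; instead it reduces the trilinear estimate to the already--proven bilinear Lemma~\ref{HC-8}. Concretely, after the change of variables $v=\eta$, $n'=m$ it recognises the $(\zeta,l')$--integration as defining a bilinear object $\mathcal{F}_{x,y}(G)(v,n')$ in the second and third inputs, then inserts an extra dyadic decomposition $\varphi_{k_4}(v)\psi_{l_4}(n')$ to localize the intermediate frequency, applies Lemma~\ref{HC-8} to the outer bilinear form in $(R_{k_1}S_{l_1}f,\,R_{k_4}S_{l_4}G)$, and bounds $\|G\|_{L^{p_2p_3/(p_2+p_3)}}$ via H\"older together with Lemmas~\ref{HC-6} and~\ref{HC-10}. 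Summing over $k_4\le\max(k_2,k_3)+8$ and $l_4\le\max(l_2,l_3)+8$ then produces exactly the stated powers of $2$. This modular approach buys you the kernel estimate for free: it was already done once in Lemma~\ref{HC-8}.

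Your direct trilinear kernel argument can in principle be made to work, but it is not ``exactly as in \eqref{7-23AB} with one more layer'', and the part you flag as routine is where the real work lies. In the variables $(\xi,\eta,\zeta)$ the intermediate frequency $\eta$ is \emph{not} dyadically localized --- only $\xi-\eta$ and $\eta-\zeta$ carry cutoffs --- so for fixed $\xi,\zeta$ the $\eta$--support can range from $0$ up to $2^{\max(k_2,k_3)}$, and the kernel does not factor as a product of three decaying pieces in $|x_i-x_{i+1}|$. This is precisely why the paper inserts the $(k_4,l_4)$ decomposition. To push your route through you would need either to add that same decomposition, or to change variables to $(\xi_1,\xi_2,\xi_3)=(\xi-\eta,\eta-\zeta,\zeta)$ (so each $\xi_j$ is localized at scale $k_j$ and the phase becomes $\sum_j(x-x_j)\cdot\xi_j$) and then verify that the composite symbol $\psi_k(\xi_1+\xi_2+\xi_3)\cdot\Phi^{-1}\mathcal{M}\,\mathcal{M}$ still obeys symbol--type derivative bounds in each $\xi_j$; Lemma~\ref{HC-12} gives this for the $\Phi^{-1}$ factor, but the $\psi_k$ cutoff on the \emph{sum} needs a separate check when $k<k_j$. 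None of this is in your sketch.
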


\begin{proof}
At first, we consider the more general operator $T$ as
\begin{equation}\label{7-14AE212}
\begin{aligned}
&T[f,g,h](\xi,n)\\
=& \psi_{k}(\xi)\psi_{l}(n)\sum_{m\in \mathbb{Z}}\sum_{l'\in \mathbb{Z}}\int_{\mathbb{R}^{2}}
\int_{\mathbb{R}^{2}}\Big(\frac{\mathcal{M}_{n,m}(\xi,\eta)}{\Phi_{\sigma,\mu}^{n,m}(\xi,\eta)}+
\frac{\mathcal{M}_{n,n-m}(\xi,\xi-\eta)}{\Phi_{\mu,\sigma}^{n,n-m}(\xi,\xi-\eta)}\Big)
\mathcal{M}_{m,l'}(\eta,\zeta)\\
&~~~~~~~~~~~~~~\cdot\mathcal{F}_{x,y}(R_{k_{1}}S_{l_{1}}f)(\xi-\eta,n-m)
\mathcal{F}_{x,y}(R_{k_{2}}S_{l_{2}}g)(\eta-\zeta,m-l')
\mathcal{F}_{x,y}(R_{k_{3}}S_{l_{3}}h)(\zeta,l')d\zeta d\eta\\
=& \psi_{k}(\xi)\psi_{l}(n)\sum_{n_{2}\in \mathbb{Z}}\sum_{n_{3}\in \mathbb{Z}}\int_{\mathbb{R}^{2}}
\int_{\mathbb{R}^{2}}\Big(\frac{\mathcal{M}_{n,n_{2}+n_{3}}(\xi,\xi_{2}+\xi_{3})}{\Phi_{\sigma,\mu}^{n,n_{2}+n_{3}}(\xi,\xi_{2}+\xi_{3})}+
\frac{\mathcal{M}_{n,n-n_{2}-n_{3}}(\xi,\xi-\xi_{2}-\xi_{3})}{\Phi_{\mu,\sigma}^{n,n-n_{2}-n_{3}}(\xi,\xi-\xi_{2}-\xi_{3})}\Big)\\
&~~~~~~~~~~~~~~~~~~~~~~~~~~~~~~~~~~~~~~~~~~~~~~~\cdot\mathcal{M}_{n_{2}+n_{3},n_{3}}(\xi_{2}+\xi_{3},\xi_{3})
\mathcal{F}_{x,y}(R_{k_{1}}S_{l_{1}}f)(\xi-\xi_{2}-\xi_{3},n-n_{2}-n_{3})\\
&~~~~~~~~~~~~~~~~~~~~~~~~~~~~~~~~
~~~~~~~~~~~~~~~~~~~~~~~~~~~~~~~~~~\cdot\mathcal{F}_{x,y}(R_{k_{2}}S_{l_{2}}g)(\xi_{2},n_{2})
\mathcal{F}_{x,y}(R_{k_{3}}S_{l_{3}}h)(\xi_{3},n_{3})d\xi_{2} d\xi_{3}\\
=& \psi_{k}(\xi)\psi_{l}(n)\sum_{n_{3}\in \mathbb{Z}}\sum_{n'\in \mathbb{Z}}\int_{\mathbb{R}^{2}}
\int_{\mathbb{R}^{2}}\Big(\frac{\mathcal{M}_{n,n'}(\xi,v)}{\Phi_{\sigma,\mu}^{n,n'}(\xi,v)}+
\frac{\mathcal{M}_{n,n-n'}(\xi,\xi-v)}{\Phi_{\mu,\sigma}^{n,n-n'}(\xi,\xi-v)}\Big)\mathcal{M}_{n',n_{3}}(v,\xi_{3})\\
&~~~~~~~~~~\cdot\mathcal{F}_{x,y}(R_{k_{1}}S_{l_{1}}f)(\xi-v,n-n')
\mathcal{F}_{x,y}(R_{k_{2}}S_{l_{2}}g)(v-\xi_{3},n'-n_{3})
\mathcal{F}_{x,y}(R_{k_{3}}S_{l_{3}}h)(\xi_{3},n_{3})d\xi_{3}dv\\
=& \psi_{k}(\xi)\psi_{l}(n)\sum_{n'\in \mathbb{Z}}
\int_{\mathbb{R}^{2}}\Big(\frac{\mathcal{M}_{n,n'}(\xi,v)}{\Phi_{\sigma,\mu}^{n,n'}(\xi,v)}+
\frac{\mathcal{M}_{n,n-n'}(\xi,\xi-v)}{\Phi_{\mu,\sigma}^{n,n-n'}(\xi,\xi-v)}\Big)
\mathcal{F}_{x,y}(R_{k_{1}}S_{l_{1}}f)(\xi-v,n-n')
\\&~~~~~~~~~~~~~~~~~~~~~~~~~~~~~~~~~~~~~~~~~~~~~~~~~~~
~~~~~~~~~~~~~~~~~~~~~~~~~~~~~~~~~~~~~~~~~~~~~~~~~~~~~\cdot\mathcal{F}_{x,y}(G)(v,n')dv,
\end{aligned}
\end{equation}
where
\begin{equation*}
\begin{aligned}
\mathcal{F}_{x,y}(G)(v,n')&=\sum_{n_{3}\in \mathbb{Z}}
\int_{\mathbb{R}^{2}}\mathcal{M}_{n',n_{3}}(v,\xi_{3})
\mathcal{F}_{x,y}(P_{k_{2}}S_{l_{2}}g)(v-\xi_{3},n'-n_{3})
\mathcal{F}_{x,y}(P_{k_{3}}S_{l_{3}}h)(\xi_{3},n_{3})d\xi_{3}.
\end{aligned}
\end{equation*}

Next, we prove that
\begin{equation}\label{7-13ADE-1}
\begin{aligned}
&\Big\|T[f,g,h](\xi,n)\Big\|_{L^{2}_{\xi}l^{2}_{n}}
\lesssim 2^{k+k_{1}+\mathrm{max}(k_{2},k_{3})+k_{3}}2^{3l+2\mathrm{max}(l_{2},l_{3})+l_{3}}
2^{(k+l/2)(1-2/r)}\\
&~~~~~~~~~~~~~~~~~~~~~~~~~~~~~~~~~~~
~~~~~~~~~~~~~~~\cdot\|f\|_{L^{p_{1}}(\mathbb{R}^{2}\times \mathbb{T})}
\|g\|_{L^{p_{2}}(\mathbb{R}^{2}\times \mathbb{T})}
\|h\|_{L^{p_{3}}(\mathbb{R}^{2}\times\mathbb{T})}.
\end{aligned}
\end{equation}
Obviously, once \eqref{7-13ADE-1} is established, then \eqref{7-13ADE} follows immediately
when $f=U_{\sigma}, g=U_{\lambda}, h=U_{\omega}$ in \eqref{7-13ADE-1}.

In view of \eqref{7-14AE212}, we taking such a decomposition
\begin{equation}\label{A-A-9}
\begin{aligned}
T=\sum_{k_{4}\geq-1}\sum_{l_{4}\geq-1}T_{k_{4},l_{4}},
\end{aligned}
\end{equation}
where
\begin{equation}\label{7-16Ahh}
\begin{aligned}
&T_{k_{4},l_{4}}[f,g,h](\xi,n)\\
:=& \psi_{k}(\xi)\psi_{l}(n)\sum_{n'\in \mathbb{Z}}
\int_{\mathbb{R}^{2}}\Big(\frac{\mathcal{M}_{n,n'}(\xi,v)}{\Phi_{\sigma,\mu}^{n,n'}(\xi,v)}+
\frac{\mathcal{M}_{n,n-n'}(\xi,\xi-v)}{\Phi_{\mu,\sigma}^{n,n-n'}(\xi,\xi-v)}\Big)\\
&~~~~~~~~~~~~~~~~~~~~~~~~~~~~~~~~~~~~\cdot
\mathcal{F}_{x,y}(R_{k_{1}}S_{l_{1}}f)(\xi-v,n-n')
\psi_{k_{4}}(v)\psi_{l_{4}}(n')\mathcal{F}_{x,y}(G)(v,n')dv\\
=& \psi_{k}(\xi)\psi_{l}(n)\sum_{n'\in \mathbb{Z}}
\int_{\mathbb{R}^{2}}\Big(\frac{\mathcal{M}_{n,n'}(\xi,v)}{\Phi_{\sigma,\mu}^{n,n'}(\xi,v)}+
\frac{\mathcal{M}_{n,n-n'}(\xi,\xi-v)}{\Phi_{\mu,\sigma}^{n,n-n'}(\xi,\xi-v)}\Big)\\
&~~~~~~~~~~~~~~~~~~~~~~~~~~~~~~~~~~~~~~~~~~~~~~~~~~\cdot
\mathcal{F}_{x,y}(R_{k_{1}}S_{l_{1}}f)(\xi-v,n-n')
\mathcal{F}_{x,y}(R_{k_{4}}S_{l_{4}}G(v,n')dv.
\end{aligned}
\end{equation}
By Lemma \ref{HC-8}, we have
\begin{equation}\label{A-A-10}
\begin{aligned}
&\|T_{k_{4},l_{4}}[f,g,h]\|_{L^{2}_{\xi}l^{2}_{n}}\\
\lesssim& 2^{l+l_{4}}(2^{k}+2^{l})(2^{k_{1}}+2^{k_{4}}+2^{\mathrm{max}(l_{1},l_{4})})2^{(k+l/2)(1-2/r)}
\|R_{k_{1}}S_{l_{1}}f\|_{L^{p_{1}}(\mathbb{R}^{2}\times\mathbb{T})}
\|R_{k_{4}}S_{l_{4}}G\|_{L^{p_{2}p_{3}/(p_{2}+p_{3})}(\mathbb{R}^{2}\times\mathbb{T})}\\
\lesssim& 2^{l+l_{4}}(2^{k}+2^{l})(2^{k_{1}}+2^{k_{4}}+2^{\mathrm{max}(l_{1},l_{4})})2^{(k+l/2)(1-2/r)}
\|f\|_{L^{p_{1}}(\mathbb{R}^{2}\times\mathbb{T})}\|G\|_{L^{p_{2}p_{3}/(p_{2}+p_{3})}(\mathbb{R}^{2}\times\mathbb{T})},
\end{aligned}
\end{equation}
In addition, it follows from Lemma \ref{HC-6} and Lemma \ref{HC-10} that
\begin{equation}\label{A-A-11}
\begin{aligned}
&\|G\|_{L^{p_{2}p_{3}/(p_{2}+p_{3})}(\mathbb{R}^{2}\times\mathbb{T})}\\
\lesssim& \Big\|\Big[R_{k_{2}}S_{l_{2}}\Big(\sum\limits_{n\in \mathbb{Z}}
\frac{(\partial_{x}~\mathrm{or}~n)}{\sqrt{1-\Delta_{x}+n^{2}}}g_{n}(x)e^{iny}\Big)\Big]\\
&~~~~~~~~~~~~~~~~~~~~~~~~~~~~\cdot \Big[(1,\partial_{x},\partial_{y})R_{k_{3}}S_{l_{3}}
\Big(\sum\limits_{n\in \mathbb{Z}}\frac{(\partial_{x}~\mathrm{or}~n)}{\sqrt{1-\Delta_{x}+n^{2}}}h_{n}(x)e^{iny}
\Big)\Big]\Big\|_{L^{p_{2}p_{3}/(p_{2}+p_{3})}(\mathbb{R}^{2}\times\mathbb{T})}\\
\lesssim& \Big\|R_{k_{2}}S_{l_{2}}\Big(\sum\limits_{n\in \mathbb{Z}}\frac{(\partial_{x}~\mathrm{or}~n)}{\sqrt{1-\Delta_{x}+n^{2}}}g_{n}(x)e^{iny}\Big)\Big\|_{L^{p_{2}}(\mathbb{R}^{2}\times\mathbb{T})}\\
&~~~~~~~~~~~~~~~~~~~~~~~~~~~~~~~~~~~~~~~~\cdot(2^{k_{3}}+2^{l_{3}})\Big\|R_{k_{3}}S_{l_{3}}\Big(\sum\limits_{n\in \mathbb{Z}}\frac{(\partial_{x}~\mathrm{or}~n)}{\sqrt{1-\Delta_{x}+n^{2}}}h_{n}(x)e^{iny}\Big)\Big\|_{L^{p_{3}}(\mathbb{R}^{2}\times\mathbb{T})}\\
\lesssim& (2^{k_{3}}+2^{l_{3}})\|g\|_{L^{p_{2}}(\mathbb{R}^{2}\times\mathbb{T})}\|h\|_{L^{p_{3}}(\mathbb{R}^{2}\times\mathbb{T})}.
\end{aligned}
\end{equation}
On the other hand, on the support of the integrand in \eqref{7-16Ahh}, the following restrictions hold
\begin{equation}\label{A-A-12}
\begin{aligned}
(k_{2},k_{3})\in\chi_{k_{4}},~~~~~(l,l_{4})\in\chi_{l_{1}},~~~~~(l_{2},l_{3})\in\chi_{l_{4}}.
\end{aligned}
\end{equation}
Using \eqref{A-A-9} and \eqref{A-A-10}-\eqref{A-A-12}, the left-hand side of \eqref{7-13ADE-1} can be dominated by
\begin{equation*}
\begin{aligned}
&\sum_{\mbox{\tiny$\begin{array}{c} -1\leq k_{4}\leq \mathrm{max}(k_{2},k_{3})+8\end{array}$}}
\sum_{\mbox{\tiny$\begin{array}{c}-1\leq l_{4}\leq \mathrm{max}(l_{2},l_{3})+8\end{array}$}}
\|T_{k_{4},l_{4}}[f,g,h]\|_{L^{2}_{\xi}l^{2}_{n}}\\
\lesssim& \sum_{\mbox{\tiny$\begin{array}{c} -1\leq k_{4}\leq \mathrm{max}(k_{2},k_{3})+8\end{array}$}}
\sum_{\mbox{\tiny$\begin{array}{c} -1\leq l_{4}\leq \mathrm{max}(l_{2},l_{3})+8\end{array}$}}
2^{l+l_{4}}(2^{k}+2^{l})(2^{k_{1}}+2^{k_{4}}+2^{\mathrm{max}(l_{1},l_{4})})2^{(k+l/2)(1-2/r)}\\
&~~~~~~~~~~~~~~~~~~~~~~~~~~~~~~~~~~~~~~~~~~~~~~~~~~~~~~~~~~~~~~~~~~~~~
~~~~~~~~~~\cdot\|f\|_{L^{p_{1}}(\mathbb{R}^{2}\times\mathbb{T})}\|G\|_{L^{p_{2}p_{3}/(p_{2}+p_{3})}(\mathbb{R}^{2}\times\mathbb{T})}\\
\end{aligned}
\end{equation*}

\begin{equation*}
\begin{aligned}
\lesssim& 2^{k+k_{1}+\mathrm{max}(k_{2},k_{3})}2^{3l+2\mathrm{max}(l_{2},l_{3})}
2^{(k+l/2)(1-2/r)}(2^{k_{3}}+2^{l_{3}})
\|f\|_{L^{p_{1}}(\mathbb{R}^{2}\times\mathbb{T})}\|g\|_{L^{p_{2}}(\mathbb{R}^{2}\times\mathbb{T})}
\|h\|_{L^{p_{3}}(\mathbb{R}^{2}\times\mathbb{T})}\\
\lesssim& 2^{k+k_{1}+\mathrm{max}(k_{2},k_{3})+k_{3}}
2^{3l+2\mathrm{max}(l_{2},l_{3})+l_{3}}2^{(k+l/2)(1-2/r)}
\|f\|_{L^{p_{1}}(\mathbb{R}^{2}\times \mathbb{T})}
\|g\|_{L^{p_{2}}(\mathbb{R}^{2}\times \mathbb{T})}
\|h\|_{L^{p_{3}}(\mathbb{R}^{2}\times\mathbb{T})}.
\end{aligned}
\end{equation*}
This shows \eqref{7-13ADE-1} and then Lemma \ref{HC-9} is proved.
\end{proof}

\section{Temporal decay estimate for the 3D linear Klein-Gordon problem}\label{B}

In this Appendix, we establish the time-decay rate of smooth solutions to the 3D homogeneous Klein-Gordon
equation on $\mathbb{R}^{2}\times \mathbb{T}$ with compactly supported initial data. The related result can be stated as:
\begin{thm}\label{8-1}
{\it Let $V(t,x,y)$ be the smooth solution to the following problem
\begin{equation}\label{8-1}\begin{cases}
(\partial_{t}^2-\Delta_{x,y}+1)V=0,\\[2mm]
(V(0),\dot{V}(0))=(V_{0},V_{1})(x,y)\in C_{0}^{\infty}(\mathbb{R}^{2}\times \mathbb{T}),
\end{cases}
\end{equation}
then one has
\begin{equation}\label{8-2}
\begin{aligned}
\sup\limits_{|\rho|\leq2}\|\partial_{x,y}^{\rho}V(t)\|_{L^{\infty}(\mathbb{R}^{2}\times \mathbb{T})}
+\sup\limits_{|\rho|\leq1}\|\partial_{x,y}^{\rho}\dot{V}(t)\|_{L^{\infty}(\mathbb{R}^{2}\times \mathbb{T})}\lesssim(1+t)^{-1}.
\end{aligned}
\end{equation}}
\end{thm}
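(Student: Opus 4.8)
The plan is to reduce the problem on $\mathbb{R}^2\times\mathbb{T}$ to a superposition over the discrete frequency $n\in\mathbb{Z}$ of two-dimensional Klein--Gordon flows on $\mathbb{R}^2$ with shifted mass $\sqrt{1+n^2}$, then to exploit the exponential decay in $n$ coming from the smooth compactly supported data together with the dispersive decay in the $x$ variable supplied by the stationary phase estimates already established in Section~3. Writing $V(t,x,y)=\sum_{n\in\mathbb{Z}}V_n(t,x)e^{iny}$, the solution of \eqref{8-1} is
\begin{equation*}
V_n(t,x)=\cos\!\big(t\sqrt{1-\Delta_x+n^2}\big)V_{0,n}(x)+\frac{\sin\!\big(t\sqrt{1-\Delta_x+n^2}\big)}{\sqrt{1-\Delta_x+n^2}}V_{1,n}(x),
\end{equation*}
so that each $\partial_{x,y}^{\rho}V$ (and $\partial_{x,y}^{\rho}\dot V$) is a linear combination, over $n$ and over the half-wave signs $\pm$, of terms $n^{a}\,e^{iny}\,e^{\pm it\sqrt{1-\Delta_x+n^2}}\,g_n(x)$ with $|a|\le 2$ and $g_n$ a derivative of $V_{0,n}$ or $(1-\Delta_x+n^2)^{-1/2}V_{1,n}$.

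Next I would insert the Littlewood--Paley decomposition in $x$, i.e.\ $1=\psi_{-1}(\xi)+\sum_{k\ge0}\varphi_k(\xi)$, and estimate each piece in $L^\infty_x$ by
\begin{equation*}
\big\|\,R_{[-1,k]}e^{\pm it\sqrt{1-\Delta_x+n^2}}g_n\,\big\|_{L^\infty_x}\lesssim \big\|\mathcal{F}^{-1}_\xi\big(\psi_{[-1,k]}(\xi)e^{\pm it\sqrt{1+|\xi|^2+n^2}}\big)\big\|_{L^\infty_x}\,\|g_n\|_{L^1_x}\lesssim 2^{2k}(1+|n|)(1+t)^{-1}\|g_n\|_{L^1_x},
\end{equation*}
which is exactly Lemma~\ref{HC-3} (combining Lemma~\ref{HC-1} and Lemma~\ref{HC-2}). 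Summing over $k$ requires control of the high-frequency tail, which is where smoothness of the data enters: for fixed $n$, $\|R_k g_n\|_{L^1_x}\lesssim 2^{-Mk}\|g_n\|_{W^{M,1}_x}$ for any $M$, so the $2^{2k}$ loss is absorbed and $\sum_k$ converges, leaving $\sum_n (1+|n|)^{3}\big(\|V_{0,n}\|_{W^{M,1}_x}+\|V_{1,n}\|_{W^{M,1}_x}\big)$, which is finite because $(V_0,V_1)\in C_0^\infty$ forces the Fourier coefficients in $y$ to decay faster than any power of $n$ (and each $V_{j,n}$ to be smooth and compactly supported, uniformly). One should also dispose of the trivial regime $t\le 1$ by the crude bound $\|e^{\pm it\sqrt{\cdots}}g_n\|_{L^\infty_x}\lesssim\|\widehat{g_n}\|_{L^1_\xi}$, which is again summable against the Schwartz decay in $n$.

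Assembling these ingredients: for each multi-index $\rho$ with $|\rho|\le2$ one bounds $\|\partial_{x,y}^\rho V(t)\|_{L^\infty}$ by $\sum_{n}\sum_{k\ge-1}(\text{dispersive factor})\cdot(\text{data factor})$ and obtains the $(1+t)^{-1}$ rate with a constant depending only on finitely many Sobolev norms of $(V_0,V_1)$; the term $\partial_{x,y}^\rho\dot V$ with $|\rho|\le1$ is handled identically, the extra $\Lambda$ on $\dot V$ being harmless because it only contributes an additional factor $\sqrt{1+|\xi|^2+n^2}\lesssim 2^{k}(1+|n|)$ inside the already-convergent sums. I expect the only genuinely delicate point to be the uniform-in-$n$ dispersive estimate $\|\mathcal{F}^{-1}_\xi(\psi_{[-1,k]}e^{\pm it\sqrt{1+|\xi|^2+n^2}})\|_{L^\infty_x}\lesssim 2^{2k}(1+|n|)(1+t)^{-1}$, i.e.\ tracking how the stationary phase constant depends on the shifted mass $\sqrt{1+n^2}$ so that the $n$-dependence is polynomial and therefore defeated by the Schwartz tail of the periodic Fourier coefficients; but this is precisely the content of Lemmas~\ref{HC-1}--\ref{HC-3}, so the proof is essentially a careful bookkeeping of these bounds summed over $k$ and $n$.
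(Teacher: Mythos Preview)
Your proposal is correct and follows essentially the same route as the paper: Fourier expansion in $y$, explicit half-wave representation of each $V_n$, Littlewood--Paley decomposition in $\xi$, the uniform-in-$n$ dispersive bound from Lemmas~\ref{HC-1}--\ref{HC-3}, and then summation in $k$ via smoothness of the data and in $n$ via rapid decay of the Fourier coefficients of $C_0^\infty$ data. The paper organizes the computation by splitting $n=0$ from $|n|\ge1$ and writing the regularity trade-off as an explicit insertion of $(1+|\xi|^2)^{-2}(1+|\xi|^2)^2$ and a factor $n^{-4}n^4$, but this is exactly your ``$2^{-Mk}$'' and ``Schwartz tail in $n$'' bookkeeping spelled out.
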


\begin{proof}
We express $V,V_{0},V_{1}$ in problem \eqref{8-1} as follows
\begin{equation}\label{8-3}
\begin{aligned}
V=\sum\limits_{n\in \mathbb{Z}}
V_{n}(t,x)e^{iny},~~V_{0}=\sum\limits_{n\in \mathbb{Z}}
V_{0;n}(x)e^{iny},~~V_{1}=\sum\limits_{n\in \mathbb{Z}}
V_{1;n}(x)e^{iny}.
\end{aligned}
\end{equation}
Substituting \eqref{8-3} into problem \eqref{8-1} yields that for each $n\in \mathbb{Z}$,
\begin{equation}\label{8-6}\begin{cases}
(\partial_{t}^2-\Delta_{x}+n^{2}+1)V_{n}(t, x)=0,\\[2mm]
(V_{n}(0,x), \dot{V_{n}}(0, x))=(V_{0; n}(x),V_{1; n}(x)).
\end{cases}
\end{equation}
Taking the Fourier transformation in the $x$-variables for problem \eqref{8-6} obtains
\begin{equation}\label{8-7}
\begin{aligned}
\widehat{V_{n}}(t,\xi)=\frac{(\widehat{V_{0;n}}(\xi)- i\widehat{V_{1;n}}(\xi)/\Lambda_{n}(\xi))e^{ it\Lambda_{n}(\xi)}}{2}
+\frac{(\widehat{V_{0;n}}(\xi)+i\widehat{V_{1;n}}(\xi)/\Lambda_{n}(\xi))e^{-it\Lambda_{n}(\xi)}}{2}.
\end{aligned}
\end{equation}
Due to
\begin{equation}\label{8-8}
\begin{aligned}
V(t,x,y)=\frac{1}{(2\pi)^{2}}\sum_{n\in \mathbb{Z}}\int_{\mathbb{R}^{2}}e^{i(ny+x\cdot\xi)}\widehat{V_{n}}(t,\xi)d\xi,
\end{aligned}
\end{equation}
then substituting \eqref{8-7} into \eqref{8-8} obtains the explicit expression of $V$.

To obtain \eqref{8-2}, it suffices to prove
\begin{equation}\label{8-9}
\|V(t)\|_{L^{\infty}(\mathbb{R}^{2}\times \mathbb{T})}\lesssim (1+t)^{-1}
\end{equation}
since the treatments of $\|\partial_{x,y}^{\rho}(V, \dot{V})(t)\|_{L^{\infty}(\mathbb{R}^{2}\times \mathbb{T})}$  ($|\rho|\ge 1$)  are completely analogous. Moreover, we only need to show \eqref{8-9} for $t\geq 1$ since \eqref{8-9}  is trivial for $0\leq t \leq 1$.
In view of \eqref{8-7} and \eqref{8-8}, we have
\begin{equation}\label{8-AA}
\|V(t)\|_{L^{\infty}(\mathbb{R}^{2}\times \mathbb{T})}\lesssim K_1+K_2,
\end{equation}
where
\begin{equation*}\label{8-10}
K_1=\Big\|\sum_{n\in \mathbb{Z}}\int_{\mathbb{R}^{2}}e^{i(ny+x\cdot\xi)}e^{\pm it\Lambda_{n}(\xi)}
\widehat{V_{0; n}}(\xi)d\xi\Big\|_{L^{\infty}(\mathbb{R}^{2}\times \mathbb{T})}
\end{equation*}
and
\begin{equation*}\label{8-11}
K_2=\Big\|\sum_{n\in \mathbb{Z}}\int_{\mathbb{R}^{2}}e^{i(ny+x\cdot\xi)}e^{\pm it\Lambda_{n}(\xi)}
\cdot\frac{\widehat{V_{1; n}}(\xi)}{\Lambda_{n}(\xi)}d\xi\Big\|_{L^{\infty}(\mathbb{R}^{2}\times \mathbb{T})}.
\end{equation*}

We next focus on the estimate of $K_1$ since the analogous argument can be done for $K_2$. Note that
\begin{equation}\label{8-1A}
K_1\leq K_{11}+K_{12},
\end{equation}
where
\begin{equation*}\label{8-12}
K_{11}=\Big\|\int_{\mathbb{R}^{2}}e^{ix\cdot\xi}e^{\pm it\sqrt{1+|\xi|^{2}}}
\widehat{V_{0; 0}}(\xi)d\xi\Big\|_{L^{\infty}(\mathbb{R}^{2})}
\end{equation*}
and
\begin{equation*}\label{8-13}
K_{12}=\sum_{n\in \mathbb{Z},|n|\geq1}\Big\|\int_{\mathbb{R}^{2}}e^{i(ny+x\cdot\xi)}e^{\pm it\Lambda_{n}(\xi)}
\widehat{V_{0; n}}(\xi)d\xi\Big\|_{L^{\infty}(\mathbb{R}^{2}\times \mathbb{T})}.
\end{equation*}

For $K_{11}$ in \eqref{8-1A}, by the definition of $\varphi_l, \psi_{-1}$ in Section 2, we have
\begin{equation}\label{8-14}
K_{11}=\Big\|\int_{\mathbb{R}^2}e^{ix\cdot \xi}e^{\pm it\sqrt{1+|\xi|^2}}\Big(\psi_{-1}(\xi)+\sum\limits_{l=1}^{+\infty}\varphi_l(\xi)\Big)\widehat{V_{0; 0}}(\xi)d\xi\Big\|_{L^{\infty}(\mathbb{R}^2)}.
\end{equation}
It follows from Young's inequality and Lemma~\ref{HC-1} that
\begin{equation}\label{8-15}
\begin{aligned}
&\Big\|\int_{\mathbb{R}^{2}}e^{ix\cdot\xi}e^{\pm it\sqrt{1+|\xi|^{2}}}
\psi_{-1}(\xi)\widehat{V_{0; 0}}(\xi)
d\xi\Big\|_{L^{\infty}(\mathbb{R}^{2})}\\
=&\Big\|\mathcal{F}^{-1}(e^{\pm it\sqrt{1+|\xi|^{2}}}
\psi_{-1}(\xi))\ast\mathcal{F}^{-1}(\widehat{V_{0;0}})\Big\|_{L^{\infty}(\mathbb{R}^{2})}\\
\lesssim& \Big\|\mathcal{F}^{-1}(e^{\pm it\sqrt{1+|\xi|^{2}}}
\psi_{-1}(\xi))\Big\|_{L^{\infty}(\mathbb{R}^{2})}\cdot\Big\|
\mathcal{F}^{-1}(\widehat{V_{0;0}})\Big\|_{L^{1}(\mathbb{R}^{2})}\\
=&\Big\|\mathcal{F}^{-1}(e^{\pm it\sqrt{1+|\xi|^{2}}}
\psi_{-1}(\xi))\Big\|_{L^{\infty}(\mathbb{R}^{2})}\cdot\|V_{0; 0}\|_{L^{1}(\mathbb{R}^{2})}\\
\lesssim& (1+t)^{-1}\|V_{0}\|_{L^{1}(\mathbb{R}^{2}\times \mathbb{T})}\lesssim (1+t)^{-1}.
\end{aligned}
\end{equation}
Similarly, by Young's inequality and Lemma \ref{HC-2}, one has
\begin{equation}\label{8-16}
\begin{aligned}
&\sum\limits_{l=1}^{+\infty}\Big\|\int_{\mathbb{R}^{2}}e^{ix\cdot\xi}e^{\pm it\sqrt{1+|\xi|^{2}}}
\varphi_{l}(\xi)\widehat{V_{0;0}}(\xi)
d\xi\Big\|_{L^{\infty}(\mathbb{R}^{2})}\\
=&\sum\limits_{l=1}^{+\infty}\Big\|\mathcal{F}^{-1}(e^{\pm it\sqrt{1+|\xi|^{2}}}
\varphi_{l}(\xi))\ast\mathcal{F}^{-1}(\varphi_{[l-2,l+2]}(\xi)\widehat{V_{0;0}}(\xi))\Big\|_{L^{\infty}(\mathbb{R}^{2})}\\
\lesssim& \sum\limits_{l=1}^{+\infty}\Big\|\mathcal{F}^{-1}(e^{\pm it\sqrt{1+|\xi|^{2}}}
\varphi_{l}(\xi))\Big\|_{L^{\infty}(\mathbb{R}^{2})}\cdot\Big\|
\mathcal{F}^{-1}(\varphi_{[l-2,l+2]}(\xi)\widehat{V_{0;0}}(\xi))\Big\|_{L^{1}(\mathbb{R}^{2})}\\
\lesssim& \sum\limits_{l=1}^{+\infty}2^{2l}(1+t)^{-1}\cdot\Big\|
\mathcal{F}^{-1}\Big(\varphi_{[l-2,l+2]}(\xi)(1+|\xi|^{2})^{-2}(1+|\xi|^{2})^{2}
\widehat{V_{0;0}}(\xi)\Big)\Big\|_{L^{1}(\mathbb{R}^{2})}\\
\lesssim& \sum\limits_{l=1}^{+\infty}2^{2l}(1+t)^{-1}\Big\|
\mathcal{F}^{-1}\Big(\varphi_{[l-2,l+2]}(\xi)(1+|\xi|^{2})^{-2}\Big)\Big\|_{L^{1}(\mathbb{R}^{2})}
\Big\|\mathcal{F}^{-1}\Big((1+|\xi|^{2})^{2}\widehat{V_{0;0}}(\xi)\Big)\Big\|_{L^{1}(\mathbb{R}^{2})}\\
\lesssim& \sum\limits_{l=1}^{+\infty}2^{-2l}(1+t)^{-1}\Big\|(1-\Delta_{x})^{2}V_{0;0}(x)\Big\|_{L^{1}(\mathbb{R}^{2})}
\lesssim (1+t)^{-1}.
\end{aligned}
\end{equation}
Collecting \eqref{8-14}-\eqref{8-16} yields
\begin{equation}\label{8-17}
K_{11}\lesssim (1+t)^{-1}.
\end{equation}

Next, we deal with $K_{12}$ in \eqref{8-1A}. Note that
\begin{equation}\label{8-18}
\begin{aligned}
K_{12}&=\sum_{n\in \mathbb{Z},|n|\geq1}\Big\|\int_{\mathbb{R}^{2}}e^{ix\cdot\xi}e^{\pm it\Lambda_{n}(\xi)}
\widehat{V_{0;n}}(\xi)d\xi\Big\|_{L^{\infty}(\mathbb{R}^{2})}
\\&=\sum_{n\in \mathbb{Z},|n|\geq1}\Big\|\int_{\mathbb{R}^{2}}e^{ix\cdot\xi}e^{\pm it\Lambda_{n}(\xi)}
\Big[\psi_{-1}(\xi)+\sum\limits_{l=1}^{+\infty}\varphi_{l}(\xi)\Big]
\widehat{V_{0;n}}(\xi)d\xi\Big\|_{L^{\infty}(\mathbb{R}^{2})}.
\end{aligned}
\end{equation}
Similarly to \eqref{8-15}, we have
\begin{equation}\label{8-19}
\begin{aligned}
&\sum_{n\in \mathbb{Z},|n|\geq1}\Big\|\int_{\mathbb{R}^{2}}e^{ix\cdot\xi}e^{\pm it\Lambda_{n}(\xi)}
\psi_{-1}(\xi)\widehat{V_{0;n}}(\xi)d\xi\Big\|_{L^{\infty}(\mathbb{R}^{2})}\\
=&\sum_{n\in \mathbb{Z},|n|\geq1}n^{-4}\Big\|\mathcal{F}^{-1}(e^{\pm it\Lambda_{n}(\xi)}
\psi_{-1}(\xi))\ast\mathcal{F}^{-1}\Big(n^{4}\cdot\widehat{V_{0;n}}\Big)\Big\|_{L^{\infty}(\mathbb{R}^{2})}\\
\lesssim& \sum_{n\in \mathbb{Z},|n|\geq1}n^{-4}\Big\|\mathcal{F}^{-1}(e^{\pm it\Lambda_{n}(\xi)}
\psi_{-1}(\xi))\Big\|_{L^{\infty}(\mathbb{R}^{2})}\cdot\Big\|
\mathcal{F}^{-1}\Big(\widehat{(\partial_{y}^{4}V_{0})_{n}}(\xi)\Big)\Big\|_{L^{1}(\mathbb{R}^{2})}\\
\lesssim& \sum_{n\in \mathbb{Z},|n|\geq1}n^{-4}|n|(1+t)^{-1}\cdot\|(\partial_{y}^{4}V_{0})_{n}\|_{L^{1}(\mathbb{R}^{2})}\\
\lesssim& (1+t)^{-1}\|\partial_{y}^{4}V_{0}\|_{L^{1}(\mathbb{R}^{2}\times \mathbb{T})}\lesssim (1+t)^{-1}.
\end{aligned}
\end{equation}
On the other hand, analogous to \eqref{8-16},~one has
\begin{equation}\label{8-20}
\begin{aligned}
&\sum_{n\in \mathbb{Z},|n|\geq1}\sum\limits_{l=1}^{+\infty}\Big\|\int_{\mathbb{R}^{2}}e^{ix\cdot\xi}e^{\pm it\Lambda_{n}(\xi)}
\varphi_{l}(\xi)\widehat{V_{0;n}}(\xi)
d\xi\Big\|_{L^{\infty}(\mathbb{R}^{2})}\\
=&\sum_{n\in \mathbb{Z},|n|\geq1}\sum\limits_{l=1}^{+\infty}n^{-4}\Big\|\mathcal{F}^{-1}(e^{\pm it\Lambda_{n}(\xi)}
\varphi_{l}(\xi))\ast\mathcal{F}^{-1}(\varphi_{[l-2,l+2]}(\xi)n^{4}\cdot\widehat{V_{0;n}}(\xi))\Big\|_{L^{\infty}(\mathbb{R}^{2})}\\
\lesssim& \sum_{n\in \mathbb{Z},|n|\geq1}\sum\limits_{l=1}^{+\infty}n^{-4}\Big\|\mathcal{F}^{-1}(e^{\pm it\Lambda_{n}(\xi)}
\varphi_{l}(\xi))\Big\|_{L^{\infty}(\mathbb{R}^{2})}\cdot\Big\|
\mathcal{F}^{-1}(\varphi_{[l-2,l+2]}(\xi)\widehat{(\partial_{y}^{4}V_{0})_{n}}(\xi))\Big\|_{L^{1}(\mathbb{R}^{2})}\\
\lesssim& \sum_{n\in \mathbb{Z},|n|\geq1}\sum\limits_{l=1}^{+\infty}|n|^{-3} 2^{2l}(1+t)^{-1}\cdot\Big\|
\mathcal{F}^{-1}\Big(\varphi_{[l-2,l+2]}(\xi)(1+|\xi|^{2})^{-2}(1+|\xi|^{2})^{2}\widehat{(\partial_{y}^{4}V_{0})_{n}}(\xi)\Big)
\Big\|_{L^{1}(\mathbb{R}^{2})}\\
\lesssim& \sum_{n\in \mathbb{Z},|n|\geq1}\sum\limits_{l=1}^{+\infty}n^{-2}2^{2l}(1+t)^{-1}\Big\|
\mathcal{F}^{-1}\Big(\varphi_{[l-2,l+2]}(\xi)(1+|\xi|^{2})^{-2}\Big)\Big\|_{L^{1}(\mathbb{R}^{2})}\times\\
&\qquad\qquad\qquad\qquad\Big\|\mathcal{F}^{-1}\Big((1+|\xi|^{2})^{2}\widehat{(\partial_{y}^{4}V_{0})_{n}}(\xi)\Big)
\Big\|_{L^{1}(\mathbb{R}^{2})}\\
\lesssim& \sum_{n\in\mathbb{Z},|n|\geq1}\sum\limits_{l=1}^{+\infty}n^{-2}2^{-2l}(1+t)^{-1}
\Big\|(1-\Delta_{x})^{2}(\partial_{y}^{4}V_{0})_{n}(x)\Big\|_{L^{1}(\mathbb{R}^{2})}\\
\lesssim& \sum_{n\in\mathbb{Z},|n|\geq1}\sum\limits_{l=1}^{+\infty} n^{-2}2^{-2l}(1+t)^{-1}\Big\|(1-\Delta_{x})^{2}\partial_{y}^{4}V_{0}(x,y)\Big\|_{L^{1}(\mathbb{R}^{2}\times \mathbb{T})}\lesssim (1+t)^{-1}.
\end{aligned}
\end{equation}
By \eqref{8-18}-\eqref{8-20},~then
\begin{equation}\label{8-22}
\begin{aligned}
K_{12}\lesssim (1+t)^{-1}.
\end{aligned}
\end{equation}
Collecting \eqref{8-1A}, \eqref{8-17} and \eqref{8-22} derives $K_1\lesssim (1+t)^{-1}$. Then the proof of the Theorem B.1
is completed.
\end{proof}

\bibliographystyle{amsplain}

\end{document}